\newcommand{\RR}{\mathbb R}
\newcommand{\CC}{\mathbb C}
\renewcommand{\Re}{\mathop{\rm Re}\nolimits}
\renewcommand{\Im}{\mathop{\rm Im}\nolimits}
\newcommand{\trace}{\mathop{\mathrm{trace}}\nolimits}
\newcommand{\arctanh}{\mathop{\mathrm{arctanh}}\nolimits}
\newcommand{\ii}{\mathrm{i}}
\newcommand{\manifold}[1]{\mathcal{#1}}
\newcommand{\M}{\manifold{M}}
\newcommand{\D}{\manifold{D}}
\newcommand{\vect}[1]{\mathrm{#1}} 
\newcommand{\x}{\vect{x}}
\newcommand{\y}{\vect{y}}
\newcommand{\va}{\vect{a}}
\newcommand{\vb}{\vect{b}}
\newcommand{\vc}{\vect{c}}
\newcommand{\vd}{\vect{d}}
\newcommand{\vX}{\vect{X}}
\newcommand{\vY}{\vect{Y}}
\newcommand{\vZ}{\vect{Z}}
\newcommand{\vW}{\vect{W}}
\newcommand{\vH}{\vect{H}}
\newcommand{\n}{\vect{n}}
\newcommand{\m}{\vect{m}}
\newcommand{\e}{\mathrm{e}}
\newtheorem{theorem}{Theorem}[section]
\newtheorem{prop}[theorem]{Proposition}
\theoremstyle{remark}
\newtheorem{remark}{Remark}[section]
\theoremstyle{definition}
\newtheorem{dfn}{Definition}[section]
\newcommand{\ds}{\displaystyle}
\begin{document}

\title
{Canonical Weierstrass Representations for Maximal Space-like Surfaces in $\RR^4_2$}%

\author{Georgi Ganchev and Krasimir Kanchev}

\address{Bulgarian Academy of Sciences, Institute of Mathematics and Informatics,
Acad. G. Bonchev Str. bl. 8, 1113 Sofia, Bulgaria}
\email{ganchev@math.bas.bg}%

\address {Department of Mathematics and Informatics, Todor Kableshkov University of Transport,
158 Geo Milev Str., 1574 Sofia, Bulgaria}%
\email{kbkanchev@yahoo.com}%

\subjclass[2000]{Primary 53A10, Secondary 53A05}%
\keywords{Maximal space-like surfaces in the four-dimensional pseudo-Euclidean space with neutral metric, 
canonical Weierstrass representation, explicit solving of the system of natural PDE's}%

\begin{abstract}
It is known that any maximal space-like surface without isotropic points
in the four-dimensional pseudo-Euclidean space with 
neutral metric admits locally geometric parameters which are special case of isothermal parameters.  
With respect to such parameters the surface is determined uniquely up to a motion by the Gauss curvature 
and the curvature of the normal connection, which satisfy a system of two PDE's (the system of natural PDE's).

For any maximal space-like surface parametrized by canonical parameters we obtain a special
Weierstrass representation -- canonical Weierstrass representation. These Weierstrass formulas
allow us to solve explicitly the system of natural PDE's by virtue of
two holomorphic functions in the Gauss plane. We find the relation between two pairs of holomorphic
functions generating one and the same solution to the system of natural PDE's.

We establish a geometric correspondence between the maximal space-like surfaces of general type in $\RR^4_2$, 
the solutions to the system of natural PDE's and the pairs of holomorphic functions in the Gauss plane.
We prove that any maximal space-like surface in the four-dimensional pseudo-Euclidean space with neutral metric 
generates two maximal space-like surfaces in the three-dimensional Minkowski space and vice versa. 
\end{abstract}

\maketitle

\thispagestyle{empty}

\tableofcontents


\section{Introduction}

Let $\RR^4_2$ be the standard flat four-dimensional space, whose metric is with signature (2,2), also known as
a pseudo-Euclidean space with neutral metric. A two-dimensional surface $\M$ in $\RR^4_2$ is said to be 
\emph{space-like} if the induced metric on the tangential space at any point of $\M$ is positive definite.
A surface $\M$ in $\RR^4_2$ is said to be \textit{maximal} if the mean curvature vector field $H$ is identically 
zero. 

Maximal space-like surfaces in $\RR^4_2$ were studied in \cite{S}. Introducing locally special isothermal parameters
(which we call canonical parameters) 
on a maximal space-like surface $\M$ in $\RR^4_2$, it was proved in \cite{S} a theorem of Bonnet type: the surface 
$\M$ is determined uniquely up to a motion through the Gauss curvature $K$ and the curvature of the normal connection 
$\varkappa$, which satisfy the background system of PDE's. 

This paper is a model of our approach to the study of maximal space-like surfaces in $\RR^4_2$.
Briefly, the basic tools and results in the present work include the following:

  Let $\M=(\D,\x(u,v))$ be a space-like surface in $\RR^4_2$, parametrized by isothermal parameters $(u,v) \in \D \subset \RR^2$.
In Section \ref{sect_Phi} we describe the complex function $\Phi(t)=2\frac{\partial\x}{\partial t}$, where $t=u+\ii v$.
In Section \ref{sect_Phi_Psi} we characterize the maximal space-like 
surfaces in $\mathbb R^4_2$ trough the function $\Phi$. Further in Section \ref{sect_K_kappa-Phi} we express the invariants of 
the surface $\M$ in terms of this function $\Phi$. 
The main feature of our approach is the use of \emph{canonical parameters} on the maximal space-like surfaces,
which are described in Section \ref{sect_can-def}\,, 
and are characterized by the equalities ${\Phi}^{2}=0$ and ${\Phi'}^{2}=\pm 1$\,.
We find in Section \ref{sect_W_can} a canonical Weierstrass representation \eqref{W_Can1_polinom_R42} for the maximal space-like surfaces, 
which gives a maximal space-like surface parametrized by canonical parameters through two holomorphic functions in the Gauss plane. 
The formula for the canonical Weierstrass representation appears to be central from the point of view of effective applications. 
One of the important applications of this representation are the formulas \eqref{K_kappa_g1g2_Can1_R42},
which give us in an explicit form all the solutions of the system \eqref{Nat_Eq_K_kappa_R42} of natural PDE's 
for the maximal space-like surfaces in $\RR^4_2$.

\smallskip 

The philosophy of the application of the canonical Weierstrass representation includes consideration of the 
following classes of geometric objects:

1. The set of equivalence classes of maximal space-like surfaces of general type in $\RR^4_2$.

2. The set of equivalence classes of solutions to the background system of (system of natural) PDE's.

3. The set of equivalence classes of pairs of holomorphic functions in the Gauss plane.     

In Section \ref{sect_sol_nat_eq} we establish natural maps between the above three sets. Using Theorem \ref{Thm-Nat_Eq_K_kappa_R42} 
and Theorem \ref{Bone_Phi_bar_Phi_n1_n2_K_kappa_R42} we obtain a map from the set of classes of maximal space-like 
surfaces of general type in $\RR^4_2$ into the set of equivalence classes of solutions to the system of natural 
PDE's, which is a bijection. Using Theorem \ref{thm-W_Can1_polinom_R42} and results from Section \ref{sect_W_can} we obtain 
a map from the set of equivalence classes of pairs of holomorphic functions in the Gauss plane into the set of 
classes of maximal space-like surfaces of general type in $\RR^4_2$, which is a bijection.
Finally, using Theorem \ref{Nat_Eq_K_kappa_solv_g1g2_R42} and Theorem \ref{Nat_eq_same_K_kappa_hatg_g_R42}
we obtain a map from the set of equivalence classes of pairs of holomorphic functions in the Gauss plane into 
the set of equivalence classes of solutions to the system of natural PDE's, which is a bijection.

The result, which summarizes our investigations is the following:
\begin{figure}
		\centering
			\includegraphics[width=0.80\textwidth]{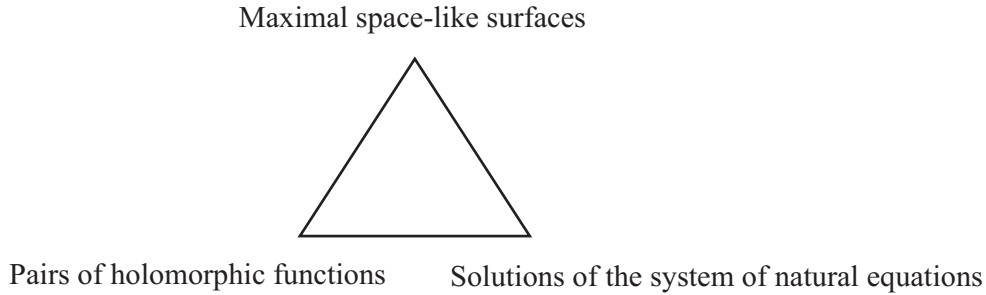}  
		\caption{Natural correspondence between the sets of basic objects}
		\label{Diag_MS_SNE_H_42}
\end{figure}\FloatBarrier

{\it The diagram in the Figure \ref{Diag_MS_SNE_H_42} is commutative and the three maps are bijections}\,. 

In \cite{G-K-1} and \cite{G-K-arXiv-1} we began to apply the above scheme of investigations to the study
of minimal surfaces in the four-dimensional Euclidean space $\RR^4$. Further, in \cite{G-K-2} and 
\cite{G-K-arXiv-2} we applied this scheme to the study of space-like surfaces with zero mean curvature 
vector field in Minkowski space-time. The present work completes the study of \textit{space-like} surfaces with 
$H=0$ in $\RR^4$, $\RR^4_1$ and $\RR^4_2$.

Finally, we note an interesting phenomenon, arising in the case of minimal surfaces in $\RR^4$ and of 
maximal space-like surfaces in $\RR^4_2$, but not in the case of space-like surfaces with zero mean curvature 
in $\RR^4_1$.

Considering the maximal space-like surfaces in the three-dimensional Minkowski space $\RR^3_1$, in Section \ref{sect_corresp_R42_R31} 
we establish the following natural correspondences:

Any maximal space-like surface in $\RR^4_2$ generates two maximal space-like surfaces in $\RR^3_1$ 
and vice versa. What is more, any solution to the system of natural PDE's of maximal space-like surfaces 
in $\RR^4_2$ generates two solutions to the natural PDE of maximal space-like surfaces in $\RR^3_1$ 
and vice versa. These facts give valuable information how the local geometry of maximal space-like surfaces 
in $\RR^3_1$ generates the local geometry of maximal space-like surfaces in $\RR^4_2$.

A similar fact about the relation between the theory of minimal surfaces in $\RR^4$ and the theory of 
minimal surfaces in $\RR^3$ was established in \cite{G-K-3} and \cite{G-K-arXiv-1}.

\section{Preliminaries}\label{sect_preliminaries}

 Let $\RR^4_2$ be the standard four-dimensional pseudo-Euclidean space with neutral metric, i.e.
the indefinite scalar product is given by the formula: 

\begin{equation}\label{R^4_2}
\va\cdot \vb=a_1b_1+a_2b_2-a_3b_3-a_4b_4\,.
\end{equation}

 Let $\M_0$ be a two-dimensional manifold and $\x : \M_0 \to \RR^4_2$ be an immersion of $\M_0$ in $\RR^4_2$.
Then $\M=(\M_0 ,\x)$ is called a regular surface in $\RR^4_2$. The immersion $\x$ is locally given by a vector function
$\x (u,v), \; (u,v) \in \D \subset \RR^2.$

The tangent space and the normal space of $\M$ at the point $p$ is denoted by $T_p(\M)$ and $N_p(\M)$, respectively.
The scalar product in $\RR^4_2$ induces a scalar product in $T_p(\M)$. If the induced scalar product is positive definite,
then the surface $\M$ is said to be \textit{space-like}. The induced scalar product on $N_p(\M)$ is negative definite.
  
We use the standard denotation for the first fundamental form on $\M$: 
\[
\mathbf{I}=E\,du^2+2F\,dudv+G\,dv^2,
\]
where $E=\x_u^2$, $F=\x_u\cdot\x_v$ and $G=\x_v^2$.

 It is well known that $\M$ admits locally isothermal coordinates characterized by the conditions
$E=G$ and $F=0$. Throughout this paper we always suppose that the local coordinates $(u,v)$ are
isothermal. Together with the real coordinates $(u,v)$ we also consider the complex coordinate
$t=u+\ii v$, identifying the coordinate plane $\RR^2$ with the Gauss plane $\CC$. Thus, using the standard 
imbedding of $\RR^4_2$ in $\CC^4$, all real functions locally defined on $\M$ are considered as complex 
functions of the complex variable $t$.

We also consider the complexified tangent space $T_{p,C}(\M)$ and the complexified normal space $N_{p,C}(\M)$ 
as subspaces in $\CC^4$.

For any two vectors $\va$ and $\vb$  in $\CC^4$, we denote by $\va\cdot \vb$ (or $\va\vb$) the bilinear 
scalar product in $\CC^4$, which is the natural extension of the product in $\RR^4_2$ given by \eqref{R^4_2}.
Together with the bilinear product in $\CC^4$ we also consider the indefinite Hermitian product of
$\va$ and $\vb$, which is given by the formula:
\[\va\cdot\bar \vb = a_1\bar b_1+a_2\bar b_2-a_3\bar b_3-a_4\bar b_4\, .\] 
The square of $\va$ with respect to the bilinear product is given by:
\[\va^2=\va\cdot \va = a_1^2+a_2^2-a_3^2-a_4^2\; .\]
The square of the norm of $\va$ with respect to the Hermitian product is: 
\[\|\va\|^2=\va\cdot\bar \va = |a_1|^2+|a_2|^2-|a_3|^2-|a_4|^2\, .\]
The complex spaces $T_{p,C}(\M)$ and $N_{p,C}(\M)$ are closed with respect to the complex conjugation
and are mutually orthogonal with respect to the bilinear product and also with respect to the Hermitian product.
Therefore we have the following orthogonal decomposition:
\[\CC^4 = T_{p,C}(\M) \oplus N_{p,C}(\M) .\]
For any vector $\va$ in $\CC^4$ $\va^\top$ and $\va^\bot$ denote the tangential and the normal component 
of $\va$, respectively. Then 
\[\va=\va^\top + \va^\bot \]
and this decomposition is the same with respect to both: the bilinear or the Hermitian product in $\CC^4$.

Let $\nabla$ be the canonical linear connection in $\RR^4_2$. For any tangent vector fields $\vX$, $\vY$ 
and normal vector field $\n$ the formulas of Gauss and Weingarten are as follows: 
\[\nabla_\vX \vY = \nabla^T_\vX \vY + \sigma(\vX,\vY)\,,\]
\[\nabla_\vX \n = -A_\n(\vX) + \nabla^N_\vX \n \,, \] 
where $\nabla^T$ is the Levi-Civita connection on $\M$; $\sigma(\vX,\vY)$  is the second fundamental form; 
$A_\n$ is the Weingarten map with respect to $\n$ and  $\nabla^N_\vX \n$ is the normal connection of $\M$.
The second fundamental form and the Weingarten map satisfy the equality
\[ A_\n\vX\cdot\vY = \sigma(\vX,\vY)\cdot \n \;. \]

 The curvature tensor $R$ on $\M$ and the curvature tensor $R^N$ of the normal connection are given by
\[ R(\vX,\vY)\vZ = \nabla^T_\vX \nabla^T_\vY \vZ - \nabla^T_\vY \nabla^T_\vX \vZ - \nabla^T_{[\vX,\vY]} \vZ\,,\]
\[ R^N(\vX,\vY)\n = \nabla^N_\vX \nabla^N_\vY \n - \nabla^N_\vY \nabla^N_\vX \n - \nabla^N_{[\vX,\vY]} \n \,,\]
respectively.
The covariant derivative of $\sigma$ is given by:
\[ (\overline\nabla_\vX \sigma)(\vY,\vZ) =
\nabla^N_\vX \sigma(\vY,\vZ)-\sigma(\nabla^T_\vX\vY,\vZ)-\sigma(\vY,\nabla^T_\vX\vZ)\,.\]

 The tensors $R$, $R^N$ and $\overline\nabla\sigma$ satisfy the following basic equations:

\noindent
 \emph{the Gauss equation}: 
\begin{equation}\label{Gauss}
R(\vX,\vY)\vZ\cdot\vW = \sigma(\vX,\vW)\sigma(\vY,\vZ) - \sigma(\vX,\vZ)\sigma(\vY,\vW)\, ;
\end{equation}
 \emph{the Codazzi equation}: 
\begin{equation}\label{Codazzi}
(\overline\nabla_\vX \sigma)(\vY,\vZ) = (\overline\nabla_\vY \sigma)(\vX,\vZ)\;; 
\end{equation}
 \emph{the Ricci equation}:
\begin{equation}\label{Ricci}
R^N(\vX,\vY)\n\cdot\m = (A_\n A_\m - A_\m A_\n)\vX\cdot\vY\ = [A_\n,A_\m]\vX\cdot\vY\, .
\end{equation}

 Further, $\vH$ , $K$ and $\varkappa$ denote the basic invariants of $\M$: the mean curvature, 
the Gauss curvature and the curvature of the normal connection.

Let $\vX_1$, $\vX_2$ be two orthonormal tangent vector fields on $\M$, and 
$\n_1$, $\n_2$ be two orthonormal normal vector fields such that the quadruple $(\vX_1,\vX_2,\n_1,\n_2)$
is a right oriented orthonormal frame field at any point $p\in\M$. Then
\begin{equation}\label{H-def}
\vH = \frac{1}{2}\trace\sigma = \frac{1}{2}(\sigma(\vX_1,\vX_1)+\sigma(\vX_2,\vX_2)) \,,
\end{equation}
\begin{equation}\label{K-def}
K = R(\vX_1,\vX_2)\vX_2\cdot\vX_1 \,,
\end{equation}
\begin{equation}\label{kappa-def}
\varkappa = R^N(\vX_1,\vX_2)\n_2\cdot\n_1 \,.
\end{equation}

The Gauss equation \eqref{Gauss}, implies the following formula for $K$:
\begin{equation}\label{K_sigma}
K = \sigma(\vX_1,\vX_1)\sigma(\vX_2,\vX_2) - \sigma^2(\vX_1,\vX_2)\,,
\end{equation}
 and the Ricci equation \eqref{Ricci}, implies the following expression for $\varkappa$:
\begin{equation}\label{kappa_A}
\varkappa = [A_{\n_2},A_{\n_1}] \vX_1\cdot \vX_2=A_{\n_1}\vX_1\cdot A_{\n_2}\vX_2-A_{\n_2}\vX_1\cdot A_{\n_1}\vX_2 \,.
\end{equation}

 Any space-like surface in $\RR^4_2$ with $\vH=0$ is said to be a \emph{maximal space-like surface}.

 As it is known from the theory of the surfaces in $\RR^4$, some of their geometric properties can be described 
in terms of their \emph{ellipse of the normal curvature}. Now we shall introduce this notion for the space-like surfaces
in $\RR^4_2$. First, let $\M$ be an arbitrary space-like surface in $\RR^4_2$ and $p\in\M$. Consider the
following subspace of the normal space $N_p(\M)$ of $\M$ at point $p$:
\begin{equation}\label{ellipse_curv-def_R42}
\mathscr{E}_p = \{ \sigma(\vX,\vX):\ \  \vX \in T_p(\M), \ \|\vX\|^2=1  \}.   
\end{equation}
Let $(\vX_1,\vX_2)$ be an orthonormal basis of $T_p(\M)$. Then any unit vector in $T_p(\M)$ can be 
represented as $\vX=\vX_1 \cos(\psi) + \vX_2 \sin(\psi)$. Replacing this expression in the definition of 
$\mathscr{E}_p$, we obtain the equality:
\begin{equation}\label{ellipse_curv_R42}
\sigma(\vX,\vX) = \vH +  
\frac{\sigma(\vX_1,\vX_1)-\sigma(\vX_2,\vX_2)}{2}\cos(2\psi) + \sigma(\vX_1,\vX_2)\sin(2\psi)\,.
\end{equation}
The last formula shows that $\mathscr{E}_p$ is an ellipse with center $\vH(p)$. Similarly to the theory of the 
surfaces in $\RR^4$, this ellipse is said to be the \emph{ellipse of the normal curvature} of $\M$ at $p$.
From here, we have:
\begin{prop}
A space-like surface $\M$ in $\RR^4_2$ is maximal if and only if at any point $p \in \M$, the ellipse of 
the normal curvature $\mathscr{E}_p$ is centered at the point $p$. 
\end{prop}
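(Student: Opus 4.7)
The plan is to read the statement directly off the identity \eqref{ellipse_curv_R42}, which was derived in the paragraphs just preceding the proposition. That identity parametrizes $\mathscr{E}_p$ by the angle $\psi$ as
\[
\sigma(\vX,\vX)=\vH(p)+\frac{\sigma(\vX_1,\vX_1)-\sigma(\vX_2,\vX_2)}{2}\cos(2\psi)+\sigma(\vX_1,\vX_2)\sin(2\psi),
\]
i.e., as $\vH(p)$ plus a term that, as $\psi$ varies over $[0,2\pi)$, traces an ellipse (possibly degenerate) symmetric about the origin of $N_p(\M)$. Hence the affine center of $\mathscr{E}_p$ is precisely the vector $\vH(p)\in N_p(\M)$.

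The interpretation of \emph{centered at} $p$ is that the center of $\mathscr{E}_p$ coincides with the origin of the affine normal space $N_p(\M)$ (the zero vector there is naturally identified with the base point $p$). First I would note that the forward direction is immediate: if $\vH\equiv 0$ on $\M$, then at every point the center $\vH(p)=0$, so $\mathscr{E}_p$ is centered at $p$. Conversely, if $\mathscr{E}_p$ is centered at $p$ for every $p\in\M$, then $\vH(p)=0$ for every $p$, hence $\vH\equiv 0$ and $\M$ is maximal by the definition given in Section \ref{sect_preliminaries}.

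There is no real obstacle in this argument; the only point worth being explicit about is that the decomposition \eqref{ellipse_curv_R42} is valid regardless of whether the ellipse is non-degenerate (it might degenerate into a segment or a point when the two generating normal vectors $\tfrac{1}{2}(\sigma(\vX_1,\vX_1)-\sigma(\vX_2,\vX_2))$ and $\sigma(\vX_1,\vX_2)$ are linearly dependent), but in all cases the parametrization is symmetric in $\psi\mapsto\psi+\pi/2$-translated sense around $\vH(p)$, so the identification of the center is unambiguous. This completes the equivalence.
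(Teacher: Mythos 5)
Your proof is correct and is essentially the argument the paper itself intends: the identity \eqref{ellipse_curv_R42} exhibits $\mathscr{E}_p$ as an ellipse centered at $\vH(p)$ in $N_p(\M)$, so $\vH\equiv 0$ is equivalent to the center being the origin (identified with $p$) at every point. Your added remark on the degenerate case does not change the argument and matches the paper's reading of the formula.
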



\section{The complex function $\Phi (t)$ }\label{sect_Phi}
Let $\M=(\D,\x(u,v))$ be a space-like surface in $\RR^4_2$, parametrized by arbitrary parameters $(u,v) \in \D$.
Define the complex function $\Phi(t)$ with values in $\CC^4$ by the following equality:
\begin{equation}\label{Phi_def}
\Phi(t)=2\frac{\partial\x}{\partial t}=\x_u-\ii\x_v \, .
\end{equation}

 The above equality implies that:
 \[ \Phi^2=(\x_u - \ii\x_v)^2=\x_u^2-\x_v^2-2\x_u \x_v \ii \,.\]
Then the following equalities are equivalent:
$$\Phi^2=0\,;\  \Leftrightarrow\  \begin{array}{l} \x_u^2-\x_v^2=0\,,\\  \x_u \x_v=0\,; \end{array} \ 
\Leftrightarrow \  \begin{array}{l} E=\x_u^2=\x_v^2=G\,,\\ F=0\,. \end{array}.$$
Hence, the parameters $(u,v)$ are isothermal if and only if 
\begin{equation}\label{Phi2} 
\Phi^2=0\,.
\end{equation}

 In the case of isothermal parameters, the norm of $\Phi$ is given by:
\[
\|\Phi\|^2=\Phi\bar{\Phi}=(\x_u - \ii\x_v)(\x_u + \ii\x_v)=\x_u^2+\x_v^2=E+G=2E=2G.
\]
Therefore the coefficients of the first fundamental form $\mathbf{I}$ are given by:
\begin{equation}\label{EG} 
E=G=\frac{1}{2}\|\Phi\|^2; \qquad F=0 \,,
\end{equation}
and
\begin{equation}\label{Idt}
\mathbf{I}=\frac{1}{2}\|\Phi\|^2 (du^2 + dv^2)=\frac{1}{2}\|\Phi\|^2|dt|^2 .
\end{equation}
It follows that $\Phi$ satisfies the inequality:
\begin{equation}\label{modPhi2} 
\|\Phi\|^2> 0\,.
\end{equation}

Differentiating equality \eqref{Phi_def} we get 
\begin{equation}\label{dPhi_dbt}
\frac{\partial\Phi}{\partial\bar t}=
\frac{\partial}{\partial\bar t}\, \left(2\, \frac{\partial \x}{\partial t} \right)=
\frac{1}{2}\Delta \x \,,
\end{equation}
where $\Delta$ is the Laplace operator in $\RR^2$.

 The last equality implies that $\ds\frac{\partial\Phi}{\partial\bar t}$ is a real valued function, i.e.

\begin{equation}\label{dPhi_dbt=dbPhi_dt}
\frac{\partial\Phi}{\partial\bar t}=\frac{\partial\bar\Phi}{\partial t} \, .
\end{equation} 

 Thus any function $\Phi$ given by \eqref{Phi_def} satisfies properties \eqref{Phi2}, \eqref{modPhi2} 
and \eqref{dPhi_dbt=dbPhi_dt}. These properties are characteristic for the function $\Phi$.

\begin{theorem}\label{x_Phi-thm}
Let the surface $\M=(\D,\x)$ in $\RR^4_2$ be parametrized by isothermal coordinates $(u,v)\in \D$ \, and \, $t=u+\ii v$.
Then the function $\Phi$, defined by \eqref{Phi_def} satisfies the conditions:
\begin{equation}\label{Phi_cond} 
\Phi^2=0; \quad \|\Phi\|^2> 0; \quad \frac{\partial\Phi}{\partial\bar t}=\frac{\partial\bar\Phi}{\partial t} \; .
\end{equation}

 Conversely, if $\Phi(t):\D \to \CC^4$ is a complex-valued  vector function, defined in $\D\subset\CC$
and satisfies \eqref{Phi_cond}, then there exists locally $\D_0\subset\D$ and a function $\x : \D_0 \to \RR^4_2$, 
such that $(\D_0,\x)$ is a space-like surface in $\RR^4_2$ parametrized by isothermal coordinates $(u,v)$ determined by
$t=u+\ii v$ and the function $\Phi(t)$ satisfies \eqref{Phi_def}.

The space-like surface $(\D_0,\x)$ is determined by $\Phi$ via equality \eqref{Phi_def} uniquely 
up to translation in $\RR^4_2$.
\end{theorem}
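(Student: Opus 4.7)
The direct half of the theorem is essentially bookkeeping: the three conditions \eqref{Phi_cond} are exactly the identities \eqref{Phi2}, \eqref{modPhi2} and \eqref{dPhi_dbt=dbPhi_dt} that were established in the paragraphs preceding the statement, so nothing new needs to be proved here. The substantive content is the converse, where $\Phi$ is given and an $\x$ realizing it must be recovered.

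For that converse I would write $\Phi = \Phi_1 + \ii \Phi_2$ with $\RR^4$-valued real and imaginary parts $\Phi_1,\Phi_2$, and observe that any $\x$ satisfying $\Phi = 2\,\partial\x/\partial t$ must obey $\x_u = \Phi_1$ and $\x_v = -\Phi_2$. A short expansion of the Wirtinger operators shows that the identity $\partial\Phi/\partial\bar t = \partial\bar\Phi/\partial t$ is equivalent to $\partial\Phi/\partial\bar t$ being $\RR^4$-valued, which in turn reduces to the single real identity $(\Phi_1)_v + (\Phi_2)_u = 0$. But this is precisely the compatibility condition $(\x_u)_v = (\x_v)_u$ for the first-order system above, so on any sufficiently small simply connected $\D_0 \subset \D$ the Poincar\'{e} lemma yields a solution $\x : \D_0 \to \RR^4$, unique up to an additive $\RR^4$-valued constant. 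By construction $\x_u - \ii\x_v = \Phi$, which is \eqref{Phi_def}.

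To finish, I would verify that $(\D_0,\x)$ is a space-like surface in isothermal parameters using the equivalences already recorded in Section~\ref{sect_Phi}: the hypothesis $\Phi^2 = 0$ forces $E = G$ and $F = 0$, while $\|\Phi\|^2 > 0$ combined with $\|\Phi\|^2 = E + G$ forces $E = G > 0$, so the induced first fundamental form is positive definite, $\x$ is an immersion, and $\M = (\D_0,\x)$ is space-like. The $\RR^4$-valued constant of integration is exactly a translation in $\RR^4_2$, which accounts for the uniqueness clause. I do not expect any real obstacle in this proof; the one point worth flagging is that $\|\Phi\|^2 > 0$ is stated with respect to the \emph{indefinite} Hermitian product on $\CC^4$, so the transition from it to genuine Riemannian positive-definiteness of the induced metric depends crucially on having $\Phi^2 = 0$ already in hand.
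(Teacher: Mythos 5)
Your proposal is correct and follows essentially the same route as the paper's own proof: both reduce the third condition in \eqref{Phi_cond} to the reality of $\partial\Phi/\partial\bar t$, identify the resulting real identity $\Re(\Phi)_v+\Im(\Phi)_u=0$ as the compatibility condition for the system $\x_u=\Re(\Phi)$, $\x_v=-\Im(\Phi)$, integrate locally, and then use $\Phi^2=0$ together with $\|\Phi\|^2>0$ to conclude that the coordinates are isothermal and the induced metric positive definite, with uniqueness up to an additive constant. Your closing remark about the indefiniteness of the Hermitian product is a fair point, correctly resolved by the same combination of hypotheses the paper uses.
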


\begin{proof}
The first part of the statement follows from \eqref{Phi2}, \eqref{modPhi2} and \eqref{dPhi_dbt=dbPhi_dt}.
For the inverse, let the function $\Phi(t)$ satisfy \eqref{Phi_cond}.
Then
\[
2\frac{\partial\Phi}{\partial\bar t}=
\left(\frac{\partial}{\partial u}+\ii \frac{\partial}{\partial v}\right)(\Re(\Phi)+\ii\Im(\Phi))=
(\Re(\Phi)_u-\Im(\Phi)_v)+\ii (\Re(\Phi)_v+\Im(\Phi)_u) .
\]
The third condition in \eqref{Phi_cond} implies that the imaginary part  of $\ds\frac{\partial\Phi}{\partial\bar t}$ is $0$.
Hence, we have:
\[ 
\Re(\Phi)_v=-\Im(\Phi)_u \,.
\]
Therefore, it follows that there exists a sub-domain $\D_0\subset\D$ and a function $\x : \D_0 \to \RR^4_2$,
such that
\[ 
\Re(\Phi)=\x_u ; \quad -\Im(\Phi)=\x_v \,.
\]
The last equalities are equivalent to \eqref{Phi_def}. The first two conditions in \eqref{Phi_cond} imply
$\x_u\cdot\x_v=0$ and $\x_u^2=\x_v^2> 0$. Hence, $(\D_0,\x)$ is a space-like surface in $\RR^4_2$ 
parametrized by isothermal coordinates $(u,v)$. 

Note that $\x_u$ and $\x_v$ are determined uniquely from \eqref{Phi_def}. Consequently the function 
$\x(u,v)$ is determined up to an additive constant. 
\end{proof}

 Next we obtain the transformation formulas for $\Phi$ under a change of the isothermal coordinates 
and under a motion of $\M=(\D,\x)$ in $\RR^4_2$.

 Let us change the isothermal coordinates by $t=t(s)$ and denote by $\tilde\Phi(s)$ the new function. 
Then the function $t(s)$ is either holomorphic or anti-holomorphic. 

1. The case of a holomorphic change.
 
In view of the definition \eqref{Phi_def} we have:
\begin{equation*}
\tilde\Phi(s)=2\frac{\partial\x}{\partial s}=
2\frac{\partial\x}{\partial t}\frac{\partial t}{\partial s}+
2\frac{\partial\x}{\partial \bar t}\frac{\partial \bar t}{\partial s}=
2\frac{\partial\x}{\partial t}\frac{\partial t}{\partial s}+ 2\frac{\partial\x}{\partial \bar t} 0=
2\frac{\partial\x}{\partial t}\frac{\partial t}{\partial s} \; .
\end{equation*}
Hence
\begin{equation}\label{Phi_s-hol}
\tilde\Phi(s)=\Phi(t(s)) \frac{\partial t}{\partial s} \; .
\end{equation}

2. The case of an anti-holomorphic change

In this case we have:
\begin{equation*}
\tilde\Phi(s)=2\frac{\partial\x}{\partial s}=
2\frac{\partial\x}{\partial t}\frac{\partial t}{\partial s}+
2\frac{\partial\x}{\partial \bar t}\frac{\partial \bar t}{\partial s}=
2\frac{\partial\x}{\partial t} 0+ 2\frac{\partial\x}{\partial \bar t} \frac{\partial \bar t}{\partial s}=
2\frac{\partial\x}{\partial \bar t}\frac{\partial \bar t}{\partial s} \; .
\end{equation*}
Hence
\begin{equation}\label{Phi_s-antihol}
\tilde\Phi(s)=\bar\Phi(t(s)) \frac{\partial \bar t}{\partial s} \; .
\end{equation}
Especially, under the change $t=\bar s$, the function $\Phi$ is transformed as follows:
\begin{equation}\label{Phi_s-t_bs}
\tilde\Phi(s)=\bar\Phi(\bar s) \,.
\end{equation}

Now, let $\M=(\D,\x)$ and $\hat\M=(\D,\hat\x)$ be two surfaces in $\RR^4_2$, parametrized by isothermal coordinates 
$t=u+\ii v$ in the same domain $\D \subset \CC$. Suppose that $\hat\M$ is obtained from $\M$ by a motion 
(possibly improper) in $\RR^4_2$:
\begin{equation}\label{hat_M-M-mov}
\hat\x(t)=A\x(t)+\vb; \qquad A \in \mathbf{O}(2,2,\RR), \ \vb \in \RR^4_2 \,.
\end{equation}
Differentiating the above equality, we get the relation between the corresponding functions $\Phi$ and $\hat\Phi$: 
\begin{equation}\label{hat_Phi-Phi-mov}
\hat\Phi(t)=A\Phi(t); \qquad A \in \mathbf{O}(2,2,\RR) \,.
\end{equation}
Conversely, if $\Phi$ and $\hat\Phi$ satisfy \eqref{hat_Phi-Phi-mov}, then
$\hat\x_u=A\x_u$ and $\hat\x_v=A\x_v$ which imply \eqref{hat_M-M-mov}. 
Therefore \eqref{hat_M-M-mov} and \eqref{hat_Phi-Phi-mov} are equivalent.


\section{Characterization of maximal space-like surfaces in $\RR^4_2$ by means of the function $\Phi$ 
and its primitive function $\Psi$}\label{sect_Phi_Psi}

Let $\M=(\D,\x)$ be a surface in $\RR^4_2$ parametrized by isothermal coordinates $t=u+\ii v$,
and  $\Phi$ is the function, defined by \eqref{Phi_def}. Next we find the condition for $\Phi$ so that
the surface $\M$ to be maximal. We consider an orthonormal tangent basis $(\vX_1,\vX_2)$ in a usual way: 
\begin{equation}\label{vX1_vX2-def}
\vX_1=\frac{\x_u}{\|\x_u\|}=\frac{\x_u}{\sqrt E};\quad \vX_2=\frac{\x_v}{\|\x_v\|}=\frac{\x_v}{\sqrt G}=\frac{\x_v}{\sqrt E}\; .
\end{equation}

In view of \eqref{Phi_def} the coordinate vectors $\x_u$ and $\x_v$  are expressed by $\Phi$ as follows:
\begin{equation}\label{xuxv}
\begin{array}{ll}
\x_u=\ \ \,\Re (\Phi)=\ds\frac{1}{2}(\Phi+\bar\Phi),\\[2ex]
\x_v=-\Im (\Phi)=\ds\frac{-1}{2\ii}(\Phi-\bar\Phi)=\ds\frac{\ii}{2}(\Phi-\bar\Phi).
\end{array}
\end{equation}

 Differentiating \eqref{Phi2}, we find:
\begin{equation}\label{Phi.dPhi_dbt} 
\Phi\cdot\frac{\partial\Phi}{\partial\bar t}=0 \,.
\end{equation}
According to \eqref{dPhi_dbt} $\ds\frac{\partial\Phi}{\partial\bar t}$ is real and after a conjugation we have:
\begin{equation}\label{bPhi.dPhi_dbt} 
\bar\Phi\cdot\frac{\partial\Phi}{\partial\bar t}=0 \,.
\end{equation}
Since $\Phi$ and $\bar\Phi$ form a basis of $T_{p,\CC}(M)$ at any point $p\in\M$,
then \eqref{Phi.dPhi_dbt} and \eqref{bPhi.dPhi_dbt} imply that the vector $\ds\frac{\partial\Phi}{\partial\bar t}$
is orthogonal to $T_p(\M)$ and
\begin{equation}\label{dPhi_dbt_inN} 
\frac{\partial\Phi}{\partial\bar t} \in N_p(\M) \,.
\end{equation}
With the aid of \eqref{dPhi_dbt} it follows that
\begin{equation*}
\begin{array}{rl}\ds
\frac{\partial\Phi}{\partial\bar t}\!\! &=
\ds\left(\frac{\partial\Phi}{\partial\bar t}\right)^\bot=
\frac{1}{2}(\Delta \x )^\bot=
\frac{1}{2}(\x_{uu}+\x_{vv} )^\bot=
\frac{1}{2}(\nabla_{\x_u} \x_u + \nabla_{\x_v} \x_v )^\bot\\[2.5ex]
&=\ds\frac{1}{2}(\sigma(\x_u,\x_u) + \sigma(\x_v,\x_v) )=
E\; \frac{1}{2}(\sigma(\vX_1,\vX_1) + \sigma(\vX_2,\vX_2) ) = E\vH\; .
\end{array}
\end{equation*}
Thus we obtained
\begin{equation}\label{dPhi_dbt-Delta_x-EH}
\frac{\partial\Phi}{\partial\bar t}=\frac{1}{2}\Delta \x = E\vH\,.
\end{equation}

The above equalities imply the following statement.
\begin{theorem}\label{Max_x_Phi-thm}
let $\M=(\D,\x)$ be a space-like surface in $\RR^4_2$ parametrized by isothermal coordinates $(u,v)\in \D$,
and $\Phi(t)$ is the function \eqref{Phi_def}\,, defined in $\D$.

Then the following conditions are equivalent:
\begin{enumerate}
	\item The function $\Phi(t)$ is holomorphic: $\left( \ds\frac{\partial\Phi}{\partial\bar t}= 0 \right)$;
	\item The function $\x (u,v)$ is harmonic: $(\Delta \x = 0)$;
	\item $\M=(\D,\x)$ is maximal space-like surface: $(\vH=0)$\,.
\end{enumerate}
\end{theorem}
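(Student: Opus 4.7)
The plan is to observe that the entire theorem is essentially an immediate consequence of the chain of equalities \eqref{dPhi_dbt-Delta_x-EH} already derived in the discussion preceding the statement:
\[
\frac{\partial\Phi}{\partial\bar t}=\frac{1}{2}\Delta\x=E\vH\,.
\]
So no new computation is needed; the work is purely to convert this identity into the three-fold equivalence.

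First I would note that, since $\M$ is space-like and parametrized by isothermal coordinates, the coefficient $E$ satisfies $E=\tfrac{1}{2}\|\Phi\|^2>0$ by \eqref{EG} together with \eqref{modPhi2}. Hence $E$ is a strictly positive scalar factor that may be freely divided out.

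Then the equivalence (1)$\Leftrightarrow$(2) is read off directly from the first equality $\frac{\partial\Phi}{\partial\bar t}=\tfrac{1}{2}\Delta\x$, and the equivalence (2)$\Leftrightarrow$(3) from the second equality $\tfrac{1}{2}\Delta\x=E\vH$ together with $E>0$. Chaining these gives (1)$\Leftrightarrow$(2)$\Leftrightarrow$(3).

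There is no real obstacle, since all the geometric content (namely that the normal projection of $\Delta\x$ equals $2E\vH$ and that $\partial\Phi/\partial\bar t$ is already normal to $T_p(\M)$ by \eqref{Phi.dPhi_dbt}--\eqref{dPhi_dbt_inN}) was established just above the theorem. The proof thus reduces to a one-line invocation of \eqref{dPhi_dbt-Delta_x-EH} plus the positivity of $E$.
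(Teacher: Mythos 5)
Your proposal is correct and matches the paper's own argument: the paper derives the chain \eqref{dPhi_dbt-Delta_x-EH} and then states that ``the above equalities imply the following statement,'' which is exactly the one-line deduction you spell out. Your explicit remark that $E>0$ (from \eqref{EG} and \eqref{modPhi2}) is needed to pass from $E\vH=0$ to $\vH=0$ is a small but welcome addition of rigor.
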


\smallskip

Equality \eqref{Phi_def} implies further:
\begin{equation}\label{dPhi_dt_bot}
\frac{\partial\Phi}{\partial t}=
\frac{\x_{uu} - \x_{vv}}{2} - \ii \x_{uv}\,;\qquad 
\left(\frac{\partial\Phi}{\partial t}\right)^\bot\!\!=
\frac{\sigma (\x_u,\x_u)-\sigma (\x_v,\x_v)}{2} - \ii \sigma (\x_u,\x_v)\,.
\end{equation}

 In the case of a maximal space-like surface, $\Phi$ is holomorphic, i.e. 
$\ds\frac{\partial\Phi}{\partial\bar t}=0$,
and we can write as usual $\ds\frac{\partial\Phi}{\partial t} = \Phi'$.

\smallskip
The maximality condition: 
$\vH=\frac{1}{2}(\sigma(\vX_1,\vX_1)+\sigma(\vX_2,\vX_2))=0$ gives that: 
\begin{equation}\label{sigma_22}
\sigma(\vX_2,\vX_2)=-\sigma(\vX_1,\vX_1)\,; \qquad  \sigma(\x_v,\x_v)=-\sigma(\x_u,\x_u)\,.
\end{equation}

 Thus, the formulas for $\Phi'$ and its orthogonal projection on $N_{p,\CC}(\M)$ are:
\begin{equation}\label{PhiPr}
\Phi^\prime=\frac{\partial\Phi}{\partial u}=\x_{uu}-\ii\x_{uv};\quad
\Phi^{\prime \bot}=\x_{uu}^\bot -\ii \x_{uv}^\bot =\sigma (\x_u,\x_u)-\ii\sigma (\x_u,\x_v).
\end{equation}

 Then $\sigma (\x_u,\x_u)$, $\sigma (\x_v,\x_v)$ and $\sigma (\x_u,\x_v)$ are expressed by $\Phi$ as follows:
\begin{equation}\label{sigma_uu_uv}
\begin{array}{l}
\sigma(\x_u,\x_u)=\ \ \,\Re (\Phi^{\prime \bot})=\ \ \,\ds\frac{1}{2}(\Phi^{\prime \bot}+\overline{\Phi^{\prime \bot}})=
\ \ \,\ds\frac{1}{2}(\Phi^{\prime \bot}+{\overline{\Phi^\prime}}^\bot),\\[2ex]

\sigma(\x_v,\x_v)=     -\Re (\Phi^{\prime \bot})=-\ds\frac{1}{2}(\Phi^{\prime \bot}+\overline{\Phi^{\prime \bot}})=
-\ds\frac{1}{2}(\Phi^{\prime \bot}+{\overline{\Phi^\prime}}^\bot),\\[2ex]

\sigma(\x_u,\x_v)=-\Im (\Phi^{\prime \bot})=
\ds\frac{-1}{2\ii}(\Phi^{\prime \bot}-\overline{\Phi^{\prime \bot}})=
\ \ \!\,\ds\frac{\ii}{2}(\Phi^{\prime \bot}-{\overline{\Phi^\prime}}^\bot).
\end{array}
\end{equation}

\medskip

According to Theorem \ref{Max_x_Phi-thm} in the case of a maximal surface $\M=(\D,\x)$, the function
$\x$ is harmonic. Then we can introduce its conjugate harmonic function
$\y$ by means of Cauchy-Riemann equations: $\y_u=-\x_v$ and $\y_v=\x_u$. 
Hence the complex-valued vector function $\Psi$:
\begin{equation}\label{Psi-def}
\Psi=\x+\ii\y ,
\end{equation}
is holomorphic and we have the following representation:
\begin{equation}\label{x_Phi_Psi}
\x=\Re\Psi ; \qquad \Phi=\x_u-\ii\x_v=\x_u+\ii\y_u=\frac{\partial\Psi}{\partial u}=\Psi' .
\end{equation}
 The maximality condition for $\M$ in terms of $\Psi$ is given by the next statement.
\begin{theorem}\label{Max_x_Psi-thm}
Let $\M=(\D,\x)$ be a maximal space-like surface in isothermal coordinates $(u,v)\in \D$.
Then $\x$ is locally given by:
\begin{equation}\label{x_Psi}
\x(u,v)=\Re\Psi(t) \,,
\end{equation}
where $\Psi$ is a holomorphic function of $t=u+\ii v$, satisfying the conditions:
\begin{equation}\label{Psi_cond}
\Psi'^{\,2}=0\,; \qquad \|\Psi'\|^2 > 0\,.
\end{equation}

 Conversely, let $\Psi$ be a holomorphic function, defined in the domain $\D\subset \CC$, and satisfying
\eqref{Psi_cond}. Then $(\D,\x)$, where $\x$ is defined by \eqref{x_Psi}, is a maximal space-like surface
and $(u,v)$ are isothermal coordinates.
\end{theorem}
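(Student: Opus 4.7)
The plan is to reduce everything to Theorem \ref{Max_x_Phi-thm} and Theorem \ref{x_Phi-thm}, using the relation $\Phi=\Psi'$ that was already derived in \eqref{x_Phi_Psi}. The two directions are essentially dual and only use standard facts about holomorphic/harmonic functions on a plane domain.

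For the forward direction, assume $\M=(\D,\x)$ is maximal and parametrized by isothermal coordinates. By Theorem \ref{Max_x_Phi-thm} the vector function $\x(u,v)$ is harmonic in $\D$, so on any simply-connected subdomain $\D_0\subset \D$ the components of $\x$ admit harmonic conjugates, which assembled give a vector function $\y:\D_0\to\RR^4_2$ with $\y_u=-\x_v,\ \y_v=\x_u$. Then $\Psi=\x+\ii\y$ is holomorphic on $\D_0$ and satisfies $\x=\Re\Psi$. The computation in \eqref{x_Phi_Psi} shows $\Psi'=\Phi$, so conditions $\Phi^2=0$ and $\|\Phi\|^2>0$ from Theorem \ref{x_Phi-thm} translate immediately into $\Psi'^{\,2}=0$ and $\|\Psi'\|^2>0$, which is \eqref{Psi_cond}.

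For the converse, let $\Psi$ be holomorphic on $\D$ satisfying \eqref{Psi_cond} and set $\x=\Re\Psi=\tfrac{1}{2}(\Psi+\bar\Psi)$. Since $\Psi$ is holomorphic and $\bar\Psi$ is antiholomorphic,
\[
\Phi := 2\frac{\partial\x}{\partial t}=\frac{\partial\Psi}{\partial t}+\frac{\partial\bar\Psi}{\partial t}=\Psi',
\]
so $\Phi$ is holomorphic and $\tfrac{\partial\Phi}{\partial\bar t}=0=\tfrac{\partial\bar\Phi}{\partial t}$. The hypotheses \eqref{Psi_cond} give $\Phi^2=\Psi'^{\,2}=0$ and $\|\Phi\|^2=\|\Psi'\|^2>0$, so the three conditions \eqref{Phi_cond} of Theorem \ref{x_Phi-thm} hold; hence $(\D,\x)$ is a space-like surface in $\RR^4_2$ parametrized by isothermal coordinates and $\Phi$ is precisely the function associated to $\x$ via \eqref{Phi_def} (here I invoke the uniqueness-up-to-translation clause of Theorem \ref{x_Phi-thm} to identify the surface built from $\Phi$ with $\Re\Psi$). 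Finally, since $\Phi$ is holomorphic, Theorem \ref{Max_x_Phi-thm} yields $\vH=0$, i.e. the surface is maximal.

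The only mildly subtle point is the local existence of $\y$ in the forward direction: it requires passing to a simply-connected subdomain so that the closed $1$-forms $-\x_v\,du+\x_u\,dv$ (coordinate-wise) become exact. This is why the statement is inherently local, as already reflected in the wording of the theorem. Everything else is a direct transcription of Theorems \ref{x_Phi-thm} and \ref{Max_x_Phi-thm} through the bridge $\Phi=\Psi'$.
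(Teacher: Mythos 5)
Your proof is correct and follows essentially the same route as the paper: both directions reduce to Theorems \ref{x_Phi-thm} and \ref{Max_x_Phi-thm} via the identity $\Phi=\Psi'$, with the forward direction resting on the harmonic conjugate construction of \eqref{Psi-def}. Your explicit remark about passing to a simply-connected subdomain for the existence of $\y$ is a welcome clarification of a point the paper leaves implicit, but it does not change the argument.
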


\begin{proof}
If $\M=(\D,\x)$ is a maximal space-like surface, then $\Psi$ defined by \eqref{Psi-def}
satisfies \eqref{x_Phi_Psi}. The conditions \eqref{Psi_cond} are equivalent to the conditions
\eqref{Phi_cond} for $\Phi$.

 Conversely, if $\Psi$ is a holomorphic function satisfying the conditions \eqref{Psi_cond}, then
the equalities $\x=\Re\Psi$ and $\Phi=\Psi'$ imply that $\Phi=\x_u-\ii\x_v$ and according to Theorem
\ref{x_Phi-thm}\,the surface $(\D,\x)$ is a space-like surface in isothermal coordinates $(u,v)$. 
Now the condition $\x$ is harmonic implies that $(\D,\x)$ is maximal. 
\end{proof}

Now we shall find how the function $\Psi$ is  transformed under a change of the isothermal coordinates and 
under a motion of the surface $\M$ in $\RR^4_2$.

If $t=t(s)$ is a holomorphic change of the complex parameter we have $\x(t(s))=\Re\Psi(t(s))$. 
Since $\Psi(t(s))$ is holomorphic, in this case $\Psi(t)$ is transformed into $\Psi(t(s))$. 
Any anti-holomorphic change can be reduced to a holomorphic change after the special change $t=\bar s$. 
Under the latter change we have $\x(\bar s)=\Re\bar\Psi(\bar s)$ and therefore $\Psi (t)$ is transformed into
$\bar\Psi(\bar s)$. 
 
Let $\M=(\D,\x)$ and $\hat\M=(\D,\hat\x)$ be two maximal space-like surfaces in $\RR^4_2$, parametrized 
by isothermal coordinates. Then they are obtained one from the other by a motion (possibly improper) 
according to the formula $\hat\x(t)=A\x(t)+\vb$, where $A \in \mathbf{O}(\RR^4_2)$ and $\vb \in \RR^4_2$, 
if and only if when the corresponding functions $\Psi$ and $\hat\Psi$ are related by the equality
$\hat\Psi (t)=A\Psi (t)+\vb$.

 Theorem \ref{Max_x_Psi-thm} shows that we can obtain (at least locally) from one maximal space-like surface 
$\M=(\D,\x)$ other maximal space-like surfaces changing the function $\Psi$. 
For example, if $k>0$ is a constant, the function $k\Psi$ satisfies \eqref{Psi_cond}
and therefore we can apply \ref{Max_x_Psi-thm} for $(\D,\hat\x=\Re( k\Psi))$.
Thus, we obtained a new maximal space-like surface $\hat\M$ satisfying $\hat\x=k\x$.   
The last equality means that $\hat\M$ is obtained from $\M$ by means of a homothety with coefficient $k$.

Hence we have the statement:
\begin{prop}\label{Max_Surf_Sim}
Let $\M$ be a maximal space-like surface in $\RR^4_2$, parametrized by isothermal coordinates $(u,v)$.
If $\hat\M$ is obtained from $\M$ by means of similarity, then $\hat\M$ is also maximal space-like
surface and the coordinates $(u,v)$ are isothermal. 
\end{prop}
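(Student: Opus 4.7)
The plan is to reduce the statement to the two ingredients already developed in the excerpt: the transformation law of $\Phi$ under a motion in $\RR^4_2$, and the characterization of maximal space-like surfaces via the holomorphic function $\Psi$ satisfying \eqref{Psi_cond}. A similarity in $\RR^4_2$ is by definition the composition of a motion (possibly improper) with a homothety of positive ratio, so it suffices to check that each of these two operations preserves both the maximal space-like property and the isothermality of $(u,v)$; the proposition then follows by composition.

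For the motion step, suppose $\hat\x(t)=A\x(t)+\vb$ with $A\in\mathbf{O}(2,2,\RR)$ and $\vb\in\RR^4_2$. As already established in the excerpt, the associated functions satisfy $\hat\Phi=A\Phi$. Since the bilinear extension of $A$ to $\CC^4$ preserves both the bilinear square and the Hermitian norm, one has $\hat\Phi^{\,2}=\Phi^{2}=0$ and $\|\hat\Phi\|^2=\|\Phi\|^2>0$, so by Theorem \ref{x_Phi-thm} the surface $\hat\M$ is space-like in the same isothermal coordinates. Moreover, $A$ being constant, $\partial\hat\Phi/\partial\bar t=A\,(\partial\Phi/\partial\bar t)=0$, so $\hat\Phi$ remains holomorphic and Theorem \ref{Max_x_Phi-thm} gives $\hat\vH=0$.

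For the homothety step, let $k>0$ and $\hat\x=k\x$. By Theorem \ref{Max_x_Psi-thm} we may write $\x=\Re\Psi$ with $\Psi$ holomorphic and satisfying $\Psi'^{\,2}=0$, $\|\Psi'\|^2>0$. Define $\hat\Psi:=k\Psi$; this is still holomorphic, and
\[
\hat\Psi'^{\,2}=k^{2}\Psi'^{\,2}=0,\qquad \|\hat\Psi'\|^{2}=k^{2}\|\Psi'\|^{2}>0.
\]
Thus $\hat\Psi$ satisfies \eqref{Psi_cond}, and since $\hat\x=k\x=\Re(k\Psi)=\Re\hat\Psi$, the converse part of Theorem \ref{Max_x_Psi-thm} yields that $(\D,\hat\x)$ is maximal space-like with $(u,v)$ isothermal.

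Combining the two steps, any similarity $\hat\x=kA\x+\vb$ ($k>0$, $A\in\mathbf{O}(2,2,\RR)$) can be performed in two stages — first the homothety, then the motion — and each stage preserves the stated properties, which proves the proposition. There is no real obstacle here: the only mildly delicate point is recognizing that positivity of the similarity ratio ensures $\hat\Psi=k\Psi$ still satisfies $\|\hat\Psi'\|^2>0$ with the correct sign, since the Hermitian form on $\CC^4$ is indefinite and one must ensure the scaling does not flip the sign of the squared norm — which of course it does not, as the factor is $k^2>0$.
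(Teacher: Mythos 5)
Your proof is correct and follows essentially the same route as the paper: decompose the similarity into a motion and a homothety, and observe that each preserves both the maximal space-like property and the isothermality of $(u,v)$. You simply make explicit (via $\hat\Phi=A\Phi$ with Theorems \ref{x_Phi-thm} and \ref{Max_x_Phi-thm}, and via $\hat\Psi=k\Psi$ with Theorem \ref{Max_x_Psi-thm}) the details the paper declares to be clear, the homothety case being exactly the computation the paper carries out in the paragraph preceding the proposition.
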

\begin{proof}
Any similarity in $\RR^4_2$ can be considered as a composition of a motion and a homothety in $\RR^4_2$.
It is clear that the notions \textit{maximal space-like surface} and \textit{isothermal coordinates}
are invariant under both: motion and homothety. This implies the assertion. 
\end{proof}

If the vector function $\x$ gives a maximal space-like surface, 
we can use its harmonic conjugate function $\y$ to obtain a new surface.
The equality $\x=\Re\Psi$ implies that $\y=\Re(-\ii\Psi)$. The function $-\ii\Psi$ together with $\Psi$
satisfies the conditions \eqref{Psi_cond} and Theorem \ref{Max_x_Psi-thm} implies that
$\y(u,v)$ determines a maximal space-like surface in $\RR^4_2$.

\begin{dfn}\label{Conj_Min_Surf} 
Let $\M=(\D,\x)$ a maximal space-like surface in $\RR^4_2$ parametrized by isothermal coordinates $(u,v)\in\D$.
If the function $\y$ is harmonically conjugate to the function $\x$, then the maximal space-like surface $\bar\M=(\D,\y)$ 
is said to be the \textbf{conjugate maximal space-like surface} to the given one. 
\end{dfn}

 It is well known that the above construction can be extended replacing the function $\Psi$ with the function 
$\e^{-\ii\theta}\Psi, \, \theta \in [0,2\pi)$. The new functions satisfy the conditions \eqref{Psi_cond} and 
determine a one-parameter family of maximal space-like surfaces:
\begin{equation}\label{1-param_family}
\x_\theta = \Re \e^{-\ii\theta}\Psi = \x\cos\theta + \y\sin\theta \,.
\end{equation}

\begin{dfn}\label{1-param_family_assoc_surf} 
Let $\M=(\D,\x)$ be a maximal space-like surface in $\RR^4_2$ parametrized by isothermal coordinates $(u,v)\in\D$.
The family of surfaces $\M_\theta=(\D,\x_\theta)$ determined by $\x_\theta$ from  \eqref{1-param_family}
is said to be an  
\textbf{associated family of maximal space-like surfaces} with the given one.
\end{dfn}

 Note that, the given surface and its conjugate one are obtained by $\theta = 0$ and $\theta=\frac{\pi}{2}$,
respectively. An essential property of this family is that any surface obtained by $\theta \neq 0$ is isometric to
the given one.
\begin{prop}\label{Isom_M_phi-M}
If $\M=(\D,\x)$ is a maximal space-like surface, parametrized by isothermal coordinates $(u,v)\in\D$,
and $\M_\theta=(\D,\x_\theta)$ are its conjugate maximal space-like surfaces, then the map
$\mathcal{F}_\theta: \x(u,v)\rightarrow \x_\theta(u,v)$ gives an isometry between $\M$ and $\M_\theta$ for any
$\theta >0$.
\end{prop}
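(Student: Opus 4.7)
The plan is to show that the map $\mathcal{F}_\theta$ preserves the first fundamental form, which by \eqref{Idt} is encoded entirely in $\|\Phi\|^2$. So the whole proof reduces to checking that the Hermitian norm of the $\Phi$-function is invariant under the construction $\Psi \mapsto \e^{-\ii\theta}\Psi$.

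First, I would identify the holomorphic generator of $\M_\theta$. By \eqref{1-param_family}, $\x_\theta = \Re(\e^{-\ii\theta}\Psi)$, so the function playing the role of $\Psi$ for the surface $\M_\theta$ is $\Psi_\theta = \e^{-\ii\theta}\Psi$. Differentiating, the corresponding $\Phi$-function of $\M_\theta$ is
\[
\Phi_\theta = \Psi_\theta' = \e^{-\ii\theta}\,\Psi' = \e^{-\ii\theta}\,\Phi.
\]
(One should also verify the hypotheses of Theorem \ref{Max_x_Psi-thm}: $\Psi_\theta'^{\,2} = \e^{-2\ii\theta}\Phi^2 = 0$ and $\|\Psi_\theta'\|^2 = \|\Phi\|^2 > 0$, so $\M_\theta$ is indeed a maximal space-like surface in isothermal coordinates $(u,v)$.)

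Next, I would compute the Hermitian norm using the definition $\|\va\|^2 = \va\cdot\bar\va$:
\[
\|\Phi_\theta\|^2 = \e^{-\ii\theta}\Phi\cdot\overline{\e^{-\ii\theta}\Phi} = \e^{-\ii\theta}\,\e^{\ii\theta}\,\Phi\cdot\bar\Phi = \|\Phi\|^2.
\]
Therefore, applying \eqref{Idt} to both $\M$ and $\M_\theta$ in the common parameter $t=u+\ii v$ gives
\[
\mathbf{I}_\theta = \tfrac{1}{2}\|\Phi_\theta\|^2|dt|^2 = \tfrac{1}{2}\|\Phi\|^2|dt|^2 = \mathbf{I}.
\]
Since $\mathcal{F}_\theta$ is the identity in the shared coordinates $(u,v)$, the equality of the first fundamental forms means precisely that $\mathcal{F}_\theta$ is an isometry between $\M$ and $\M_\theta$.

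There is no real obstacle here, only a bookkeeping point: the conclusion depends on using the \emph{Hermitian} product (so the two factors of $\e^{\pm \ii\theta}$ cancel), not the bilinear product (under which they would square to $\e^{-2\ii\theta}$, a value distinct from $1$ in general). The proof therefore rests on the fact, already observed right after \eqref{Phi2}, that $\mathbf{I}$ is governed by $\|\Phi\|^2 = \Phi\cdot\bar\Phi$ and not by $\Phi^2$.
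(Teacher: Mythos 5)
Your proof is correct and follows essentially the same route as the paper: both compute $\Phi_\theta = (\e^{-\ii\theta}\Psi)' = \e^{-\ii\theta}\Phi$, observe $\|\Phi_\theta\| = \|\Phi\|$, and conclude via the expression of the first fundamental form through $\|\Phi\|^2$. Your added remarks (verifying the hypotheses of Theorem \ref{Max_x_Psi-thm} and noting that the cancellation relies on the Hermitian rather than the bilinear product) are sound elaborations of the same argument.
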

\begin{proof}
It is enough to check that the coefficients of the first fundamental forms of $\M$ and $\M_\theta$
at the corresponding points are equal, i.e. $E_\theta = E$. 
Equality \eqref{x_Phi_Psi} implies that
\begin{equation}\label{Phi_1-param_family}
\Phi_\theta = (\e^{-\ii\theta}\Psi)' = \e^{-\ii\theta}\Phi; \qquad \|\Phi_\theta\|=\|\Phi\|\; .
\end{equation}
Taking into account \eqref{EG}, we obtain the assertion. 
\end{proof}


\section{Expressions for $K$ and $\varkappa$ in terms of the Weingarten operators and $\Phi$ }\label{sect_K_kappa-Phi}

 Let $\M=(\D,\x)$ be a maximal space-like surface parametrized by isothermal coordiantes $(u,v)\in\D$.
The unit vectors $\vX_1$ and $\vX_2$ are the normalized tangent vectors $\x_u$ and $\x_v$, respectively and 
the normal unit vectors $\n_1$, $\n_2$ form an orthonormal basis for $N_p(\M)$, such that the quadruple 
$(\vX_1,\vX_2,\n_1,\n_2)$ forms a right oriented frame at any $p\in\M$. Further, $A_{\n}$ denotes the 
Weingarten operator at $T_p(\M)$, corresponding to the normal vector $\n$. The condition $\vH=0$ means that 
$\trace A_{\n}=0 $ at any $p\in\M$. Then the matrices of the operators $A_{\n_1}$ and $A_{\n_2}$ have 
the following form:

\begin{equation}\label{A1A2_nu_lambda_rho_mu}
A_{\n_1}=
\left(
\begin{array}{rr}
\nu      &  \lambda\\
\lambda  & -\nu
\end{array}
\right); \qquad
A_{\n_2}=
\left(
\begin{array}{rr}
\rho &  \mu\\
\mu  & -\rho
\end{array}
\right).
\end{equation}
The second fundamental form $\sigma$ satisfies the conditions:
\begin{equation}\label{sigma_nu_lambda_rho_mu}
\begin{array}{l}
\sigma (\vX_1,\vX_1)=-(\sigma (\vX_1,\vX_1)\cdot \n_1)\n_1-(\sigma (\vX_1,\vX_1)\cdot \n_2)\n_2=
-\nu \n_1 - \rho \n_2 \,;\\
\sigma (\vX_1,\vX_2)=-(\sigma (\vX_1,\vX_2)\cdot \n_1)\n_1-(\sigma (\vX_1,\vX_2)\cdot \n_2)\n_2=
-\lambda \n_1 - \mu \n_2 \,; \\
\sigma (\vX_2,\vX_2)=-\sigma (\vX_1,\vX_1)= \nu \n_1 + \rho \n_2 \,.
\end{array}
\end{equation}

 The Gauss curvature $K$ of $\M$ in view of \eqref{Gauss} and \eqref{sigma_22} satisfies the equality:
\begin{equation}\label{K}
\begin{array}{rl}
K &=R(\vX_1,\vX_2)\vX_2\cdot \vX_1=\sigma(\vX_1,\vX_1)\sigma(\vX_2,\vX_2)-\sigma^2(\vX_1,\vX_2)\\
  &=-\sigma^2(\vX_1,\vX_1)-\sigma^2 (\vX_1,\vX_2).
\end{array}
\end{equation}
The expression for $K$ in terms of $A_{\n_1}$ and $A_{\n_2}$ follows from \eqref{K} and \eqref{sigma_nu_lambda_rho_mu}: 
\begin{equation}\label{K_nu_lambda_rho_mu}
K=-(-\nu^2-\rho^2)-(-\lambda^2-\mu^2)=\nu^2+\lambda^2+\rho^2+\mu^2=-\det(A_{\n_1})-\det(A_{\n_2})\,.
\end{equation}
\medskip
 In order to express $K$ by means of $\Phi$, we have from \eqref{PhiPr}:
\begin{equation}\label{PhiPr_X}
\Phi^{\prime \bot}=\sigma ({\sqrt E}{\vX_1},{\sqrt E}{\vX_1})-\ii\sigma ({\sqrt E}{\vX_1},{\sqrt E}{\vX_2})=
E(\sigma (\vX_1,\vX_1)-\ii\sigma (\vX_1,\vX_2)) \,.
\end{equation}
We get from here
$$
{\|\Phi^{\prime \bot}\|}^2=\Phi^{\prime \bot}\cdot\overline{\Phi^{\prime \bot}}
=E^2(\sigma^2(\vX_1,\vX_1)+\sigma^2 (\vX_1,\vX_2))
$$
and taking into account \eqref{EG} 
\begin{equation*}
\sigma^2(\vX_1,\vX_1)+\sigma^2 (\vX_1,\vX_2)=
\frac{{\|\Phi^{\prime \bot}\|}^2}{E^2}=\frac{4{\|\Phi^{\prime \bot}\|}^2}{\|\Phi\|^4}\;.
\end{equation*}
Hence
\begin{equation}\label{K_Phi}
K= \ds\frac{-4{\|\Phi^{\prime \bot}\|}^2}{\|\Phi\|^4}\;.
\end{equation}

\smallskip

 The vector function $\Phi^{\prime \bot}$ is not holomorphic in general and we shall find another representation of 
$\|\Phi^{\prime \bot}\|^2$ through holomorphic functions.

The condition $\Phi^2=0$ means that $\Phi$ and $\bar\Phi$ are orthogonal with respect to the Hermitian product in 
${\CC}^4$. Taking into account formulas \eqref{Phi_def} and \eqref{xuxv} we obtain that they form an orthogonal basis 
of $T_{p,\CC}(\M)$ at any $p\in\M$. Therefore the tangential projection of $\Phi^\prime$ is 
\[
\Phi^{\prime\top}
=\ds\frac{\Phi^{\prime\top}\cdot\bar \Phi}{\|\Phi\|^2}\Phi+\ds\frac{\Phi^{\prime\top}\cdot \Phi}{\|\bar \Phi\|^2}\bar \Phi 
=\ds\frac{\Phi' \cdot \bar \Phi}{\|\Phi\|^2}\Phi + \ds\frac{\Phi' \cdot \Phi}{\|\bar \Phi\|^2}\bar \Phi\,.
\]

 Differentiating $\Phi^2=0$ we find the relation:
\begin{equation}\label{Phi.dPhi_dt}
\Phi\cdot\Phi^\prime=0\,.
\end{equation}
Applying the last equality to $\Phi^{\prime\top}$, we find the projections of $\Phi'$:
\begin{equation}\label{Phipn}
\Phi^{\prime\top}= \ds\frac{\Phi' \cdot \bar \Phi}{\|\Phi\|^2}\Phi\,; \quad\quad \Phi^{\prime\bot}=\Phi'-\Phi^{\prime\top}=
\Phi'-\ds\frac{\Phi' \cdot \bar \Phi}{\|\Phi\|^2}\Phi\,.
\end{equation}
Direct computations show that:
\begin{equation}\label{mPhipn2}
{\|\Phi^{\prime\bot}\|}^2 = \ds\frac{\|\Phi\|^2\|\Phi'\|^2-|\bar \Phi \cdot \Phi'|^2}{\|\Phi\|^2}\ .
\end{equation}
Replacing into \eqref{K_Phi} we get:
\begin{equation}\label{K_Phi.Phip}
K= \ds\frac{-4(\|\Phi\|^2\|\Phi'\|^2-|\bar \Phi \cdot \Phi'|^2)}{\|\Phi\|^6}\;.
\end{equation} 
Denoting the bivector product of $\Phi$ and $\Phi'$ by $\Phi\wedge\Phi'$, we have:
\[\|\Phi\wedge\Phi'\|^2=\|\Phi\|^2\|\Phi'\|^2-|\bar \Phi \cdot \Phi'|^2.\] 
Hence
\begin{equation}\label{K_Phi_bv}
K= \ds\frac{-4\|\Phi\wedge\Phi'\|^2}{\|\Phi\|^6}\;.
\end{equation}

\smallskip

Now we shall find another relation between $K$ and $\Phi$ using that $K$ is expressed by the second derivatives 
of the coefficients of the first fundamental form. In isothermal coordinates this means that $K$ is
expressed by the second derivatives of $E$, which means by the second derivatives of   
$\|\Phi\|^2$ according to \eqref{EG}. In order to obtain this relation we use the orthogonal decomposition 
of $\Phi'$:
\begin{equation}\label{Phi_p-Ort_1}
\Phi'=\Phi^{\prime\top}+\Phi^{\prime\bot}=
\ds\frac{\Phi' \cdot \bar \Phi}{\|\Phi\|^2}\Phi + \Phi^{\prime\bot}\,.
\end{equation}
Using that $\bar\Phi$ is anti-holomorphic, we calculate:
\[
\Phi' \cdot \bar \Phi = \frac{\partial\Phi}{\partial t} \cdot \bar \Phi = 
\frac{\partial(\Phi \cdot \bar \Phi)}{\partial t}- \Phi \cdot \frac{\partial \bar \Phi}{\partial t}=
\frac{\partial(\|\Phi\|^2)}{\partial t}\,
\]
Taking into account that $\|\Phi\|^2=2E$, we obtain:
\begin{equation}\label{d_ln_E_dt}
\ds\frac{\Phi' \cdot \bar \Phi}{\|\Phi\|^2}=
\frac{\partial(\|\Phi\|^2)}{\partial t} \frac{1}{\|\Phi\|^2}=
\frac{\partial E}{\partial t} \frac{1}{E} = \frac{\partial\ln E}{\partial t} \;.
\end{equation}
Thus, the orthogonal decomposition \eqref{Phi_p-Ort_1} of $\Phi'$ gets the form:
\begin{equation*}
\Phi'=
\frac{\partial\ln E}{\partial t}\Phi + \Phi^{\prime\bot}.
\end{equation*}
Differentiating the last equality with respect to $\bar t$ in view of the fact that $\Phi'$ and $\Phi$
are holomorphic, we obtain:
\begin{equation*}
0=\frac{\partial^2\ln E}{\partial\bar t\partial t}\Phi + \frac{\partial (\Phi^{\prime\bot})}{\partial\bar t}\;.
\end{equation*}
Multiplying the last equality by $\bar\Phi$ we find:
\begin{equation}\label{Delta_lnE_Phi_1}
\frac{\partial^2\ln E}{\partial\bar t\partial t}\|\Phi\|^2 +
\frac{\partial (\Phi^{\prime\bot})}{\partial\bar t} \bar\Phi = 0\;.
\end{equation}
Since  $\frac{\partial}{\partial\bar t} \frac{\partial}{\partial t} = \frac{1}{4}\Delta$ and 
$\|\Phi\|^2=2E$, the first addend becomes:
\[
\frac{\partial^2\ln E}{\partial\bar t\partial t}\|\Phi\|^2=
\frac{E\Delta\ln E}{2}\;.
\]
The second addend is transformed as follows:
\[
\begin{array}{rl}
\ds\frac{\partial (\Phi^{\prime\bot})}{\partial\bar t} \bar\Phi 
            &=\ds\frac{\partial (\Phi^{\prime\bot}\cdot\bar\Phi)}{\partial\bar t} -
              \Phi^{\prime\bot} \frac{\partial \bar\Phi}{\partial\bar t}=
              0-\Phi^{\prime\bot}\overline{\Phi'}\\[1ex]
						&=-\Phi^{\prime\bot}\overline{\Phi'}{}^\bot=
						  -\Phi^{\prime\bot}\overline{\Phi^{\prime\bot}}=-\|\Phi^{\prime\bot}\|^2 \,.
\end{array}
\]
Now equality \eqref{Delta_lnE_Phi_1}, gets the form:
\begin{equation*}
\frac{E\Delta\ln E}{2} - \|\Phi^{\prime\bot}\|^2 = 0\;.
\end{equation*}
Replacing here $-\|\Phi^{\prime\bot}\|^2$ with $\frac{1}{4}K\|\Phi\|^4$
in view of \eqref{K_Phi} we find:
\begin{equation*}
2E\Delta\ln E + K\|\Phi\|^4 = 0\,,
\end{equation*}
or 
\begin{equation}\label{Delta_lnE_2K}
\frac{\Delta\ln E}{E} + 2K = 0\,.
\end{equation}
This is the classical Gauss equation, applied to a maximal space-like surface with respect to isothermal coordinates.
Finally we obtain one more way to express $K$ through $\Phi$. Formula \eqref{Delta_lnE_2K} implies:
\begin{equation}\label{K_Delta_lnE_lnPhi}
K = \frac{-\Delta\ln E}{2E} = \frac{-2\Delta\ln \|\Phi\|}{\|\Phi\|^2}\;.
\end{equation}

\medskip

Further, we express the curvature of the normal connection $\varkappa$
by means of the Weingarten operators and $\Phi$.

The Ricci equation \eqref{Ricci} and \eqref{A1A2_nu_lambda_rho_mu} imply

\begin{equation}\label{kappa}
\begin{array}{rl}
\varkappa &=R^N(\vX_1,\vX_2)\n_2\cdot \n_1 = 
						[A_{\n_2},A_{\n_1}] \vX_1\cdot \vX_2=A_{\n_1}\vX_1\cdot A_{\n_2}\vX_2-A_{\n_2}\vX_1\cdot A_{\n_1}\vX_2\\
					&=(\nu\vX_1 + \lambda\vX_2)\cdot (\mu\vX_1 - \rho\vX_2)-(\rho\vX_1 + \mu\vX_2)\cdot (\lambda\vX_1 - \nu\vX_2)\\
					&=\nu\mu - \lambda\rho  - (\rho\lambda - \mu\nu) = 2\nu\mu - 2\rho\lambda \,.
\end{array}
\end{equation}

Denote by $\det (\va,\vb,\vc,\vd)$ the fourth order determinant formed by the coordinates of the vectors 
$\va$, $\vb$, $\vc$ and $\vd$ with respect to the standard basis in ${\CC}^4$. With the aid of 
\eqref{sigma_nu_lambda_rho_mu} we have:
$$
\begin{array}{rl}
\det (\x_u,\x_v,\sigma(\x_u,\x_u),\sigma(\x_u,\x_v))&=
\det (\sqrt E \vX_1,\sqrt E \vX_2,\sigma(\sqrt E \vX_1,\sqrt E \vX_1),\sigma(\sqrt E \vX_1,\sqrt E \vX_2))\\
                                              &=E^3 \det (\vX_1,\vX_2,\sigma(\vX_1,\vX_1),\sigma(\vX_1,\vX_2))\\
                                              &=E^3 \det (\vX_1,\vX_2,-\nu\n_1 - \rho\n_2,-\lambda\n_1 - \mu\n_2)\\
																							&=E^3 \det (\vX_1,\vX_2,\nu\n_1,\mu\n_2)+E^3 \det (\vX_1,\vX_2,\rho\n_2,\lambda\n_1)\\
                                              &=E^3(\nu\mu - \rho\lambda)\det (\vX_1,\vX_2,\n_1,\n_2)=E^3(\nu\mu - \rho\lambda).$$
\end{array}
$$
or
\begin{equation}\label{numu2}
\nu\mu - \rho\lambda = \ds \frac{1}{E^3}\det (\x_u,\x_v,\sigma(\x_u,\x_u),\sigma(\x_u,\x_v)).
\end{equation}

In the last determinant we replace $\x_u$ and $\x_v$ from \eqref{xuxv} and get:
\begin{equation}\label{det1}
\begin{array}{l}
\ds \det (\x_u,\x_v,\sigma(\x_u,\x_u),\sigma(\x_u,\x_v))=\frac{\ii}{4}\det (\Phi+\bar\Phi,\Phi-\bar\Phi,\sigma(\x_u,\x_u),\sigma(\x_u,\x_v))\\
\ds =\frac{\ii}{4}\det (\Phi,-\bar\Phi,\sigma(\x_u,\x_u),\sigma(\x_u,\x_v))+ \frac{\ii}{4}\det (\bar\Phi,\Phi,\sigma(\x_u,\x_u),\sigma(\x_u,\x_v))\\
\ds =-\frac{\ii}{2}\det (\Phi,\bar\Phi,\sigma(\x_u,\x_u),\sigma(\x_u,\x_v)).
\end{array}
\end{equation}
Analogously, we replace $\sigma(\x_u,\x_u)$ and $\sigma(\x_u,\x_v)$ from \eqref{sigma_uu_uv} and find
\begin{equation}\label{det2}
\begin{array}{l}
\ds \det (\Phi,\bar\Phi,\sigma(\x_u,\x_u),\sigma(\x_u,\x_v))=-\frac{\ii}{2}\det (\Phi,\bar\Phi,\Phi^{\prime \bot},{\overline{\Phi^\prime}}^\bot).
\end{array}
\end{equation}
Equalities \eqref{det2} and \eqref{det1} imply that:
\begin{equation*}
\begin{array}{l}
\ds \det (\x_u,\x_v,\sigma(\x_u,\x_u),\sigma(\x_u,\x_v))=-\frac{1}{4}\det (\Phi,\bar\Phi,\Phi^{\prime \bot},{\overline{\Phi^\prime}}^\bot)\\
\ds =-\frac{1}{4}\det (\Phi,\bar\Phi,\Phi^\prime -\Phi^{\prime \top},\overline{\Phi^\prime}-{\overline{\Phi^\prime}}^\top).
\end{array}
\end{equation*}
Since the tangent vectors $\Phi^{\prime \top}$ and ${\overline{\Phi^\prime}}^\top$ are linear combinations of
of $\Phi$ and $\bar\Phi$, it follows that:
\begin{equation}\label{det3}
\begin{array}{l}
\ds \det (\x_u,\x_v,\sigma(\x_u,\x_u),\sigma(\x_u,\x_v))=
-\frac{1}{4}\det (\Phi,\bar\Phi,\Phi^\prime,\overline{\Phi^\prime}).
\end{array}
\end{equation}

Now, \eqref{kappa},  \eqref{numu2} and \eqref{det3}, give:

\begin{equation*}
\begin{array}{l}
\ds \varkappa = 2\nu\mu - 2\rho\lambda = \frac{2}{E^3}\det (\x_u,\x_v,\sigma(\x_u,\x_u),\sigma(\x_u,\x_v))=
-\frac{1}{2E^3}\det (\Phi,\bar\Phi,\Phi^\prime,\overline{\Phi^\prime}).
\end{array}
\end{equation*}
In view of \eqref{EG} we find the expression of $\varkappa$ by means of $\Phi$:
\begin{equation}\label{kappa2}
\begin{array}{l}
\ds \varkappa = -\frac{4}{\|\Phi\|^6}\det (\Phi,\bar\Phi,\Phi^\prime,\overline{\Phi^\prime}).
\end{array}
\end{equation}

Thus, we have that the Gauss curvature $K$ and the normal curvature $\varkappa$ of any maximal space-like surface $\M=(\D,\x)$, parametrized by isothermal parameters, satisfy the following formulas:
\begin{equation}\label{K_kappa_nu_lambda_rho_mu}
K= \nu^2+\lambda^2+\rho^2+\mu^2 \,; \quad\quad
\varkappa = 2\nu\mu - 2\rho\lambda \,.
\end{equation}
and
\begin{equation}\label{K_kappa_Phi_R42}
K= \ds\frac{-4{\|\Phi^{\prime \bot}\|}^2}{\|\Phi\|^4}
=\ds\frac{-4\|\Phi\wedge\Phi'\|^2}{\|\Phi\|^6}\,; \quad\quad
\varkappa = -\ds\frac{4}{\|\Phi\|^6}\det (\Phi,\bar\Phi,\Phi^\prime,\overline{\Phi^\prime}).
\end{equation}

Using the above formulas we shall see how the curvatures $K$ and $\varkappa$ are transformed under a change
of the coordinates and under the basic geometric transformations of the surface $\M$. Let us begin with 
the change of the coordinates $t=t(s)$.  Since $K$ and $\varkappa$ are scalar invariants, 
then we have
\begin{equation}\label{K_kappa_s_R42}
K(s)=K(t(s))\,; \qquad \varkappa (s) = \varkappa (t(s))\,.
\end{equation}
Suppose that the maximal space-like surface $\M=(\D,\x)$ is transformed into $\hat\M=(\D,\hat\x)$ by a
motion (possibly improper): 
\begin{equation}\label{hat_M-M-mov_R42}
\hat\x(t)=A\x(t)+\vb\,; \qquad A \in \mathbf{O}(2,2,\RR), \ \vb \in \RR^4_2 \,.
\end{equation}
Differentiating the above equality we have $\hat\Phi(t)=A\Phi(t)$ and.
\[
\det (A\Phi,A\bar\Phi,A\Phi^\prime,A\overline{\Phi^\prime})=
\det A \cdot \det (\Phi,\bar\Phi,\Phi^\prime,\overline{\Phi^\prime})\,.
\]
If the motion is proper ($\det A = 1$), then $\varkappa$ is preserved, while under an improper motion 
($\det A = -1$) the sign of $\varkappa$ is changed.

Thus under a proper motion we have:
\begin{equation}\label{hat_K_kappa-K_kappa-prop_mov_R42}
\hat K=K\,; \qquad \hat\varkappa = \varkappa \,.
\end{equation}
Under an improper motion we have:
\begin{equation}\label{hat_K_kappa-K_kappa-unprop_mov_R42}
\hat K=K\,; \qquad \hat\varkappa = -\varkappa \,.
\end{equation}

 Suppose that  the maximal space-like surface $\M=(\D,\x)$ is transformed into $\hat\M=(\D,\hat\x)$ by a 
homothety: $\hat\x(t)=k\x(t)$. From here it follows that $\hat\Phi=k\Phi$. Then \eqref{K_kappa_Phi_R42} implies
that:
\begin{equation}\label{hat_K_kappa-K_kappa-homotet_R42}
\hat K=\frac{1}{k^2}K\,; \qquad \hat\varkappa = \frac{1}{k^2}\varkappa \,.
\end{equation}

Finally, let us consider the family $\M_\theta$ of associated maximal space-like surfaces with $\M$ and the map 
$\mathcal{F}_\theta: \x(t)\rightarrow \x_\theta(t)$. According to Proposition \ref{Isom_M_phi-M}\, this map 
is an isometry and $K_\theta(t)=K(t)$. Since $\Phi_\theta = \e^{-\ii\theta}\Phi$, then it follows from 
\eqref{K_kappa_Phi_R42} that $\varkappa$ is also invariant under $\mathcal{F}_\theta$. Thus we have:
\begin{equation}\label{K_kappa-1-param_family_R42}
K_\theta(t)=K(t)\,; \qquad \varkappa_\theta(t) = \varkappa(t) \,.
\end{equation}


\section{Canonical coordinates on maximal space-like surfaces in $\RR^4_2$.}\label{sect_can-def}

  Up to now, we have considered maximal space-like surfaces in $\RR^4_2$ with respect to arbitrary isothermal coordinates. 
It is known that the maximal space-like surfaces in $\RR^3_1$ with $K \neq 0$ carry locally special isothermal coordinates,
having additional properties. In \cite{G} it is shown that in a neighborhood of a point in which $K \neq 0$, 
can be introduced coordinates which are \textit{principal} and \textit{isothermal}. Using the standard denotations
for the coefficients of the second fundamental form $L$, $M$ and $N$, this means:
$E=G$, $F=0$ and $M=0$. Moreover, these coordinates can be normalized in such a way that $L=-N=1$. These properties of 
the special coordinates determine them up to renumbering and directions of the coordinate lines. Further we call 
these special coordinates \textit{canonical coordinates}. In \cite{G} it is also shown that the surfaces 
in $\RR^3_1$ with $K \neq 0$ carry locally special coordinates which are \textit{asymptotic} and \textit{isothermal}.
They are characterized by the conditions $L=N=0$ and $M=1$.

In terms of the notions and the denotations in $\RR^4_2$ 
the principal canonical coordinates for surfaces in $\RR^3_1$ are characterized by the conditions:
\begin{equation}\label{Can_Princ_R31}
\sigma^2(\x_u,\x_u)=-1\,; \qquad \sigma(\x_u,\x_v)=0\,,
\end{equation}
while the asymptotic canonical coordinates are characterized by the conditions:
\begin{equation}\label{Can_Asimpt_R31}
\sigma^2(\x_u,\x_u)=0\,; \qquad \sigma(\x_u,\x_v)=-1\,.
\end{equation}

In order to express the above conditions in terms of the function $\Phi$, we consider the scalar square 
of the second equality in \eqref{PhiPr}:
\begin{equation}\label{PhiPr_bot^2}
{\Phi^{\prime \bot}}^2=\sigma^2(\x_u,\x_u)-\sigma^2(\x_u,\x_v)-2\ii\,\sigma(\x_u,\x_u)\cdot\sigma(\x_u,\x_v).
\end{equation}
Now it is clear that the conditions \eqref{Can_Princ_R31} are equivalent to ${\Phi^{\prime \bot}}^2=-1$\,,
while the conditions \eqref{Can_Asimpt_R31} are equivalent to ${\Phi^{\prime \bot}}^2=1$\,.
These equalities show how to introduce \textit{canonical coordinates} on maximal space-like surfaces in 
$\RR^4_2$. Coordinates with these properties are introduced in \cite{S} by the Cartan moving frame method.
In the present work we give an alternative approach to the canonical coordinates on the base of the function $\Phi$.

 In $\RR^4_2$ we can introduce canonical coordinates, which are an analogue of the principal canonical coordinates 
in $\RR^3_1$ in the following way:
\begin{dfn}\label{Can1-def}
Let $\M$ be a maximal space-like surface in $\RR^4_2$, parametrized by isothermal coordinates $(u,v)$.
These coordinates are said to be \textbf{canonical coordinates of the first type} if the function 
$\Phi$ satisfies the condition ${\Phi^{\prime \bot}}^2=-1$\,.
\end{dfn}
The canonical coordinates, which are an analogue of the asymptotic isothermal coordinates in $\RR^3_1$ 
are introduced by
\begin{dfn}\label{Can2-def}
Let $\M$ be a maximal space-like surface in $\RR^4_2$, parametrized by isothermal coordinates $(u,v)$. 
These coordinates are said to be \textbf{canonical coordinates of the second type}, if the function 
$\Phi$ satisfies the condition ${\Phi^{\prime \bot}}^2=1$\,.
\end{dfn}

 With the aid of equality \eqref{PhiPr_bot^2} we characterize the canonical coordinates by means of the second 
fundamental form $\sigma$:
\begin{prop}\label{Can1-sigma}
Let $\M$ be a maximal space-like surface parametrized by isothermal coordinates $(u,v)$. These coordinates 
are canonical coordinates of the first type (the second type) if and only if the second fundamental form
$\sigma$ satisfies the conditions:
\begin{equation}\label{sigma_uv_can1}
\sigma (\x_u,\x_u)\bot \: \sigma(\x_u,\x_v)\,,\qquad \sigma^2(\x_u,\x_u)-\sigma^2(\x_u,\x_v)=\mp 1\,,
\end{equation}
where the sign "$-$" refers to the case of canonical coordinates of the first type, while the sign "$+$" 
refers to the case of canonical coordinates of the second type.  
\end{prop}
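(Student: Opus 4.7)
The proof is essentially a matter of reading off real and imaginary parts from equation \eqref{PhiPr_bot^2}, so my plan is to reduce the statement to that identity and then separate real and imaginary components.

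First I would recall that by \eqref{PhiPr_bot^2} we already have
\[
{\Phi^{\prime\bot}}^2 = \bigl(\sigma^2(\x_u,\x_u)-\sigma^2(\x_u,\x_v)\bigr) - 2\ii\,\sigma(\x_u,\x_u)\cdot\sigma(\x_u,\x_v),
\]
where the quantities $\sigma^2(\x_u,\x_u)$, $\sigma^2(\x_u,\x_v)$ and $\sigma(\x_u,\x_u)\cdot\sigma(\x_u,\x_v)$ are real, being scalar products in $\RR^4_2$ of real vectors. Hence the right-hand side is already written as a decomposition into real and imaginary parts.

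Next, by Definition \ref{Can1-def} the coordinates are canonical of the first type iff ${\Phi^{\prime\bot}}^2 = -1$, and by Definition \ref{Can2-def} they are canonical of the second type iff ${\Phi^{\prime\bot}}^2 = +1$. In either case the right-hand side of the displayed identity must be a real number equal to $\mp 1$, so I would argue that this is equivalent to the simultaneous conditions
\[
\sigma(\x_u,\x_u)\cdot\sigma(\x_u,\x_v) = 0, \qquad \sigma^2(\x_u,\x_u)-\sigma^2(\x_u,\x_v) = \mp 1,
\]
obtained by equating imaginary and real parts separately. The first condition is exactly the orthogonality $\sigma(\x_u,\x_u) \perp \sigma(\x_u,\x_v)$ claimed in the statement, and the second is the required scalar equality with the corresponding sign.

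There is no real obstacle here; the only small point to double-check is that all three scalar expressions in \eqref{PhiPr_bot^2} are indeed real (which follows since $\x_u, \x_v$ are real tangent vectors and $\sigma$ is real-valued on real arguments), so that separating real and imaginary parts is legitimate. Once this is noted, the equivalence in both directions is immediate, and the proof closes with the standard qed.
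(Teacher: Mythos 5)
Your argument is correct and is exactly the paper's own reasoning: the proposition is obtained by separating the real and imaginary parts of ${\Phi^{\prime\bot}}^2$ in the identity \eqref{PhiPr_bot^2} and equating them to $\mp 1$ and $0$ respectively. Nothing is missing.
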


 The above definitions \ref{Can1-def} and \ref{Can2-def} of canonical coordinates in terms of the function $\Phi$, 
look purely analytical, but the Proposition \ref{Can1-sigma} shows that these coordinates are geometric.
Since the conditions \eqref{sigma_uv_can1} are invariant under an arbitrary motion $A \in \mathbf{O}(2,2,\RR)$
in $\RR^4_2$, then the canonical coordinates are also invariant under such a motion.

Another proof of the last property can be obtained directly from the function $\Phi$. 
Namely, it follows from \eqref{hat_Phi-Phi-mov}\,, consecutively
\begin{equation}\label{hat_PhiPr-PhiPr-mov}
\hat\Phi'=A\Phi'; \qquad  
\hat\Phi^{\prime \bot} = A\Phi^{\prime \bot}; \qquad
\left.\hat\Phi^{\prime \bot}\right.^2 = \left.\Phi^{\prime \bot}\right.^2 ; \qquad
A \in \mathbf{O}(2,2,\RR) \,,
\end{equation}
which shows that ${\Phi^{\prime \bot}}^2$ is invariant under a motion in $\RR^4_2$. 

Thus we have:
\begin{theorem}\label{Can_Move}
Let the maximal space-like surface $\hat\M$ be obtained from $\M$ by a motion in $\RR^4_2$.
If $(u,v)$ are canonical coordinates of the first type (the second type) on $\M$, then they are also
canonical coordinates of the first type (the second type) on $\hat\M$.
\end{theorem}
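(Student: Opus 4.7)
The plan is to use the transformation formula \eqref{hat_Phi-Phi-mov} for $\Phi$ under a motion and combine it with the fact that canonical coordinates of type 1 (resp. type 2) are defined by the purely algebraic condition ${\Phi^{\prime \bot}}^2 = -1$ (resp. $+1$), which is a scalar built from the bilinear product in $\CC^4$. Since $A \in \mathbf{O}(2,2,\RR)$ preserves that bilinear product (by definition, extended from $\RR^4_2$ to $\CC^4$), the condition should be preserved provided the tangential/normal splitting commutes with $A$.

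First I would recall from \eqref{hat_Phi-Phi-mov} that $\hat\Phi(t) = A\Phi(t)$, and differentiate in $t$ to obtain $\hat\Phi'(t) = A\Phi'(t)$. Next I would establish that the orthogonal projection onto the normal bundle is intertwined by $A$: since the motion $\hat\x = A\x + \vb$ sends $\M$ onto $\hat\M$ with differential $A$, the matrix $A$ maps $T_p(\M)$ bijectively onto $T_{\hat p}(\hat\M)$; because $A$ preserves the bilinear product, it also maps $N_p(\M)$ bijectively onto $N_{\hat p}(\hat\M)$. Complexifying, $A$ respects the orthogonal decomposition $\CC^4 = T_{p,\CC}(\M)\oplus N_{p,\CC}(\M)$, which gives $\hat\Phi^{\prime\bot} = A\,\Phi^{\prime\bot}$.

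With this identity the proof concludes in one line: the invariance of the bilinear product under $A$ yields
\[
\bigl(\hat\Phi^{\prime\bot}\bigr)^2 = (A\,\Phi^{\prime\bot})\cdot(A\,\Phi^{\prime\bot}) = \bigl(\Phi^{\prime\bot}\bigr)^2,
\]
so the defining scalar $\pm 1$ is preserved with its sign, and Definitions \ref{Can1-def}--\ref{Can2-def} give the claim.

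The only non-routine point is the intertwining $\hat\Phi^{\prime\bot}=A\,\Phi^{\prime\bot}$; everything else is a direct substitution. This is not really an obstacle since it follows immediately from $A$ being a linear isometry of $\RR^4_2$ that sends $\M$ to $\hat\M$, but it is the single step that has to be verified carefully before the scalar identity can be invoked. In fact the chain displayed in \eqref{hat_PhiPr-PhiPr-mov} already packages exactly this argument, so the proof reduces to citing \eqref{hat_PhiPr-PhiPr-mov} and reading off that ${\Phi^{\prime\bot}}^2$ is a motion invariant.
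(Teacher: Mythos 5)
Your argument is correct and coincides with the paper's own second proof of this theorem: the paper derives the chain $\hat\Phi'=A\Phi'$, $\hat\Phi^{\prime\bot}=A\Phi^{\prime\bot}$, $\bigl(\hat\Phi^{\prime\bot}\bigr)^2=\bigl(\Phi^{\prime\bot}\bigr)^2$ in \eqref{hat_PhiPr-PhiPr-mov} exactly as you do, the intertwining of the normal projection being the one point that needs the observation that $A$ maps tangent and normal spaces of $\M$ to those of $\hat\M$. The paper also offers an alternative route via the invariance of the conditions \eqref{sigma_uv_can1} on the second fundamental form, but your version matches its $\Phi$-based argument essentially verbatim.
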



 To prove the local existence of canonical coordinates on the maximal space-like surface $\M$ we 
consider how ${\Phi^{\prime\bot}}^2$ is transformed under a change of the isothermal coordinates.
Let $(u,v)$ be isothermal coordinates on the maximal space-like surface $\M$. In terms of 
the complex parameter $t=u+\ii v$ let us make the change $t=t(s)$, where $s$ is a new complex parameter, 
which determines new isothermal coordinates. Denote by $\tilde\Phi(s)$ the complex function, corresponding to
the new coordinates. The map under a change of the isothermal coordinates is either holomorphic or anti-holomorphic.

In the case of a holomorphic map we have $\tilde\Phi=\Phi t'$ from \eqref{Phi_s-hol} and
$\tilde\Phi'_s=\Phi'_t t'^{\,2}+\Phi t'\,\!'$. Since $\Phi(t)$ is tangent to $\M$ at the any point, then $\Phi^\bot=0$
and therefore:
\begin{equation}\label{tildPhiPr2}
\tilde\Phi_s'^\bot = \Phi_t'^\bot t^{\prime \, 2}\,; \qquad 
\left.\tilde\Phi_s^{\prime\bot}\right.^2 = {\Phi_t^{\prime \bot}}^2 t'^{\,4}\,.
\end{equation}

In the case of an anti-holomorphic map it is enough to consider the special change $t=\bar s$.
According to \eqref{Phi_s-t_bs} we have $\tilde\Phi(s)=\bar\Phi(\bar s)$ and
$\tilde\Phi_s'(s)=\overline{\Phi_t'}(\bar s)$.
Now it follows that: 
\begin{equation}\label{tildPhiPr2-t_bs}
\tilde\Phi_s'^\bot(s) = \overline{\Phi_t'^\bot(\bar s)}\;; \qquad
\left.\tilde\Phi_s^{\prime\bot}\right.^2\!(s) = \overline{{\Phi_t^{\prime \bot}}^2 (\bar s)}\;.
\end{equation}
  
	Note that if ${\Phi_t^{\prime\bot}}^2=0$, formulas \eqref{tildPhiPr2} and \eqref{tildPhiPr2-t_bs} 
imply that $\left.\tilde\Phi_s^{\prime\bot}\right.^2=0$. This means that the condition 
${\Phi^{\prime\bot}}^2=\pm 1$ is impossible. Hence, the points in which ${\Phi^{\prime\bot}}^2=0$, 
have to be considered separately. 

We give the following
\begin{dfn}\label{DegP-def}
Let $\M$ be a maximal space-like surface, parametrized by isothermal coordinates $(u,v)$. The point 
$p \in \M$ is said to be a \textbf{degenerate point}, if the function $\Phi$ satisfies the condition 
${\Phi^{\prime \bot}}^2(p)=0$\, at this point.
\end{dfn}

With the aid of \eqref{PhiPr_bot^2} we describe the degenerate points by means of $\sigma$:
\begin{prop}\label{DegP-sigma}
Let $\M$ be a maximal space-like surface, parametrized by isothermal coordinates $(u,v)$. 
The point $p \in \M$ is degenerate if and only if the second fundamental form $\sigma$ at this point 
satisfies the conditions:
\begin{equation}\label{sigma_uv_DegP}
\sigma (\x_u,\x_u)\bot \: \sigma(\x_u,\x_v)\,; \qquad \sigma^2(\x_u,\x_u)=\sigma^2(\x_u,\x_v)\,.
\end{equation} 
\end{prop}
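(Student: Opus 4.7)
The plan is to deduce Proposition \ref{DegP-sigma} directly from the identity \eqref{PhiPr_bot^2} already derived in the text, which reads
\[
{\Phi^{\prime \bot}}^2 = \sigma^2(\x_u,\x_u) - \sigma^2(\x_u,\x_v) - 2\ii\, \sigma(\x_u,\x_u)\cdot\sigma(\x_u,\x_v).
\]
The strategy is simply to note that the right-hand side decomposes cleanly into a real and an imaginary part, and then to apply the definition of a degenerate point.

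First, I would observe that since $(u,v)$ are real coordinates on $\M$, the tangent vectors $\x_u,\x_v$ are real, and the values of the second fundamental form $\sigma(\x_u,\x_u)$ and $\sigma(\x_u,\x_v)$ lie in the real normal space $N_p(\M)\subset\RR^4_2$. Consequently, all three scalar quantities
\[
\sigma^2(\x_u,\x_u),\qquad \sigma^2(\x_u,\x_v),\qquad \sigma(\x_u,\x_u)\cdot\sigma(\x_u,\x_v)
\]
are real numbers. This identifies the decomposition of \eqref{PhiPr_bot^2} into real and imaginary parts:
\[
\Re\bigl({\Phi^{\prime \bot}}^2\bigr) = \sigma^2(\x_u,\x_u) - \sigma^2(\x_u,\x_v),\qquad
\Im\bigl({\Phi^{\prime \bot}}^2\bigr) = -2\,\sigma(\x_u,\x_u)\cdot\sigma(\x_u,\x_v).
\]

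Next, by Definition \ref{DegP-def}, the point $p$ is degenerate precisely when ${\Phi^{\prime \bot}}^2(p)=0$. A complex number is zero if and only if both its real and imaginary parts vanish, so this condition is equivalent to the simultaneous vanishing of the two real expressions above. The vanishing of the real part is exactly $\sigma^2(\x_u,\x_u)=\sigma^2(\x_u,\x_v)$, while the vanishing of the imaginary part is exactly $\sigma(\x_u,\x_u)\bot\sigma(\x_u,\x_v)$. These are precisely the conditions \eqref{sigma_uv_DegP} in the statement, proving the equivalence in both directions at once.

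There is no real obstacle here; the only point that needs a brief justification is the reality of the three scalar quantities, which is immediate from $\sigma$ taking values in the real normal bundle. This parallels the discussion following \eqref{PhiPr_bot^2} used to characterize canonical coordinates in Proposition \ref{Can1-sigma}, and the proposition at hand is really the $\pm 1\rightsquigarrow 0$ version of that same argument.
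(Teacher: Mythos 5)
Your proof is correct and follows exactly the route the paper intends: it applies the identity \eqref{PhiPr_bot^2}, separates real and imaginary parts (justified by the reality of $\sigma(\x_u,\x_u)$ and $\sigma(\x_u,\x_v)$), and observes that ${\Phi^{\prime\bot}}^2(p)=0$ is equivalent to the simultaneous vanishing of both parts, which are precisely the two conditions in \eqref{sigma_uv_DegP}. This matches the paper's (implicit) argument, which likewise derives the proposition ``with the aid of \eqref{PhiPr_bot^2}'' in parallel with Proposition \ref{Can1-sigma}.
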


 The definition of a degenerate point by means of the function $\Phi$ has a geometric character. 
Namely, we have:
\begin{theorem}\label{DegP_Change_Move}
 Let $\M$ be a maximal space-like surface. The property of a point $p\in \M$ to be degenerate 
does not depend on the isothermal coordinates. Moreover, this property is invariant under a motion of
$\M$ in $\RR^4_2$.
\end{theorem}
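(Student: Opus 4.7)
The plan is to reduce the statement directly to the transformation laws for $\Phi^{\prime\bot}$ that have already been recorded in the excerpt, since being degenerate is defined by the vanishing of the scalar invariant ${\Phi^{\prime\bot}}^2$ at the point $p$.

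First I would handle independence from the choice of isothermal coordinates. Fix a point $p\in\M$ and suppose $p$ is degenerate with respect to the parameter $t$, i.e.\ ${\Phi_t^{\prime\bot}}^2(p)=0$. Any change of isothermal parameters $t=t(s)$ is either holomorphic or anti-holomorphic, and in a neighbourhood of the preimage of $p$ the derivative $t'(s)$ (respectively the conjugate derivative) is nonzero, since otherwise the new parameters would fail to be regular isothermal coordinates. In the holomorphic case I invoke \eqref{tildPhiPr2} to obtain
\[
\left.\tilde\Phi_s^{\prime\bot}\right.^{\!2}(s)={\Phi_t^{\prime\bot}}^2(t(s))\,t'(s)^{4},
\]
and since $t'(s)\neq 0$, the two sides vanish simultaneously. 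In the anti-holomorphic case I reduce to the elementary change $t=\bar s$ followed by a holomorphic change, and apply \eqref{tildPhiPr2-t_bs} to conclude
\[
\left.\tilde\Phi_s^{\prime\bot}\right.^{\!2}(s)=\overline{{\Phi_t^{\prime\bot}}^2(\bar s)},
\]
which again vanishes exactly when ${\Phi_t^{\prime\bot}}^2(\bar s)=0$. Thus being degenerate is intrinsic to $p$.

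For invariance under a motion $\hat\x=A\x+\vb$ in $\RR^4_2$ with $A\in\mathbf{O}(2,2,\RR)$, I rely on \eqref{hat_PhiPr-PhiPr-mov}, which already records
\[
\left.\hat\Phi^{\prime\bot}\right.^{\!2}=\left.\Phi^{\prime\bot}\right.^{\!2}.
\]
Hence the vanishing at a point is preserved when one passes from $\M$ to $\hat\M$. Combining this with the previous step yields the full assertion.

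I do not expect a genuine obstacle here: the theorem is essentially a bookkeeping consequence of the already-proved transformation formulas \eqref{tildPhiPr2}, \eqref{tildPhiPr2-t_bs} and \eqref{hat_PhiPr-PhiPr-mov}. The only point that requires a brief remark rather than a calculation is the non-vanishing of $t'(s)$ (or $\overline{t'(s)}$) under an admissible change of isothermal parameters, which ensures that the multiplicative factor $t'(s)^{4}$ does not spuriously create or destroy zeros of ${\Phi^{\prime\bot}}^2$.
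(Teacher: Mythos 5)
Your proposal is correct and follows essentially the same route as the paper: the coordinate-change half is exactly the paper's argument via \eqref{tildPhiPr2} and \eqref{tildPhiPr2-t_bs} (your remark that $t'(s)\neq 0$ for an admissible change is a sensible, if tacit, addition), and for the motion half you cite \eqref{hat_PhiPr-PhiPr-mov}, i.e.\ the invariance $\left.\hat\Phi^{\prime\bot}\right.^{2}=\left.\Phi^{\prime\bot}\right.^{2}$, where the paper instead appeals to the invariance of $\sigma$ and the conditions \eqref{sigma_uv_DegP} --- but the paper itself derives \eqref{hat_PhiPr-PhiPr-mov} a few lines earlier as an equivalent argument, so the difference is negligible.
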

\begin{proof}
The property of a point $p\in \M$ to be degenerate does not depend on the choice of isothermal coordinates
as a consequence from \eqref{tildPhiPr2} and \eqref{tildPhiPr2-t_bs}.

Since the second fundamental form $\sigma$, as well as the conditions \eqref{sigma_uv_DegP} are invariant 
under a motion in $\RR^4_2$, then the property of a point $p\in \M$ to be degenerate is also invariant
under a motion. 
\end{proof}

 As we have already mentioned, the function $\Phi^{\prime \bot}$,
is not holomorphic in general, but it occurs that the scalar square ${\Phi^{\prime \bot}}^2$
is always a holomorphic function. To prove this, let us consider again equalities \eqref{Phipn}.  
Taking a square of the second equality we find:
\[
{\Phi^{\prime\bot}}^2 =
{\Phi'}^2-2\Phi'\ds\frac{\Phi' \cdot \bar \Phi}{\|\Phi\|^2}\Phi+\left(\ds\frac{\Phi' \cdot \bar \Phi}{\|\Phi\|^2}\right)^2 \Phi^2.
\]
Applying $\Phi^2=0$ and $\Phi\cdot\Phi^\prime=0$ to the above equality, we get:
\begin{equation}\label{Phi_prim_bot^2-Phi_prim^2}
{\Phi^{\prime \bot}}^2={\Phi^\prime}^2 .
\end{equation}
It follows from here that ${\Phi^{\prime \bot}}^2$ is a holomorphic function, because $\Phi^\prime$ and
therefore ${\Phi^\prime}^2$ are holomorphic.

Using properties of the set of zeroes of a holomorphic function, we can formulate:
\begin{theorem}\label{Min_Surf_DegP}
If $\M$ is a connected maximal space-like surface in $\RR^4_2$, then either all of the points of $\M$ 
are degenerate, or the set of the degenerate points of $\M$ is countable without points of condensation in $\M$.
\end{theorem}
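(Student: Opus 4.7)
The plan is to reduce the statement to a standard property of holomorphic functions on a connected domain, using the key identity \eqref{Phi_prim_bot^2-Phi_prim^2}, namely ${\Phi^{\prime\bot}}^2 = {\Phi'}^2$, together with the fact that $\Phi$ is holomorphic on a maximal space-like surface (Theorem \ref{Max_x_Phi-thm}). Thus the degeneracy condition ${\Phi^{\prime\bot}}^2(p)=0$ becomes, in any isothermal chart, the vanishing of the scalar \emph{holomorphic} function ${\Phi'}^2$.

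First I would work locally. Cover $\M$ with isothermal charts $(\D_\alpha,\x_\alpha)$ and set $f_\alpha := (\Phi'_\alpha)^2 : \D_\alpha \to \CC$. Each $f_\alpha$ is holomorphic on the connected domain $\D_\alpha$ (shrinking charts to connected ones if needed). By the classical identity theorem for holomorphic functions on a connected open subset of $\CC$, either $f_\alpha \equiv 0$ on $\D_\alpha$, or the zero set $Z_\alpha = \{t \in \D_\alpha : f_\alpha(t)=0\}$ is a discrete subset of $\D_\alpha$. By Theorem \ref{DegP_Change_Move}, the property of a point being degenerate does not depend on the choice of isothermal chart, so the notion $Z_\alpha$ glues consistently to a globally defined subset $Z\subset\M$ of degenerate points.

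Next I would use the connectedness of $\M$ to pass from the local dichotomy to a global one. Let $U\subset\M$ be the set of points that possess an isothermal neighborhood on which $f_\alpha$ vanishes identically, and let $V$ be the set of points that possess an isothermal neighborhood on which $f_\alpha$ is not identically zero. Both $U$ and $V$ are open by construction; they are disjoint (because on any chart $f_\alpha$ is either identically zero or not), and every point lies in one or the other (again by the local dichotomy applied to any chart around the point). Since $\M$ is connected, either $U=\M$, giving that every point of $\M$ is degenerate, or $V=\M$, in which case in every chart the degenerate points form a discrete subset. In the latter case, since $\M$ is a $2$-manifold it is second countable, so a locally discrete subset is at most countable and has no accumulation points in $\M$.

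The only subtle point is making sure the local dichotomy genuinely globalizes, i.e. that the set $U$ really is open (not merely that $f_\alpha\equiv 0$ on one chart, but that around each such point one can find a chart with the same property) and that $U \cup V = \M$. Both follow immediately from the identity theorem applied chart by chart, since if $f_\alpha\equiv 0$ on $\D_\alpha$, then in any smaller isothermal subchart the corresponding holomorphic function (related to $f_\alpha$ by the transformation \eqref{tildPhiPr2} or \eqref{tildPhiPr2-t_bs}, which multiplies by $t'^{\,4}$ or conjugates) is again identically zero. Hence $U$ is open, $V$ is open, $U\cap V=\emptyset$, $U\cup V=\M$, and connectedness finishes the argument.
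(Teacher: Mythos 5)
Your proposal is correct and follows essentially the same route as the paper: the paper's entire argument consists of establishing the identity ${\Phi^{\prime\bot}}^2={\Phi'}^2$ in \eqref{Phi_prim_bot^2-Phi_prim^2}, noting that this function is holomorphic, and then invoking ``properties of the set of zeroes of a holomorphic function.'' You have merely spelled out the details the paper leaves implicit (chart independence via Theorem \ref{DegP_Change_Move} and the transformation law \eqref{tildPhiPr2}, the open-set dichotomy on the connected surface, and second countability for the countability claim), all of which are correct.
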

Finally we characterize the degenerate points in terms of the ellipse of the normal curvature, which was introduced by
\eqref{ellipse_curv-def_R42}. If the space-like surface $\M$ is maximal, then $\vH=0$\, and 
formula \eqref{ellipse_curv_R42} gives that $\sigma(\vX_2,\vX_2)=-\sigma(\vX_1,\vX_1)$ and consequently 
\eqref{ellipse_curv_R42} has the following more simple form:
\begin{equation}\label{ellipse_curv_min_surf_R42}
\sigma(\vX,\vX) = \sigma(\vX_1,\vX_1)\cos(2\psi) + \sigma(\vX_1,\vX_2)\sin(2\psi)\,.
\end{equation}
Taking the square of the last equality, we find:
\begin{equation}\label{ellipse_curv_min_surf_2_R42}
\begin{array}{l}
\sigma^2(\vX,\vX) = \ds\frac{\sigma^2(\vX_1,\vX_1)+\sigma^2(\vX_1,\vX_2)}{2} \ + \\[2.0ex]
\ds\frac{\sigma^2(\vX_1,\vX_1)-\sigma^2(\vX_1,\vX_2)}{2}\cos(4\psi) + \sigma(\vX_1,\vX_1)\sigma(\vX_1,\vX_2)\sin(4\psi)\,.
\end{array}
\end{equation}
The ellipse $\mathscr{E}_p$ is a circle, if and only if $\sigma^2(\vX,\vX)$ does not depend on $\psi$.
This means that the coefficients before $\cos(4\psi)$ and $\sin(4\psi)$ in the formula \eqref{ellipse_curv_min_surf_2_R42}
are zero. Thus we obtained the following property: $\mathscr{E}_p$ is a circle if and only if for any orthonormal basis 
$(\vX_1,\vX_2)$ of $T_p(\M)$ the following conditions are valid:
\begin{equation}\label{sigma_X12_super_conf}
\sigma (\vX_1,\vX_1)\bot \: \sigma(\vX_1,\vX_2)\,; \qquad \sigma^2(\vX_1,\vX_1)=\sigma^2(\vX_1,\vX_2)\,.
\end{equation} 
Suppose that $t=u+\ii v$ are isothermal coordinates on $\M$ in a neighborhood of $p$ and let $(\vX_1,\vX_2)$ be the 
normalized coordinate vectors $(\x_u,\x_v)$. Using again equalities $\sigma (\x_u,\x_u)=E\sigma(\vX_1,\vX_1)$ and
$\sigma (\x_u,\x_v)=E\sigma(\vX_1,\vX_2)$, we obtain that the conditions \eqref{sigma_X12_super_conf} are equivalent 
to the conditions \eqref{sigma_uv_DegP} for the point $p$ to be degenerate. Thus we have:
\begin{prop}\label{DegP-super_conf_R42}
If $\M$ is a maximal space-like surface in $\RR^4_2$, then a point $p \in \M$ is degenerate if and only the ellipse 
of the normal curvature $\mathscr{E}_p$ is a circle.
\end{prop}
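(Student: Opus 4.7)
The proposition is really a repackaging of the computation already carried out in the paragraph immediately preceding it, so my plan is to assemble those pieces cleanly rather than to start a new argument.

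First I would make explicit the purely algebraic criterion for $\mathscr{E}_p$ to be a circle. Since $\vH = 0$, every point of $\mathscr{E}_p$ has the form $\sigma(\vX_1,\vX_1)\cos(2\psi)+\sigma(\vX_1,\vX_2)\sin(2\psi)$ in any orthonormal basis $(\vX_1,\vX_2)$ of $T_p(\M)$. The ellipse is a circle precisely when the Lorentzian norm $\sigma^2(\vX,\vX)$ is independent of $\psi$. Using the already-derived identity \eqref{ellipse_curv_min_surf_2_R42}, this independence is equivalent to the vanishing of the coefficients of $\cos(4\psi)$ and $\sin(4\psi)$, which are exactly the conditions
\[
\sigma(\vX_1,\vX_1)\bot\sigma(\vX_1,\vX_2),\qquad \sigma^2(\vX_1,\vX_1)=\sigma^2(\vX_1,\vX_2).
\]
These are the conditions \eqref{sigma_X12_super_conf}.

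Second, I would pick isothermal parameters $t=u+\ii v$ around $p$ and take the distinguished orthonormal frame $\vX_1=\x_u/\sqrt E$, $\vX_2=\x_v/\sqrt E$ from \eqref{vX1_vX2-def}. The bilinearity of $\sigma$ gives $\sigma(\x_u,\x_u)=E\,\sigma(\vX_1,\vX_1)$ and $\sigma(\x_u,\x_v)=E\,\sigma(\vX_1,\vX_2)$, so the conditions \eqref{sigma_X12_super_conf} are equivalent, after multiplication by $E^2$, to the conditions \eqref{sigma_uv_DegP} that appear in Proposition \ref{DegP-sigma}. By that proposition, \eqref{sigma_uv_DegP} is precisely the degeneracy of $p$. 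Combining the two equivalences yields the claim.

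One small point to address is the qualifier “for any orthonormal basis” that occurred in the ellipse characterization. Here it is harmless: \eqref{ellipse_curv_min_surf_2_R42} shows that the circle condition, derived without fixing a basis, is equivalent to \eqref{sigma_X12_super_conf} holding for any one orthonormal basis, and in particular for the one coming from $(\x_u,\x_v)$; conversely, since \eqref{sigma_uv_DegP} is a geometric property of $p$ (independent of the parametrization, by Theorem \ref{DegP_Change_Move}), it implies the circle condition without appeal to a preferred basis. There is no real obstacle in the proof, only some bookkeeping to make sure the scaling factor $E$ is handled correctly and that we are using the form \eqref{ellipse_curv_min_surf_R42} valid for maximal surfaces (where $\vH=0$ simplifies the general ellipse formula \eqref{ellipse_curv_R42}).
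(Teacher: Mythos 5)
Your proposal is correct and follows essentially the same route as the paper: the paper likewise uses $\vH=0$ to reduce to \eqref{ellipse_curv_min_surf_R42}, squares to get \eqref{ellipse_curv_min_surf_2_R42}, reads off the circle condition as the vanishing of the $\cos(4\psi)$ and $\sin(4\psi)$ coefficients, i.e.\ \eqref{sigma_X12_super_conf}, and then rescales by $E$ via the normalized coordinate frame to land on the degeneracy conditions \eqref{sigma_uv_DegP}. Your extra remark disposing of the ``for any orthonormal basis'' quantifier is a sensible bit of bookkeeping that the paper leaves implicit.
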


Now, let us consider the question of existence of canonical coordinates.
Since the properties of both types of canonical coordinates are analogous, from now on we consider
canonical coordinates of the first type and call them simply canonical coordinates.
 We give the following 
\begin{dfn}\label{Min_Surf_Gen_Typ-def}
The maximal space-like surface $\M$ in $\RR^4_2$ is said to be of \textbf{general type} if it has no 
degenerate points.
\end{dfn}
\begin{theorem}\label{Can_Coord-exist}
If $\M$ is a maximal space-like surface in $\RR^4_2$ of general type, then it admits locally canonical coordinates.
\end{theorem}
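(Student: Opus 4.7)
The plan is to reduce the existence of canonical coordinates to an elementary complex ODE that can be integrated in closed form. Fix $p_0\in\M$ and choose arbitrary isothermal coordinates $(u_0,v_0)$ on a neighborhood $U$ of $p_0$, with complex parameter $t=u_0+\ii v_0$. By \eqref{Phi_prim_bot^2-Phi_prim^2} the function $({\Phi^{\prime\bot}})^2=({\Phi^\prime})^2$ is holomorphic in $t$ on $U$, and by Definition \ref{Min_Surf_Gen_Typ-def} together with Definition \ref{DegP-def} the general-type hypothesis guarantees that it is nowhere zero on $U$.

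Next I would look for a holomorphic reparametrization $t=t(s)$, $s=u+\ii v$, such that the new function $\tilde\Phi$ satisfies $({\tilde\Phi}^{\prime\bot})^{2}=-1$; the resulting pair $(u,v)=(\Re s,\Im s)$ will then automatically be isothermal, because the condition $\Phi^{2}=0$ is preserved under any holomorphic change by \eqref{Phi_s-hol}, and so by Definition \ref{Can1-def} it will consist of canonical coordinates of the first type. The transformation law \eqref{tildPhiPr2} converts the requirement into
\[
\left(\frac{ds}{dt}\right)^{\!4} = -({\Phi^{\prime\bot}})^{2}(t).
\]
Shrinking $U$ if necessary so that it is simply connected, the nowhere-vanishing holomorphic function on the right admits a holomorphic fourth root $h(t)$, and any primitive $s(t)$ of $h$ on $U$ is a local biholomorphism since $s^{\prime}(t)=h(t)\neq 0$. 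Inverting yields the desired holomorphic change $t=t(s)$ near $p_{0}$, and a direct computation (tangential components of $\Phi$ drop out when projecting onto the normal bundle) confirms that $({\tilde\Phi}^{\prime\bot})^{2}=(ds/dt)^{-4}\cdot({\Phi^{\prime\bot}})^{2}=-1$.

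The only non-formal step is the existence of a holomorphic fourth root of $-({\Phi^{\prime\bot}})^{2}$ on a simply connected domain, which succeeds precisely because this function has no zeros; this is where the general-type hypothesis is essential, and indeed at a degenerate point the target value $-1$ in \eqref{tildPhiPr2} would be unreachable, so the hypothesis is not only sufficient but also essentially necessary for the construction to work.
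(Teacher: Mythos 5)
Your proof is correct and follows essentially the same route as the paper: both reduce the problem via the transformation law \eqref{tildPhiPr2} to the equation $(ds/dt)^4=-({\Phi^{\prime\bot}})^2$, use the holomorphy and non-vanishing of $({\Phi^{\prime\bot}})^2=({\Phi^{\prime}})^2$ guaranteed by \eqref{Phi_prim_bot^2-Phi_prim^2} and the general-type hypothesis, and obtain $s$ as a primitive of a holomorphic fourth root. Your explicit remark about shrinking to a simply connected neighborhood to extract the fourth root is a minor sharpening of a step the paper leaves implicit in the notation $\sqrt[4]{\,\cdot\,}$.
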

\begin{proof}
Let the surface $\M$ be parametrized by isothermal coordinates $u + \ii v=t$ in a neighborhood of a point
$p \in \M$. Next we show how to introduce new isothermal coordinates $t=t(s)$, where $t(s)$ is a
holomorphic function, so that the new coordinates to be canonical. Taking into account Definition \ref{Can1-def} 
and formula \eqref{tildPhiPr2} it follows that the coordinates, determined by $s$, are canonical if:
\begin{equation}\label{condcan}
{\Phi_t^{\prime \bot}}^2 t'^{\,4} = \left.\tilde\Phi_s^{\prime\bot}\right.^2 = - 1 \,.
\end{equation}
Hence the function $t=t(s)$ satisfies the following ordinary complex differential equation of the first order:
\begin{equation*}
\sqrt[4]{-\,{\Phi_t^{\prime \bot}}^2}\:dt = ds\,.
\end{equation*}
In view of \eqref{Phi_prim_bot^2-Phi_prim^2} we can replace
${\Phi_t^{\prime \bot}}^2$ with ${\Phi_t^\prime}^2$ and we get:
\begin{equation}\label{eqcan}
ds = \sqrt[4]{-\,{\Phi_t^\prime}^2}\:dt\,.
\end{equation}
Since ${\Phi_t^\prime}^2$ is a holomorphic function of $t$, by integrating we find:
\begin{equation}\label{eqcan-sol}
s = \int\sqrt[4]{-\,{\Phi_t^\prime}^2}\:dt\,.
\end{equation} 
Since $\M$ has no degenerate points, then ${\Phi_t^\prime}^2\ne 0$\,. Therefore \eqref{eqcan-sol} 
determines $s$ as a holomorphic locally reversible function of $t$. This means that $s$ determines new 
isothermal coordinates in a neighborhood of the point in consideration. The equation \eqref{eqcan} 
is equivalent to \eqref{condcan} and consequently, the coordinates determined by $s$, are canonical. 
\end{proof}

Next we consider the question of the uniqueness of the canonical coordinates.
\begin{theorem}\label{Can_Coord-uniq}
Let $\M$ be a maximal space-like surface of general type, and let $t$ and $s$ are complex variables,
which determine canonical coordinates in a neighborhood of a point $p \in M$. If $t$ and $s$ generate one 
and the same orientation on $\M$, then they satisfy one of the following relations:
\begin{equation}\label{uniq-holo}
t=\pm s+c\,; \quad t=\pm \ii s+c\,.
\end{equation}
If $t$ and $s$ generate different orientations on $\M$, then they satisfy one of the following relations:
\begin{equation}\label{uniq-antiholo}
t=\pm \bar s + c\,; \quad t=\pm \ii \bar s + c\,.
\end{equation}
In the above equalities $c$ is an arbitrary complex constant. 
\end{theorem}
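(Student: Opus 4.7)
The plan is to exploit the transformation formulas \eqref{tildPhiPr2} and \eqref{tildPhiPr2-t_bs} for ${\Phi^{\prime\bot}}^2$ under changes of isothermal coordinates, combined with the characterization from Definition \ref{Can1-def} that canonical coordinates of the first type are precisely those for which ${\Phi^{\prime\bot}}^2=-1$. Recall that any change of isothermal coordinates is either holomorphic or anti-holomorphic, so the two cases in the statement exhaust all possibilities.

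For the same-orientation case, I would use that $t=t(s)$ is a holomorphic map. Formula \eqref{tildPhiPr2} gives
\begin{equation*}
\tilde\Phi_s^{\prime\bot\,2}=\Phi_t^{\prime\bot\,2}\,(t'(s))^4.
\end{equation*}
Because both $t$ and $s$ are canonical of the first type, the left-hand side and the first factor on the right both equal $-1$, so $(t'(s))^4=1$ throughout the common neighborhood. The holomorphic function $t'(s)$ therefore takes values in the finite discrete set $\{1,-1,\ii,-\ii\}$; on a connected neighborhood of $p$ this forces $t'(s)$ to be a constant $\epsilon$ with $\epsilon^4=1$, and integrating yields $t=\epsilon s+c$ with $c\in\CC$, which is exactly \eqref{uniq-holo}.

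For the opposite-orientation case, I would reduce to the previous case by introducing the intermediate parameter $r=\bar s$. Since $t(s)$ is anti-holomorphic, the map $r\mapsto t$ is holomorphic. Applying \eqref{tildPhiPr2-t_bs} to the change $s\leftrightarrow r=\bar s$ gives
\begin{equation*}
\tilde\Phi_r^{\prime\bot\,2}(r)=\overline{\Phi_s^{\prime\bot\,2}(\bar r)}=\overline{-1}=-1,
\end{equation*}
so $r$ is also a canonical parameter of the first type (now of the same orientation as $t$). The previous case applied to the pair $(r,t)$ yields $t=\epsilon r+c$ with $\epsilon\in\{\pm 1,\pm\ii\}$, and substituting $r=\bar s$ gives \eqref{uniq-antiholo}.

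The main obstacle is not serious; the proof is essentially algebraic once one has the transformation laws for ${\Phi^{\prime\bot}}^2$. The only step requiring a moment of care is the constancy argument in the holomorphic case, which relies on the connectedness of the coordinate domain to promote ``$t'(s)$ is holomorphic and discrete-valued'' to ``$t'(s)$ is constant''; this is a standard consequence of continuity (or of the open mapping theorem). Everything else is bookkeeping of the four possible values of the fourth roots of unity.
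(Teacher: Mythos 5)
Your proposal is correct and follows essentially the same route as the paper's own proof: apply \eqref{tildPhiPr2} to get $t'^4=1$ in the holomorphic case, and reduce the anti-holomorphic case to it via the auxiliary parameter $r=\bar s$ and \eqref{tildPhiPr2-t_bs}. Your explicit remark that connectedness is needed to promote the discrete-valued holomorphic function $t'(s)$ to a constant is a small point the paper leaves implicit, but it does not change the argument.
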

\begin{proof}
First we consider the case of the same orientation, which means that $t$ is a holomorphic function
of $s$. Then we can apply formula \eqref{tildPhiPr2}. By the conditions of the theorem $t$ and $s$
determine canonical coordinates and 
${\Phi_t^{\prime \bot}}^2 = \left.\tilde\Phi_s^{\prime\bot}\right.^2 = - 1$\,.
Consequently, \eqref{tildPhiPr2} is reduced to $- 1 = - 1 t'^4$. This implies that $t'^4 = 1$ and 
$t'(s) = \pm 1\,;\ \pm \ii$\,. The last equation is equivalent to \eqref{uniq-holo}.

 In the case of different orientations $t$ is an anti-holomorphic function of $s$. Introducing an additional
variable $r$ by the formula $r=\bar s$, then we can apply formula \eqref{tildPhiPr2-t_bs} to $r$ and $s$, 
which means that the variable $r$ also determines canonical coordinates. Since $t$ is a holomorphic 
function of $r$, then $t$ and $r$ satisfy one of the relations \eqref{uniq-holo}. 
This implies that $t$ and $s$ satisfy one of the relations \eqref{uniq-antiholo}. 
\end{proof}

\begin{remark}
Geometrically, the above eight relations \eqref{uniq-holo} and \eqref{uniq-antiholo} mean that
the canonical coordinates are unique up to renumbering and orientation of the parametric lines\,.
\end{remark}

\smallskip

 Next we shall find the relations between the degenerate points, canonical coordinates and the curvatures 
$K$ and $\varkappa$. First we shall characterize the degenerate points in the sense of Definition \ref{DegP-def}
by means of $K$ and $\varkappa$. We have:
\begin{theorem}\label{super_conf_K_kappa_R42}
If $\M$ is a maximal space-like surface in $\RR^4_2$, then the Gauss curvature $K$ and the curvature of the
normal connection $\varkappa$ of $\M$ satisfy the inequality:
\begin{equation}\label{K>=kappa}
K \geq |\varkappa | \,,
\end{equation}
with equality only in the degenerate points of $\M$.
\end{theorem}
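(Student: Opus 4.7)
The plan is to start from the matrix description of the Weingarten operators in \eqref{A1A2_nu_lambda_rho_mu} together with the resulting formulas \eqref{K_kappa_nu_lambda_rho_mu},
$$K = \nu^2 + \lambda^2 + \rho^2 + \mu^2, \qquad \varkappa = 2\nu\mu - 2\rho\lambda.$$
The inequality $K \geq |\varkappa|$ is then immediate from the two algebraic identities
$$K - \varkappa = (\nu - \mu)^2 + (\rho + \lambda)^2, \qquad K + \varkappa = (\nu + \mu)^2 + (\rho - \lambda)^2,$$
each of whose right-hand sides is manifestly non-negative.

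To locate the equality case I would compute $K^2 - \varkappa^2$ as the product of these two expressions. Applying the Brahmagupta--Fibonacci identity $(X^2+Y^2)(U^2+V^2) = (XU+YV)^2 + (XV-YU)^2$ to $X=\nu-\mu$, $Y=\rho+\lambda$, $U=\nu+\mu$, $V=\rho-\lambda$, the product collapses to
$$K^2 - \varkappa^2 = (\nu^2 + \rho^2 - \lambda^2 - \mu^2)^2 + 4(\nu\lambda + \rho\mu)^2.$$
The next step is to identify the right-hand side with the squared modulus of $(\Phi^{\prime\bot})^2$. Using $\sigma(\vX_1,\vX_1) = -\nu\n_1 - \rho\n_2$ and $\sigma(\vX_1,\vX_2) = -\lambda\n_1 - \mu\n_2$ from \eqref{sigma_nu_lambda_rho_mu}, the relation \eqref{PhiPr_X} and the identities $\n_1^2 = \n_2^2 = -1$, $\n_1\n_2 = 0$, a direct expansion gives
$$\frac{(\Phi^{\prime\bot})^2}{E^2} = (\lambda^2 + \mu^2 - \nu^2 - \rho^2) + 2\ii(\nu\lambda + \rho\mu),$$
and the squared Hermitian modulus of this scalar coincides with the expression for $K^2-\varkappa^2$ above. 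Therefore
$$K^2 - \varkappa^2 = \frac{|(\Phi^{\prime\bot})^2|^2}{E^4} \geq 0,$$
with equality if and only if $(\Phi^{\prime\bot})^2(p) = 0$, which by Definition \ref{DegP-def} is precisely the condition that $p$ be a degenerate point of $\M$.

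The only real delicacy is bookkeeping of signs, since the induced metric on $N_p(\M)$ is negative-definite: identities such as $\sigma^2(\vX_1,\vX_1) = -(\nu^2+\rho^2)$ and $\sigma(\vX_1,\vX_1)\cdot\sigma(\vX_1,\vX_2) = -(\nu\lambda+\rho\mu)$ must be handled carefully so that the imaginary part in the displayed formula for $(\Phi^{\prime\bot})^2/E^2$ comes out with the right coefficient. A purely geometric alternative I would keep in reserve proceeds via the ellipse of normal curvature $\mathscr{E}_p$ from \eqref{ellipse_curv_min_surf_R42}: in the Euclidean structure on $N_p(\M)$ one recognizes $K$ as the sum of squares of the semi-axes of $\mathscr{E}_p$ and $|\varkappa|/2$ as its area divided by $\pi$, whence $K \geq |\varkappa|$ by AM--GM with equality iff $\mathscr{E}_p$ is a circle, which by Proposition \ref{DegP-super_conf_R42} characterizes the degenerate points.
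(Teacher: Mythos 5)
Your argument is correct, and the first half coincides with the paper's: the inequality itself is obtained in both cases from the two sums of squares $K\pm\varkappa=(\nu\pm\mu)^2+(\rho\mp\lambda)^2$ built out of \eqref{K_kappa_nu_lambda_rho_mu}. Where you diverge is in the treatment of the equality case. The paper stays at the level of the real parameters: it reads off from $K\pm\varkappa=0$ the conditions $\nu\pm\mu=0$, $\rho\mp\lambda=0$, substitutes them into \eqref{sigma_nu_lambda_rho_mu}, and checks that they are equivalent to $\sigma(\vX_1,\vX_1)\bot\,\sigma(\vX_1,\vX_2)$ and $\sigma^2(\vX_1,\vX_1)=\sigma^2(\vX_1,\vX_2)$, i.e.\ to the degeneracy conditions of Proposition \ref{DegP-sigma}. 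You instead multiply the two sums of squares, collapse the product via the Brahmagupta--Fibonacci identity, and recognize the result as $\bigl|{\Phi^{\prime\bot}}^2\bigr|^2/E^4$ — your expansion of ${\Phi^{\prime\bot}}^2/E^2$ from \eqref{PhiPr_X} and \eqref{sigma_nu_lambda_rho_mu} checks out, including the signs coming from $\n_1^2=\n_2^2=-1$ — so that equality in \eqref{K>=kappa} is tied directly to the analytic Definition \ref{DegP-def}. This buys you the clean closed identity $K^2-\varkappa^2=\bigl|{\Phi^{\prime\bot}}^2\bigr|^2/E^4$, which is consistent with (and essentially explains) the later formula $E=(K^2-\varkappa^2)^{-1/4}$ in canonical coordinates, where ${\Phi^{\prime\bot}}^2=-1$; the paper's route, by contrast, lands directly on the geometric characterization \eqref{sigma_uv_DegP} without passing through $\Phi$. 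One bookkeeping point: since $K\ge|\varkappa|\ge 0$ is already secured by the first step, the equivalence of $K=|\varkappa|$ with $K^2-\varkappa^2=0$ is indeed legitimate. Your reserve argument via the ellipse of normal curvature (semi-axes $\nu$ and $|\mu|$, AM--GM, Proposition \ref{DegP-super_conf_R42}) is also sound, though as stated in the paper the formulas $K=\nu^2+\mu^2$, $\varkappa=2\nu\mu$ are derived only in canonical coordinates, so to make that version self-contained at an arbitrary point one would first have to align the frame with the axes of $\mathscr{E}_p$ by a rotation of $(\vX_1,\vX_2)$.
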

\begin{proof}
Taking into account equalities \eqref{K_kappa_nu_lambda_rho_mu} we get consecutively:
\begin{equation*}\label{K+-kappa}
K\pm\varkappa = \nu^2+\lambda^2+\rho^2+\mu^2 \pm 2\nu\mu \mp 2\rho\lambda 
= (\nu \pm \mu)^2 + (\rho \mp \lambda)^2 \geq 0\,.
\end{equation*}
The last inequality is equivalent to \eqref{K>=kappa} with equality only if
\begin{equation*}\label{nu+-mu}
\nu \pm \mu = 0\,; \qquad \rho \mp \lambda = 0\,.
\end{equation*}
Applying these equalities to \eqref{sigma_nu_lambda_rho_mu} we find that they are equivalent to:
\begin{equation*}\label{sigma_nu_rho_DegP}
\sigma (\vX_1,\vX_1)=-\nu \n_1 - \rho \n_2      \,; \qquad
\sigma (\vX_1,\vX_2)= \mp\rho \n_1 \pm \nu \n_2 \,.
\end{equation*}
The last equalities are equivalent to the conditions:
\begin{equation}\label{sigma_X1X2_DegP}
\sigma (\vX_1,\vX_1)\bot \: \sigma(\vX_1,\vX_2)\,; \qquad \sigma^2(\vX_1,\vX_1)=\sigma^2(\vX_1,\vX_2)\,,
\end{equation} 

 On the other hand, it follows that equalities $\sigma (\x_u,\x_u)=E\sigma(\vX_1,\vX_1)$ and 
$\sigma (\x_u,\x_v)=E\sigma(\vX_1,\vX_2)$ imply that the conditions \eqref{sigma_X1X2_DegP}
are equivalent to the conditions \eqref{sigma_uv_DegP} for a point in $\M$ to be degenerate.
\end{proof}

 In the paper \cite{S}, points satisfying the equality $K=|\varkappa|$ are called \emph{isotropic}. 
As a consequence of the last theorem, the notions of isotropic points and degenerate points coincide.

\smallskip

 Let now $\M=(\D,\x)$ be a maximal space-like surface and $t=u+\ii v \in \D$ determines canonical 
coordinates in a neighborhood of a given point $p \in \M$. Denote again by $(\vX_1,\vX_2)$ the normalized tangent vectors
$(\x_u,\x_v)$. Since the coordinates are canonical, then $\sigma (\x_u,\x_u)\bot \, \sigma(\x_u,\x_v)$, 
according to \eqref{sigma_uv_can1}. The last equality is equivalent to $\sigma (\vX_1,\vX_1)\bot \, \sigma(\vX_1,\vX_2)$.
This means that the canonical coordinates generate geometric choice of the orthonormal basis in the normal 
space $N_p(\M)$ at the point $p \in \M$. Namely, $\n_1$ and $\n_2$ can be chosen to be collinear with
$\sigma (\vX_1,\vX_1)$ and $\sigma (\vX_1,\vX_2)$. More precisely, let $\n_1$ be the unit normal vector, 
with the \emph{opposite} direction of $\sigma (\vX_1,\vX_1)$, and $\n_2$ be the unit normal vector, such that the quadruple
$(\vX_1,\vX_2,\n_1,\n_2)$ is a \emph{right oriented basis} in $\RR^4_2$. 
Then $\n_2$ is \emph{collinear} with $\sigma (\vX_1,\vX_2)$.

 Having chosen the basis in the above way, formulas \eqref{sigma_nu_lambda_rho_mu} for $\sigma$ in the previous  
section get the form:
\begin{equation}\label{sigma_nu_mu_R42}
\begin{array}{l}
\sigma (\vX_1,\vX_1)=          - \nu \n_1\,, \\
\sigma (\vX_1,\vX_2)=          - \mu \n_2\,, \\
\sigma (\vX_2,\vX_2)=\phantom{-} \nu \n_1\,;
\end{array}
\qquad \nu>0 \,.
\end{equation}
This means that $\lambda=0$ and $\rho=0$. Then the Weingarten operators \eqref{A1A2_nu_lambda_rho_mu}
get the form: 
\begin{equation}\label{A1A2_nu_mu_R42}
A_{\n_1}=
\left(
\begin{array}{rr}
\nu  &  0\\
0    & -\nu
\end{array}
\right); \qquad
A_{\n_2}=
\left(
\begin{array}{rr}
0    & \mu\\
\mu  & 0
\end{array}
\right).
\end{equation}
Since the coordinates are canonical, then \eqref{sigma_uv_can1} gives
$\sigma^2(\x_u,\x_u) = \sigma^2(\x_u,\x_v) - 1$, which implies that
$\sigma^2(\x_u,\x_u) < \sigma^2(\x_u,\x_v) \leq 0$\, and
$\sigma^2(\vX_1,\vX_1) < \sigma^2(\vX_1,\vX_2) \leq 0$\,. Consequently, the functions $\nu$ and $\mu$ 
satisfy the following conditions:
\begin{equation}\label{numu_sigma_R42}
\begin{array}{cl}
   \nu^2   \!\!  & =  -\sigma^2(\vX_1,\vX_1) \,,\\
   \mu^2   \!\!  & =  -\sigma^2(\vX_1,\vX_2) \,;
\end{array}
\qquad
\nu > |\mu |\,.
\end{equation}

\smallskip

Let's connect the basis vectors $(\n_1,\n_2)$ and the functions $\nu$ and $\mu$ with the ellipse of the
normal curvature, defined by \eqref{ellipse_curv-def_R42}. 
It follows from $\sigma (\vX_1,\vX_1)\bot \, \sigma(\vX_1,\vX_2)$ and \eqref{ellipse_curv_min_surf_R42} that
$\n_1$ and $\n_2$ are collinear withe the principal axes of the ellipse and $\n_1$ lies on the major axis, while $\n_2$
lies on the minor axis. Furthermore, it follows from \eqref{numu_sigma_R42} that $\nu$ is equal to the semi-major 
axis and $|\mu|$ is equal to the semi-minor axis of the ellipse.
\smallskip

Let us make clear the relation between the pairs $(K,\varkappa)$ and $(\nu,\mu)$.
In canonical coordinates formulas \eqref{K_kappa_nu_lambda_rho_mu} get the form:
\begin{equation}\label{K_kappa_nu_mu_R42}
K= \nu^2+\mu^2\,; \quad\quad
\varkappa = 2\nu\mu \,.
\end{equation}
Therefore
\begin{gather}\label{K_kappa_nu_mu_2_R42}
\hspace{-2ex}
\begin{array}{rll}
K         &=& \nu^2+\mu^2\,, \\
\varkappa &=& 2\nu\mu\,;
\end{array}
\Leftrightarrow
\begin{array}{rll}
K+\varkappa &=& (\nu + \mu)^2\,, \\
K-\varkappa &=& (\nu - \mu)^2\,;
\end{array}
\Leftrightarrow
\begin{array}{rll}
\sqrt{K+\varkappa} &=& \nu + \mu \,,\\
\sqrt{K-\varkappa} &=& \nu - \mu \,.
\end{array}
\end{gather}
Now we can express $\nu$ and $\mu$ via $K$ and $\varkappa$\,:
\begin{equation}\label{nu_mu_K_kappa_R42}
\begin{array}{rll}
\nu &=& \ds\frac{1}{2}(\sqrt{K+\varkappa}+\sqrt{K-\varkappa})\,, \\[2ex]
\mu &=& \ds\frac{1}{2}(\sqrt{K+\varkappa}-\sqrt{K-\varkappa})\,.
\end{array}
\end{equation}
Since $K$ and $\varkappa$ are invariants, the last formulas imply that $\nu$ and $\mu$ are also invariants.
Formulas \eqref{A1A2_nu_mu_R42} show that $\nu$ and $\mu$ play the same role in the theory of maximal 
space-like surfaces of general type in $\RR^4_2$ as the normal curvature $\nu$ play in the theory 
of maximal space-like surfaces of general type in $\RR^3_1$. That is why the invariants $\nu$ and $\mu$
can be called \emph{normal curvatures}.

\smallskip

 The functions $\nu$ and $\mu$ determine completely the second fundamental form of $\M$, in view of formulas
\eqref{sigma_nu_mu_R42}. Next we show, that the functions $\nu$ and $\mu$ also determine the first fundamental 
form in canonical coordinates. The conditions \eqref{sigma_uv_can1} for canonical coordinates give us:                        
\[
E^2(\sigma^2(\vX_1,\vX_1)-\sigma^2(\vX_1,\vX_2))=-1\,.
\]
The last equality and \eqref{numu_sigma_R42} imply that $E^2(\nu^2-\mu^2)=1$\,. We express from here the
coefficient of the first fundamental form $E$ by means of $\nu$ and $\mu$:
\begin{equation}\label{E_nu_mu_R42}
E=\frac{1}{\sqrt{\nu^2-\mu^2}} \,.
\end{equation}

 Multiplying equalities \eqref{K_kappa_nu_mu_2_R42} we get:
\[
K^2-\varkappa^2 = (\nu^2-\mu^2)^2.
\]
From here and \eqref{E_nu_mu_R42} we obtain the coefficient $E$ in terms of $K$ and $\varkappa$:
\begin{equation}\label{E_K_kappa_R42}
E=\frac{1}{\sqrt[4]{ \vphantom{\mu^2} K^2-\varkappa^2} } \;.
\end{equation}

\smallskip
   
 Finally we establish how the canonical coordinates and also the invariants $K$, $\varkappa$, $\nu$ and $\mu$, 
considered as functions of these coordinates, are transformed under a change of the coordinates and under 
the basic geometric transformations of the maximal space-like surface $\M$ in $\RR^4_2$.

Let us start with a change of the canonical coordinates. Since the four functions $K$, $\varkappa$, $\nu$ and $\mu$ 
are scalar invariants, then the necessary formulas are given by Theorem \ref{Can_Coord-uniq}.
Denote by $t(s)$ any of the changes, described with equalities \eqref{uniq-holo} and \eqref{uniq-antiholo}:
\begin{equation}\label{can_coord_ch_R42}
t=\pm s+c\,; \quad t=\pm \ii s+c\,; \quad t=\pm \bar s + c\,; \quad t=\pm \ii \bar s + c\,.
\end{equation}
Then, for the new functions $\tilde K$, $\tilde\varkappa$, $\tilde\nu$ and $\tilde\mu$ of $s$ we have:
\begin{equation}\label{tilde_K_kappa_nu_mu-can_coord_ch_R42}
\tilde K(s)=K(t(s))\,; \quad \tilde\varkappa(s)=\varkappa(t(s))\,; \quad \tilde\nu(s)=\nu(t(s))\,; \quad \tilde\mu(s)=\mu(t(s))\,.
\end{equation}

 Now, let the complex variable $t$ determine canonical coordinates on the surface $\M$ and the surface $\hat\M$ 
be obtained from $\M$ by a motion in $\RR^4_2$. In the case of a proper motion, it follows from 
\eqref{hat_K_kappa-K_kappa-prop_mov_R42}, that $K$ and $\varkappa$ are invariants. Formulas 
\eqref{nu_mu_K_kappa_R42} imply that $\nu$ and $\mu$ are also invariants: 
\begin{equation}\label{hat_K_kappa_nu_mu-propmov_R42}
\hat K(t) = K(t)\,; \quad \hat\varkappa(t) = \varkappa(t)\,; \quad \hat\nu(t) = \nu(t)\,; \quad \hat\mu(t) = \mu(t)\,.
\end{equation}
In the case of an improper motion \eqref{hat_K_kappa-K_kappa-unprop_mov_R42} shows that $K$ is invariant but 
$\varkappa$ changes the sign. Then it follows from \eqref{nu_mu_K_kappa_R42} that $\nu$ is invariant, but 
$\mu$ changes the sign:
\begin{equation}\label{hat_K_kappa_nu_mu-unpropmov_R42}
\hat K(t) = K(t)\,; \quad \hat\varkappa(t) = -\varkappa(t)\,; \quad \hat\nu(t) = \nu(t)\,; \quad \hat\mu(t) = -\mu(t)\,.
\end{equation}

Let us consider the case of a similarity in $\RR^4_2$.
\begin{theorem}\label{Can_Sim}
Let $\M=(\D,\x)$ be a maximal space-like surface of general type, parametrized by canonical coordinates determined by the 
complex variable $t\in\D$. If the maximal space-like surface $\hat\M=(\D,\hat\x)$ is obtained from $\M$ by a similarity 
with coefficient $k>0$ in $\RR^4_2$, then $\hat\M$ is also of general type and the variable $s$ satisfying the equality
$t=\frac{1}{\sqrt{k}}s$, determines canonical coordinates on $\hat\M$.
\end{theorem}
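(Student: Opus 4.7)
The plan is to decompose the similarity into a motion composed with a homothety and then track how the quantity ${\Phi^{\prime\bot}}^2$, which characterizes canonicity, transforms under each factor and under the subsequent change of coordinates $t=s/\sqrt{k}$. Concretely, write the similarity as $\hat\x(t)=kA\x(t)+\vb$ with $A\in\mathbf{O}(2,2,\RR)$ and $k>0$, so that $\hat\Phi(t)=kA\Phi(t)$ in the (still isothermal) $t$-coordinates. By Proposition \ref{Max_Surf_Sim} the surface $\hat\M$ is maximal space-like and $t$ remains isothermal.

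Next I would compute ${\hat\Phi^{\prime\bot}}^{\!2}$ in the coordinate $t$. Differentiating gives $\hat\Phi'(t)=kA\Phi'(t)$. Since the orthogonal map $A$ carries $T_p(\M)$ onto $T_{\hat\x}(\hat\M)$ and $N_p(\M)$ onto $N_{\hat\x}(\hat\M)$, it commutes with the tangential/normal decomposition, whence $\hat\Phi^{\prime\bot}(t)=kA\bigl(\Phi^{\prime\bot}(t)\bigr)$. Because $A$ preserves the bilinear product on $\CC^4$, this yields
\begin{equation*}
{\hat\Phi^{\prime\bot}}^{\!2}(t)=k^{2}{\Phi^{\prime\bot}}^{\!2}(t)=-k^{2},
\end{equation*}
where the last equality uses that $t$ is canonical on $\M$. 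In particular ${\hat\Phi^{\prime\bot}}^{\!2}\ne 0$ everywhere, so $\hat\M$ has no degenerate points and is of general type in the sense of Definition \ref{Min_Surf_Gen_Typ-def}.

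Finally, I would apply the transformation rule \eqref{tildPhiPr2} for the holomorphic change $t=s/\sqrt{k}$, for which $dt/ds=1/\sqrt{k}$ and $(dt/ds)^{4}=1/k^{2}$. This gives
\begin{equation*}
\bigl(\tilde{\hat\Phi}_s^{\,\prime\bot}\bigr)^{2}(s)
={\hat\Phi^{\prime\bot}}^{\!2}\bigl(t(s)\bigr)\cdot\Bigl(\tfrac{dt}{ds}\Bigr)^{\!4}
=(-k^{2})\cdot\tfrac{1}{k^{2}}=-1,
\end{equation*}
so by Definition \ref{Can1-def} the variable $s$ determines canonical coordinates on $\hat\M$.

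The only non-routine point is the verification that $A$ commutes with the tangential/normal projection, which follows directly from $A$ being a linear isometry of $\RR^4_2$ mapping $\M$ to $\hat\M$; once this is in hand the result reduces to substituting the known transformation laws \eqref{hat_Phi-Phi-mov} and \eqref{tildPhiPr2}, so I do not anticipate any serious obstacle.
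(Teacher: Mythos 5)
Your proposal is correct and follows essentially the same route as the paper: the paper reduces to the homothety case (the motion part being already covered by Theorem \ref{Can_Move}), computes $\left.\hat\Phi_t^{\prime\bot}\right.^2 = k^2\,{\Phi_t^{\prime\bot}}^2 = -k^2$, and then applies \eqref{tildPhiPr2} with $t'^{\,4}=1/k^2$ exactly as you do. Your explicit verification that $A$ commutes with the tangential/normal projection is just the content of \eqref{hat_PhiPr-PhiPr-mov}, so there is no substantive difference.
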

\begin{proof}
It is enough to consider a homothety in $\RR^4_2$ and let $\hat\x=k\x$. It follows from here that 
$\hat\Phi=k\Phi$, $\hat\Phi'_t=k\Phi'_t$ and $\left.\hat\Phi_t^{\prime\bot}\right.^2 = k^2 {\Phi_t^{\prime \bot}}^2$,
respectively. Since $t$ determines canonical coordinates on $\M$, then the last equality reduces to
$\left.\hat\Phi_t^{\prime\bot}\right.^2 = - k^2$. Hence $\hat\M$ is also of general type. 
On the other hand, $t=\frac{1}{\sqrt{k}}s$ gives that $t'^{\,4}=\frac{1}{k^2}$.
Applying \eqref{tildPhiPr2} to $\hat\M$, we get: 
\[
\left.\tilde{\hat\Phi}_s^{\prime\bot}\right.^2 = \left.\hat\Phi_t^{\prime\bot}\right.^2 t'^{\,4} =
- k^2 \ds\frac{1}{k^2} = - 1\,.
\]
This means that $s$ determines canonical coordinates on $\hat\M$. 
\end{proof}

\begin{remark}
The formula $\left.\hat\Phi_t^{\prime\bot}\right.^2 = k^2 {\Phi_t^{\prime \bot}}^2$,
in the proof of the last theorem, means that the notion of a degenerate point is invariant under a similarity.
\end{remark}

 If $\hat\M$ is obtained from $\M$ by a homothety: $\hat\x=k\x$, then \eqref{hat_K_kappa-K_kappa-homotet_R42} 
imply that $\hat K=\frac{1}{k^2}K$ and $\hat\varkappa = \frac{1}{k^2}\varkappa$. From here and \eqref{nu_mu_K_kappa_R42} 
we get $\hat\nu = \frac{1}{k}\nu$ and $\hat\mu = \frac{1}{k}\mu$. 
The canonical coordinates $s$ on $\hat\M$ and the canonical coordinates $t$ on $\M$ are related by the equality 
$t=\frac{1}{\sqrt{k}}s$, according to Theorem \ref{Can_Sim}. Thus we have
\begin{gather}\label{hat_K_kappa_nu_mu-homotet_R42}
\textstyle{\hat K(s) = \frac{1}{k^2}\; K\!\left(\frac{1}{\sqrt{k}}s\right); \ \  
\hat\varkappa(s) = \frac{1}{k^2}\;\varkappa\!\left(\frac{1}{\sqrt{k}}s\right); \ \ 
\hat\nu(s) = \frac{1}{k}\;\nu\!\left(\frac{1}{\sqrt{k}}s\right); \ \ 
\hat\mu(s) = \frac{1}{k}\;\mu\!\left(\frac{1}{\sqrt{k}}s\right).}
\raisetag{-0.3em}
\end{gather}

\smallskip
 Now let us see how to obtain the canonical coordinates on associated maximal space-like surfaces introduced by
Definition \ref{1-param_family_assoc_surf}\,.
\begin{theorem}\label{Can_1-param_family}
Let $\M=(\D,\x)$ be a maximal space-like surface of general type and
$t\in\D$ be a complex variable, which determines canonical coordinates on $\M$.
Let $\M_\theta=(\D,\x_\theta)$ be the family of associated maximal space-like surfaces with $\M$.
Then for any $\theta$, $\M_\theta$ is also of general type and the variable $s$ introduced by
$t=\e^{\ii\frac{\theta}{2}} s$, determines canonical coordinates on $\M_\theta$.
\end{theorem}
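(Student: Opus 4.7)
The plan is to combine equation \eqref{Phi_1-param_family}, which gives the simple relation $\Phi_\theta = \e^{-\ii\theta}\Phi$ between the complex functions of $\M$ and $\M_\theta$, with the transformation law \eqref{tildPhiPr2} for ${\Phi^{\prime\bot}}^2$ under a holomorphic change of isothermal coordinates. Since Definition \ref{Can1-def} characterizes canonical coordinates of the first type by the single scalar condition ${\Phi^{\prime\bot}}^2=-1$, the whole theorem reduces to a direct computation of this scalar for $\M_\theta$ in the coordinates $s$.

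First I would verify that, at corresponding points $p\in\M$ and $\x_\theta(p)\in\M_\theta$, the tangent (and hence the normal) spaces of $\M$ and $\M_\theta$ coincide as subspaces of $\RR^4_2$. Indeed, differentiating $\x_\theta = \x\cos\theta + \y\sin\theta$ and using the Cauchy-Riemann equations $\y_u = -\x_v$, $\y_v = \x_u$, one sees that $(\x_\theta)_u$ and $(\x_\theta)_v$ are real linear combinations of $\x_u$ and $\x_v$. Therefore $T_p(\M_\theta) = T_p(\M)$ and $N_p(\M_\theta) = N_p(\M)$, so the normal-projection operator $(\cdot)^\bot$ in $\CC^4$ is the same for both surfaces at corresponding points.

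Consequently, from $\Phi_\theta = \e^{-\ii\theta}\Phi$ I obtain $\Phi_\theta' = \e^{-\ii\theta}\Phi'$ and, projecting normally,
\[
(\Phi_\theta)^{\prime\bot}_t = \e^{-\ii\theta}\,\Phi^{\prime\bot}_t, \qquad
\bigl((\Phi_\theta)^{\prime\bot}_t\bigr)^{\!2} = \e^{-2\ii\theta}\bigl(\Phi^{\prime\bot}_t\bigr)^{\!2} = -\e^{-2\ii\theta},
\]
where the last equality uses the hypothesis that $t$ is canonical on $\M$. In particular this scalar is nowhere zero on $\D$, so by Definition \ref{DegP-def} the surface $\M_\theta$ has no degenerate points, i.e.\ it is of general type.

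Finally, applying the holomorphic change $t = \e^{\ii\theta/2}s$, one has $t'(s) = \e^{\ii\theta/2}$ and hence $(t')^4 = \e^{2\ii\theta}$. Formula \eqref{tildPhiPr2} then yields
\[
\bigl(\tilde\Phi_\theta{}^{\prime\bot}_s\bigr)^{\!2} = \bigl((\Phi_\theta)^{\prime\bot}_t\bigr)^{\!2} (t')^4 = -\e^{-2\ii\theta}\,\e^{2\ii\theta} = -1,
\]
which by Definition \ref{Can1-def} means precisely that $s$ determines canonical coordinates on $\M_\theta$. The only conceptual point that requires care is the identification of the tangent and normal spaces at corresponding points of $\M$ and $\M_\theta$; once this is established, the rest is an immediate one-line computation.
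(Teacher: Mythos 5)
Your proof is correct and follows essentially the same route as the paper: both use $\Phi_\theta=\e^{-\ii\theta}\Phi$ to get $\bigl(\Phi_{\theta|t}^{\prime\bot}\bigr)^2=-\e^{-2\ii\theta}$ and then apply \eqref{tildPhiPr2} with $t'^{\,4}=\e^{2\ii\theta}$. Your extra verification that $\M$ and $\M_\theta$ share tangent and normal spaces at corresponding points is a legitimate (and welcome) justification of a step the paper leaves implicit; alternatively one could sidestep it entirely by invoking ${\Phi^{\prime\bot}}^2={\Phi'}^2$ from \eqref{Phi_prim_bot^2-Phi_prim^2}.
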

\begin{proof}
We use the formula \eqref{Phi_1-param_family}, which gives that
$\Phi_\theta = \e^{-\ii\theta}\Phi$. From here we find $\Phi'_{\theta|t} = \e^{-\ii\theta}\Phi'_t$ and 
$\left.\Phi_{\theta|t}^{\prime\bot}\right.^2 = \e^{-2\ii\theta} {\Phi_t^{\prime \bot}}^2$.
Since $t$ determines canonical coordinates on $\M$, then the last equality gets the form
$\left.\Phi_{\theta|t}^{\prime\bot}\right.^2 = -\e^{-2\ii\theta}$, which implies that $\M_\theta$ 
is also of general type. On the other hand, the formula $t=\e^{\ii\frac{\theta}{2}} s$ implies that 
$t'^{\,4}=\e^{2\ii\theta}$. Applying \eqref{tildPhiPr2} to $\M_\theta$, we obtain: 
\[
\left.\tilde\Phi_{\theta|s}^{\prime\bot}\right.^2 = \left.\Phi_{\theta|t}^{\prime\bot}\right.^2 t'^{\,4} =
- \e^{-2\ii\theta} \e^{2\ii\theta} = - 1\,.
\]
This means that $s$ determines canonical coordinates on $\M_\theta$. 
\end{proof}

\begin{remark}
The formula $\left.\Phi_{\theta|t}^{\prime\bot}\right.^2 = \e^{-2\ii\theta} {\Phi_t^{\prime \bot}}^2$,
in the proof of the last theorem shows that the notion of a degenerate point is invariant for the family 
of associated surfaces in the following sense: If $p$ is a degenerate point in $\M$, then for any 
$\theta$ the point $\mathcal{F}_\theta(p)$, corresponding to на $p$ under the standard isometry 
$\mathcal{F}_\theta$ between $\M$ and $\M_\theta$, described in Proposition \ref{Isom_M_phi-M}\,,
 is also a degenerate point in $\M_\theta$.
\end{remark}

 The scalar invariants of $\M_\theta$ in view of \eqref{K_kappa-1-param_family_R42}, satisfy 
$K_\theta(t)=K(t)$ и $\varkappa_\theta(t) = \varkappa(t)$. taking into account \eqref{nu_mu_K_kappa_R42},
we have $\nu_\theta(t) = \nu(t)$ и $\mu_\theta(t) = \mu(t)$. The canonical coordinate $s$ of $\M_\theta$ 
and the canonical coordinate $t$ of $\M$ are related by the formula $t=\e^{\ii\frac{\theta}{2}} s$, 
according to Theorem \ref{Can_1-param_family}. Combining the last formulas, we obtain:
\begin{equation}\label{K_kappa_nu_mu_1-param_family_R42}
K_\theta(s) = K(\e^{\ii\frac{\theta}{2}} s)\;; \quad \varkappa_\theta(s) = \varkappa(\e^{\ii\frac{\theta}{2}} s)\;; \quad
\nu_\theta(s) = \nu(\e^{\ii\frac{\theta}{2}} s)\;; \quad \mu_\theta(s) = \mu(\e^{\ii\frac{\theta}{2}} s)\;.
\end{equation}

Since the conjugate maximal space-like surface $\bar\M$ of $\M$ belongs to the family of the associated surfaces with 
$\M$ by the condition $\theta=\frac{\pi}{2}$, then the corresponding formulas for the conjugate surface $\bar\M$
can be obtained from the above formulas replacing $\theta = \frac{\pi}{2}$.


\section{Frenet type formulas and a system of natural equations for a maximal space-like surface in $\RR^4_2$.}\label{sect_nat_eq}

 Let $\M=(\D,\x)$ be a maximal space-like surface of general type in $\RR^4_2$ and $t=u+\ii v$ determine canonical 
coordinates on $\M$. Consider the orthonormal frame field $(\vX_1,\vX_2,\n_1,\n_2)$, introduced in the previous 
section. As a consequence of Theorem \ref{Can_Coord-uniq} this basis is determined uniquely up to renumbering 
and directions of the vectors in the basis.
 The covariant derivatives of these vector fields satisfy the following Frenet type formulas:
\begin{equation}\label{Frene_X1_X2_n1_n2_R42}
\begin{array}{llrrr}
       \nabla_{\vX_1} \vX_1 & =                     & \gamma_1 \,\vX_2 & -\ \  \nu \,\n_1\,; &                     \\[0.5ex]
       \nabla_{\vX_1} \vX_2 & =\; -\gamma_1 \,\vX_1 &                  &                     & - \;\;\mu \,\n_2\,; \\[0.5ex]
       \nabla_{\vX_2} \vX_1 & =                     & \gamma_2 \,\vX_2 &                     & - \;\;\mu \,\n_2\,; \\[0.5ex]
			 \nabla_{\vX_2} \vX_2 & =\; -\gamma_2 \,\vX_1 &                  & +\ \  \nu \,\n_1\,; &                     \\[2.0ex] 
				
       \nabla_{\vX_1} \n_1 & =\; -\nu  \,\vX_1   &              &                        & +\ \ \beta_1 \,\n_2\,; \\[0.5ex]
       \nabla_{\vX_1} \n_2 & =                   & -\mu \,\vX_2 & -\ \ \beta_1 \,\n_1\,; &                        \\[0.5ex]
       \nabla_{\vX_2} \n_1 & =                   &  \nu \,\vX_2 &                        & +\ \ \beta_2 \,\n_2\,; \\[0.5ex]
			 \nabla_{\vX_2} \n_2 & =\; -\mu  \,\vX_1   &              & -\ \ \beta_2 \,\n_1\,. &                        \\[0.5ex]
       \end{array}
\end{equation}

 The functions $\nu$ and $\mu$ are the invariants, introduced in the previous section and $\gamma_1$, $\gamma_2$
are the geodesic curvatures of the canonical coordinate lines on $\M$. The functions $\gamma_1$ and $\gamma_2$ 
can be expressed by the coefficient $E$ of the first fundamental form:
\[
\begin{array}{lll}
\gamma_1 &=&(\nabla_{\vX_1} \vX_1) \cdot \vX_2 = \ds\frac{(\nabla_{\x_u} \x_u) \cdot \x_v}{(\sqrt{E})^3}
          = \ds\frac{-\x_u \cdot (\nabla_{\x_u}\x_v)}{E\sqrt{E}}\\[2.5ex]
 				 &=&\ds\frac{-\x_u \cdot (\nabla_{\x_v}\x_u)}{E\sqrt{E}}
				  = \ds\frac{-\nabla_{\x_v}(\x_u \cdot \x_u)}{2E\sqrt{E}}
					= \ds\frac{-E_v}{2E\sqrt{E}}\;.
\end{array}
\]
Note that the last computations are valid in arbitrary isothermal coordinates. In a similar way we get 
an analogous formula for $\gamma_2$. Thus, the geodesic curvatures of the coordinate lines of a surface $\M$, parametrized by isothermal coordinates are given by:
\begin{equation}\label{gamma_12-E_Can_R42}
\gamma_1 = \frac{-E_v}{2E\sqrt{E}}\,; \qquad \gamma_2 = \frac{ E_u}{2E\sqrt{E}}\;.
\end{equation}

\smallskip
 The integrability conditions of the system \eqref{Frene_X1_X2_n1_n2_R42} form the system of natural PDE's 
of the surface $\M$. In the present work we use an alternative approach to obtain these natural equations
using the complex vector fields $\Phi$ and $\bar\Phi$ instead the vector fields $\vX_1$ and $\vX_2$.

The equality $\Phi^2=0$ means that $\Phi$ and $\bar\Phi$ are orthogonal with respect to the Hermitian 
scalar product in $\CC^4$. Hence the quadruple $(\Phi,\bar\Phi,\n_1,\n_2)$ forms an orthogonal (not necessarily normalized)
moving frame field on $\M$. Next we find a complex variant of Frenet type formulas \eqref{Frene_X1_X2_n1_n2_R42}.

Since $\Phi$ is a holomorphic function, the first formula is $\frac{\partial\Phi}{\partial \bar t}=0$. 
In view of $\Phi^2=0$ it follows that $\frac{\partial\Phi}{\partial t}\Phi=0$, which means that 
$\frac{\partial\Phi}{\partial t}$ is orthogonal to $\bar\Phi$ with respect to the Hermitian scalar product
in $\CC^4$. Therefore $\frac{\partial\Phi}{\partial t}$ is a linear combination of $\Phi$, $\n_1$ and $\n_2$ 
and has the form:
\begin{equation*}
\frac{\partial\Phi}{\partial t}=\ds\frac{\Phi' \cdot \bar \Phi}{\|\Phi\|^2}\Phi-(\Phi'\cdot\n_1)\n_1-(\Phi'\cdot\n_2)\n_2\,.
\end{equation*}
The coefficient before $\Phi$ satisfies equality \eqref{d_ln_E_dt} and can be expressed by $E$.
For the coefficient before $\n_1$ we combine \eqref{PhiPr} and \eqref{sigma_nu_mu_R42} and get:
\begin{equation*}
\Phi'\cdot\n_1 = (\sigma(\x_u,\x_u)-\ii\sigma(\x_u,\x_v))\cdot\n_1=E\sigma(\vX_1,\vX_1)\cdot\n_1=E\nu \,.
\end{equation*}
Analogously for the coefficient before $\n_2$ we have:
\begin{equation*}
\Phi'\cdot\n_2 = (\sigma(\x_u,\x_u)-\ii\sigma(\x_u,\x_v))\cdot\n_2=-\ii E\sigma(\vX_1,\vX_2)\cdot\n_2=-\ii E\mu \,.
\end{equation*}
Thus we find the following formula for $\frac{\partial\Phi}{\partial t}$:
\begin{equation*}
\frac{\partial\Phi}{\partial t}=\ds\frac{\partial\ln E}{\partial t}\Phi - E\nu\n_1 + \ii E\mu\n_2 \,.
\end{equation*}

 The condition $\n_1^2=-1$ implies that $\frac{\partial\n_1}{\partial t}\n_1=0$ and hence $\frac{\partial\n_1}{\partial t}$
is represented in the form:
\begin{equation*}
\frac{\partial\n_1}{\partial t} =  
\frac{\frac{\partial\n_1}{\partial t}\bar\Phi}{\|\Phi\|^2}\Phi + 
\frac{\frac{\partial\n_1}{\partial t}\Phi}{\|\Phi\|^2}\bar\Phi + \beta\n_2 \,. 
\end{equation*}
The coefficients before $\Phi$ and $\bar\Phi$ are: 
\[
\frac{\partial\n_1}{\partial t}\bar\Phi=-\n_1\frac{\partial\bar\Phi}{\partial t}=0, \quad
\frac{\partial\n_1}{\partial t}\Phi=-\n_1\frac{\partial\Phi}{\partial t}=-E\nu \text{\ \ \ and\ \ \ } \|\Phi\|^2=2E\,. 
\]
Hence $\frac{\partial\n_1}{\partial t}$ is given by:
\begin{equation*}
\frac{\partial\n_1}{\partial t} =  -\frac{\nu}{2}\bar\Phi + \beta\n_2 \,. 
\end{equation*}
In analogous way we find $\frac{\partial\n_2}{\partial t}$:
\begin{equation*}
\frac{\partial\n_2}{\partial t} =  \ii\frac{\mu}{2}\bar\Phi - \beta\n_1 \,. 
\end{equation*}

 Up to now, we found formulas for four derivatives:
$\frac{\partial\Phi}{\partial \bar t}$, $\frac{\partial\Phi}{\partial t}$, $\frac{\partial\n_1}{\partial t}$ and 
$\frac{\partial\n_2}{\partial t}$. Applying complex conjugation to these formulas we obtain 
$\frac{\partial\bar\Phi}{\partial t}$, $\frac{\partial\bar\Phi}{\partial\bar t}$, $\frac{\partial\n_1}{\partial\bar t}$ and 
$\frac{\partial\n_2}{\partial\bar t}$, but they do not give any new information.
Hence, the Frenet type formulas for the complex frame field $(\Phi,\bar\Phi,\n_1,\n_2)$ are the following:

\begin{equation}\label{Frene_Phi_bar_Phi_n1_n2_R42}
\begin{array}{llcrrr}
 \ds\frac{\partial\Phi}{\partial \bar t} &=&  0\,;                                     & &            & \\[2ex]
 \ds\frac{\partial\Phi}{\partial t}      &=&  \ds\frac{\partial\ln E}{\partial t}\Phi  & &-\ E\nu\n_1\phantom{.} 
&+\ \ \ \ \,\ii E\mu\n_2\,;\\[2ex]
 \ds\frac{\partial\n_1}{\partial t}      &=&           &  -\ds\frac{\nu}{2}\bar\Phi  & &+\qquad\ \:\beta\n_2\,;\\[2ex]
 \ds\frac{\partial\n_2}{\partial t}      &=&           &\ii\ds\frac{\mu}{2}\bar\Phi  &-\ \ \ \,\beta\n_1. &\\
\end{array}
\end{equation}

\bigskip
 The integrability conditions of the above system of complex PDE's are the following:
\[
\frac{\partial^2\Phi}{\partial \bar t \partial t}=\frac{\partial^2\Phi}{\partial t \partial\bar t}\;; \qquad
\frac{\partial^2\n_1}{\partial \bar t \partial t}=\frac{\partial^2\n_1}{\partial t \partial\bar t}\;; \qquad
\frac{\partial^2\n_2}{\partial \bar t \partial t}=\frac{\partial^2\n_2}{\partial t \partial\bar t}\;.
\]

 The first equation yields $\frac{\partial^2\Phi}{\partial t \partial\bar t}=0$ and the equality of the 
mixed partial derivatives gives:
\[
\begin{array}{lll}
0=\ds\frac{\partial^2\Phi}{\partial \bar t \partial t} &=&
      \ds\frac{\partial^2\ln E}{\partial\bar t \partial t}\Phi + 
      \ds\frac{\partial\ln E}{\partial t}\ds\frac{\partial\Phi}{\partial\bar t} - 
      \ds\frac{\partial(E\nu\n_1)}{\partial\bar t} + \ii\ds\frac{\partial(E\mu\n_2)}{\partial\bar t}\\[2.0ex]
 				 &=&\ds\frac{\Delta\ln E}{4}\Phi-
				\ds\frac{\partial(E\nu)}{\partial\bar t}\n_1-E\nu\ds\frac{\partial\n_1}{\partial\bar t} + 
				\ii\ds\frac{\partial(E\mu)}{\partial\bar t}\n_2+\ii E\mu\ds\frac{\partial\n_2}{\partial\bar t}\\[2.0ex]
				 &=&\left(\ds\frac{\Delta\ln E}{4}+\ds\frac{E\nu^2}{2}+\ds\frac{E\mu^2}{2}\right)\Phi\\[2.0ex]
				 &+&\left(  -\ds\frac{\partial(E\nu)}{\partial\bar t}-\ii E\mu\bar\beta\right)\n_1
				  + \left(\ii\ds\frac{\partial(E\mu)}{\partial\bar t}-    E\nu\bar\beta\right)\n_2\,.
\end{array}
\]

 The coefficient before $\Phi$ is zero, i.e.:
\begin{equation}\label{Nat_Eq_Gauss_E_nu_mu_R42}
\frac{\Delta\ln E}{E}+2\nu^2+2\mu^2=0\,.
\end{equation} 
Taking into account \eqref{K_kappa_nu_mu_R42}, it follows easily that  the equation 
\eqref{Nat_Eq_Gauss_E_nu_mu_R42} is in fact the classical Gauss equation.

 Equating to zero the coefficients before $\n_1$ and $\n_2$ we get:
\begin{equation}\label{Nat_Eq_Codazzi_beta_E_nu_mu_R42}
E\nu\bar\beta = \ii \ds\frac{\partial(E\mu)}{\partial\bar t}\;; \qquad
E\mu\bar\beta = \ii \ds\frac{\partial(E\nu)}{\partial\bar t}\;.
\end{equation}
These equations are in fact the classical Codazzi equations. Next we see that in canonical coordinates 
the two equalities in \eqref{Nat_Eq_Codazzi_beta_E_nu_mu_R42} are not independent and the 
first of them implies the second one. Note that \eqref{E_nu_mu_R42} implies that $(E\nu)^2=(E\mu)^2 + 1$.
Differentiating the last equality, we find:
\[
\nu \ds\frac{\partial(E\nu)}{\partial\bar t} = \mu \ds\frac{\partial(E\mu)}{\partial\bar t}\;.
\]
Assuming that the first equality in \eqref{Nat_Eq_Codazzi_beta_E_nu_mu_R42} is satisfied, we replace 
$\ds\frac{\partial(E\mu)}{\partial\bar t}$ with $-\ii E\nu\bar\beta$ and get:
\[
\nu \ds\frac{\partial(E\nu)}{\partial\bar t} = -\ii \mu E\nu\bar\beta\,.
\]
Since by definition we have $\nu \neq 0$, then the last equality is equivalent to the second equality in 
\eqref{Nat_Eq_Codazzi_beta_E_nu_mu_R42}.

Taking into account the first equation in \eqref{Nat_Eq_Codazzi_beta_E_nu_mu_R42} we shall express explicitly $\beta$
by means of $\nu$ and $\mu$. 
First we add the two equalities in \eqref{Nat_Eq_Codazzi_beta_E_nu_mu_R42} and get:
\begin{equation*}
E(\nu+\mu)\bar\beta = \ii \ds\frac{\partial(E(\nu+\mu))}{\partial\bar t}\;,
\end{equation*}
from where it follows that
\begin{equation*}
\bar\beta = \ii \ds\frac{\partial\ln(E(\nu+\mu))}{\partial\bar t}\;.
\end{equation*}
In the last equality we replace $E$ from \eqref{E_nu_mu_R42} and find:
\begin{equation*}
\bar\beta = \ii \ds\frac{\partial}{\partial\bar t}\ln\sqrt{\frac{\nu+\mu}{\nu-\mu}}
          = \frac{\ii}{2} \ds\frac{\partial}{\partial\bar t}\ln\frac{\nu+\mu}{\nu-\mu}\;.
\end{equation*}
After a complex conjugation, we have:
\begin{equation}\label{beta_nu_mu_R42}
\beta = -\frac{\ii}{2} \ds\frac{\partial}{\partial t}\ln\frac{\nu+\mu}{\nu-\mu}\;.
\end{equation}

 Now, let us return to the integrability conditions of \eqref{Frene_Phi_bar_Phi_n1_n2_R42} and consider 
the equality of the mixed partial derivatives of $\n_1$ and $\n_2$. Taking the tangential components 
of these equalities we get equations equivalent to \eqref{Nat_Eq_Codazzi_beta_E_nu_mu_R42}. That is why 
we take only the normal components of these mixed partial derivatives.
\[
\begin{array}{lll}
 \left(\ds\frac{\partial^2\n_1}{\partial \bar t \partial t}\right)^\bot &=&
    -\ds\frac{\nu}{2}\left(\ds\frac{\partial \bar\Phi}{\partial \bar t}\right)^\bot +
     \ds\frac{\partial \beta}{\partial \bar t}\n_2 + \beta \left(\ds\frac{\partial \n_2}{\partial \bar t}\right)^\bot\\[2.0ex]
&=&  \phantom{-}\ds\frac{\nu}{2}\left(E\nu\n_1 + \ii E\mu\n_2 \right) + \ds\frac{\partial \beta}{\partial \bar t}\n_2
    + \beta (-\bar\beta\n_1)\\[2.0ex]
&=&  \phantom{-}\left(  \ds\frac{E\nu^2}{2} - |\beta|^2\right)\n_1 + 
      \left(  \ii\ds\frac{E\nu\mu}{2} + \ds\frac{\partial \beta}{\partial \bar t}\right)\n_2 \,.
\end{array}
\]
In analogous way we find for the other mixed derivative:
\[
\left(\ds\frac{\partial^2\n_1}{\partial t \partial \bar t}\right)^\bot =
      \left(  \ds\frac{E\nu^2}{2} - |\beta|^2\right)\n_1 +
		  \left( -\ii\ds\frac{E\nu\mu}{2} + \ds\frac{\partial \bar \beta}{\partial t}\right)\n_2 \;.
\]
The last formulas show that the coefficients before $\n_1$ coincide. 
Equating the coefficients before $\n_2$, we get an equality which is the Ricci fundamental equation:
\[
 \ii\ds\frac{E\nu\mu}{2} + \ds\frac{\partial \beta}{\partial \bar t} = 
-\ii\ds\frac{E\nu\mu}{2} + \ds\frac{\partial \bar \beta}{\partial t}\;.
\]
The last equation is equivalent to
\[
\ds\frac{\partial \beta}{\partial \bar t} - \ds\frac{\partial \bar \beta}{\partial t} = -\ii E \nu \mu 
\quad \Leftrightarrow \quad
2\Im \ds\frac{\partial \beta}{\partial \bar t} \ii = -\ii E \nu \mu \,.
\]
Finally we obtain:
\begin{equation}\label{Nat_Eq_Ricci_E_nu_mu_R42}
\Im \ds\frac{\partial \beta}{\partial \bar t} = -\ds\frac{E\nu\mu}{2} \,.
\end{equation}

 Equating the mixed partial derivatives of the second normal vector field $\n_2$ we do not obtain new equations.
Summarizing the above calculations, we have the following statement. 
\begin{prop}\label{Frene_integr_cond_R42}
The integrability conditions of the system \eqref{Frene_Phi_bar_Phi_n1_n2_R42} are given by the equalities:
\eqref{Nat_Eq_Gauss_E_nu_mu_R42}, \eqref{Nat_Eq_Codazzi_beta_E_nu_mu_R42} and \eqref{Nat_Eq_Ricci_E_nu_mu_R42}. 
\end{prop}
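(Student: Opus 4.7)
The plan is essentially to organize and verify the computations that were carried out inline before the statement. The integrability of the first-order system \eqref{Frene_Phi_bar_Phi_n1_n2_R42} is equivalent to the equality of three pairs of mixed second partial derivatives,
\[
\frac{\partial^2 \Phi}{\partial t \partial \bar t} = \frac{\partial^2 \Phi}{\partial \bar t \partial t}, \qquad
\frac{\partial^2 \n_1}{\partial t \partial \bar t} = \frac{\partial^2 \n_1}{\partial \bar t \partial t}, \qquad
\frac{\partial^2 \n_2}{\partial t \partial \bar t} = \frac{\partial^2 \n_2}{\partial \bar t \partial t},
\]
together with those of $\bar \Phi$, which are just complex conjugates of the first pair. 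I would check each identity by computing both sides from \eqref{Frene_Phi_bar_Phi_n1_n2_R42} and reading off scalar equations from the components along the orthogonal frame field $(\Phi, \bar\Phi, \n_1, \n_2)$.

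For the first identity the left-hand side vanishes because $\Phi$ is holomorphic. The right-hand side, already expanded in the text, decomposes into a $\Phi$-component, an $\n_1$-component and an $\n_2$-component. Annihilating the $\Phi$-component reproduces the Gauss equation \eqref{Nat_Eq_Gauss_E_nu_mu_R42}, while annihilating the $\n_1$- and $\n_2$-components yields exactly the two Codazzi equations \eqref{Nat_Eq_Codazzi_beta_E_nu_mu_R42}.

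For the second identity, a parallel computation shows that the tangential projections of both sides reduce, after using the first two formulas of \eqref{Frene_Phi_bar_Phi_n1_n2_R42}, to consequences of the Codazzi relations already obtained; the $\n_1$-components agree automatically; and the $\n_2$-components differ by a purely imaginary multiple of $\n_2$ whose vanishing, after taking imaginary parts, is exactly the Ricci equation \eqref{Nat_Eq_Ricci_E_nu_mu_R42}. The third identity, for $\n_2$, is handled by the same scheme; its components give, up to sign and conjugation, the same Codazzi and Ricci relations, so nothing new appears.

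The main obstacle is bookkeeping rather than a conceptual difficulty: each mixed derivative produces five or six terms, and to isolate the frame components I would repeatedly invoke $\Phi^2 = 0$, $\Phi \cdot \bar \Phi = \|\Phi\|^2 = 2E$, $\n_i \cdot \Phi = \n_i \cdot \bar \Phi = 0$, and $\n_i \cdot \n_j = -\delta_{ij}$. A subtle but essential point, already observed in the inline derivation, is that the two Codazzi equations in \eqref{Nat_Eq_Codazzi_beta_E_nu_mu_R42} are not independent: the canonical-coordinate identity $(E\nu)^2 = (E\mu)^2 + 1$ coming from \eqref{E_nu_mu_R42} forces the second from the first (using that $\nu \neq 0$ on a surface of general type). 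Once this dependence is recorded, the full list of integrability conditions collapses exactly to \eqref{Nat_Eq_Gauss_E_nu_mu_R42}, \eqref{Nat_Eq_Codazzi_beta_E_nu_mu_R42} and \eqref{Nat_Eq_Ricci_E_nu_mu_R42}, which is the assertion of the proposition.
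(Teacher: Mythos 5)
Your proposal is correct and follows essentially the same route as the paper: the paper's own justification is precisely the inline computation of the three pairs of mixed partial derivatives, reading off the Gauss equation from the $\Phi$-component, the Codazzi equations from the $\n_1$- and $\n_2$-components, and the Ricci equation from the $\n_2$-component of the $\n_1$-compatibility, with the $\n_2$-compatibility and the tangential parts yielding nothing new. The observation that the two Codazzi equations are dependent via $(E\nu)^2=(E\mu)^2+1$ is also made in the paper, though it is used later rather than being needed for the proposition itself.
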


 In the above integrability conditions we can eliminate two of the functions: $E$ and $\beta$.
Thus we shall obtain a system of two partial differential equations for the functions $\nu$ and $\mu$. 
For that purpose let us replace the expression for $\beta$ from \eqref{beta_nu_mu_R42} into the
equality \eqref{Nat_Eq_Ricci_E_nu_mu_R42}.
\[
-\Im \frac{\ii}{2} \ds\frac{\partial^2}{\partial \bar t \partial t}\ln\frac{\nu+\mu}{\nu-\mu} = -\ds\frac{E\nu\mu}{2}\;.
\]
Using that
$\frac{\partial^2}{\partial \bar t \partial t} = \frac{1}{4}\Delta$, we get:
\[
\frac{1}{4} \Delta\ln\frac{\nu+\mu}{\nu-\mu} = E\nu\mu\,,
\]
which is equivalent to
\[
\frac{1}{E} \Delta\ln\frac{\nu+\mu}{\nu-\mu} - 4\nu\mu = 0\,.
\]
Replacing $E$ from \eqref{E_nu_mu_R42} into the last equation and in \eqref{Nat_Eq_Gauss_E_nu_mu_R42}
we obtain that $\nu$ and $\mu$ satisfy the following system of partial differential equations:

\begin{equation}\label{Nat_Eq_nu_mu_R42}
\begin{array}{l}
\sqrt{\nu^2-\mu^2}\; \Delta\ln \sqrt{\nu^2-\mu^2} - 2\nu^2 - 2\mu^2 = 0\,; \\[1.5ex]
\sqrt{\nu^2-\mu^2}\; \Delta\ln\ds\frac{\nu+\mu}{\nu-\mu} - 4\nu\mu = 0\,;
\end{array} \qquad\quad \nu^2-\mu^2>0\,;\quad \nu>0\,.
\end{equation}
 
 We call this system the \emph{system of natural PDE's} of the maximal space-like surfaces in $\RR^4_2$.
Since the Frenet type formulas \eqref{Frene_Phi_bar_Phi_n1_n2_R42} are the complex variant of 
the Frenet type formulas \eqref{Frene_X1_X2_n1_n2_R42}, then \eqref{Nat_Eq_nu_mu_R42} are also the 
integrability conditions of \eqref{Frene_X1_X2_n1_n2_R42}. 

 Let $\M=(\D,\x)$ and $\hat\M=(\D,\hat\x)$ be two maximal space-like surfaces of general type related by 
a proper motion of the type:
\begin{equation}\label{hat_M-M-prop_mov_R42}
\hat\x = A\x + \vb\,; \qquad A \in \mathbf{SO}(2,2,\RR), \ \ \vb \in \RR^4_2 \,.
\end{equation}
If $t=u+\ii v$ gives canonical coordinates on $\M$, then according to Theorem \ref{Can_Move}
these coordinates are canonical for the surface $\hat\M$ as well.
In view of \eqref{hat_K_kappa_nu_mu-propmov_R42} the pair $(\hat\nu,\hat\mu)$
coincides with the pair $(\nu,\mu)$, considered as functions of $t$. 

 The results obtained above for $\nu$ and $\mu$ can be summarized in the following statement:
\begin{theorem}\label{Thm-Nat_Eq_nu_mu_R42}
Let $\M$ be a maximal space-like surface of general type in $\RR^4_2$ parametrized by canonical coordinates.
Then the invariants $\nu$ and $\mu$ of $\M$, considered as functions of these coordinates satisfy
the system of natural equations \eqref{Nat_Eq_nu_mu_R42} of the maximal space-like surfaces in $\RR^4_2$.

If $\hat\M$ is obtained from $\M$ by a proper motion of the type \eqref{hat_M-M-prop_mov_R42},
then it generates the same solution to \eqref{Nat_Eq_nu_mu_R42}. 
\end{theorem}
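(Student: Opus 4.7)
The plan is to observe that essentially all the ingredients of the proof have already been assembled in the discussion preceding the theorem, and the proof amounts to repackaging them.

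For the first assertion, I would begin by noting that since $\M$ is a genuine space-like surface of general type parametrized by canonical coordinates, the Frenet-type system \eqref{Frene_Phi_bar_Phi_n1_n2_R42} is satisfied on $\M$, so its integrability conditions hold automatically. By Proposition \ref{Frene_integr_cond_R42}, these integrability conditions are precisely the three equations \eqref{Nat_Eq_Gauss_E_nu_mu_R42}, \eqref{Nat_Eq_Codazzi_beta_E_nu_mu_R42} and \eqref{Nat_Eq_Ricci_E_nu_mu_R42}. Next, using the canonical-coordinate formula \eqref{E_nu_mu_R42} I would eliminate the coefficient $E$ from \eqref{Nat_Eq_Gauss_E_nu_mu_R42}, which immediately produces the first equation of \eqref{Nat_Eq_nu_mu_R42}. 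To obtain the second equation I would use the Codazzi consequence \eqref{beta_nu_mu_R42}, which expresses $\beta$ explicitly in terms of $\nu,\mu$, and substitute into the Ricci equation \eqref{Nat_Eq_Ricci_E_nu_mu_R42}; after applying $\tfrac{\partial^2}{\partial\bar t\partial t}=\tfrac14\Delta$ and again replacing $E$ from \eqref{E_nu_mu_R42}, one is left with the second equation of \eqref{Nat_Eq_nu_mu_R42}. The inequalities $\nu^2-\mu^2>0$ and $\nu>0$ are built into the definition of canonical coordinates on a surface of general type, via \eqref{numu_sigma_R42} and \eqref{sigma_nu_mu_R42}.

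For the second assertion, I would invoke the invariance properties already established. By Theorem \ref{Can_Move}, the canonical character of the coordinates $(u,v)$ is preserved under any motion in $\RR^4_2$, so the same parameter $t=u+\ii v$ provides canonical coordinates for both $\M$ and $\hat\M$. Since the motion is proper, formula \eqref{hat_K_kappa_nu_mu-propmov_R42} gives $\hat\nu(t)=\nu(t)$ and $\hat\mu(t)=\mu(t)$ as functions of $t$, so the pair $(\hat\nu,\hat\mu)$ coincides with $(\nu,\mu)$ and therefore represents the same solution of \eqref{Nat_Eq_nu_mu_R42}.

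I do not expect any serious obstacle: the first part is bookkeeping, combining the already-derived integrability conditions with the two algebraic substitutions \eqref{E_nu_mu_R42} and \eqref{beta_nu_mu_R42}; the second part is a direct citation of Theorem \ref{Can_Move} and formula \eqref{hat_K_kappa_nu_mu-propmov_R42}. The most delicate point to state cleanly is that in the Codazzi system \eqref{Nat_Eq_Codazzi_beta_E_nu_mu_R42} the two equations are not independent when $\nu>0$, so eliminating $\beta$ by the single formula \eqref{beta_nu_mu_R42} uses the full Codazzi information without any loss.
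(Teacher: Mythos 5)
Your proposal is correct and follows essentially the same route as the paper: the theorem is stated there as a summary of the immediately preceding derivation, namely the integrability conditions \eqref{Nat_Eq_Gauss_E_nu_mu_R42}--\eqref{Nat_Eq_Ricci_E_nu_mu_R42} of the Frenet-type system with $E$ and $\beta$ eliminated via \eqref{E_nu_mu_R42} and \eqref{beta_nu_mu_R42}, and the invariance part via Theorem \ref{Can_Move} together with \eqref{hat_K_kappa_nu_mu-propmov_R42}. Your remark on the dependence of the two Codazzi equations when $\nu>0$ also mirrors the paper's discussion.
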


 In the paper \cite{S}, it is obtained a system of PDE's, analogous to \eqref{Nat_Eq_nu_mu_R42},
but for the invariants $(K,\varkappa)$. Since the pairs $(\nu,\mu)$ and $(K,\varkappa)$ can be 
obtained one from the other, we shall see that the system of PDE's for $(K,\varkappa)$ in 
\cite{S} is equivalent to \eqref{Nat_Eq_nu_mu_R42}.

By means of multiplication and division in \eqref{K_kappa_nu_mu_2_R42} we get:
\begin{equation*}
K^2-\varkappa^2 = (\nu^2-\mu^2)^2\,; \qquad
\ds\frac{K+\varkappa}{K-\varkappa} = \left( \ds\frac{\nu+\mu}{\nu-\mu} \right)^2 .
\end{equation*}
Applying the last equalities and \eqref{K_kappa_nu_mu_R42}, to the system \eqref{Nat_Eq_nu_mu_R42},
we obtain that the curvatures $K$ and $\varkappa$ are solutions to the following system, which we also call
the \emph{system of natural PDE's} of the maximal space-like surfaces in $\RR^4_2$: 

\begin{equation}\label{Nat_Eq_K_kappa_R42}
\begin{array}{l}
\sqrt[4]{ \vphantom{\mu^2} K^2-\varkappa^2}\; \Delta\ln \sqrt[4]{ \vphantom{\mu^2} K^2-\varkappa^2} = 2K\,; \\[1.5ex]
\sqrt[4]{ \vphantom{\mu^2} K^2-\varkappa^2}\; \Delta\ln\sqrt{\ds\frac{\vphantom{\mu^2}K+\varkappa}{K-\varkappa}}=2\varkappa\,;
\end{array}  \qquad\quad K^2-\varkappa^2>0\,;\quad K>0\,.
\end{equation}

By using the identity $\arctanh x = \ds\frac{1}{2}\ln\ds\frac{1+x}{1-x}$, we can write this system in the form:

\begin{equation}\label{Nat_Eq_K_kappa_2_R42}
\begin{array}{l}
\sqrt[4]{ \vphantom{\mu^2} K^2-\varkappa^2}\; \Delta\ln \sqrt[4]{ \vphantom{\mu^2} K^2-\varkappa^2} = 2K\,; \\[1.5ex]
\sqrt[4]{ \vphantom{\mu^2} K^2-\varkappa^2}\; \Delta\arctanh\ds\frac{\varkappa}{K}=2\varkappa\,;
\end{array}  \qquad\quad K^2-\varkappa^2>0\,;\quad K>0\,.
\end{equation}

 The system of natural equations \eqref{Nat_Eq_K_kappa_R42} for $K$ and $\varkappa$ is equivalent to \eqref{Nat_Eq_nu_mu_R42}
and therefore we have the following analogue of Theorem \ref{Thm-Nat_Eq_nu_mu_R42}\,:
\begin{theorem}\label{Thm-Nat_Eq_K_kappa_R42}
Let $\M$ be a maximal space-like surface of general type in $\RR^4_2$ parametrized by canonical coordinates. 
Then the Gauss curvature $K$ and the curvature of the normal connection $\varkappa$ of $\M$, satisfy the system 
of natural equations 
\eqref{Nat_Eq_K_kappa_R42} of the maximal space-like surfaces in $\RR^4_2$.

If $\hat\M$ is obtained from $\M$ through a proper motion of the type \eqref{hat_M-M-prop_mov_R42},
then $\hat\M$ gives the same solution to the system \eqref{Nat_Eq_K_kappa_R42}. 
\end{theorem}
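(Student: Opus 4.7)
The plan is to deduce Theorem \ref{Thm-Nat_Eq_K_kappa_R42} directly from the already–proven Theorem \ref{Thm-Nat_Eq_nu_mu_R42} by a change of variables, since the curvatures $(K,\varkappa)$ and the normal curvatures $(\nu,\mu)$ are related by the explicit invertible formulas \eqref{K_kappa_nu_mu_R42}, \eqref{K_kappa_nu_mu_2_R42} and \eqref{nu_mu_K_kappa_R42}. No new PDE computations are needed; the content is purely algebraic rewriting of the system \eqref{Nat_Eq_nu_mu_R42}.

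First I would assemble the dictionary. Multiplying and dividing the two equalities in \eqref{K_kappa_nu_mu_2_R42} gives
\[
K^2-\varkappa^2=(\nu^2-\mu^2)^2,\qquad \frac{K+\varkappa}{K-\varkappa}=\Bigl(\frac{\nu+\mu}{\nu-\mu}\Bigr)^{2},
\]
and since $\nu>|\mu|\geq 0$ (see \eqref{numu_sigma_R42}) we may extract positive fourth- and square-roots to obtain
\[
\sqrt[4]{K^2-\varkappa^2}=\sqrt{\nu^2-\mu^2},\qquad \sqrt{\tfrac{K+\varkappa}{K-\varkappa}}=\tfrac{\nu+\mu}{\nu-\mu}.
\]
Taking logarithms, $\ln\sqrt[4]{K^2-\varkappa^2}=\ln\sqrt{\nu^2-\mu^2}$ and $\ln\sqrt{(K+\varkappa)/(K-\varkappa)}=\ln\bigl((\nu+\mu)/(\nu-\mu)\bigr)$. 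Finally, from \eqref{K_kappa_nu_mu_R42}, $2K=2\nu^2+2\mu^2$ and $2\varkappa=4\nu\mu$.

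Now I would substitute these identities into \eqref{Nat_Eq_nu_mu_R42}. The first equation of \eqref{Nat_Eq_nu_mu_R42}, namely $\sqrt{\nu^2-\mu^2}\,\Delta\ln\sqrt{\nu^2-\mu^2}=2\nu^2+2\mu^2$, becomes verbatim the first equation of \eqref{Nat_Eq_K_kappa_R42}. The second equation, $\sqrt{\nu^2-\mu^2}\,\Delta\ln\tfrac{\nu+\mu}{\nu-\mu}=4\nu\mu$, becomes the second equation of \eqref{Nat_Eq_K_kappa_R42}. The open constraints $\nu^2-\mu^2>0$, $\nu>0$ translate to $K^2-\varkappa^2>0$ and $K=\nu^2+\mu^2>0$ via \eqref{K_kappa_nu_mu_R42}. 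This establishes that $(K,\varkappa)$ solves \eqref{Nat_Eq_K_kappa_R42}.

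For the invariance statement, I would apply Theorem \ref{Can_Move} to conclude that the canonical coordinates $t$ on $\M$ remain canonical coordinates on $\hat\M$ whenever $\hat\M$ is obtained from $\M$ by a motion in $\RR^4_2$; in particular this is true for the proper motion \eqref{hat_M-M-prop_mov_R42}. Then the transformation law \eqref{hat_K_kappa-K_kappa-prop_mov_R42} shows $\hat K(t)=K(t)$ and $\hat\varkappa(t)=\varkappa(t)$, so $\hat\M$ yields the same solution of \eqref{Nat_Eq_K_kappa_R42} as $\M$. There is no real obstacle here; the only point requiring a small verification is the sign/branch choice in extracting the fourth and square roots above, which is settled by the positivity conditions $\nu>|\mu|\geq 0$ from \eqref{numu_sigma_R42}.
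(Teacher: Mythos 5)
Your proposal is correct and follows essentially the same route as the paper: the paper also derives \eqref{Nat_Eq_K_kappa_R42} from \eqref{Nat_Eq_nu_mu_R42} by multiplying and dividing the equalities in \eqref{K_kappa_nu_mu_2_R42} to get $K^2-\varkappa^2=(\nu^2-\mu^2)^2$ and $\frac{K+\varkappa}{K-\varkappa}=\bigl(\frac{\nu+\mu}{\nu-\mu}\bigr)^2$, and then invokes the equivalence of the two systems together with Theorem \ref{Thm-Nat_Eq_nu_mu_R42}; the invariance under proper motions rests on Theorem \ref{Can_Move} and \eqref{hat_K_kappa-K_kappa-prop_mov_R42} exactly as you argue.
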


 The curvatures $K$ and $\varkappa$ in \eqref{Nat_Eq_K_kappa_R42} are scalar invariants,
but the system as a whole is not, since the Laplace operator is not invariant.
In order to obtain a completely invariant form of the system of natural equations, we can use the Laplace-Beltrami 
operator $\Delta_b$ for the surface $\M$, given by the formula:
\begin{equation}\label{def_Lpl-Beltr}
\Delta_b = *\,d*d \,,
\end{equation}
where "$*$" is the Hodge operator and $d$ is the exterior differential. 
This operator is invariant by definition with respect to the local coordinates and 
in isothermal coordinates it is related to the ordinary Laplace operator in $\RR^2$
by the equality:
\begin{equation}\label{Lpl-Beltr_Lpl}
\Delta_b = \frac{1}{E} \Delta\,.
\end{equation}
In canonical coordinates the equality \eqref{E_K_kappa_R42} is fulfilled  and hence this operator 
has the following representation:
\begin{equation}\label{Lpl-Beltr_Lpl_R42}
\Delta_b = \frac{1}{E}\, \Delta = \sqrt[4]{ \vphantom{\mu^2} K^2-\varkappa^2}\; \Delta\,.
\end{equation}
The last formula, applied to \eqref{Nat_Eq_K_kappa_R42}, gives:
\begin{equation}\label{Nat_Eq_Beltr_K_kappa_R42}
\begin{array}{l}
\Delta_b \ln \sqrt[4]{ \vphantom{\mu^2} K^2-\varkappa^2} = 2K\,; \\[1.5ex]
\Delta_b \ln\sqrt{\ds\frac{\vphantom{\mu^2}K+\varkappa}{K-\varkappa}}=2\varkappa\,;
\end{array} \qquad\ K^2-\varkappa^2>0\,;\quad K>0\,.
\end{equation}
The system of natural equations in this form is completely invariant and it is the same with respect 
to arbitrary (possibly non-isothermal) coordinates.  

 Multiplying the first equality in \eqref{Nat_Eq_Beltr_K_kappa_R42} with $2$ and then applying addition 
and subtraction with the second equation, we get the system of natural equations in the form given in \cite{S}: 
\begin{equation}\label{Nat_Eq_Sakaki_K_kappa_R42}
\begin{array}{l}
\Delta_b \ln (K + \varkappa ) = 2(2K+\varkappa) \,; \\[1.5ex]
\Delta_b \ln (K - \varkappa ) = 2(2K-\varkappa) \,;
\end{array} \qquad\ K>|\varkappa| \,.
\end{equation}

\medskip

 Let us return to formulas \eqref{Frene_Phi_bar_Phi_n1_n2_R42}. Next we shall see that the equations  
\eqref{Nat_Eq_nu_mu_R42} are not only necessary but also sufficient for the (local) existence of a solution 
to the system \eqref{Frene_Phi_bar_Phi_n1_n2_R42}. This is given by the following Bonnet type theorem
for the system \eqref{Frene_Phi_bar_Phi_n1_n2_R42}:
\begin{theorem}\label{Bone_Phi_bar_Phi_n1_n2_nu_mu_R42}
Let $\nu>0$ and $\mu$ be real functions, defined in a domain $\D\subset\RR^2$ and let the pair $(\nu,\mu)$ 
be a solution to the system of natural equations \eqref{Nat_Eq_nu_mu_R42} of the maximal space-like surfaces 
in $\RR^4_2$. For any point $p_0\in\D$, there exists a neighborhood $\D_0\subset\D$ of $p_0$ and
a vector function $\x: \D_0 \to \RR^4_2$, such that $(\D_0,\x)$ is a maximal space-like surface of general type
parametrized by canonical coordinates and the given functions are the scalar invariants $\nu$ and $\mu$ of 
this surface, defined by \eqref{sigma_nu_mu_R42}.
If $(\hat\D_0,\hat\x)$ is another surface with the same properties, then there exists a sub-domain
$\tilde\D_0$ of $\D_0$ containing the point $p_0$, such that $(\tilde\D_0,\hat\x)$ and $(\tilde\D_0,\x)$ 
are related by a proper motion in $\RR^4_2$ of the type \eqref{hat_M-M-prop_mov_R42}.
\end{theorem}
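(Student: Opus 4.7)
The plan is to treat the Frenet–type formulas \eqref{Frene_Phi_bar_Phi_n1_n2_R42} as a linear first–order system of PDE's for the unknown $\CC^4$–valued quadruple $(\Phi,\bar\Phi,\n_1,\n_2)$, and to recover the surface by integrating $\Phi$. Given the solution $(\nu,\mu)$ of \eqref{Nat_Eq_nu_mu_R42}, I would first define the auxiliary functions $E$ and $\beta$ by the formulas \eqref{E_nu_mu_R42} and \eqref{beta_nu_mu_R42}. Since $(\nu,\mu)$ satisfies the natural system, the computations preceding Proposition \ref{Frene_integr_cond_R42} can be read backwards to show that the triple $(E,\nu,\mu,\beta)$ fulfills \eqref{Nat_Eq_Gauss_E_nu_mu_R42}, \eqref{Nat_Eq_Codazzi_beta_E_nu_mu_R42} and \eqref{Nat_Eq_Ricci_E_nu_mu_R42}, i.e.\ the integrability conditions of \eqref{Frene_Phi_bar_Phi_n1_n2_R42}.

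Next, fix $p_0\in\D$ and choose a right–oriented orthonormal basis $(\vX_1^0,\vX_2^0,\n_1^0,\n_2^0)$ of $\RR^4_2$ with $\vX_i^0$ space–like and $\n_i^0$ time–like. As initial data prescribe
\[
\Phi(p_0)=\sqrt{E(p_0)}\,(\vX_1^0-\ii\vX_2^0),\qquad \n_i(p_0)=\n_i^0,
\]
together with the complex conjugate of the first. By the classical Frobenius theorem for overdetermined linear systems (the integrability being exactly what was verified above), there exists a neighborhood $\D_0\subset\D$ of $p_0$ on which \eqref{Frene_Phi_bar_Phi_n1_n2_R42} admits a unique smooth solution with this initial data; moreover the second unknown is automatically the complex conjugate of the first, since the system is symmetric under conjugation and the initial data are conjugated.

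The main obstacle is to propagate the algebraic constraints from $p_0$ to all of $\D_0$, namely
\[
\Phi^2=0,\quad \|\Phi\|^2=2E,\quad \Phi\cdot\n_i=0,\quad \n_i\cdot\n_j=-\delta_{ij},
\]
and similarly for $\bar\Phi$. The standard device is to differentiate each of these scalar products using \eqref{Frene_Phi_bar_Phi_n1_n2_R42}: a direct computation shows that the full collection of scalar products satisfies a closed homogeneous linear PDE system (in $t$ and $\bar t$), for which the constant vector prescribed by the desired values is a solution; by ODE uniqueness along any path from $p_0$, the actual scalar products coincide with these constants. In particular $\Phi^2=0$ and $\|\Phi\|^2>0$; combined with the first equation of \eqref{Frene_Phi_bar_Phi_n1_n2_R42}, which gives $\partial\Phi/\partial\bar t=0$, and the holomorphicity of $\bar\Phi$'s conjugate, the conditions \eqref{Phi_cond} of Theorem \ref{x_Phi-thm} are met. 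Hence $\Phi$ integrates (uniquely up to translation) to a space–like surface $\x:\D_0\to\RR^4_2$ in isothermal coordinates. Theorem \ref{Max_x_Phi-thm} makes it maximal; formula \eqref{Phi_prim_bot^2-Phi_prim^2} together with ${\Phi'}^{\!\bot2}=-1$ (a consequence of the defining equation for $\Phi'$ read off the second line of \eqref{Frene_Phi_bar_Phi_n1_n2_R42} and the orthogonality relations just established) shows that $(u,v)$ are canonical; finally the second line of \eqref{Frene_Phi_bar_Phi_n1_n2_R42} combined with \eqref{PhiPr} and \eqref{sigma_uu_uv} identifies the invariants of $(\D_0,\x)$ as exactly the given $(\nu,\mu)$.

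For the uniqueness part, let $(\hat\D_0,\hat\x)$ be a second such surface and denote its associated frame by $(\hat\Phi,\overline{\hat\Phi},\hat\n_1,\hat\n_2)$. By Theorems \ref{Can_Move} and \ref{Thm-Nat_Eq_nu_mu_R42} the two frames satisfy the same Frenet system \eqref{Frene_Phi_bar_Phi_n1_n2_R42} with the same coefficients. Using the right–oriented orthonormal character of the two initial frames, there exists $A\in\mathbf{SO}(2,2,\RR)$ mapping one to the other at $p_0$; after replacing $\hat\x$ by $A^{-1}(\hat\x-\hat\x(p_0))+\x(p_0)$ the two frames agree at $p_0$, and uniqueness for linear PDE's forces them to agree on the common domain $\tilde\D_0$. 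Then $\hat\Phi=\Phi$, whence $\hat\x=\x$ by the uniqueness clause of Theorem \ref{x_Phi-thm}; reversing the normalization yields the required proper motion \eqref{hat_M-M-prop_mov_R42}.
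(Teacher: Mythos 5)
Your proposal follows essentially the same route as the paper's own proof: define $E$ and $\beta$ from $(\nu,\mu)$, verify that the natural equations yield the integrability conditions of the Frenet system \eqref{Frene_Phi_bar_Phi_n1_n2_R42}, solve that linear system with orthonormal initial data, propagate the algebraic constraints by the standard homogeneous-system uniqueness trick, integrate $\Phi$ via Theorem \ref{x_Phi-thm}, and prove uniqueness by matching initial frames with a matrix in $\mathbf{SO}(2,2,\RR)$. The argument is correct and matches the paper step for step.
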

\begin{proof}
Let $(u,v) \in \D$ and $t=u+\ii v$. Let $p_0\in\D$ be a fixed point with coordinates $t_0=u_0+\ii v_0$. 
Define new functions $E$ and $\beta$ in the domain $\D$ by means of the equalities:
\begin{equation}\label{E_beta_nu_mu_R42}
E=\frac{1}{\sqrt{\nu^2-\mu^2}} \,; \qquad \beta = -\frac{\ii}{2} \ds\frac{\partial}{\partial t}\ln\frac{\nu+\mu}{\nu-\mu}\;,
\end{equation}
which coincide with \eqref{E_nu_mu_R42} and \eqref{beta_nu_mu_R42}.
Next we show that the four functions: $\nu$, $\mu$, $E$ and $\beta$ satisfy the integrability conditions
of the system \eqref{Frene_Phi_bar_Phi_n1_n2_R42}. Applying the first equality in \eqref{E_beta_nu_mu_R42} to
the first natural equation \eqref{Nat_Eq_nu_mu_R42}, we get exactly the integrability condition
\eqref{Nat_Eq_Gauss_E_nu_mu_R42}. Applying the two equalities in \eqref{E_beta_nu_mu_R42} to the second natural equation
\eqref{Nat_Eq_nu_mu_R42} and using $\Delta = \frac{4\partial^2}{\partial \bar t \partial t}$, we obtain exactly 
the integrability condition \eqref{Nat_Eq_Ricci_E_nu_mu_R42}. Finally, the first equality in \eqref{E_beta_nu_mu_R42}
implies that $\nu-\mu=\frac{1}{E^2(\nu+\m)}$ and $\nu+\mu=\frac{1}{E^2(\nu-\m)}$.
Applying anyone of the last equalities to the second formula in \eqref{E_beta_nu_mu_R42} by using complex conjugation, 
we find:
\begin{equation*}
E(\nu+\mu)\bar\beta =  \ii \ds\frac{\partial(E(\nu+\mu))}{\partial\bar t}\;; \qquad
E(\nu-\mu)\bar\beta = -\ii \ds\frac{\partial(E(\nu-\mu))}{\partial\bar t}\;.
\end{equation*}
Adding and subtracting the last equalities, we obtain the integrability conditions 
\eqref{Nat_Eq_Codazzi_beta_E_nu_mu_R42}.

Consider now equalities \eqref{Frene_Phi_bar_Phi_n1_n2_R42},  as a system of PDE's with unknown complex vector function
$\Phi$ and unknown  real vector functions $\n_1$ and $\n_2$. 
The integrability conditions are exactly \eqref{Nat_Eq_Gauss_E_nu_mu_R42}, \eqref{Nat_Eq_Codazzi_beta_E_nu_mu_R42} and
\eqref{Nat_Eq_Ricci_E_nu_mu_R42} according Proposition \ref{Frene_integr_cond_R42}, which are satisfied.
This means that the system has locally a unique solution under the given initial conditions.

 Let $(\x_{u0},\x_{v0},\n_{1|0},\n_{2|0})$ be a right oriented orthogonal vector quadruple in $\RR^4_2$, such
that $\x_{u0}^2=\x_{v0}^2=E(t_0)$ and $\n_{1|0}^2=\n_{2|0}^2=-1$. Then there exist vector functions 
$\Phi$, $\n_1$ and $\n_2$, defined in a neighborhood $\D_0$ of $t_0$, which satisfy \eqref{Frene_Phi_bar_Phi_n1_n2_R42} 
with initial conditions $\Phi(t_0)=\x_{u0}-\ii\x_{v0}$, $\n_1(t_0)=\n_{1|0}$ and $\n_2(t_0)=\n_{2|0}$. 
Next we prove that in a sufficiently small neighborhood $D_0$, for any $t\in\D_0$ \ $(\Phi,\bar\Phi,\n_1,\n_2)$ 
is an orthogonal quadruple with the properties $\|\Phi\|^2=2E$ and $\n_1^2=\n_2^2=-1$. To this end, let us consider
the functions:
$f_1=\|\Phi\|^2-2E$, $f_2=\n_1^2+1$, $f_3=\n_2^2+1$, $f_4=\Phi^2$, $f_5=\Phi\cdot\n_1$ and $f_6=\Phi\cdot\n_2$.
Direct calculations show that the functions $f_1$,\,$f_2$,\,$f_3$,\,$f_4$,\,$f_5$,\,$f_6$,\,$\bar f_4$,\,$\bar f_5$ 
and $\bar f_6$ are a solution to a homogeneous system of the first order, solved with respect to the derivatives.
The initial conditions for $\Phi$, $\n_1$ and $\n_2$ imply that all the functions $f_j$ have zero initial conditions. 
The zero functions are the unique solution to a homogeneous system with zero initial conditions. 
Hence all the functions $f_j$ are equal to zero, 
which implies the necessary properties for the functions $\Phi$, $\n_1$ and $\n_2$.

 These properties of $\Phi$ imply that $\Phi^2=0$ and $\|\Phi\|^2>0$, and it follows from the system 
\eqref{Frene_Phi_bar_Phi_n1_n2_R42} that $\frac{\partial \Phi}{\partial\bar t}=0$. This means that $\Phi$ 
satisfies the conditions \eqref{Phi_cond} and according to Theorem \ref{x_Phi-thm}\,, there exists a vector function
$\x$ in a neighborhood $\D_0$ of $t_0$ satisfying \eqref{Phi_def}, and such that $(\D_0,\x)$ is a space-like 
surface in isothermal coordinates $(u,v)$. It remains to check that this surface satisfies the conditions of the theorem.

 Since $\Phi$ is holomorphic, the surface $(\D_0,\x)$ is maximal, according to Theorem \ref{Max_x_Phi-thm}\,. 
The equality $\|\Phi\|^2=2E$ shows that the coefficient $E$ of the first fundamental form of $(\D_0,\x)$
coincides with the function $E$, defined by \eqref{E_beta_nu_mu_R42}. Taking a square of the second  equality in
\eqref{Frene_Phi_bar_Phi_n1_n2_R42}, we have: ${\Phi'}^2=E^2(-\nu^2+\mu^2)$. In view of 
\eqref{Phi_prim_bot^2-Phi_prim^2} and \eqref{E_beta_nu_mu_R42} we get: ${\Phi^{\prime \bot}}^2={\Phi^\prime}^2=-1$. 
The last equality shows that $(u,v)$ are canonical coordinates of the first type.
Taking scalar multiplication of the second equality in \eqref{Frene_Phi_bar_Phi_n1_n2_R42} separately with
$\n_1$ and $\n_2$ and taking into account \eqref{PhiPr}, we find:
\begin{equation}\label{sigma_n1_n2_R42}
\sigma(\x_u,\x_u)\cdot\n_1=E\nu; \ \sigma(\x_u,\x_v)\cdot\n_1=0; \  
\sigma(\x_u,\x_u)\cdot\n_2=0;    \ \sigma(\x_u,\x_v)\cdot\n_2=E\mu\,.
\end{equation}
Now it follows that $\sigma(\x_u,\x_u)$ and $\sigma(\x_u,\x_v)$ are collinear with $\n_1$ and $\n_2$, respectively.
Since $\nu>0$ then $\sigma(\x_u,\x_u)$ has the opposite direction of $\n_1$. This means that $(\n_1, \n_2)$
coincides with the canonical basis at $N_p(\M)$, generated by the canonical coordinates $(u,v)$. Now it follows from
\eqref{sigma_n1_n2_R42} that the pair of scalar invariants $(\nu, \mu)$ of $(\D_0,\x)$, defined by 
\eqref{sigma_nu_mu_R42}, coincides with the pair of the given functions $(\nu, \mu)$.
This completes the proof of the existence.

\smallskip

 Suppose that $(\hat\D_0,\hat\x)$ is another surface, satisfying the conditions of the theorem.
Then $(\hat\x_u(t_0),\hat\x_v(t_0),\hat\n_1(t_0),\hat\n_2(t_0))$ is a right oriented vector quadruple in $\RR^4_2$,
satisfying the conditions $\hat\x_u^2(t_0)=\hat\x_v^2(t_0)=E(t_0)$ and $\hat\n_1^2(t_0)=\hat\n_2^2(t_0)=-1$. 
Let $A \in \mathbf{SO}(2,2,\RR)$ denote the motion, which transforms the quadruple  
$(\x_u(t_0),\x_v(t_0),\n_1(t_0),\n_2(t_0))$ into  $(\hat\x_u(t_0),\hat\x_v(t_0),\hat\n_1(t_0),\hat\n_2(t_0))$.
Define the map $\hat{\hat\x}$ from $\D_0$ in $\RR^4_2$ by $\hat{\hat\x}=A\x$.
Then $(\D_0,\hat{\hat\x})$ is a maximal space-like surface, obtained from $(\D_0,\x)$ by a proper motion.
Therefore $(u,v)$ are canonical coordinates for $(\D_0,\hat{\hat\x})$ and the functions
$E$, $\nu$ and $\mu$ of $(\D_0,\hat{\hat\x})$ coincide with those of $(\D_0,\x)$.
Equality \eqref{beta_nu_mu_R42} implies that the function $\beta$ of $(\D_0,\hat{\hat\x})$ also coincides with
that of $(\D_0,\x)$. Hence the corresponding functions $\hat{\hat\Phi}$, ${\hat{\hat\n}}_1$ and 
${\hat{\hat\n}}_2$ also satisfy the system \eqref{Frene_Phi_bar_Phi_n1_n2_R42}. Taking into account 
the definition of $A$ it follows that $\hat{\hat\Phi}$, ${\hat{\hat\n}}_1$ and ${\hat{\hat\n}}_2$ satisfy 
the same initial conditions by $t=t_0$, as well as the functions $\hat\Phi$, $\hat\n_1$ and $\hat\n_2$ of
$(\hat\D_0,\hat\x)$. 
Since the system \eqref{Frene_Phi_bar_Phi_n1_n2_R42} has locally a unique solution under the given initial 
conditions, then there exists a connected neighborhood $\tilde\D_0$ of $t_0$, in which $\hat\Phi=\hat{\hat\Phi}$. 
On the other hand, $\hat{\hat\x}=A\x$ implies that $\hat{\hat\Phi}=A\Phi$. Hence $\hat\Phi=A\Phi$ in  
$\tilde\D_0$. In the case of a connected neighborhood $\tilde\D_0$, the last equality is equivalent to 
$\hat\x = A\x + \vb$,\ \ $\vb\in\RR^4_2$. 
\end{proof}

 Since the systems of natural equations \eqref{Nat_Eq_nu_mu_R42} and \eqref{Nat_Eq_K_kappa_R42} are equivalent via 
formulas \eqref{K_kappa_nu_mu_R42} and \eqref{nu_mu_K_kappa_R42}, then Theorem \ref{Bone_Phi_bar_Phi_n1_n2_nu_mu_R42} 
implies an analogous theorem for the pair $(K,\varkappa)$: 
\begin{theorem}\label{Bone_Phi_bar_Phi_n1_n2_K_kappa_R42}
Let $K>0$ and $\varkappa$ be real functions, defined in a domain $\D\subset\RR^2$ and let the pair 
$(K,\varkappa)$ be a solution to the system of natural equations \eqref{Nat_Eq_K_kappa_R42} of
the maximal space-like surfaces in $\RR^4_2$. Then for any point $p_0\in\D$, there exists a neighborhood 
$\D_0\subset\D$ of $p_0$ and a map $\x: \D_0 \to \RR^4_2$, such that $(\D_0,\x)$ is a maximal space-like surface
of general type in $\RR^4_2$, parametrized by canonical coordinates and the given functions $K$ and $\varkappa$  
are the Gauss curvature and the curvature of the normal connection for $(\D_0,\x)$, respectively.
If $(\hat\D_0,\hat\x)$ is another surface with the same properties, then there exists a sub-domain $\tilde\D_0$
of $\D_0$ and $\hat\D_0$, containing $p_0$, such that $(\tilde\D_0,\hat\x)$ is obtained from
$(\tilde\D_0,\x)$ by a proper motion in $\RR^4_2$ of the type \eqref{hat_M-M-prop_mov_R42}.
\end{theorem}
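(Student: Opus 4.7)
The plan is to reduce this statement to the already-established Bonnet-type theorem \ref{Bone_Phi_bar_Phi_n1_n2_nu_mu_R42} for the pair $(\nu,\mu)$, exploiting the explicit algebraic equivalence between the two systems of natural PDE's expressed by \eqref{K_kappa_nu_mu_R42} and \eqref{nu_mu_K_kappa_R42}.

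First I would use the hypothesis $K>0$ together with the domain condition $K^2-\varkappa^2>0$ (equivalently $K>|\varkappa|$) built into \eqref{Nat_Eq_K_kappa_R42} to define two auxiliary functions on $\D$ by
\[
\nu = \tfrac{1}{2}\bigl(\sqrt{K+\varkappa}+\sqrt{K-\varkappa}\bigr), \qquad
\mu = \tfrac{1}{2}\bigl(\sqrt{K+\varkappa}-\sqrt{K-\varkappa}\bigr),
\]
which are precisely formulas \eqref{nu_mu_K_kappa_R42}. Both square roots are strictly positive real-valued smooth functions, so $\nu>0$ and
\[
\nu^{2}-\mu^{2} = \sqrt{K+\varkappa}\,\sqrt{K-\varkappa} = \sqrt{K^{2}-\varkappa^{2}} > 0,
\]
matching the domain conditions required in the system \eqref{Nat_Eq_nu_mu_R42}.

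Next I would verify that $(\nu,\mu)$ is a solution of the natural system \eqref{Nat_Eq_nu_mu_R42}. This amounts to a direct substitution: from $K=\nu^{2}+\mu^{2}$ and $\varkappa=2\nu\mu$ one gets the identities
\[
K^{2}-\varkappa^{2}=(\nu^{2}-\mu^{2})^{2}, \qquad
\frac{K+\varkappa}{K-\varkappa}=\left(\frac{\nu+\mu}{\nu-\mu}\right)^{2},
\]
which, after taking fourth and square roots respectively and applying $\ln$, convert each equation of \eqref{Nat_Eq_K_kappa_R42} into the corresponding equation of \eqref{Nat_Eq_nu_mu_R42}. This equivalence was already announced before the statement of Theorem \ref{Thm-Nat_Eq_K_kappa_R42}, so I would just invoke it.

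Now Theorem \ref{Bone_Phi_bar_Phi_n1_n2_nu_mu_R42} applies to the pair $(\nu,\mu)$: for any $p_{0}\in\D$ there exist a neighborhood $\D_{0}\subset\D$ and a map $\x:\D_{0}\to\RR^{4}_{2}$ such that $(\D_{0},\x)$ is a maximal space-like surface of general type in canonical coordinates, whose scalar invariants defined by \eqref{sigma_nu_mu_R42} are exactly $\nu$ and $\mu$. Applying \eqref{K_kappa_nu_mu_R42} to this surface recovers the given functions $K=\nu^{2}+\mu^{2}$ and $\varkappa=2\nu\mu$ as its Gauss curvature and normal connection curvature, which proves existence.

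For uniqueness, if $(\hat\D_{0},\hat\x)$ is another such surface realizing the same $(K,\varkappa)$, then its own invariants $(\hat\nu,\hat\mu)$ are determined by $(K,\varkappa)$ through the same formulas \eqref{nu_mu_K_kappa_R42}, hence coincide with $(\nu,\mu)$ on the overlap. The uniqueness clause of Theorem \ref{Bone_Phi_bar_Phi_n1_n2_nu_mu_R42} then furnishes a sub-domain $\tilde\D_{0}\ni p_{0}$ and a proper motion \eqref{hat_M-M-prop_mov_R42} carrying $(\tilde\D_{0},\x)$ onto $(\tilde\D_{0},\hat\x)$. The only step that needs any care is checking that the algebraic passage from $(K,\varkappa)$ to $(\nu,\mu)$ preserves the regularity conditions ($\nu>0$, $\nu^{2}-\mu^{2}>0$), but this is immediate from $K>|\varkappa|$, so the argument is essentially an algebraic translation rather than a substantive new proof.
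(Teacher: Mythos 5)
Your reduction to Theorem \ref{Bone_Phi_bar_Phi_n1_n2_nu_mu_R42} via the algebraic equivalence \eqref{K_kappa_nu_mu_R42}--\eqref{nu_mu_K_kappa_R42} is exactly the route the paper takes (it states this theorem as an immediate consequence of the $(\nu,\mu)$ version precisely because the two systems are equivalent under that change of invariants). Your proposal is correct and simply spells out the details the paper leaves implicit.
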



 At the end of the previous section we obtained transformation formulas for the invariants $K$, $\varkappa$, $\nu$ and $\mu$
through some of the basic geometric transformations of the maximal space-like surface $\M$ in $\RR^4_2$. 
It follows from the above results in this section that the statements at the end of the previous section are reversible.

For example, if the pairs $(\nu,\mu)$ of two different maximal space-like surfaces of general type are related by
equalities \eqref{hat_K_kappa_nu_mu-homotet_R42}, then these surfaces are homothetic up to a motion.

If two maximal space-like surfaces of general type are obtained one from the other by a proper motion, a homothety 
or a change of the canonical parameters, then from a geometric point of view they can be considered as identical.
That is why we give in a united form these transformation formulas. 
\begin{prop}\label{Thm-K_kappa_nu_mu_IdentSurf_R42}
Let $\M$ and $\hat\M$ be maximal space-like surfaces of general type and $p_0\in\M$ and $\hat p_0\in\hat\M$ be
fixed points in them. If $t$ determines canonical coordinates on $\M$ in a neighborhood of $p_0$, 
and $s$ determines canonical coordinates on $\hat\M$ in a neighborhood of $\hat p_0$, then the following 
conditions are equivalent:
\begin{enumerate}
	\item There exists a neighborhood of $\hat p_0$ in $\hat\M$, which is obtained from the corresponding neighborhood
	of $p_0$ in $\M$ by a proper motion, homothety or change of the canonical coordinates.
	\item There exists a neighborhood of $\hat p_0$ in $\hat\M$, in which the normal curvatures $\hat\nu(s)$ and 
	$\hat\mu(s)$ of $\hat\M$ are obtained from the normal curvatures $\nu(t)$ and $\mu(t)$ of $\M$, by formulas of the type: 
  \begin{equation}\label{hat_nu_mu-Ident_R42}
  \hat\nu(s) = a^2 \nu(\delta a \tilde s + b)\;; \qquad \hat\mu(s) = a^2 \mu(\delta a \tilde s + b)\;.
  \end{equation}
  where $\delta=\pm 1;\pm\ii$, $\tilde s$ denotes $s$ or $\bar s$, and  $a>0$, $b\in\CC$ are constants\,. 
	\item There exists a neighborhood of $\hat p_0$ in $\hat\M$, in which the Gauss curvature $\hat K(s)$ 
	and the curvature of the normal connection $\hat\varkappa(s)$ of $\hat\M$ are obtained from the corresponding curvatures
	$K(t)$ and $\varkappa(t)$ of $\M$, by formulas of the type: 
  \begin{equation}\label{hat_K_kappa-Ident_R42}
  \hat K(s) = a^4 K(\delta a \tilde s + b)\;; \qquad \hat\varkappa(s) = a^4 \varkappa(\delta a \tilde s + b)\;.
  \end{equation}
  where $\delta=\pm 1;\pm\ii$, $\tilde s$ denotes $s$ or $\bar s$, and $a>0$, $b\in\CC$ are constants\,. 
\end{enumerate}
\end{prop}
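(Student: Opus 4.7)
The plan is to prove the three equivalences in the order $(2) \Leftrightarrow (3)$, $(1) \Rightarrow (2)$, and $(2) \Rightarrow (1)$. The first is purely algebraic, the second follows by composing the transformation rules already collected at the end of Section \ref{sect_can-def}, and the third, which is the main content, invokes the Bonnet type theorem \ref{Bone_Phi_bar_Phi_n1_n2_nu_mu_R42}.

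The equivalence $(2) \Leftrightarrow (3)$ is obtained by direct substitution using $K=\nu^2+\mu^2$ and $\varkappa=2\nu\mu$ from \eqref{K_kappa_nu_mu_R42}: the transformation $(\hat\nu,\hat\mu)(s) = a^2 (\nu,\mu)(\delta a \tilde s + b)$ yields $(\hat K,\hat\varkappa)(s) = a^4 (K,\varkappa)(\delta a \tilde s + b)$. Conversely, applying the inverse formulas \eqref{nu_mu_K_kappa_R42} and using $a>0$ to control the sign of the square roots recovers $(2)$ from $(3)$.

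For $(1) \Rightarrow (2)$, I would compose the three transformation rules from Section \ref{sect_can-def} in an arbitrary order. By \eqref{hat_K_kappa_nu_mu-propmov_R42} a proper motion leaves $(\nu,\mu)$ pointwise unchanged; by Theorem \ref{Can_Coord-uniq} an admissible change of canonical coordinate has the form $t=\delta \tilde r + c$ with $\delta\in\{\pm 1,\pm\ii\}$ and $\tilde r\in\{r,\bar r\}$, producing $\nu(\delta\tilde r + c)$; and by Theorem \ref{Can_Sim} together with \eqref{hat_K_kappa_nu_mu-homotet_R42} a homothety with coefficient $k>0$ rescales $\nu,\mu$ by $1/k$ while passing from coordinate $r$ to coordinate $s$ via $r=as$, where $a=1/\sqrt{k}$. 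Because $a$ is real and positive, conjugation commutes with the factor $a$, so $\widetilde{(as)}=a\tilde s$ both when $\tilde r=r$ and when $\tilde r=\bar r$. Iterating these three types of operations therefore reproduces exactly the functional form \eqref{hat_nu_mu-Ident_R42}.

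The main step is $(2) \Rightarrow (1)$. Given the hypothesis $(2)$, I would construct an auxiliary maximal space-like surface $\M^\star$ from $\M$ by first performing the admissible change of canonical coordinate $t = \delta \tilde r + b$ and then applying the homothety with coefficient $k=1/a^2$; the resulting canonical coordinate on $\M^\star$ is $s$ with $r = as$, and the same bookkeeping as in the previous paragraph shows that the normal curvatures of $\M^\star$ as functions of $s$ are exactly $a^2 \nu(\delta a \tilde s + b) = \hat\nu(s)$ and $a^2 \mu(\delta a \tilde s + b) = \hat\mu(s)$. Since $\M^\star$ and $\hat\M$ are both parametrized by canonical coordinates and share the same pair $(\nu,\mu)$ on a neighborhood of the corresponding points, the Bonnet type theorem \ref{Bone_Phi_bar_Phi_n1_n2_nu_mu_R42} produces a proper motion of $\RR^4_2$ identifying them on some sub-neighborhood containing the base point. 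Therefore $\hat\M$ is obtained from $\M$ by the composition of a change of canonical coordinates, a homothety, and a proper motion, which is $(1)$. The main obstacle is the careful bookkeeping in (i)--(ii) to ensure that the real scaling factor $a$ and the complex constant $b$ combine into exactly the affine expression $\delta a \tilde s + b$ required in $(2)$; once this functional matching is secured, the uniqueness part of the Bonnet theorem supplies the proper motion automatically.
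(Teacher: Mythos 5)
Your proposal is correct and follows essentially the same route as the paper: the forward direction is the composition of the transformation formulas \eqref{can_coord_ch_R42}, \eqref{hat_K_kappa_nu_mu-propmov_R42} and \eqref{hat_K_kappa_nu_mu-homotet_R42}, and the converse rests on the Bonnet type Theorem \ref{Bone_Phi_bar_Phi_n1_n2_nu_mu_R42}. The only (immaterial) organizational difference is that you derive $(2)\Leftrightarrow(3)$ directly from \eqref{K_kappa_nu_mu_R42} and \eqref{nu_mu_K_kappa_R42} and chain through $(2)$, whereas the paper handles $(1)\Leftrightarrow(2)$ and $(1)\Leftrightarrow(3)$ in parallel using the $(\nu,\mu)$ and $(K,\varkappa)$ versions of the relevant theorems.
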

\begin{proof}
Formulas \eqref{hat_nu_mu-Ident_R42} and \eqref{hat_K_kappa-Ident_R42} can be obtained by a composition of
formulas \eqref{can_coord_ch_R42} and \eqref{hat_K_kappa_nu_mu-homotet_R42} and changing the denotations of the constants. 
Then the equivalence of the conditions 1. and 2. follows from Theorem \ref{Thm-Nat_Eq_nu_mu_R42} and
Theorem \ref{Bone_Phi_bar_Phi_n1_n2_nu_mu_R42}\,. In a similar way, the equivalence of the conditions 1. and 3. 
follows from Theorem \ref{Thm-Nat_Eq_K_kappa_R42} and Theorem \ref{Bone_Phi_bar_Phi_n1_n2_K_kappa_R42}\,.
\end{proof}


\section{Weierstrass representation of maximal space-like surfaces in $\RR^4_2$}\label{sect_W}

Let $\M=(\D,\x)$ be a maximal space-like surface in $\RR^4_2$ and $t=u+\ii v$ determine isothermal coordinates on $\M$. 
This condition in terms of the function $\Phi$ is expressed by $\Phi^2=0$. If the components of the function $\Phi$ are given by
$\Phi=(\phi_1,\phi_2,\phi_3,\phi_4)$, then the equality $\Phi^2=0$ can be written in the form:
\begin{equation}\label{phi2-0_coord_R42}
\phi_1^2+\phi_2^2-\phi_3^2-\phi_4^2=0\,.
\end{equation}
The last equality can be ''parametrized'' in different ways by means of three holomorphic functions. 
Similarly to minimal surfaces in $\RR^4$ and space-like surfaces with zero mean curvature vector in $\RR^4_1$, the relation 
\eqref{phi2-0_coord_R42} can be ''parametrized'' in such a way that the components of $\Phi$ are polynomials of these three 
holomorphic functions.

 First, let $(f,g_1,g_2)$ be a triple of holomorphic functions, defined in the domain $\D \subset \CC$ and
consider the function $\Phi$, defined by:
\begin{equation}\label{W_polinom_R42}
\Phi = \big(\, f(g_1 g_2+1) \,,\, \ii f(g_1 g_2-1) \,,\, f(g_1 + g_2) \,,\, \ii f(g_1 - g_2) \,\big).
\end{equation}
Next we shall find the conditions for $(f,g_1,g_2)$, under which the function $\Phi$ satisfies Theorem \ref{x_Phi-thm}\,. 
In order to simplify the calculations, we introduce the auxiliary vector function $\va$:
\begin{equation}\label{va-def_R42}
\va=\big(\, (g_1 g_2+1) \,,\, \ii (g_1 g_2-1) \,,\, (g_1 + g_2) \,,\, \ii (g_1 - g_2) \,\big).
\end{equation}
Then we have:
\begin{equation}\label{Phi-va_R42}
\Phi=f\va\,; \qquad \Phi^2=f^2\va^2\,; \qquad \|\Phi\|^2=|f|^2\|\va\|^2\,; \qquad \Phi'=f'\va + f\va'\,. 
\end{equation}

 First we calculate $\va^2$. If $\va=(a_1,a_2,a_3,a_4)$, then
$a_1^2=g_1^2 g_2^2 + 2 g_1 g_2 + 1$ and respectively $a_2^2=-g_1^2 g_2^2 + 2 g_1 g_2 - 1$.
This implies that $a_1^2+a_2^2 = 4 g_1 g_2$. Analogously we have
$a_3^2=g_1^2 + 2 g_1 g_2 + g_2^2$ and $a_4^2= - g_1^2 + 2 g_1 g_2 - g_2^2$, from where $a_3^2+a_4^2= 4 g_1 g_2$.
Therefore, $a_1^2+a_2^2=a_3^2+a_4^2$, which implies that $\va^2=0$. It follows from \eqref{Phi-va_R42} that 
equality $\Phi^2=0$ is valid.
This means that $\Phi$ satisfies the first of the conditions \eqref{Phi_cond} of Theorem \ref{x_Phi-thm}\,.

 Next we consider $\|\va\|^2$. We have $|a_1|^2=|g_1|^2 |g_2|^2 + g_1 g_2 + \bar g_1 \bar g_2 + 1$ and respectively
$|a_2|^2=|g_1|^2 |g_2|^2 - g_1 g_2 - \bar g_1 \bar g_2 + 1$. Hence $|a_1|^2+|a_2|^2=2|g_1|^2 |g_2|^2 + 2$\,.
Analogously we have 
$|a_3|^2=|g_1|^2 + g_1 \bar g_2 + \bar g_1  g_2 + |g_2|^2$ and $|a_4|^2=|g_1|^2 - g_1 \bar g_2 - \bar g_1  g_2 + |g_2|^2$,
which gives $|a_3|^2+|a_4|^2=2|g_1|^2 + 2|g_2|^2$. Therefore we get for $\|\va\|^2$:
\begin{equation}\label{n_va^2_g1g2_R42}
\|\va\|^2 = 2|g_1|^2 |g_2|^2 + 2 - 2|g_1|^2 - 2|g_2|^2 = 2(|g_1|^2-1)(|g_2|^2-1)\,.
\end{equation}
From the last formula and from \eqref{Phi-va_R42} we get the corresponding formula for $\|\Phi\|^2$:
\begin{equation}\label{n_Phi^2_g1g2_R42}
\|\Phi\|^2 = 2|f|^2(|g_1|^2-1)(|g_2|^2-1)\,.
\end{equation}
This means that $\Phi$ satisfies the second of the conditions \eqref{Phi_cond} in Teorem \ref{x_Phi-thm}\,,
if and only if the triple $(f,g_1,g_2)$ satisfies the conditions:
\begin{equation}\label{W_polinom_cond_R42}
f\neq 0\,; \  |g_1|>1\,; \  |g_2|>1 \qquad \text{or} \qquad  f\neq 0\,; \  |g_1|<1\,; \  |g_2|<1 \,.
\end{equation}

 Since the functions $(f,g_1,g_2)$ are holomorphic, then $\Phi$ is also holomorphic and consequently 
satisfy the third of the conditions \eqref{Phi_cond} of Theorem \ref{x_Phi-thm}\,.
According to the last mentioned theorem, the conditions \eqref{W_polinom_cond_R42} imply that there exists 
a sub-domain $\D_0\subset\D$ containing $t_0 \in \D$ and a function $\x : \D_0 \to \RR^4_2$, such that 
$(\D_0,\x)$ is a space-like surface in $\RR^4_2$ with isothermal coordinates $(u,v)$ determined by $t=u+\ii v$, 
which satisfies \eqref{Phi_def}. 

Further, it follows from Theorem \ref{Max_x_Phi-thm} that the surface $(\D_0,\x)$ is a maximal space-like surface.
We say that this surface is \textit{generated by the triple of functions $(f,g_1,g_2)$} through the formula
\eqref{W_polinom_R42}

\smallskip

 Next we see how the functions $f$, $g_1$ and $g_2$, from formula \eqref{W_polinom_R42}, 
can be expressed explicitly via the components of $\Phi$. It follows from \eqref{W_polinom_R42} immediately:
\[
\begin{array}{l}
\phi_1+\ii\phi_2=f(g_1 g_2+1)-f(g_1 g_2-1)=2f \,;\\
\phi_3+\ii\phi_4=f(g_1+g_2)-f(g_1-g_2)=2fg_2  \,;\\
\phi_3-\ii\phi_4=f(g_1+g_2)+f(g_1-g_2)=2fg_1  \,.
\end{array}
\]
From the above equalities we get for $f$, $g_1$ and $g_2$:
\begin{equation}\label{fg1g2_Phi_R42}
f=\ds\frac{1}{2}(\phi_1+\ii\phi_2)\,; \qquad
g_1=\ds\frac{\phi_3-\ii\phi_4}{\phi_1+\ii\phi_2}\;; \qquad
g_2=\ds\frac{\phi_3+\ii\phi_4}{\phi_1+\ii\phi_2}\;.
\end{equation}

 The last equalities show how for a given maximal space-like surface  we can obtain a representation 
of the type \eqref{W_polinom_R42}. Let $\M=(\D,\x)$ be such a surface and $\Phi$ be the corresponding 
function defined by \eqref{Phi_def} and suppose that $\phi_1+\ii\phi_2 \neq 0$\,. 
Defining the triple $(f,g_1,g_2)$ by means of formulas \eqref{fg1g2_Phi_R42}, we get:
\[
g_1 g_2+1 = \frac{(\phi_3-\ii\phi_4)(\phi_3+\ii\phi_4)}{(\phi_1+\ii\phi_2)^2} + 1
= \frac{\phi_3^2+\phi_4^2}{(\phi_1+\ii\phi_2)^2} + 1\,.
\]
Applying to the last formula the identity $\phi_3^2+\phi_4^2=\phi_1^2+\phi_2^2$ we find:
\[
g_1 g_2+1 = \frac{\phi_1^2+\phi_2^2}{(\phi_1+\ii\phi_2)^2} + 1
= \frac{(\phi_1-\ii\phi_2)(\phi_1+\ii\phi_2)}{(\phi_1+\ii\phi_2)^2} + 1
= \frac{\phi_1-\ii\phi_2}{\phi_1+\ii\phi_2} + 1 = \frac{2\phi_1}{\phi_1+\ii\phi_2}\,.
\]
Multiplying the last equality with the first equality in \eqref{fg1g2_Phi_R42}, we obtain $f(g_1 g_2+1)=\phi_1$.
Thus we obtained the required formula for the first coordinate in \eqref{W_polinom_R42}. In a similar way we find the 
formulas for the remaining three coordinates in \eqref{W_polinom_R42}. Hence we obtained that the given surface
has a representation of the type \eqref{W_polinom_R42}. This formula is the analogue of the classical 
Weierstrass representation for the minimal surfaces in $\RR^3$. We call formula \eqref{W_polinom_R42}
a \emph{Weierstrass representation} for the maximal space-like surfaces in $\RR^4_2$.

  The above representation \eqref{W_polinom_R42} and its properties can be summarized in the following
statement:  
\begin{theorem}\label{thm-W_polinom_R42}
Let $\M$ be a maximal space-like surface in $\RR^4_2$ with a fixed point $p$ in it and \,$t=u+\ii v$ determines 
isothermal coordinates in a neighborhood of $p$. Suppose that the function $\Phi$, defined by \eqref{Phi_def}, 
satisfies the condition $\phi_1 + \ii\phi_2 \neq 0$\,. Then there exists a neighborhood of $p$, in which the function
$\Phi$ has a Weierstrass representation \eqref{W_polinom_R42}, where $f$, $g_1$ and $g_2$ are a triple of 
holomorphic functions satisfying the conditions \eqref{W_polinom_cond_R42}. The functions 
$f$, $g_1$ and $g_2$ are determined uniquely by $\Phi$ according to \eqref{fg1g2_Phi_R42}.

 Conversely, let $(f,g_1,g_2)$ be three holomorphic functions, defined in a domain $\D \subset \CC$, 
satisfying the conditions \eqref{W_polinom_cond_R42} and let $t_0$ be a point in $\D$.
Then there exists a sub-domain $\D_0$ in $\D$, containing $t_0$ and a maximal space-like surface
$(\D_0,\x)$ such that the corresponding function $\Phi$ has the representation \eqref{W_polinom_R42} 
in $\D_0$ by means of the given functions and satisfies the condition $\phi_1 + \ii\phi_2 \neq 0$\,. 
\end{theorem}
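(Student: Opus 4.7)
The plan is to assemble the statement by invoking the characterization of space-like surfaces via $\Phi$ from Theorem \ref{x_Phi-thm}, the maximality characterization from Theorem \ref{Max_x_Phi-thm}, and the explicit identities already derived in the paragraphs immediately preceding the theorem (namely $\va^{2}=0$, the formula \eqref{n_Phi^2_g1g2_R42} for $\|\Phi\|^{2}$, and the inversion formulas \eqref{fg1g2_Phi_R42}). The theorem essentially packages those computations into an existence/uniqueness statement, so the proof will be short once the pieces are cited in the right order.

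For the converse direction, I would start from a triple $(f,g_{1},g_{2})$ holomorphic in $\D$ satisfying \eqref{W_polinom_cond_R42} and define $\Phi$ by \eqref{W_polinom_R42}. Holomorphicity of $\Phi$ is immediate from holomorphicity of the triple, which gives both the third condition in \eqref{Phi_cond} and, via Theorem \ref{Max_x_Phi-thm}, eventual maximality. The identity $\Phi^{2}=0$ is read off from $\Phi=f\va$ together with the already verified $\va^{2}=0$. The inequality $\|\Phi\|^{2}>0$ follows from \eqref{n_Phi^2_g1g2_R42} once one observes that \eqref{W_polinom_cond_R42} forces $(|g_{1}|^{2}-1)(|g_{2}|^{2}-1)>0$ and $|f|^{2}>0$. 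Hence Theorem \ref{x_Phi-thm} provides a sub-domain $\D_{0}\ni t_{0}$ and a vector function $\x$ such that $(\D_{0},\x)$ is a space-like surface in isothermal coordinates with $\Phi=\x_{u}-\ii\x_{v}$, and Theorem \ref{Max_x_Phi-thm} upgrades this to a maximal space-like surface. The first component of $\Phi$ shows $\phi_{1}+\ii\phi_{2}=2f\neq 0$ as required.

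For the forward direction, given $\M=(\D,\x)$ and a point $p$ where $\phi_{1}+\ii\phi_{2}\neq 0$, I would choose a neighborhood $\D_{0}$ of $p$ on which this non-vanishing persists, and then define $(f,g_{1},g_{2})$ by \eqref{fg1g2_Phi_R42}. These functions are holomorphic on $\D_{0}$ because $\Phi$ is holomorphic (by Theorem \ref{Max_x_Phi-thm}) and the denominators do not vanish. The direct substitution already carried out in the text (using the identity $\phi_{3}^{2}+\phi_{4}^{2}=\phi_{1}^{2}+\phi_{2}^{2}$ supplied by $\Phi^{2}=0$) reproduces all four components of \eqref{W_polinom_R42}, establishing the representation. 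Uniqueness of the triple is then immediate from the inversion formulas \eqref{fg1g2_Phi_R42}. Finally, the dichotomy in \eqref{W_polinom_cond_R42} is enforced by the already established formula \eqref{n_Phi^2_g1g2_R42}: since $\|\Phi\|^{2}>0$ and $|f|^{2}>0$, the continuous function $(|g_{1}|^{2}-1)(|g_{2}|^{2}-1)$ is strictly positive, neither factor vanishes, and so on a connected neighborhood of $p$ both factors keep a common sign, giving one of the two alternatives in \eqref{W_polinom_cond_R42}.

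The only step that needs a little care, rather than a straight citation, is this last connectedness argument selecting one of the two branches in \eqref{W_polinom_cond_R42}; everything else is either a direct invocation of an earlier theorem or a computation already presented in the paragraphs leading up to the statement. I therefore expect no serious obstacle, and the proof should fit in a few lines once the references are organized.
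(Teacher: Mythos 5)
Your proposal is correct and follows essentially the same route as the paper: the paper's argument for this theorem is precisely the chain of computations in the preceding paragraphs ($\va^2=0$, formula \eqref{n_Phi^2_g1g2_R42}, the inversion formulas \eqref{fg1g2_Phi_R42}, and the direct substitution using $\phi_3^2+\phi_4^2=\phi_1^2+\phi_2^2$), combined with Theorems \ref{x_Phi-thm} and \ref{Max_x_Phi-thm}, and the theorem itself is just the packaging of those facts. Your explicit connectedness argument for selecting one of the two alternatives in \eqref{W_polinom_cond_R42} is a small point the paper leaves implicit, but it is the right justification and does not constitute a different approach.
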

\begin{remark}
The restriction $\phi_1 + \ii\phi_2 \neq 0$ in the last theorem is not an essential geometric condition for $\M$.
If a maximal space-like surface $\M$ satisfies the condition $\phi_1 + \ii\phi_2 = 0$, then it can 
be transformed by a proper motion into a surface with the property $\phi_1 + \ii\phi_2 \neq 0$.
Further, all geometric properties will be invariant under proper motions in $\RR^4_2$, and hence will be valid 
for surfaces satisfying the equality
 $\phi_1 + \ii\phi_2 = 0$\,. 
\end{remark}

 Our next goal is to obtain formulas for the change of the functions in the Weierstrass representation 
\eqref{W_polinom_R42} under a change of the isothermal coordinates and under the basic geometric transformations 
of the given maximal space-like surface.

First we consider a holomorphic change of the coordinates of the type $t=t(s)$. If $(\tilde f, \tilde g_1, \tilde g_2)$ 
is the triple of functions corresponding to $s$ in the representation \eqref{W_polinom_R42}, 
then it follows from \eqref{Phi_s-hol} that:
\begin{equation}\label{f_g_s-hol_R42}
\tilde f(s) = f(t(s)) t'(s)\,; \qquad \tilde g_1(s) = g_1(t(s))\,; \qquad \tilde g_2(s) = g_2(t(s))\,.
\end{equation}

 In the case of an anti-holomorphic change of the coordinates it is sufficient to consider the change $t=\bar s$.
Then according to formula \eqref{Phi_s-t_bs}, the representation \eqref{W_polinom_R42} satisfies:
\begin{equation*}
\tilde\Phi(s): \quad
\begin{array}{rllll}
\tilde\phi_1 &=& \phantom{-\ii} \bar f(\bar s) (\bar g_1(\bar s) \bar g_2(\bar s) + 1) &=& 
\phantom{\ii} \bar f(\bar s) \bar g_1(\bar s) \bar g_2(\bar s)
\left(\ds\frac{1}{\bar g_1(\bar s)} \frac{1}{\bar g_2(\bar s)} + 1\right) ;\\[2ex]
\tilde\phi_2 &=&          -\ii  \bar f(\bar s) (\bar g_1(\bar s) \bar g_2(\bar s) - 1) &=& 
         \ii  \bar f(\bar s) \bar g_1(\bar s) \bar g_2(\bar s)
\left(\ds\frac{1}{\bar g_1(\bar s)} \frac{1}{\bar g_2(\bar s)} - 1\right) ;\\[2ex]
\tilde\phi_3 &=& \phantom{-\ii} \bar f(\bar s) (\bar g_1(\bar s) + \bar g_2(\bar s)) &=& 
\phantom{\ii} \bar f(\bar s) \bar g_1(\bar s) \bar g_2(\bar s)
\left(\ds\frac{1}{\bar g_1(\bar s)} + \frac{1}{\bar g_2(\bar s)}\right) ;\\[2ex]
\tilde\phi_4 &=&          -\ii  \bar f(\bar s) (\bar g_1(\bar s) - \bar g_2(\bar s)) &=& 
         \ii  \bar f(\bar s) \bar g_1(\bar s) \bar g_2(\bar s)
\left(\ds\frac{1}{\bar g_1(\bar s)} - \frac{1}{\bar g_2(\bar s)}\right) .
\end{array}
\end{equation*}
Therefore the triple $(f,g_1,g_2)$ is transformed in the following way:
\begin{equation}\label{f_g_s-t_bs_R42}
\tilde f(s) = \bar f(\bar s)\bar g_1(\bar s)\bar g_2(\bar s)\,; \qquad \tilde g_1(s) = \frac{1}{\bar g_1(\bar s)}\:;
\qquad \tilde g_2(s) = \frac{1}{\bar g_2(\bar s)}\:.
\end{equation}

\smallskip

 Now, let us consider a motion in $\RR^4_2$ and find the transformation formulas for the triple $(f,g_1,g_2)$
in the representation \eqref{W_polinom_R42}. The basic result is that the functions $g_1$ and $g_2$ are
transformed with linear fractional transformations of a certain type. In order to describe these transformations,
we consider the indefinite Hermitian scalar product in $\CC^2$, defined as follows:
for any two elements $\vect{z}=(z_1,z_2)$ and $\vect{w}=(w_1,w_2)$ in $\CC^2$ the scalar product is defined by
$\vect{z}\cdot \bar{\vect{w}} = z_1\bar w_1 - z_2\bar w_2$. The space $\CC^2$ endowed by this scalar product
is denoted by $\CC^2_1$. We shall find that the functions $g_1$ and $g_2$ are transformed 
by means of linear fractional transformations, given by matrices, which are unitary or anti-unitary with respect to
the indefinite Hermitian scalar product in $\CC^2_1$.

For this purpose, we need some formulas from the theory of spinors in $\RR^4_2$. 
We recall these formulas in a form convenient to apply to maximal space-like surfaces. To any vector $\x$ 
in $\RR^4_2$ we associate a complex $2\times 2$ matrix $S$ in the following way: 
\begin{equation}\label{Spin_S-x_R42}
\x=(x_1,x_2,x_3,x_4)
 \ \leftrightarrow \ 
S=\left(
\begin{array}{cc}
   x_3 + \ii x_4  & x_1 + \ii x_2\\
   x_1 - \ii x_2  & x_3 - \ii x_4
\end{array}
\right) = 
\left(
\begin{array}{cc}
        z_1  &      z_2 \\
   \bar z_2  & \bar z_1
\end{array}
\right) .
\end{equation}
It follows immediately that the matrices of the type $S$ in \eqref{Spin_S-x_R42} form a real linear 
subspace of the space of all $2\times 2$ matrices. By a direct verification it follows that 
this space is closed with respect to the matrix multiplication and the correspondence \eqref{Spin_S-x_R42}
has the following important property: $\det S = -\x^2$. This means that any linear operator,
acting in the space of the matrices of the type $S$ in \eqref{Spin_S-x_R42} and preserving the determinant, 
generates an orthogonal operator in $\RR^4_2$. 

 If $U$ and $V$ are special unitary $2\times 2$ matrices in $\mathbf{SU}(1,1,\CC)$, then they have the form
\eqref{Spin_S-x_R42} and therefore $U S V^*$ is a matrix of the same type, where $V^*$ denotes the Hermitian 
conjugate (with respect to the scalar product in $\CC^2_1$) of $V$. What is more, $\det U S V^* = \det S$.
As we mentioned above, to any pair of matrices $(U,V)$ in $\mathbf{SU}(1,1,\CC) \times \mathbf{SU}(1,1,\CC)$ 
we can associate an orthogonal matrix $A$ in $\mathbf{O}(2,2,\RR)$. Hence, we have a group homomorphism
$(U,V) \rightarrow A$, which is given schematically in the following way:
\begin{equation}\label{Spin_UV-A_R42}
\hat S = U S V^* \ \rightarrow \ \hat\x = A\x\,.
\end{equation}
This homomorphism from $\mathbf{SU}(1,1,\CC) \times \mathbf{SU}(1,1,\CC)$ to $\mathbf{O}(2,2,\RR)$ is said 
to be spinor map.

It is proved in the theory of spinors that the kernel of the spinor map consists of two elements:
$(I,I)$ and $(-I,-I)$, where $I$ denotes the unit matrix. For the image of this map it is proved that
it coincides with the connected component of the unit in $\mathbf{O}(2,2,\RR)$, which is denoted 
in a standard way by $\mathbf{SO}^+(2,2,\RR)$.
In physical terms, this is the group of matrices giving such orthogonal transformations in $\RR^4_2$,
which preserve the orientation not only of the whole space $\RR^4_2$, but also the orientation of  
the two-dimensional time-like subspace and the orientation of the two-dimensional space-like subspace  
of $\RR^4_2$. The motions in $\RR^4_2$, preserving the orientation of the two-dimensional time-like 
subspace are called orthochronous motions in $\RR^4_2$.

Taking into account the kernel and the image of the spinor map \eqref{Spin_UV-A_R42} it follows that 
this map induces the following group isomorphism:
\begin{equation}\label{SU11^2/+-1_SO22+}
\mathbf{SU}(1,1,\CC) \times \mathbf{SU}(1,1,\CC)/\{(I,I),(-I,-I)\}\ \cong \ \mathbf{SO}^+(2,2,\RR)\,.
\end{equation}
This means that $\mathbf{SU}(1,1,\CC) \times \mathbf{SU}(1,1,\CC)$ is a double covering of 
$\mathbf{SO}^+(2,2,\RR)$ and therefore we can identify it with the spin-group $\mathbf{Spin}(2,2)$ 
of $\mathbf{SO}^+(2,2,\RR)$. In other words, \eqref{SU11^2/+-1_SO22+} gives a representation of
$\mathbf{Spin}(2,2)$ as $\mathbf{SU}(1,1,\CC) \times \mathbf{SU}(1,1,\CC)$, which is called
spinor representation.

 Until now, we have restricted $\x$ to be a real vector in $\RR^4_2$. Since the relations
\eqref{Spin_S-x_R42} and \eqref{Spin_UV-A_R42} are linear with respect to $\x$, then they continue to be valid if we 
replace $\x$ with an arbitrary complex vector in $\CC^4$. The only difference is that $S$ can be an arbitrary
complex matrix. Under a motion of the complex vector $\x$ with a matrix in $\mathbf{SO}^+(2,2,\RR)$, the
corresponding matrix $S$ is transformed by the same formula \eqref{Spin_UV-A_R42}, as in the case of a real vector $\x$.

\smallskip

 Now, let us return to the maximal space-like surfaces. Using the above formulas we shall find 
how the functions in the Weierstrass representation change under a motion of the given surface in $\RR^4_2$.  
Let $\M=({\D},\x)$ and $\hat\M=(\D,\hat\x)$ be two maximal space-like surfaces, parametrized by isothermal 
coordinates $t=u+\ii v$. Suppose that $\hat\M$ is obtained from $\M$ by a proper orthochronous motion in $\RR^4_2$
of the type $\hat\x(t)=A\x(t)+\vb$, where $A \in \mathbf{SO}^+(2,2,\RR)$ and $\vb \in \RR^4_2$. 
If $\Phi$ is the function, defined by \eqref{Phi_def}, then $\Phi$ is transformed by the formula 
$\hat\Phi=A\Phi$ as we know from \eqref{hat_Phi-Phi-mov}. Next we introduce the matrix $S_{\Phi}$, 
which is obtained  from $\Phi$ according to the rule \eqref{Spin_S-x_R42}:
\begin{equation}\label{SPhi_R42_def}
S_{\Phi}=\left(
\begin{array}{cc}
   \phi_3 + \ii\phi_4  &  \phi_1 + \ii\phi_2\\
   \phi_1 - \ii\phi_2  &  \phi_3 - \ii\phi_4 
\end{array}
\right) .
\end{equation}
Denote by $(U,V)$ an arbitrary pair of matrices in $\mathbf{SU}(1,1,\CC) \times \mathbf{SU}(1,1,\CC)$, 
corresponding to $A$ by the homomorphism \eqref{Spin_UV-A_R42}. If $S_{\hat\Phi}$ is the matrix obtained from
$\hat\M$, then according to \eqref{Spin_UV-A_R42} it is related to $S_{\Phi}$ in the following way:
\begin{equation}\label{hatSPhi_SPhi_R42}
S_{\hat\Phi} = U S_{\Phi} V^* .
\end{equation}

 Suppose that $\M$ is given by a Weierstrass representation of the type \eqref{W_polinom_R42}. We 
obtain by direct computations:
\begin{equation*}
\begin{array}{llll}
\phi_1+\ii\phi_2 = f(g_1g_2+1)&-&f(g_1g_2-1) &= 2f       \,;\\
\phi_1-\ii\phi_2 = f(g_1g_2+1)&+&f(g_1g_2-1) &= 2fg_1g_2 \,;\\
\phi_3+\ii\phi_4 = f(g_1+g_2) &-&f(g_1-g_2)  &= 2fg_2 \,;\\
\phi_3-\ii\phi_4 = f(g_1+g_2) &+&f(g_1-g_2)  &= 2fg_1 \,.\\
\end{array}
\end{equation*}
Hence the matrix $S_{\Phi}$ is represented by $f$, $g_1$ and $g_2$ in the following way:
\begin{equation}\label{SPhi_fg1g2_R42}
S_{\Phi}=\left(
\begin{array}{ll}
   2fg_2     &  2f \\
   2fg_1g_2  &  2fg_1
\end{array}
\right) .
\end{equation}
Denoting the elements of $S_{\Phi}$ by $s_{ij}$, we have the following for $f$, $g_1$ and $g_2$\,: 
\begin{equation}\label{fg1g2_s1234_R42}
f=\frac{1}{2}s_{12}\,; \qquad g_1=\frac{s_{22}}{s_{12}}\,; \qquad g_2=\frac{s_{11}}{s_{12}} \,.
\end{equation}
We have already seen that $S_{\Phi}$ was transformed by the rule \eqref{hatSPhi_SPhi_R42}. Next we shall 
find the transformation formulas for the functions $f$, $g_1$ and $g_2$ using \eqref{fg1g2_s1234_R42}.
For that purpose, let us denote the elements of $U$ and $V$ in the following way:
\begin{equation}\label{UV_ab_R42}
U=\left(
\begin{array}{rr}
    \bar a_1  &  b_1 \\
    \bar b_1  &  a_1
\end{array}
\right); \quad
V=\left(
\begin{array}{rr}
    \bar a_2  &  -b_2 \\
   -\bar b_2  &   a_2
\end{array}
\right);
\qquad a_j,b_j \in \CC\,;
\quad |a_j|^2-|b_j|^2=1\,.
\end{equation}
After multiplication of the matrices in \eqref{hatSPhi_SPhi_R42} and the corresponding simplification we get:
\begin{equation}\label{hatSPhi_fg1g2_R4}
S_{\hat\Phi}=\left(
\begin{array}{rr}
     2f(b_1 g_1 + \bar a_1)(a_2 g_2 + \bar b_2)   &   2f(b_1 g_1 + \bar a_1)(b_2 g_2 + \bar a_2) \\
     2f(a_1 g_1 + \bar b_1)(b_2 g_2 + \bar a_2)   &   2f(b_2 g_2 + \bar a_2)(a_1 g_1 + \bar b_1)
\end{array}
\right).
\end{equation}
Now, applying \eqref{fg1g2_s1234_R42} to $\hat f$, $\hat g_1$ and $\hat g_2$, we obtain the required 
transformation formulas for the functions in the Weierstrass representation of the type \eqref{W_polinom_R42} 
under a proper orthochronous motion of $\M$ in $\RR^4_2$\,:
\begin{gather}\label{hatfg_fg-orthchr_move_R42}
\hat f = f(b_1 g_1 + \bar a_1)(b_2 g_2 + \bar a_2) \,; \quad 
\hat g_1 = \frac{a_1 g_1 + \bar b_1}{b_1 g_1 + \bar a_1}\,; \quad 
\hat g_2 = \frac{a_2 g_2 + \bar b_2}{b_2 g_2 + \bar a_2}\,;
\qquad \begin{array}{l} a_j,b_j \in \CC\,; \\ |a_j|^2-|b_j|^2=1\,. \end{array}
\raisetag{-0.8ex}
\end{gather}

 Now we consider the inverse statement. Suppose that $\hat\M=(\D,\hat\x)$ and $\M=(\D,\x)$
are two maximal space-like surfaces whose Weierstrass representations of the type \eqref{W_polinom_R42} 
are related by \eqref{hatfg_fg-orthchr_move_R42}. Next we show that the surfaces are obtained one from 
the other by a proper orthochronous motion in $\RR^4_2$. For that purpose, we introduce the matrices $U$ 
and $V$ by \eqref{UV_ab_R42}. Let $A$ be the corresponding to $(U,V)$ matrix through the homomorphism 
\eqref{Spin_UV-A_R42}. With the aid of $A$ we can obtain a third surface $\hat{\hat\M}=(\D,\hat{\hat\x})$ 
applying the formula: $\hat{\hat\x} = A\x$. As we have proved up to now, $\hat{\hat\M}$ has a Weierstrass 
representation with functions satisfying \eqref{hatfg_fg-orthchr_move_R42}. Therefore, $\hat\M$ and 
$\hat{\hat\M}$ are generated by the same functions through the formulas \eqref{W_polinom_R42} and hence
they are obtained one from the other by a translation in $\RR^4_2$. Since by definition $\hat{\hat\M}$ 
is obtained from $\M$ by means of a proper orthochronous motion, then $\hat\M$ is also obtained from $\M$ 
by a proper orthochronous motion. Summarizing, we obtain the following:
\begin{theorem}\label{W-orthchr_move_R42} 
Let $\hat\M=(\D,\hat\x)$ and $\M=(\D,\x)$ be two maximal space-like  surfaces in $\RR^4_2$, 
given by Weierstrass representations of the type \eqref{W_polinom_R42}, where $\D$ is a connected domain 
in $\CC$. Then the following conditions are equivalent:
\begin{enumerate}
	\item $\hat\M$ and $\M$ are related by a proper orthochronous motion in $\RR^4_2$ of the type:\\
	$\hat\x(t)=A\x(t)+\vb$, where $A \in \mathbf{SO}^+(2,2,\RR)$ and $\vb \in \RR^4_2$.
	\item The functions in the Weierstrass representations of $\hat\M$ and $\M$ are related by
		the equalities \eqref{hatfg_fg-orthchr_move_R42}. 
\end{enumerate}
\end{theorem}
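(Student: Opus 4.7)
The plan is to exploit the spinor correspondence already set up in \eqref{Spin_S-x_R42}--\eqref{SU11^2/+-1_SO22+}, which reduces the statement to a matrix computation plus a uniqueness argument. For the forward direction $(1)\Rightarrow(2)$, I will start from $\hat\x=A\x+\vb$, differentiate to get $\hat\Phi=A\Phi$ via \eqref{hat_Phi-Phi-mov}, and translate this into the matrix identity $S_{\hat\Phi}=U S_{\Phi}V^{*}$ by choosing any preimage $(U,V)\in\mathbf{SU}(1,1,\CC)\times\mathbf{SU}(1,1,\CC)$ of $A$ under the spinor map (such a preimage exists because $A\in\mathbf{SO}^{+}(2,2,\RR)$). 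Writing $U,V$ as in \eqref{UV_ab_R42} and substituting the explicit form \eqref{SPhi_fg1g2_R42} of $S_{\Phi}$, the matrix product is exactly what was recorded in \eqref{hatSPhi_fg1g2_R4}. Reading off $\hat f,\hat g_{1},\hat g_{2}$ through \eqref{fg1g2_s1234_R42} then gives precisely the three relations in \eqref{hatfg_fg-orthchr_move_R42}.

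For the reverse direction $(2)\Rightarrow(1)$, I will run the argument backwards via an auxiliary surface. Given parameters $a_{j},b_{j}$ with $|a_{j}|^{2}-|b_{j}|^{2}=1$ realizing \eqref{hatfg_fg-orthchr_move_R42}, define $U$ and $V$ by \eqref{UV_ab_R42}; they lie in $\mathbf{SU}(1,1,\CC)$ by construction. Let $A\in\mathbf{SO}^{+}(2,2,\RR)$ be the image of $(U,V)$ under the spinor homomorphism \eqref{Spin_UV-A_R42} and form the new maximal space-like surface $\hat{\hat\M}=(\D,A\x)$. Applying the forward direction already proved to $\hat{\hat\M}$ and $\M$, the Weierstrass functions of $\hat{\hat\M}$ satisfy \eqref{hatfg_fg-orthchr_move_R42} with the same $a_{j},b_{j}$, hence they coincide on $\D$ with those of $\hat\M$. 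Therefore $\hat\Phi=\hat{\hat\Phi}$ on $\D$, and by Theorem~\ref{x_Phi-thm} the vector functions $\hat\x$ and $\hat{\hat\x}=A\x$ differ by a constant translation $\vb\in\RR^{4}_{2}$ on the connected domain $\D$, which is exactly condition (1).

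The main obstacle is largely bookkeeping: carrying out the product $U S_{\Phi}V^{*}$ cleanly so that the columns and rows factor as in \eqref{hatSPhi_fg1g2_R4}, and then verifying via \eqref{fg1g2_s1234_R42} that the linear-fractional formulas drop out in the stated form. Two subsidiary points deserve a moment of care. First, the denominators $b_{j}g_{j}+\bar a_{j}$ in \eqref{hatfg_fg-orthchr_move_R42} must not vanish; this follows from $|a_{j}|^{2}-|b_{j}|^{2}=1$ together with $|g_{j}|\neq 1$ coming from \eqref{W_polinom_cond_R42}, since $|b_{j}g_{j}+\bar a_{j}|^{2}-|a_{j}g_{j}+\bar b_{j}|^{2}=|g_{j}|^{2}-1\neq 0$. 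Second, the passage from $\hat\Phi=\hat{\hat\Phi}$ to a single global translation vector $\vb$ requires $\D$ to be connected, which is a hypothesis of the theorem; otherwise one would only get locally constant translations on each component. With these points in hand, the equivalence is complete.
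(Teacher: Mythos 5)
Your proof is correct and follows essentially the same route as the paper: the forward direction via the spinor correspondence \eqref{Spin_UV-A_R42}, the matrix product $U S_{\Phi}V^{*}$ read off through \eqref{fg1g2_s1234_R42}, and the reverse direction via the auxiliary surface $\hat{\hat\M}=(\D,A\x)$ reduced to a translation on the connected domain $\D$. One small note on your subsidiary remark: the identity should read $|b_{j}g_{j}+\bar a_{j}|^{2}-|a_{j}g_{j}+\bar b_{j}|^{2}=1-|g_{j}|^{2}$, and when $|g_{j}|>1$ this alone does not exclude a zero of the denominator --- but this point is not needed, since the theorem already hypothesizes that both surfaces admit representations of the type \eqref{W_polinom_R42}.
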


 Next we consider the cases of a motion in $\RR^4_2$, belonging to one of the three remaining connected components
of $\mathbf{O}(2,2,\RR)$. First we consider a proper non-orthochronous motion. We can obtain a concrete such a motion
if we change the sign in the second and the fourth coordinate. Let $\hat\M$ be obtained by $\M$ in this way.
Then we have:
\begin{equation*}
\begin{array}{ll}
\hat\Phi\!\!\! &=\, \big(\, f(g_1 g_2+1) \,,\, -\ii f(g_1 g_2-1) \,,\, f(g_1 + g_2) \,,\, -\ii f(g_1 - g_2) \,\big)\\[1.5ex]
         &=\, \ds\left(fg_1 g_2\left(\frac{1}{g_1}\frac{1}{g_2}+1\right),\, \ii fg_1 g_2\left(\frac{1}{g_1}\frac{1}{g_2}-1\right),\,
         fg_1 g_2\left(\frac{1}{g_1}+\frac{1}{g_2}\right),\, \ii fg_1 g_2\left(\frac{1}{g_1}-\frac{1}{g_2}\right) \right).
\end{array}
\end{equation*}
It follows from the above formulas, that under this concrete motion the triple $(f,g_1,g_2)$ is 
transformed in the following way:
$\hat f = fg_1 g_2$,\, $\hat g_1 = \ds\frac{1}{g_1}$ и $\hat g_2 = \ds\frac{1}{g_2}$. 
The functions $g_1$ and $g_2$ in the last formulas are transformed by linear fractional transformations 
given by anti-unitary ($\|U\vect{z}\|^2=-\|\vect{z}\|^2$) matrices with determinant equal to $-1$. Any
proper non-orthochronous motion can be obtained as a composition of this concrete motion and a proper
orthochronous motion in $\RR^4_2$. Therefore, if two maximal space-like surfaces are related by a proper 
non-orthochronous motion, then the functions in their Weierstrass representations are transformed by formulas
analogous to \eqref{hatfg_fg-orthchr_move_R42}. The only difference is that the matrices of the 
linear fractional transformations of $\hat g_1$ and $\hat g_2$ are anti-unitary with determinant equal to $-1$\,:
\begin{gather}\label{hatfg_fg-nonorthchr_move_R42}
\hat f = f(b_1 g_1 + \bar a_1)(b_2 g_2 + \bar a_2) \,; \quad 
\hat g_1 = \frac{a_1 g_1 + \bar b_1}{b_1 g_1 + \bar a_1}\,; \quad 
\hat g_2 = \frac{a_2 g_2 + \bar b_2}{b_2 g_2 + \bar a_2}\,;
\qquad \begin{array}{l} a_j,b_j \in \CC\,; \\ |a_j|^2-|b_j|^2=-1\,. \end{array}
\raisetag{-0.8ex}
\end{gather}

\smallskip

 Further we consider the case of an improper motion in $\RR^4_2$. 
Such a concrete motion is obtained changing only the sign in the fourth coordinate.
This means that in the Weierstrass representation \eqref{W_polinom_R42} $\phi_4$ changes the sign.
This transformation of $\Phi$ is obtained changing the places of $g_1$ and $g_2$. 
Any improper motion can be obtained by a composition of this concrete  motion and a proper 
motion in $\RR^4_2$. Hence the required formulas are obtained changing the places of $g_1$ and $g_2$
in the corresponding formulas for the proper motions. 
Thus for an improper non-orthochronous motion in $\RR^4_2$ we get from \eqref{hatfg_fg-orthchr_move_R42}:
\begin{gather}\label{hatfg_fg-unp_nonorthchr_move_R42}
\hat f = f(b_1 g_2 + \bar a_1)(b_2 g_1 + \bar a_2) \,; \quad 
\hat g_1 = \frac{a_1 g_2 + \bar b_1}{b_1 g_2 + \bar a_1}\,; \quad 
\hat g_2 = \frac{a_2 g_1 + \bar b_2}{b_2 g_1 + \bar a_2}\,;
\qquad \begin{array}{l} a_j,b_j \in \CC\,; \\ |a_j|^2-|b_j|^2=1\,. \end{array}
\raisetag{-0.8ex}
\end{gather}
Respectively, by an improper orthochronous motion in $\RR^4_2$ we have from \eqref{hatfg_fg-nonorthchr_move_R42}:
\begin{gather}\label{hatfg_fg-unp_orthchr_move_R42}
\hat f = f(b_1 g_2 + \bar a_1)(b_2 g_1 + \bar a_2) \,; \quad 
\hat g_1 = \frac{a_1 g_2 + \bar b_1}{b_1 g_2 + \bar a_1}\,; \quad 
\hat g_2 = \frac{a_2 g_1 + \bar b_2}{b_2 g_1 + \bar a_2}\,;
\qquad \begin{array}{l} a_j,b_j \in \CC\,; \\ |a_j|^2-|b_j|^2=-1\,. \end{array}
\raisetag{-0.8ex}
\end{gather}

\smallskip

 Now, let us find the change of the triples of functions under a homothety of the type $\hat\x=k\x$.
In this case $\hat\Phi=k\Phi$ and consequently the triple $(f,g_1,g_2)$ from the representation 
\eqref{W_polinom_R42} is changed as follows: 
\begin{equation}\label{hat_fg-fg-homotet_R42}
\hat f = k f\,; \qquad \hat g_1 = g_1\,; \qquad \hat g_2 = g_2 \,.
\end{equation}

\smallskip

  Finally, let us consider the one-parameter family $\M_\theta$ of associated maximal space-like surfaces 
to a given surface $\M$. According to \eqref{Phi_1-param_family} we have: $\Phi_\theta = \e^{-\ii\theta}\Phi$.
From here we obtain that the functions in the representation \eqref{W_polinom_R42} are changed as follows:
\begin{equation}\label{fg_1-param_family_R42}
f_\theta = \e^{-\ii\theta}f\,; \qquad g_{1|\theta} = g_1\,; \qquad g_{2|\theta} = g_2 \,.
\end{equation}


\section{Formulas for the basic invariants of a maximal space-like surface in $\RR^4_2$, given by a Weierstrass representation}
\label{sect_K_kappa-W}

 The purpose of this section is to find formulas for the coefficients of the first fundamental form 
and for the scalar invariants $K$ and $\varkappa$ of a given maximal space-like surface $\M$ via the complex functions
in the Weierstrass representation of $\M$. In order to simplify the calculations we use again the auxiliary vector
function $\va$ defined by equality \eqref{va-def_R42}.
We have:
\begin{equation}\label{va_vap-g_R42}
\begin{array}{lll}
\va       &=& \big(\, (g_1 g_2+1) \,,\, \phantom{-}\ii (g_1 g_2-1) \,,\, (g_1 + g_2) \,,\, \phantom{-}\ii (g_1 - g_2) \,\big)\,;\\
\bar\va   &=& \big(\, (\bar g_1 \bar g_2+1) \,,\, -\ii (\bar g_1 \bar g_2-1) \,,\, 
                      (\bar g_1 + \bar g_2) \,,\, -\ii (\bar g_1 - \bar g_2) \,\big)\,;\\
\va'      &=& \big(\, (g'_1 g_2+g_1 g'_2) \,,\, \phantom{-}\ii (g'_1 g_2+g_1 g'_2) \,,\, 
                      (g'_1 + g'_2) \,,\, \phantom{-}\ii (g'_1 - g'_2) \,\big)\,;\\
\bar{\va'}&=& \big(\, (\bar{g'_1} \bar g_2 + \bar g_1 \bar{g'_2}) \,,\, -\ii (\bar{g'_1} \bar g_2 + \bar g_1 \bar{g'_2}) \,,\, 
                      (\bar{g'_1} + \bar{g'_2}) \,,\, -\ii (\bar{g'_1} - \bar{g'_2}) \,\big)\,.
\end{array}
\end{equation}
We need the scalar products between $\va$, $\bar \va$, $\va'$ and  $\bar {\va'}$. 
After the definition of $\va$ we have already obtained that $\va^2=0$ and after differentiation and
complex conjugation we get:
\begin{equation}\label{a1_R42}
\va^2=\va \va'=\bar \va^2=\bar \va \bar {\va'}=0\,.
\end{equation}
For $\|\va\|^2$ we also have formula \eqref{n_va^2_g1g2_R42}. After a scalar multiplication of the equalities
\eqref{va_vap-g_R42} we find the following formulas:
\begin{equation}\label{a3_R42}
\begin{array}{rl}
\va\bar {\va'} 
           &= 2(\bar{g'_1} \bar g_2 + \bar g_1 \bar{g'_2}) g_1 g_2 - 2(g_1\bar{g'_1} + g_2\bar{g'_2})\\
					 &= 2 g_1\bar{g'_1}(|g_2|^2-1) + 2 g_2\bar{g'_2}(|g_1|^2-1).
\end{array}
\end{equation}
\begin{equation}\label{a4_R42}
 \bar \va \va' = \overline{\va\bar {\va'}} = 2 \bar g_1 g'_1 (|g_2|^2-1) + 2 \bar g_2 g'_2 (|g_1|^2-1).
\end{equation} 
\begin{equation}\label{a5_R42}
{\va'}^2 = -(g'_1 + g'_2)^2 + (g'_1 - g'_2)^2 = -4g'_1 g'_2 \,. 
\end{equation}
\begin{equation}\label{a6_R42}
\begin{array}{rl}
\|\va'\|^2 = \va'\bar {\va'} 
                       &= 2|g_2|^2 |g'_1|^2 + 2\bar g_1 g'_1 g_2 \bar{g'_2} + 2g_1 \bar{g'_1} \bar g_2 g'_2 + 2|g_1|^2 |g'_2|^2
											   -2|g'_1|^2 - 2|g'_2|^2\\
					             &= 2|g'_1|^2 (|g_2|^2-1) + 2|g'_2|^2 (|g_1|^2-1) + 2\bar g_1 g'_1 g_2 \bar{g'_2} + 2g_1 \bar{g'_1} \bar g_2 g'_2\,.
\end{array}
\end{equation}

 Next we  obtain formulas for ${\va^{\prime\bot}}^2$ and ${\|\va^{\prime\bot}\|}^2$, expressed by $g_1$ and $g_2$. 
The vector $\va$ has the same algebraic properties as $\Phi$. Therefore this vector satisfies identities
analogous to \eqref{mPhipn2} and \eqref{Phi_prim_bot^2-Phi_prim^2}:
\begin{equation}\label{a_prim_bot^2_R42}
{\va^{\prime \bot}}^2={\va^\prime}^2\,; \qquad
{\|\va^{\prime\bot}\|}^2 = \ds\frac{\|\va\|^2\|\va'\|^2-|\bar \va \cdot \va'|^2}{\|\va\|^2}\:.
\end{equation}
From here with the aid of \eqref{a5_R42} we obtain the required formula for ${\va^{\prime\bot}}^2$:
\begin{equation}\label{apn2_R42}
{\va^{\prime\bot}}^2 = {\va'}^2 = -4g'_1 g'_2\,.
\end{equation}
Denote by $k_1$ the numerator in the last formula \eqref{a_prim_bot^2_R42} for ${\|\va^{\prime\bot}\|}^2$. 
Applying formulas \eqref{n_va^2_g1g2_R42}, \eqref{a4_R42} and \eqref{a6_R42}, after simplification we 
obtain $k_1$:
\begin{equation}\label{k1_R42}
\begin{array}{rl}
k_1 &= \|\va\|^2\|\va'\|^2-|\bar \va \cdot \va'|^2\\
    &= -4\big(|g'_1|^2 (|g_2|^2-1)^2 + |g'_2|^2 (|g_1|^2-1)^2\big)\,.
\end{array}
\end{equation}
Denote by $k_2$ the determinant of the vectors $\va$, $\bar \va$, $\va'$ and $\bar {\va'}$  multiplied by $-1$.
Using formulas \eqref{va_vap-g_R42} after simplification we find $k_2$:
\begin{equation}\label{k2_R42}
\begin{array}{rl}
k_2 &= -\det(\va,\bar \va , \va' , \bar {\va'})\\
    &= -4\big(|g'_1|^2 (|g_2|^2-1)^2 - |g'_2|^2 (|g_1|^2-1)^2\big)\,.
\end{array}
\end{equation}

\smallskip

 Let $\M$ be given by a Weierstrass representation of the type \eqref{W_polinom_R42}. Using the above 
formulas for the vector function $\va$, we can easily express the invariants of $\M$ via 
$f$, $g_1$ and $g_2$. Combining \eqref{EG}, \eqref{Phi-va_R42} and \eqref{n_va^2_g1g2_R42} we find 
the coefficient $E$ of the first fundamental form: 
\begin{equation}\label{E_fg1g2_R42}
E=\frac{1}{2}\|\Phi\|^2=|f|^2(|g_1|^2-1)(|g_2|^2-1)\,.
\end{equation}
Hence, the first fundamental form of $\M$ is given by the equality:
\begin{equation}\label{I_fg1g2_R42}
\mathbf{I}=E\,(du^2 + dv^2) = |f|^2(|g_1|^2-1)(|g_2|^2-1)|dt|^2 \,.
\end{equation}

\smallskip

 Equality \eqref{Phi-va_R42} implies that:
\begin{equation}\label{Phipn-va_R42}
\Phi^{\prime \bot}=(f'\va+f\va')^\bot = f\va^{\prime \bot}\,; \qquad
{\Phi^{\prime \bot}}^2=f^2 {\va^{\prime \bot}}^2\,.
\end{equation}
In order to obtain ${\Phi^\prime}^2$, we apply consecutively 
\eqref{Phi_prim_bot^2-Phi_prim^2}, \eqref{Phipn-va_R42} and \eqref{apn2_R42}:
\begin{equation}\label{Phip^2_fg1g2_R42}
{\Phi^\prime}^2={\Phi^{\prime \bot}}^2=f^2{\va^{\prime \bot}}^2=f^2{\va^\prime}^2=-4f^2 g'_1 g'_2\,.
\end{equation}

\smallskip

  Now, consider formula \eqref{K_kappa_Phi_R42} for $K$. Expressing $\Phi^{\prime \bot}$ through \eqref{Phipn-va_R42},
	we get:
\[
K = \ds\frac{-4{\|\Phi^{\prime \bot}\|}^2}{\|\Phi\|^4} = \ds\frac{-4{\|f\va^{\prime \bot}\|}^2}{\|f\va\|^4}
= \ds\frac{-4{|f|^2\|\va^{\prime \bot}\|}^2}{|f|^4\|\va\|^4} = \ds\frac{-4{\|\va^{\prime \bot}\|}^2}{|f|^2\|\va\|^4}\,.
\]
Replacing ${\|\va^{\prime\bot}\|}^2$ by \eqref{a_prim_bot^2_R42} in the last formula and using the function 
$k_1$, defined by \eqref{k1_R42}, we find:
\[
K = \ds\frac{-4(\|\va\|^2\|\va'\|^2-|\bar \va \cdot \va'|^2)}{|f|^2\|\va\|^6} = \ds\frac{-4k_1}{|f|^2\|\va\|^6}\,.
\]
In order to obtain a similar formula for $\varkappa$, we use the second formula in \eqref{K_kappa_Phi_R42}. 
We express $\Phi$ in this formula by $f$ and $\va$, and using the function $k_2$ defined by \eqref{k2_R42}, we obtain:
\[
\begin{array}{rll}
\ds \varkappa &=& -\ds\frac{4}{\|\Phi\|^6}\det (\Phi,\bar\Phi,\Phi^\prime,\overline{\Phi^\prime})
               =  -\ds\frac{4}{\|f\va\|^6}\det (f\va,\bar f \bar \va,f'\va+f\va',\bar f' \bar \va + \bar f \bar {\va'})\\[6mm]
              &=& -\ds\frac{4}{|f|^6\|\va\|^6}\det (f\va,\bar f \bar \va,f\va',\bar f \bar {\va'}) 
               =  -\ds\frac{4|f|^4}{|f|^6\|\va\|^6}\det (\va,\bar \va,\va',\bar {\va'}) = \ds\frac{4k_2}{|f|^2\|\va\|^6}\,.
\end{array}
\]
Finally we have the following formulas for the curvatures $K$ and $\varkappa$:
\begin{equation}\label{K_kappa1_R42}
K = \ds\frac{-4k_1}{|f|^2\|\va\|^6}\,; \qquad \varkappa = \ds\frac{4k_2}{|f|^2\|\va\|^6}\,.
\end{equation}
We express in the last formulas $\|\va\|^2$, $k_1$ and $k_2$ respectively through \eqref{n_va^2_g1g2_R42}, 
\eqref{k1_R42} and \eqref{k2_R42}.
Thus, for any maximal space-like surface $\M$ given by a Weierstrass representation of the type 
\eqref{W_polinom_R42}, the Gauss curvature $K$ and the curvature of the normal connection $\varkappa$ are given by:
\begin{equation}\label{K_kappa_fg1g2_R42}
\begin{array}{llr}
K         &=& \ds\frac{ 2}{|f|^2(|g_1|^2-1)(|g_2|^2-1)}
              \left(\ds\frac{|g'_1|^2}{(|g_1|^2-1)^2}+\ds\frac{|g'_2|^2}{(|g_2|^2-1)^2}\right);\\[4ex]
\varkappa &=& \ds\frac{-2}{|f|^2(|g_1|^2-1)(|g_2|^2-1)}
              \left(\ds\frac{|g'_1|^2}{(|g_1|^2-1)^2}-\ds\frac{|g'_2|^2}{(|g_2|^2-1)^2}\right).
\end{array}
\end{equation}


\section{Canonical Weierstrass representation for maximal space-like surfaces of general type in $\RR^4_2$}\label{sect_W_can}

 In this section we consider Weierstrass representation of maximal space-like surfaces of general type
parametrized by canonical coordinates.

Let $\M$ be a maximal space-like surface in $\RR^4_2$ given by a Weierstrass representation of the type
\eqref{W_polinom_R42}. First we express the conditions for a point to be degenerate and the conditions 
for the coordinates to be canonical through the functions in the Weierstrass representation. 
According to Definition \ref{DegP-def} and \eqref{Phi_prim_bot^2-Phi_prim^2}, the degenerate points coincide
with the zeroes of ${\Phi^\prime}^2$. Then it follows from equality \eqref{Phip^2_fg1g2_R42} that 
the degenerate points are characterized by the condition $-4f^2 g'_1 g'_2 = 0$. Since  $f\neq 0$, then
the set of the degenerate points coincides with the zeroes of $g'_1 g'_2$. Hence we have:
\begin{prop}\label{DegP_fg1g2_R42}
Let $\M$ be a maximal space-like surface in $\RR^4_2$ given by a Weierstrass representation of the type
\eqref{W_polinom_R42}. Then the point $p \in \M$ is degenerate if and only if $g'_1(p)g'_2(p)=0$\,. 
\end{prop}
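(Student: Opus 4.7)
The plan is to reduce the geometric condition of degeneracy to an algebraic condition on the Weierstrass data by chaining together the three identities that the excerpt has already prepared. The proof will essentially be a computation, so there is no real obstacle; the task is mainly to assemble the pieces in the right order.

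First, I will recall from Definition \ref{DegP-def} that a point $p \in \M$ is degenerate precisely when ${\Phi^{\prime \bot}}^2(p)=0$. Next, I will invoke identity \eqref{Phi_prim_bot^2-Phi_prim^2}, which was derived earlier without any canonicity assumption and asserts ${\Phi^{\prime \bot}}^2 = {\Phi^\prime}^2$; this replaces the normal component of $\Phi'$ (which is not holomorphic in general) with the purely holomorphic object ${\Phi^\prime}^2$, and thus makes it possible to use the explicit Weierstrass data.

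The third step is to substitute formula \eqref{Phip^2_fg1g2_R42}, namely ${\Phi^\prime}^2 = -4 f^2 g'_1 g'_2$, which was computed in the previous section directly from the representation \eqref{W_polinom_R42}. Combining the three equalities gives that $p$ is degenerate if and only if $-4 f^2(p) g'_1(p) g'_2(p) = 0$. Finally, I will use the Weierstrass admissibility condition \eqref{W_polinom_cond_R42}, which guarantees $f \neq 0$ at every point, so $f^2$ can be cancelled and the condition reduces to $g'_1(p) g'_2(p) = 0$, as claimed.

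The only point requiring any care is that all three preceding identities are applicable at the point $p$; this is immediate because \eqref{Phi_prim_bot^2-Phi_prim^2} holds pointwise on any space-like isothermally parametrized surface, \eqref{Phip^2_fg1g2_R42} holds pointwise wherever the representation \eqref{W_polinom_R42} is defined, and the non-vanishing of $f$ is a standing hypothesis of the representation. Thus the proof will be a three-line chain of equivalences with no case analysis.
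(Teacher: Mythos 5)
Your proof is correct and follows exactly the paper's own argument: Definition \ref{DegP-def} combined with \eqref{Phi_prim_bot^2-Phi_prim^2} reduces degeneracy to the vanishing of ${\Phi^\prime}^2$, then \eqref{Phip^2_fg1g2_R42} and the condition $f\neq 0$ from \eqref{W_polinom_cond_R42} give the stated equivalence. Nothing is missing.
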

From here and Definition \ref{Min_Surf_Gen_Typ-def} we have:
\begin{prop}\label{Gen_Typ_fg1g2_R42}
Let $\M$ be a maximal space-like surface in $\RR^4_2$ given by a Weierstrass representation of the type
\eqref{W_polinom_R42}. Then the surface $\M$ is of general type if and only if $g'_1 g'_2 \neq 0$\,. 
\end{prop}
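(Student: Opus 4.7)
The plan is to derive this proposition directly as a corollary of the preceding Proposition \ref{DegP_fg1g2_R42} together with Definition \ref{Min_Surf_Gen_Typ-def}. There is essentially no work beyond chaining the two ``if and only if'' statements together pointwise.

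First I would unpack the two ingredients. By Definition \ref{Min_Surf_Gen_Typ-def}, the surface $\M$ is of general type precisely when it contains no degenerate points, that is, when for every $p\in\M$ the point $p$ fails to be degenerate. By Proposition \ref{DegP_fg1g2_R42}, applied pointwise, $p$ fails to be degenerate if and only if $g'_1(p)g'_2(p)\neq 0$. Combining the two biconditionals pointwise gives that $\M$ is of general type if and only if $g'_1(p)g'_2(p)\neq 0$ for every $p\in\M$, which is exactly the condition $g'_1g'_2\neq 0$ on $\M$.

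No genuine obstacle arises; the only thing worth noting is that Proposition \ref{DegP_fg1g2_R42} was itself obtained from the identity ${\Phi^\prime}^2=-4f^2 g'_1 g'_2$ in \eqref{Phip^2_fg1g2_R42} and the nonvanishing of $f$ guaranteed by \eqref{W_polinom_cond_R42}, so the pointwise characterization of nondegeneracy already uses those facts and need not be revisited. Thus the proof reduces to a single sentence invoking Proposition \ref{DegP_fg1g2_R42} and Definition \ref{Min_Surf_Gen_Typ-def}.
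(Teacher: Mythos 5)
Your proposal is correct and matches the paper exactly: the paper also obtains this proposition immediately by combining Proposition \ref{DegP_fg1g2_R42} (a point $p$ is degenerate if and only if $g'_1(p)g'_2(p)=0$) with Definition \ref{Min_Surf_Gen_Typ-def} of general type. Nothing further is needed.
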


  Let $\M$ be a maximal space-like surface of general type in $\RR^4_2$. Theorem \ref{Can_Coord-exist}\, 
gives that the surface admits locally canonical coordinates. Suppose that $\phi_1 + \ii\phi_2 \neq 0$\,
on $\M$. Since the canonical coordinates are isothermal, then the surface $\M$ has a Weierstrass 
representation of the type \eqref{W_polinom_R42}. According to Definition \ref{Can1-def}, the function $\Phi$
satisfies the condition ${\Phi^{\prime \bot}}^2={\Phi^\prime}^2=-1$\,. Then, it follows from \eqref{Phip^2_fg1g2_R42} that, 
the functions $f$, $g_1$ and $g_2$ are related by the equality $-4f^2 g'_1 g'_2=-1$\,.Thus, we have:
\begin{equation}\label{fg1g2_Can1_R42}
f = \ds\frac{1}{2\sqrt{g'_1 g'_2}}\,.
\end{equation}

  Applying the last equality to the representation \eqref{W_polinom_R42} we obtain that any maximal space-like surface
of general type in $\RR^4_2$, satisfying the condition $\phi_1 + \ii\phi_2 \neq 0$\,, has a local Weierstrass 
representation of the type:
\begin{equation}\label{W_Can1_polinom_R42}
\Phi= \left(\,
              \ds\frac{1}{2}\;   \ds\frac {g_1 g_2+1}{\sqrt{g'_1 g'_2}}\,,\,
              \ds\frac{\ii}{2}\; \ds\frac {g_1 g_2-1}{\sqrt{g'_1 g'_2}}\,,\,
              \ds\frac{1}{2}\;   \ds\frac {g_1 + g_2}{\sqrt{g'_1 g'_2}}\,,\,
              \ds\frac{\ii}{2}\; \ds\frac {g_1 - g_2}{\sqrt{g'_1 g'_2}}\,\right),
\end{equation}
where the pair of functions $(g_1,g_2)$ satisfy the conditions:
\begin{equation}\label{W_Can1_polinom_cond_R42}
g'_1 g'_2 \neq 0\,; \  |g_1|>1\,; \  |g_2|>1 \qquad \text{or} \qquad  g'_1 g'_2 \neq 0\,; \  |g_1|<1\,; \  |g_2|<1 \,.
\end{equation}
We call this representation a \textbf{canonical Weierstrass representation} for maximal space-like surfaces 
of general type in $\RR^4_2$.

  Conversely, if $g_1$ and $g_2$ are holomorphic functions defined in a domain $\D$ in ${\CC}$, and satisfy
the conditions \eqref{W_Can1_polinom_cond_R42}, then formulas \eqref{W_Can1_polinom_R42} give a
maximal space-like surface $\M$ in $\RR^4_2$, parametrized by canonical coordinates.
We say that this surface is \textit{generated by the holomorphic functions $g_1$ and $g_2$}
by formulas \eqref{W_Can1_polinom_R42}. We note that $\Phi$ determines uniquely the functions $g_1$ and $g_2$ 
through equalities \eqref{W_Can1_polinom_R42}. Applying formulas \eqref{fg1g2_Phi_R42} to the representation
\eqref{W_Can1_polinom_R42} we find that $g_1$ and $g_2$ are expressed explicitly by $\Phi$:
\begin{equation}\label{g-Phi_Can1_R42}
g_1=\ds\frac{\phi_3-\ii\phi_4}{\phi_1+\ii\phi_2}\;; \qquad
g_2=\ds\frac{\phi_3+\ii\phi_4}{\phi_1+\ii\phi_2}\;.
\end{equation}

 Summarizing we have:
\begin{theorem}\label{thm-W_Can1_polinom_R42}
Let $\M$ be a maximal space-like surface in $\RR^4_2$ and the complex parameter $t=u+\ii v$ give canonical 
coordinates in a neighborhood of a point $p \in \M$. 
Suppose the function $\Phi$ satisfies the condition $\phi_1 + \ii\phi_2 \neq 0$\,.  Then there exists
a neighborhood of $p$, where the function $\Phi$ has a Weierstrass representation of the type 
\eqref{W_Can1_polinom_R42}, where $g_1$ and $g_2$ are holomorphic functions satisfying the conditions 
\eqref{W_Can1_polinom_cond_R42}. The functions $g_1$ and $g_2$ are determined uniquely by $\Phi$ via formulas
\eqref{g-Phi_Can1_R42}.

 Conversely, let $g_1$ and $g_2$ be holomorphic functions, defined in a domain $\D$ in ${\CC}$, 
satisfying conditions \eqref{W_Can1_polinom_cond_R42} and let $p \in \D$. Then there exists
a sub-domain $\D_0 \subset \D$, containing $p$ and a maximal space-like surface of general type $(\D_0,\x)$ 
in $\RR^4_2$ such that the corresponding function $\Phi$ has a representation \eqref{W_Can1_polinom_R42}
in $\D_0$ and satisfies the condition $\phi_1 + \ii\phi_2 \neq 0$\,. 
\end{theorem}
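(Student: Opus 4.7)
The plan is to deduce both directions of the theorem from the general (non-canonical) Weierstrass representation Theorem \ref{thm-W_polinom_R42} combined with the computation \eqref{Phip^2_fg1g2_R42} which expresses ${\Phi^\prime}^2$ in terms of the Weierstrass data, together with the defining property ${\Phi^{\prime \bot}}^2 = -1$ of canonical coordinates of the first type (Definition \ref{Can1-def}) and its equivalent form ${\Phi^\prime}^2 = -1$ given by \eqref{Phi_prim_bot^2-Phi_prim^2}.

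For the forward direction, I would first apply Theorem \ref{thm-W_polinom_R42} to the point $p$: since $t=u+\ii v$ are in particular isothermal and $\phi_1 + \ii\phi_2 \neq 0$, there is a neighborhood on which $\Phi$ admits a representation \eqref{W_polinom_R42} with a uniquely determined triple $(f,g_1,g_2)$ of holomorphic functions satisfying \eqref{W_polinom_cond_R42}. The canonicity of the coordinates, together with \eqref{Phip^2_fg1g2_R42}, then forces $-4f^2 g'_1 g'_2 = -1$. Since $\M$ is of general type (canonical coordinates exist only on surfaces of general type by Theorem \ref{Can_Coord-exist}), Proposition \ref{Gen_Typ_fg1g2_R42} gives $g'_1 g'_2 \neq 0$, so locally $f = \pm \frac{1}{2\sqrt{g'_1 g'_2}}$; the sign can be absorbed into the choice of branch of $\sqrt{g'_1g'_2}$, yielding \eqref{fg1g2_Can1_R42} and, after substitution into \eqref{W_polinom_R42}, exactly the representation \eqref{W_Can1_polinom_R42}. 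The conditions \eqref{W_Can1_polinom_cond_R42} are then the specialization of \eqref{W_polinom_cond_R42} to this case, since $f\neq 0$ is automatic from the formula, while $g'_1 g'_2 \neq 0$ comes from general type. Uniqueness of $(g_1,g_2)$ via \eqref{g-Phi_Can1_R42} is inherited directly from \eqref{fg1g2_Phi_R42}, so no separate argument is needed.

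For the converse direction, I would start from a pair $(g_1,g_2)$ satisfying \eqref{W_Can1_polinom_cond_R42} on $\D$. On a sufficiently small simply connected sub-neighborhood $\D_0$ of $p$ where $g'_1 g'_2$ is nonvanishing, I pick a branch of $\sqrt{g'_1 g'_2}$ and set $f := \frac{1}{2\sqrt{g'_1 g'_2}}$, so that $f$ is holomorphic and the triple $(f,g_1,g_2)$ satisfies \eqref{W_polinom_cond_R42}. Theorem \ref{thm-W_polinom_R42} then produces a maximal space-like surface $(\D_0,\x)$ whose associated function $\Phi$ has the Weierstrass form \eqref{W_polinom_R42}, which by construction is exactly \eqref{W_Can1_polinom_R42}. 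The identity \eqref{Phip^2_fg1g2_R42} yields ${\Phi^\prime}^2 = -4 f^2 g'_1 g'_2 = -1$, hence by \eqref{Phi_prim_bot^2-Phi_prim^2} and Definition \ref{Can1-def} the coordinates are canonical; moreover $\phi_1 + \ii\phi_2 = 2f \neq 0$, and by Proposition \ref{Gen_Typ_fg1g2_R42} the surface is of general type.

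The only subtle point—and the main obstacle—is the global choice of $\sqrt{g'_1 g'_2}$: a single-valued holomorphic square root of $g'_1 g'_2$ need not exist on all of $\D$, and the two possible local branches correspond to the sign ambiguity in the forward direction. This is harmless because the statement is purely local (an existence result in a neighborhood of $p$), so shrinking $\D$ to a simply connected $\D_0$ on which $g'_1 g'_2$ is nowhere zero permits both branches; the two choices produce the same surface up to the change of orientation $t \mapsto -t$ (covered by Theorem \ref{Can_Coord-uniq}), so they do not threaten the well-posedness of the canonical Weierstrass representation.
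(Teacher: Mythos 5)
Your proposal is correct and follows essentially the same route as the paper: apply the general Weierstrass representation (Theorem \ref{thm-W_polinom_R42}), use \eqref{Phi_prim_bot^2-Phi_prim^2} and \eqref{Phip^2_fg1g2_R42} to turn the canonical condition ${\Phi^{\prime\bot}}^2=-1$ into $-4f^2g'_1g'_2=-1$, i.e.\ \eqref{fg1g2_Can1_R42}, and reverse the substitution for the converse. One small correction to your closing remark: switching the branch of $\sqrt{g'_1 g'_2}$ replaces $\Phi$ by $-\Phi$, so the two resulting surfaces differ by the central symmetry $-I \in \mathbf{SO}^+(2,2,\RR)$ (a proper orthochronous motion, as the paper's remark states), not by the coordinate change $t \mapsto -t$, which would in addition reparametrize $g_1$ and $g_2$.
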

\begin{remark}
The restriction $\phi_1 + \ii\phi_2 \neq 0$ in the last theorem is not an essential geometric condition for $\M$.
If any maximal space-like surface of general type in $\RR^4_2$ satisfies the condition $\phi_1 + \ii\phi_2 = 0$,
then the surface can be transformed by a proper motion in $\RR^4_2$ into a surface with the property
$\phi_1 + \ii\phi_2 \neq 0$. Since all geometric properties which we shall obtain further 
are invariant under a proper motion in $\RR^4_2$, then they will be valid also for surfaces satisfying 
the equality $\phi_1 + \ii\phi_2 = 0$\,. 
\end{remark}
\begin{remark}
In the canonical Weierstrass representation \eqref{W_Can1_polinom_R42} there is a factor with a square root.
Consequently, choosing different branches of the square root, the formula gives two maximal space-like 
surfaces of general type in $\RR^4_2$, which differ from one another by the sign.  These two different 
surfaces are related by a proper orthochronous motion in $\RR^4_2$.
\end{remark}

 Now we can give an algorithm for a transition from a classical to canonical Weierstrass representation
of $\M$. Suppose that the maximal space-like surface of general type $\M$ is given by а classical representation
\eqref{W_polinom_R42}. Combining equalities \eqref{eqcan-sol} and \eqref{Phip^2_fg1g2_R42}, we find the 
following formula for the canonical coordinates $s$ on $\M$ :
\begin{equation}\label{eq_can1-sol_fg1g2_R42}
s = \int\sqrt{\vphantom{K^2} 4f^2(t) g'_1(t) g'_2(t)}\:dt\;.
\end{equation} 
From the last equality we express $t$ as a function of $s$. Then the new functions $\tilde g_1$ and 
$\tilde g_2$, giving the canonical representation of the type \eqref{W_Can1_polinom_R42} are obtained
according to \eqref{f_g_s-hol_R42}, from the old functions $g_1$ and $g_2$ trough the formulas 
$\tilde g_1(s) = g_1(t(s))$ and $\tilde g_2(s) = g_2(t(s))$.

\medskip

 Next we  obtain transformation formulas for the functions $g_1$ and $g_2$ in the canonical Weierstrass 
representation \eqref{W_Can1_polinom_R42} of a given maximal space-like surface of general type $\M$ in $\RR^4_2$, 
under a change of the canonical coordinates and under the basic geometric transformations of the surface in 
$\RR^4_2$. 

We begin with the case of a holomorphic change of the canonical coordinates. According to Theorem 
\ref{Can_Coord-uniq} such a change has the form $t=\delta s+c$, where $\delta = \pm 1;\, \pm\ii$, and 
$c = \rm {const}$. If $\tilde g_1(s)$ and $\tilde g_2(s)$ are the generating functions for the representation 
\eqref{W_Can1_polinom_R42} in the new coordinates, then according to \eqref{f_g_s-hol_R42}, these functions 
are expressed by the old functions $g_1$ and $g_2$ in the following way:
\begin{equation}\label{g_s-hol_R42}
\tilde g_1(s) = g_1(\delta s+c)\,; \qquad \tilde g_2(s) = g_2(\delta s+c)\,.
\end{equation}

 Consider also the case of an anti-holomorphic change of the canonical coordinates. According to Theorem
\ref{Can_Coord-uniq}, such a change is of the type $t=\delta \bar s + c$, where $\delta = \pm 1;\, \pm\ii$,
and $c = \rm{const}$. Applying equalities \eqref{f_g_s-hol_R42} and \eqref{f_g_s-t_bs_R42} to the representation 
\eqref{W_Can1_polinom_R42}, we get the following relation between the generating functions:
\begin{equation}\label{g_s-antihol_R42}
\tilde g_1(s) = \frac{1}{\bar g_1(\delta \bar s + c)}\,; \qquad \tilde g_2(s) = \frac{1}{\bar g_2(\delta \bar s + c)}\,.
\end{equation}

\smallskip

 Further, we consider the case of motion in $\RR^4_2$. Let the maximal space-like surface of general type 
$\hat\M=(\D,\hat\x)$ be obtained from $\M=(\D,\x)$ through a proper orthochronous motion in $\RR^4_2$ of the 
type $\hat\x(t)=A\x(t)+\vb$, where $A \in \mathbf{SO}^+(2,2,\RR)$ and $\vb \in \RR^4_2$. If $t$ gives canonical 
coordinates on $\M$, then according to Theorem \ref{Can_Move}, these coordinates are canonical also for $\hat\M$. 
Then it follows from equalities \eqref{hatfg_fg-orthchr_move_R42} that the functions $\hat g_1$, $\hat g_2$, 
and $g_1$, $g_2$ from the canonical representations \eqref{W_Can1_polinom_R42} are related by linear fractional
transformations with special unitary matrices from $\mathbf{SU}(1,1,\CC)$:
\begin{equation}\label{hatg_g-orthchr_move_Can1_R42}
\hat g_1 = \frac{a_1 g_1 + \bar b_1}{b_1 g_1 + \bar a_1}\,; \quad 
\hat g_2 = \frac{a_2 g_2 + \bar b_2}{b_2 g_2 + \bar a_2}\,;
\qquad \begin{array}{l} a_j,b_j \in \CC\,; \\ |a_j|^2-|b_j|^2=1\,. \end{array}
\end{equation}

 As in the case of the general representation \eqref{W_polinom_R42} it follows that the inverse statement 
is also valid: If the functions $\hat g_1$, $\hat g_2$ and $g_1$, $g_2$ in the canonical representations 
\eqref{W_Can1_polinom_R42} satisfy \eqref{hatg_g-orthchr_move_Can1_R42}, then the generated surfaces are
related by a proper orthochronous motion in $\RR^4_2$. Thus we obtained similarly to Theorem
\ref{W-orthchr_move_R42} that the following statement is valid:
\begin{theorem}\label{W-orthchr_move_Can1_R42} 
Let $\hat\M=(\D,\hat\x)$ and $\M=(\D,\x)$ be two maximal space-like surfaces of general type in $\RR^4_2$, 
given by Weierstrass representations of the type \eqref{W_Can1_polinom_R42}, where $\D$ is a connected 
domain in $\CC$. Then the following conditions are equivalent:
\begin{enumerate}
	\item $\hat\M$ and $\M$ are related by a proper orthochronous motion in $\RR^4_2$ of the type:\\
	$\hat\x(t)=A\x(t)+\vb$, where $A \in \mathbf{SO}^+(2,2,\RR)$ and $\vb \in \RR^4_2$.
	\item The functions $\hat g_1$, $\hat g_2$ and $g_1$, $g_2$ form the canonical Weierstrass representations
	\eqref{W_Can1_polinom_R42} of $\hat\M$ and $\M$ are related by the equalities	\eqref{hatg_g-orthchr_move_Can1_R42}. 
\end{enumerate}
\end{theorem}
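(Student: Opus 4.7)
The plan is to reduce this theorem to Theorem~\ref{W-orthchr_move_R42} on the general Weierstrass representation. The key point is that in the canonical case the function $f$ is no longer free: by \eqref{fg1g2_Can1_R42} it is fully determined by $g_1,g_2$ via $f=\frac{1}{2\sqrt{g_1'g_2'}}$. Together with the invariance of canonical coordinates under motions in $\RR^4_2$ (Theorem~\ref{Can_Move}), this means the transformation law for $(f,g_1,g_2)$ given by \eqref{hatfg_fg-orthchr_move_R42} must be \emph{consistent} with the canonical constraint; the content of the present theorem is exactly this consistency.

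For the implication $(1)\Rightarrow(2)$, I would argue as follows. By Theorem~\ref{Can_Move} the parameter $t$ gives canonical coordinates on both $\M$ and $\hat\M$, so each surface admits a canonical Weierstrass representation \eqref{W_Can1_polinom_R42}. Viewing these as particular instances of the general representation \eqref{W_polinom_R42}, Theorem~\ref{W-orthchr_move_R42} applies and forces $(\hat f,\hat g_1,\hat g_2)$ and $(f,g_1,g_2)$ to be related by \eqref{hatfg_fg-orthchr_move_R42}. The linear fractional parts of these formulas are precisely \eqref{hatg_g-orthchr_move_Can1_R42}, which is what condition (2) asserts.

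For the implication $(2)\Rightarrow(1)$, I would start from a short computation: differentiating $\hat g_j=\frac{a_j g_j+\bar b_j}{b_j g_j+\bar a_j}$ and using $|a_j|^2-|b_j|^2=1$ gives
\begin{equation*}
\hat g_j'=\frac{g_j'}{(b_j g_j+\bar a_j)^2},\qquad j=1,2.
\end{equation*}
Substituting this into the canonical relation $\hat f=\frac{1}{2\sqrt{\hat g_1'\hat g_2'}}$ and comparing with $f=\frac{1}{2\sqrt{g_1'g_2'}}$ yields $\hat f=\pm f(b_1g_1+\bar a_1)(b_2g_2+\bar a_2)$. Thus $(\hat f,\hat g_1,\hat g_2)$ and $(f,g_1,g_2)$ satisfy \eqref{hatfg_fg-orthchr_move_R42} (with the appropriate branch of the square root), so Theorem~\ref{W-orthchr_move_R42} applied to the general representations delivers the proper orthochronous motion relating $\hat\M$ and $\M$.

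The main subtlety, and the only place where care is required, is the sign ambiguity coming from the square root $\sqrt{g_1'g_2'}$ in \eqref{W_Can1_polinom_R42}: the equality $\hat f=\pm f(b_1g_1+\bar a_1)(b_2g_2+\bar a_2)$ is only determined up to a global sign. However, the remark following Theorem~\ref{thm-W_Can1_polinom_R42} already records that the two branches generate surfaces related by a proper orthochronous motion in $\RR^4_2$. Hence the sign can be absorbed into the final motion and does not affect the conclusion, completing the reduction.
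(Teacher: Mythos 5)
Your proposal is correct and follows essentially the same route as the paper: derive $(1)\Rightarrow(2)$ from Theorem \ref{Can_Move} together with the general transformation formulas \eqref{hatfg_fg-orthchr_move_R42}, and reduce $(2)\Rightarrow(1)$ back to Theorem \ref{W-orthchr_move_R42}. The only difference is that you make explicit the computation $\hat g_j'=g_j'/(b_jg_j+\bar a_j)^2$ and the resulting identity $\hat f=\pm f(b_1g_1+\bar a_1)(b_2g_2+\bar a_2)$, together with the absorption of the sign into a proper orthochronous motion — a step the paper only gestures at with ``as in the case of the general representation'' — which is a welcome clarification rather than a deviation.
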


 The cases of a motion in  $\RR^4_2$, belonging to one of the remaining three connected components of 
$\mathbf{O}(2,2,\RR)$ are considered in a similar way. 

If $\hat\M$ is obtained from $\M$ through a proper non-orthochronous motion, then similarly to 
\eqref{hatfg_fg-nonorthchr_move_R42}, we get the following transformation formulas:
\begin{equation}\label{hatg_g-nonorthchr_move_Can1_R42}
\hat g_1 = \frac{a_1 g_1 + \bar b_1}{b_1 g_1 + \bar a_1}\,; \quad 
\hat g_2 = \frac{a_2 g_2 + \bar b_2}{b_2 g_2 + \bar a_2}\,;
\qquad \begin{array}{l} a_j,b_j \in \CC\,; \\ |a_j|^2-|b_j|^2=-1\,. \end{array}
\end{equation}

 Under an improper non-orthochronous motion in $\RR^4_2$, similarly to \eqref{hatfg_fg-unp_nonorthchr_move_R42} we have:
\begin{equation}\label{hatg_g-unp_nonorthchr_move_Can1_R42}
\hat g_1 = \frac{a_1 g_2 + \bar b_1}{b_1 g_2 + \bar a_1}\,; \quad 
\hat g_2 = \frac{a_2 g_1 + \bar b_2}{b_2 g_1 + \bar a_2}\,;
\qquad \begin{array}{l} a_j,b_j \in \CC\,; \\ |a_j|^2-|b_j|^2=1\,. \end{array}
\end{equation}

 Finally, under an improper orthochronous motion in $\RR^4_2$, similarly to \eqref{hatfg_fg-unp_orthchr_move_R42}
we have:
\begin{equation}\label{hatg_g-unp_orthchr_move_Can1_R42}
\hat g_1 = \frac{a_1 g_2 + \bar b_1}{b_1 g_2 + \bar a_1}\,; \quad 
\hat g_2 = \frac{a_2 g_1 + \bar b_2}{b_2 g_1 + \bar a_2}\,;
\qquad \begin{array}{l} a_j,b_j \in \CC\,; \\ |a_j|^2-|b_j|^2=-1\,. \end{array}
\end{equation}

\smallskip

 Next we find the transformation formulas for the generating functions in the canonical Weierstrass 
representations in the case when $\hat\M$ is obtained from $\M$ by a homothety with coefficient $k$.
If $t$ gives canonical coordinates on $\M$, then according to Theorem \ref{Can_Sim}, canonical coordinates
on $\hat\M$ are obtained by the equality $t=\frac{1}{\sqrt{k}}s$. Combining equalities
\eqref{hat_fg-fg-homotet_R42} and \eqref{f_g_s-hol_R42} we obtain that the transformation formulas for the functions
in the representation \eqref{W_Can1_polinom_R42} of $\hat\M$ and $\M$, have the form:
\begin{equation}\label{hat_g-g-homotet_Can1_R42}
\hat g_1(s) = g_1\left(\frac{1}{\sqrt{k}}s\right); \qquad \hat g_2(s) = g_2\left(\frac{1}{\sqrt{k}}s\right).
\end{equation}

 Finally we consider the family $\M_\theta$ of associated maximal space-like surfaces to a given surface $\M$.
If $t$ gives canonical coordinates on $\M$, then according to \ref{Can_1-param_family}, canonical coordinates
on $\M_\theta$ are obtained by the equality $t=\e^{\ii\frac{\theta}{2}} s$. Combining \eqref{fg_1-param_family_R42} and
\eqref{f_g_s-hol_R42} we find that the transformation formulas for the functions in the representations
\eqref{W_Can1_polinom_R42}, have the form:
\begin{equation}\label{g_1-param_family_Can1_R42}
g_{1|\theta} (s) = g_1(\e^{\ii\frac{\theta}{2}} s) \,; \qquad g_{2|\theta} (s) = g_2(\e^{\ii\frac{\theta}{2}} s) \,.
\end{equation}

 Now we can easily obtain the relation between the canonical representations of a maximal space-like surface of a 
general type $\M$ in $\RR^4_2$ and its conjugate surface $\bar\M$, which was introduced by Definition
\ref{Conj_Min_Surf}\,. As we noted  after Definition \ref{1-param_family_assoc_surf}\,, the conjugate maximal 
space-like surface $\bar\M$ of $\M$ belongs to the one-parameter family of associated maximal space-like surfaces
to $\M$ and is obtained by $\theta=\frac{\pi}{2}$. Consequently, the corresponding formulas for $\bar\M$ follow 
from the formulas for $\M_\theta$, replacing $\theta$ with $\frac{\pi}{2}$.

\smallskip

  If two maximal space-like surfaces of general type in $\RR^4_2$, are obtained one from the other by a proper motion,
homothety and a change of the canonical coordinates, then they can be considered identical from a geometric 
point of view. That is why, as in the case of $K$ and $\varkappa$, we unify the transformation formulas into one
in the following way: 
\begin{prop}\label{Thm-g1g2_IdentSurf_R42}
Let $\M$ and $\hat\M$ be maximal space-like surfaces of general type in $\RR^4_2$ and $p_0\in\M$, $\hat p_0\in\hat\M$ 
be fixed points. Let $t$ give canonical coordinates on $\M$ in a neighborhood of $p_0$ and $s$ give 
canonical coordinates on $\hat\M$ in a neighborhood of $\hat p_0$. Then the following conditions are equivalent:
\begin{enumerate}
	\item There exists a neighborhood of $\hat p_0$ in $\hat\M$, which is obtained from the corresponding
	neighborhood of $p_0$ in $\M$ by a proper motion, homothety and a change of the canonical coordinates.
	\item There exists a neighborhood of $\hat p_0$ in $\hat\M$, in which the functions $\hat g_1(s)$ and $\hat g_2(s)$
	in the canonical representation \eqref{W_Can1_polinom_R42} of $\hat\M$ are obtained from the corresponding functions
	$g_1(t)$ and $g_2(t)$ of $\M$, by formulas of the following type: 
  \begin{gather}\label{hat_g_g-Ident_R42}
  \hat g_1(s) = \frac{a_1 \tilde g_1(\delta a \tilde s + b) + \bar b_1}{b_1 \tilde g_1(\delta a \tilde s + b) + \bar a_1}\,; \quad
	\hat g_2(s) = \frac{a_2 \tilde g_2(\delta a \tilde s + b) + \bar b_2}{b_2 \tilde g_2(\delta a \tilde s + b) + \bar a_2}\,; \qquad 
	\begin{array}{l} a_j,b_j \in \CC\,; \\ |a_j|^2-|b_j|^2=\pm 1\,. \end{array}
	\raisetag{-0.9ex}
  \end{gather}
  where $\delta=\pm 1;\pm\ii$,\; $\tilde g(\tilde s)$ denotes $g(s)$ or $\bar g(\bar s)$, and  $a>0$, $b\in\CC$ are constants.
	\end{enumerate}
\end{prop}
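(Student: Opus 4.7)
The plan is to mimic the proof of Proposition \ref{Thm-K_kappa_nu_mu_IdentSurf_R42}, using the transformation formulas \eqref{g_s-hol_R42}, \eqref{g_s-antihol_R42}, \eqref{hatg_g-orthchr_move_Can1_R42}--\eqref{hatg_g-unp_orthchr_move_Can1_R42}, and \eqref{hat_g-g-homotet_Can1_R42} derived earlier in this section, together with the uniqueness part of Theorem \ref{thm-W_Can1_polinom_R42} and Theorem \ref{W-orthchr_move_Can1_R42}.

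For the direction (1) $\Rightarrow$ (2) I would assume that $\hat\M$ is obtained from $\M$ by applying, in some convenient order, a change of canonical parameters $t=t(s)$, a homothety with coefficient $k>0$, and a motion $A\in\mathbf{O}(2,2,\RR)$. Composing the corresponding transformation rules one by one, I would record the effect on $(g_1,g_2)$: the change of parameter, by Theorem \ref{Can_Coord-uniq} and formulas \eqref{g_s-hol_R42}--\eqref{g_s-antihol_R42}, either substitutes $\delta s+c$ into the argument or substitutes $\delta\bar s+c$ and replaces $g_j$ by $1/\bar g_j$; the homothety, by \eqref{hat_g-g-homotet_Can1_R42}, rescales the argument by $1/\sqrt{k}$; the motion, depending on the connected component of $\mathbf{O}(2,2,\RR)$ in which $A$ lies, applies to $g_1$, $g_2$ a pair of linear-fractional transformations of the type in \eqref{hatg_g-orthchr_move_Can1_R42}--\eqref{hatg_g-unp_orthchr_move_Can1_R42}, with $\mathbf{SU}(1,1,\CC)$ matrices in the orthochronous cases and anti-unitary matrices in the non-orthochronous cases, possibly combined with a swap $g_1\leftrightarrow g_2$ for improper motions. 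Collecting constants by setting $a=1/\sqrt{k}$, folding $c$ into $b$, and letting $\tilde g_j$ stand for $g_j(s)$ or $\bar g_j(\bar s)$ as dictated by the parameter change, the result is exactly \eqref{hat_g_g-Ident_R42}, with the $\pm 1$ in $|a_j|^2-|b_j|^2$ distinguishing the orthochronous from the non-orthochronous cases.

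For the converse (2) $\Rightarrow$ (1), I would reverse-engineer the decomposition. Starting from $\M$, first perform the change of canonical coordinates encoded by $s\mapsto\delta a\tilde s+b$ (holomorphic or anti-holomorphic according to whether $\tilde s=s$ or $\tilde s=\bar s$); apply Theorem \ref{Can_Sim} and the homothety with coefficient $k=1/a^2$; finally apply the motion in $\RR^4_2$ (orthochronous, non-orthochronous, proper or improper depending on the sign $\pm 1$ and on whether the indices are swapped) determined by the matrices with entries $a_j,b_j$ via the spinor correspondence \eqref{Spin_UV-A_R42}. The resulting surface $\M'$ has, by the first direction, canonical generating functions that coincide with $\hat g_1(s)$ and $\hat g_2(s)$. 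Applying the uniqueness part of Theorem \ref{thm-W_Can1_polinom_R42} (together with the remark about the sign ambiguity in the square root in \eqref{W_Can1_polinom_R42}, which is itself absorbed by a proper orthochronous motion via Theorem \ref{W-orthchr_move_Can1_R42}), we conclude that $\M'$ and $\hat\M$ coincide up to a proper orthochronous motion, hence $\hat\M$ is obtained from $\M$ by a composition of the three types of operations listed in (1).

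The only non-trivial step is the algebraic bookkeeping: one has to verify that the composition of a unitary or anti-unitary linear-fractional transformation in $\mathbf{SU}(1,1,\CC)$ with the inversion $g\mapsto 1/\bar g$ coming from an anti-holomorphic change of the canonical parameter again fits the format in \eqref{hat_g_g-Ident_R42}. This follows because the inversion $g\mapsto 1/\bar g$ is itself a linear-fractional transformation in the indicated form (with $a_j=0$, $b_j=1$, so $|a_j|^2-|b_j|^2=-1$), and $\mathbf{SU}(1,1,\CC)$ together with its anti-unitary coset forms a group under composition, so the composite has the required shape; the possible swap $g_1\leftrightarrow g_2$ from improper motions is handled by allowing $\tilde g_j$ to refer to either index. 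Once this matrix-level check is carried out, the equivalence of (1) and (2) is immediate.
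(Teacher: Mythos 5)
Your proposal is correct and follows essentially the same route as the paper, which simply observes that \eqref{hat_g_g-Ident_R42} is the composition of \eqref{g_s-hol_R42}, \eqref{g_s-antihol_R42}, \eqref{hatg_g-orthchr_move_Can1_R42}, \eqref{hatg_g-nonorthchr_move_Can1_R42} and \eqref{hat_g-g-homotet_Can1_R42} after renaming the constants; your explicit check that the inversion $g\mapsto 1/\bar g$ coming from an anti-holomorphic parameter change is absorbed by the unitary/anti-unitary linear-fractional group is exactly the bookkeeping the paper leaves implicit. One small correction: condition (1) involves only \emph{proper} motions, so the improper-motion formulas \eqref{hatg_g-unp_nonorthchr_move_Can1_R42}--\eqref{hatg_g-unp_orthchr_move_Can1_R42} and the swap $g_1\leftrightarrow g_2$ should not enter your argument at all --- and indeed that swap is \emph{not} representable in the format \eqref{hat_g_g-Ident_R42}, since $\tilde g_j$ there keeps the index $j$ and only allows the replacement of $g_j(s)$ by $\bar g_j(\bar s)$, not a relabelling of indices.
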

\begin{proof}
Formulas \eqref{hat_g_g-Ident_R42} are obtained by a composition of the formulas 
\eqref{hatg_g-orthchr_move_Can1_R42}, \eqref{hatg_g-nonorthchr_move_Can1_R42}, 
\eqref{hat_g-g-homotet_Can1_R42}, \eqref{g_s-hol_R42} and \eqref{g_s-antihol_R42} and a change of the denotations 
of the constants, from where it follows the required equivalence. 
\end{proof}


\section{Explicit solving of the system of natural equations of the maximal space-like surfaces in $\RR^4_2$}\label{sect_sol_nat_eq}

Our first task in this section is to express the basic invariants of a given maximal space-like surface of 
general type in $\RR^4_2$, through the complex functions $g_j$, involved in the canonical Weierstrass representation
\eqref{W_Can1_polinom_R42}. For this purpose we use the corresponding general formulas from Section \ref{sect_K_kappa-W}\,, 
as well as the condition \eqref{fg1g2_Can1_R42} for the coordinates to be canonical.

 Let $\M$ be a maximal space-like surface of general type in $\RR^4_2$, parametrized by canonical coordinates
$t=u+\ii v$ and consider the canonical Weierstrass representation of the type \eqref{W_Can1_polinom_R42}. 
First we find the coefficient $E$ of the first fundamental form of $\M$. In the general formula \eqref{E_fg1g2_R42} 
we replace $f$ with $\ds\frac{1}{2\sqrt{g'_1 g'_2}}$, according to the condition \eqref{fg1g2_Can1_R42}. 
Thus we find:
\begin{equation}\label{E_g1g2_Can1_R42}
E=\frac{(|g_1|^2-1)(|g_2|^2-1)}{4|g'_1 g'_2|}\;.
\end{equation}
Then the first fundamental form of $\M$ is as follows:
\begin{equation}\label{I_g1g2_Can1_R42}
\mathbf{I}=E\,(du^2 + dv^2) = \frac{(|g_1|^2-1)(|g_2|^2-1)}{4|g'_1 g'_2|}|dt|^2.
\end{equation}

\smallskip

 Next we find the Gauss curvature $K$ and the curvature of the normal connection $\varkappa$. Replacing
$f$ with $\ds\frac{1}{2\sqrt{g'_1 g'_2}}$ in \eqref{K_kappa_fg1g2_R42}, we get:
\begin{equation}\label{K_kappa_g1g2_Can1_R42}
\begin{array}{llr}
K         &=& \ds\frac{ 8|g'_1 g'_2|}{(|g_1|^2-1)(|g_2|^2-1)}
              \left(\ds\frac{|g'_1|^2}{(|g_1|^2-1)^2}+\ds\frac{|g'_2|^2}{(|g_2|^2-1)^2}\right);\\[3ex]
\varkappa &=& \ds\frac{-8|g'_1 g'_2|}{(|g_1|^2-1)(|g_2|^2-1)}
              \left(\ds\frac{|g'_1|^2}{(|g_1|^2-1)^2}-\ds\frac{|g'_2|^2}{(|g_2|^2-1)^2}\right).
\end{array}
\end{equation}

\smallskip

 Finally we find the normal curvatures $\nu$ and $\mu$, defined by \eqref{sigma_nu_mu_R42}.
If the maximal space-like surface $\M$ is given by \eqref{W_Can1_polinom_R42}, 
taking into account the relation \eqref{nu_mu_K_kappa_R42} between $(\nu,\mu)$ and $(K,\varkappa)$ 
and \eqref{K_kappa_g1g2_Can1_R42} we have: 
\begin{equation}\label{nu_mu_g1g2_Can1_R42}
\begin{array}{llr}
\nu &=& 2 \ \sqrt{\ds\frac{|g'_1 g'_2|}{(|g_1|^2-1)(|g_2|^2-1)}}
              \left(\ds\frac{|g'_1|}{|\,|g_1|^2-1\,|}+\ds\frac{|g'_2|}{|\,|g_2|^2-1\,|}\right);\\[3.5ex]
\mu &=& -2 \ \sqrt{\ds\frac{|g'_1 g'_2|}{(|g_1|^2-1)(|g_2|^2-1)}}
              \left(\ds\frac{|g'_1|}{|\,|g_1|^2-1\,|}-\ds\frac{|g'_2|}{|\,|g_2|^2-1\,|}\right).
\end{array}
\end{equation}

\medskip 

 As an immediate application of these formulas, we shall characterize the maximal space-like surfaces $\M$ of general type
lying entirely in a hyperplane in $\RR^4_2$. First we consider the case when the three-dimensional plane is determined by
the equality $x_4=k$, where $k = \rm{const}$. This is equivalent to the equality $\phi_4=0$, where $\phi_4$ is the fourth 
coordinate of $\Phi$. Then the fourth formula in the representation \eqref{W_Can1_polinom_R42} implies that $g_1=g_2$. 
Next, let the hyperplane be determined by the condition $\x\cdot\n=k$, where $\n \in \RR^4_2$. Then the vector $\n$ 
is time-like because in this case only, the orthogonal complement $\{\n\}^\bot$ contains two-dimensional planes with positive definite metric.
Any hyperplane in $\RR^4_2$ of the type $\x\cdot\n=k$, where $\n$ is time-like, can be transformed into 
the hyperplane $x_4=k$ by a proper orthochronous motion. Under this motion, the pair of functions are changed 
through the formulas \eqref{hatg_g-orthchr_move_Can1_R42}. Hence, the surface $\M$ is contained in a hyperplane of
$\RR^4_2$, if and only if:
\begin{equation*}
\frac{a_1 g_1 + \bar b_1}{b_1 g_1 + \bar a_1}=\frac{a_2 g_2 + \bar b_2}{b_2 g_2 + \bar a_2}\,;
\qquad \begin{array}{l} a_j,b_j \in \CC\,; \\ |a_j|^2-|b_j|^2=1\,. \end{array}
\end{equation*}
The last condition is equivalent to:
\begin{equation}\label{g1g2_3dim_subsp_R42}
g_2=\frac{a g_1 + \bar b}{b g_1 + \bar a}\,;
\qquad \begin{array}{l} a,b \in \CC\,; \\ |a|^2-|b|^2=1\,. \end{array}
\end{equation}
Now, the results for the maximal space-like surfaces of general type in $\RR^3_1$, obtained in \cite{G-K-K}, and
\eqref{W_Can1_polinom_cond_R42} give that \eqref{g1g2_3dim_subsp_R42} is equivalent to:
\begin{equation*}
\ds\frac{4|g'_1|^2}{(|g_1|^2-1)^2}=\ds\frac{4|g'_2|^2}{(|g_2|^2-1)^2}\,.
\end{equation*}
It follows from the formula for $\varkappa$ in \eqref{K_kappa_g1g2_Can1_R42} that the last equality is 
equivalent to $\varkappa=0$\,. Further, in view of \eqref{nu_mu_g1g2_Can1_R42}, this is equivalent to $\mu=0$\,. 
Summarizing, we have:
\begin{theorem}\label{Thm-Min_Surf_in_hyperspace_R42}
Let $\M=(\D,\x)$ be a maximal space-like surface of general type in $\RR^4_2$, given by the canonical Weierstrass 
representation \eqref{W_Can1_polinom_R42}, where $\D$ is a connected domain in $\CC$.  
Then the following conditions are equivalent:
\begin{enumerate}
	\item $\M$ lies entirely in a hyperplane of $\RR^4_2$.
	\item The functions $g_1$ and $g_2$ in the canonical Weierstrass representation \eqref{W_Can1_polinom_R42} of $\M$
	are related by a linear fractional transformation with special unitary matrix over
  $\CC^2_1$ of the type \eqref{g1g2_3dim_subsp_R42}.
	\item The curvature of the normal connection is zero:\ $\varkappa=0$\,. 
	\item The normal curvature $\mu$ is zero: \ $\mu=0$\,. 
\end{enumerate}
\end{theorem}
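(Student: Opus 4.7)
The plan is to establish the chain $(1) \Leftrightarrow (2) \Leftrightarrow (3) \Leftrightarrow (4)$, using the canonical Weierstrass representation throughout and reducing the general hyperplane condition to a normalized one.

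First, for $(1) \Leftrightarrow (2)$, I would begin with the special hyperplane $x_4 = k$. Containment $\M \subset \{x_4 = k\}$ is equivalent to $\x_u^{(4)} = \x_v^{(4)} = 0$, hence to $\phi_4 \equiv 0$. Reading off the fourth coordinate in \eqref{W_Can1_polinom_R42}, namely $\phi_4 = \frac{\ii}{2}(g_1 - g_2)/\sqrt{g'_1 g'_2}$, we get $\phi_4 = 0 \Leftrightarrow g_1 = g_2$, which is the special case $a = 1$, $b = 0$ of \eqref{g1g2_3dim_subsp_R42}. For a general hyperplane $\x \cdot \n = k$, I would first justify that $\n$ must be time-like: since $T_p(\M) \subset \{\n\}^\bot$ is a two-dimensional positive-definite subspace and the signature of $\RR^4_2$ is neutral, the orthogonal complement $\{\n\}^\bot$ contains a positive-definite $2$-plane precisely when $\n$ is time-like. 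Any hyperplane with time-like normal can be mapped onto $\{x_4 = k\}$ by a suitable $A \in \mathbf{SO}^+(2,2,\RR)$, and under this motion the pair $(g_1, g_2)$ is transformed according to \eqref{hatg_g-orthchr_move_Can1_R42}. Imposing $\hat g_1 = \hat g_2$ after the motion and solving for the relation between $g_1$ and $g_2$ yields precisely \eqref{g1g2_3dim_subsp_R42}, with the matrix in $\mathbf{SU}(1,1,\CC)$ built from the composition of the two $\mathbf{SU}(1,1,\CC)$ factors corresponding to $A$.

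For $(2) \Leftrightarrow (3)$, the key observation is that \eqref{g1g2_3dim_subsp_R42} expresses $g_2$ as the image of $g_1$ under an isometry of the unit disk (or its exterior) equipped with the Poincar\'e metric $\frac{4|dg|^2}{(|g|^2 - 1)^2}$. Such isometries preserve this metric, so \eqref{g1g2_3dim_subsp_R42} is equivalent, as noted from the analysis in \cite{G-K-K}, to the equality $\frac{4|g'_1|^2}{(|g_1|^2-1)^2} = \frac{4|g'_2|^2}{(|g_2|^2-1)^2}$. Inspecting the second line of \eqref{K_kappa_g1g2_Can1_R42}, this is exactly the vanishing $\varkappa = 0$. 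Finally, $(3) \Leftrightarrow (4)$ follows either directly from \eqref{nu_mu_K_kappa_R42} (where $K \pm \varkappa$ are perfect squares making $\mu = 0$ and $\varkappa = 0$ equivalent), or by comparing \eqref{nu_mu_g1g2_Can1_R42} with \eqref{K_kappa_g1g2_Can1_R42} to see that $\mu$ vanishes exactly when the same equality of Poincar\'e derivatives holds.

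The main obstacle will be the clean handling of $(1) \Rightarrow (2)$ in full generality: one must verify that the normal to any hyperplane containing a space-like surface in neutral signature is necessarily time-like, and then carry out the book-keeping of passing the condition $\hat g_1 = \hat g_2$ back through the $\mathbf{SU}(1,1,\CC) \times \mathbf{SU}(1,1,\CC)$ action to produce a single $\mathbf{SU}(1,1,\CC)$ relation between $g_1$ and $g_2$. The remaining equivalences are essentially direct readings of the explicit formulas already established.
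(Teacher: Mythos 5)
Your proposal is correct and follows essentially the same route as the paper: reduce a general hyperplane (whose normal is necessarily time-like) to $x_4=k$ by a proper orthochronous motion, read off $\phi_4=0\Leftrightarrow g_1=g_2$ from \eqref{W_Can1_polinom_R42}, pull this back through \eqref{hatg_g-orthchr_move_Can1_R42} to get \eqref{g1g2_3dim_subsp_R42}, and then identify this with the equality of the Poincar\'e-type expressions $\frac{4|g_j'|^2}{(|g_j|^2-1)^2}$ (via \cite{G-K-K}) to conclude $\varkappa=0$, with $\varkappa=0\Leftrightarrow\mu=0$ following from the explicit relations between $(K,\varkappa)$ and $(\nu,\mu)$. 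The only point the paper makes that you pass over quickly is the final local-to-global step using connectedness of $\D$, which is routine.
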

\begin{proof}
The above arguments before the theorem show that the four conditions in the theorem are equivalent locally.
Taking into account that the domain $\D$ is connected, standard topological arguments give that the conditions
are equivalent globally. 
\end{proof}
\begin{remark}
We can obtain a proof of the above theorem without reference to the theory of the maximal space-like surfaces in $\RR^3_1$.
The equivalence 1. $\Leftrightarrow$ 4. follow from the fourth equation 
in \eqref{Frene_Phi_bar_Phi_n1_n2_R42} and \eqref{beta_nu_mu_R42}.
While the equivalence 3. $\Leftrightarrow$ 4. follow from \eqref{K_kappa_nu_mu_R42} and $\nu>0$\,.
\end{remark}

 Let us return to the system of natural equations \eqref{Nat_Eq_nu_mu_R42} of maximal space-like surfaces
in $\RR^4_2$. Next we show that the formulas for $\nu$ and $\mu$ can be interpreted as local formulas 
giving the general solution to this system. First we show that \eqref{nu_mu_g1g2_Can1_R42} gives locally
all solutions of the system of natural equations through pairs of holomorphic functions.
\begin{theorem}\label{Nat_Eq_nu_mu_solv_g1g2_R42}
Let $(\nu>0,\mu)$ be a pair of holomorphic functions, defined in a domain $\D\subset\RR^2\equiv\CC$ and
let $(\nu,\mu)$ be a solution to the system of natural equations \eqref{Nat_Eq_nu_mu_R42}. 
For any $p_0\in\D$ there exists a neighborhood $\D_0\subset\D$ of $p_0$ and a pair of holomorphic 
functions $(g_1,g_2)$, defined in $\D_0$ and satisfying the conditions $g'_1g'_2\ne 0$ and 
$(|g_1|^2-1)(|g_2|^2-1)>0$\,, such that the solution $(\nu,\mu)$ is given by formulas
\eqref{nu_mu_g1g2_Can1_R42} in the domain $\D_0$.

Conversely, if $(g_1,g_2)$ is a pair of holomorphic functions, defined in a domain $\D\subset\CC$ and satisfying
the conditions $g'_1g'_2\ne 0$ and $(|g_1|^2-1)(|g_2|^2-1)>0$\,, then formulas \eqref{nu_mu_g1g2_Can1_R42}
give a solution $(\nu>0,\mu)$ to the system of natural equations \eqref{Nat_Eq_nu_mu_R42}.
\end{theorem}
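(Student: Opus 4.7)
The plan is to reduce both implications to results already established in the preceding sections, so no direct PDE manipulation will be needed.

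For the converse direction (easier), I would start from the pair $(g_1,g_2)$ satisfying $g'_1 g'_2\neq 0$ and $(|g_1|^2-1)(|g_2|^2-1)>0$. By Theorem \ref{thm-W_Can1_polinom_R42}, the canonical Weierstrass representation \eqref{W_Can1_polinom_R42} produces locally a maximal space-like surface $\M=(\D_0,\x)$ of general type parametrized by canonical coordinates, and by Proposition \ref{Gen_Typ_fg1g2_R42} the surface is of general type. Then Theorem \ref{Thm-Nat_Eq_nu_mu_R42} guarantees that the normal curvatures of $\M$ solve the system \eqref{Nat_Eq_nu_mu_R42}. But those normal curvatures are exactly the functions given by \eqref{nu_mu_g1g2_Can1_R42} (derived in this section). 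Hence the pair \eqref{nu_mu_g1g2_Can1_R42} is a solution of the natural system.

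For the forward direction, I would begin with the prescribed solution $(\nu>0,\mu)$ and apply the Bonnet-type Theorem \ref{Bone_Phi_bar_Phi_n1_n2_nu_mu_R42} to obtain, in a neighborhood $\D_0$ of $p_0$, a maximal space-like surface of general type $(\D_0,\x)$ in canonical coordinates whose scalar invariants are precisely the given $\nu$ and $\mu$. The next step is to pass to the canonical Weierstrass representation via Theorem \ref{thm-W_Can1_polinom_R42}, extracting the holomorphic pair $(g_1,g_2)$ satisfying \eqref{W_Can1_polinom_cond_R42} (which immediately gives $g'_1 g'_2\neq 0$ and $(|g_1|^2-1)(|g_2|^2-1)>0$). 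Since the $(\nu,\mu)$ computed from this representation by \eqref{nu_mu_g1g2_Can1_R42} coincide with the invariants of the surface, they agree with the originally prescribed pair.

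The main obstacle I foresee is the technical assumption $\phi_1+\ii\phi_2\neq 0$ required by Theorem \ref{thm-W_Can1_polinom_R42}. If the surface produced by the Bonnet theorem fails this at $p_0$, I would first apply a suitable proper motion in $\RR^4_2$: by Theorem \ref{Thm-Nat_Eq_nu_mu_R42} the pair $(\nu,\mu)$ is unchanged under such motions, while the components of $\Phi$ transform by an element of $\mathbf{O}(2,2,\RR)$, so one can arrange $\phi_1+\ii\phi_2\neq 0$ at $p_0$ (hence in a possibly smaller neighborhood by continuity). After shrinking $\D_0$ if necessary, Theorem \ref{thm-W_Can1_polinom_R42} applies and yields the desired $(g_1,g_2)$. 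Everything else is assembled from results already in the paper, so no further calculation is required.
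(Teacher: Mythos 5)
Your proposal is correct and follows essentially the same route as the paper: the converse via Theorem \ref{thm-W_Can1_polinom_R42} plus Theorem \ref{Thm-Nat_Eq_nu_mu_R42}, and the forward direction via the Bonnet-type Theorem \ref{Bone_Phi_bar_Phi_n1_n2_nu_mu_R42} followed by passage to the canonical Weierstrass representation. Your explicit handling of the technical condition $\phi_1+\ii\phi_2\neq 0$ by a preliminary proper motion is a point the paper only addresses in a remark, and it is treated correctly.
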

\begin{proof}
Let $(\nu>0,\mu)$ be a solution to the system of natural equations in a given domain $\D$  and let
$p_0$ be a point in $\D$. Applying Theorem \ref{Bone_Phi_bar_Phi_n1_n2_nu_mu_R42} to $(\nu,\mu)$, 
we get a maximal space-like surface of general type $(\D_1,\x)$ in $\RR^4_2$, given by canonical coordinates
in a neighborhood $\D_1\subset\D$ of $p_0$. Furthermore, the given functions $\nu$ and $\mu$ are the normal curvatures 
of this surface. Applying Theorem \ref{thm-W_Can1_polinom_R42}\, to this surface, we have that the surface 
has a canonical Weierstrass representation of the type \eqref{W_Can1_polinom_R42} in a neighborhood $\D_0\subset\D_1$ of
$p_0$. Then the normal curvatures $\nu$ and $\mu$ of the surface satisfy \eqref{nu_mu_g1g2_Can1_R42}.

 Let now $(g_1,g_2)$ be two holomorphic functions, defined in a domain $\D\subset\CC$, satisfying te conditions
$g'_1g'_2\ne 0$ and $(|g_1|^2-1)(|g_2|^2-1)>0$ and let $p_0$ be a point in $\D$. According to Theorem
\ref{thm-W_Can1_polinom_R42}\,, the pair $(g_1,g_2)$ generates through formulas \eqref{W_Can1_polinom_R42}
a maximal space-like surface of general type $(\D_1,\x)$ in $\RR^4_2$, where $\D_1\subset\D$ is a neighborhood of 
$p_0$. The normal curvatures $\nu$ and $\mu$ of this surface satisfy \eqref{nu_mu_g1g2_Can1_R42}. Since 
the coordinates are canonical, then the normal curvatures $\nu$ and $\mu$ are a solution to the system
of natural equations \eqref{Nat_Eq_nu_mu_R42}. This means that formulas \eqref{nu_mu_g1g2_Can1_R42} give
a solution $(\nu>0,\mu)$ of the system of natural equations \eqref{Nat_Eq_nu_mu_R42} in a neighborhood of an 
arbitrary point in $\D$. This implies that \eqref{nu_mu_g1g2_Can1_R42} is a solution in the whole domain $\D$.
\end{proof}

 Further we consider the following natural question: \emph{When two different pairs of holomorphic functions generate 
through formulas \eqref{nu_mu_g1g2_Can1_R42} one and the same solution to the system of natural equations
\eqref{Nat_Eq_nu_mu_R42}?}

The answer is given by the following:
\begin{theorem}\label{Nat_eq_same_nu_mu_hatg_g_R42}
Let $(g_1,g_2)$ and $(\hat g_1,\hat g_2)$ be two pairs of holomorphic functions, defined in a connected domain $\D\subset\CC$, 
satisfying the conditions $g'_1g'_2\ne 0$\,, $(|g_1|^2-1)(|g_2|^2-1)>0$\,,
 $\hat g'_1 \hat g'_2\ne 0$ and $(|\hat g_1|^2-1)(|\hat g_2|^2-1)>0$\,.
The two pairs generate through formulas \eqref{nu_mu_g1g2_Can1_R42} one and the same solution to the system of
natural equations \eqref{Nat_Eq_nu_mu_R42} if and only if they are related by linear fractional transformations
of the type \eqref{hatg_g-orthchr_move_Can1_R42} or \eqref{hatg_g-nonorthchr_move_Can1_R42}.
\end{theorem}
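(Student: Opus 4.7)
The plan is to exploit the bijective correspondences established in the paper between pairs of holomorphic functions, maximal space-like surfaces of general type in canonical coordinates, and solutions of the system of natural PDE's. The key tools are the canonical Weierstrass representation (Theorem \ref{thm-W_Can1_polinom_R42}), the Bonnet type theorem (Theorem \ref{Bone_Phi_bar_Phi_n1_n2_nu_mu_R42}), and the transformation laws \eqref{hatg_g-orthchr_move_Can1_R42} and \eqref{hatg_g-nonorthchr_move_Can1_R42}, which encode proper motions of $\RR^4_2$ at the level of the generating pair $(g_1,g_2)$.

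For the converse direction, assume $(g_1,g_2)$ and $(\hat g_1,\hat g_2)$ are related by one of these two families of linear fractional transformations. By Theorem \ref{W-orthchr_move_Can1_R42} in the orthochronous case, and by its proper non-orthochronous counterpart in the other case, the two canonical Weierstrass representations produce maximal space-like surfaces of general type that differ by a proper motion in $\RR^4_2$. According to \eqref{hat_K_kappa_nu_mu-propmov_R42}, the normal curvatures $\nu$ and $\mu$ are invariant under proper motions, so by \eqref{nu_mu_g1g2_Can1_R42} the two pairs generate the very same solution of \eqref{Nat_Eq_nu_mu_R42}.

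For the direct direction, fix $p_0\in\D$. By Theorem \ref{thm-W_Can1_polinom_R42}, in some neighborhood $\D_0\subset\D$ of $p_0$ the pairs $(g_1,g_2)$ and $(\hat g_1,\hat g_2)$ generate maximal space-like surfaces $(\D_0,\x)$ and $(\D_0,\hat\x)$ of general type, both in the canonical coordinate $t$, and both carrying the common pair $(\nu,\mu)$ as their normal curvatures. The uniqueness clause of Theorem \ref{Bone_Phi_bar_Phi_n1_n2_nu_mu_R42} applied at $p_0$ then yields a sub-domain $\tilde\D_0\ni p_0$, a matrix $A\in\mathbf{SO}(2,2,\RR)$ and a vector $\vb\in\RR^4_2$ such that $\hat\x=A\x+\vb$ on $\tilde\D_0$. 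Depending on whether $A$ is orthochronous or non-orthochronous, Theorem \ref{W-orthchr_move_Can1_R42} or its counterpart producing \eqref{hatg_g-nonorthchr_move_Can1_R42} provides constants $(a_1,b_1,a_2,b_2)$ for which $(\hat g_1,\hat g_2)$ is obtained from $(g_1,g_2)$ by the corresponding linear fractional formula throughout $\tilde\D_0$.

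The main obstacle is globalizing this local identity, with the same constants, to the whole connected domain $\D$. I would handle it by clearing denominators in \eqref{hatg_g-orthchr_move_Can1_R42} or \eqref{hatg_g-nonorthchr_move_Can1_R42}, obtaining the equivalent identities $\hat g_1(b_1g_1+\bar a_1)=a_1g_1+\bar b_1$ and $\hat g_2(b_2g_2+\bar a_2)=a_2g_2+\bar b_2$. Both sides are holomorphic functions of $t$ on $\D$, since $g_j$ and $\hat g_j$ are holomorphic, and these identities hold on the open set $\tilde\D_0$. Because $\D$ is connected, the identity principle for holomorphic functions extends them to the whole of $\D$, completing the proof.
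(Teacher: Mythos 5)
Your proposal is correct and follows essentially the same route as the paper's proof: the converse via the motion--Weierstrass correspondence and the invariance of $(\nu,\mu)$ under proper motions, and the direct implication via the uniqueness clause of the Bonnet-type theorem followed by globalization over the connected domain $\D$. The only difference is that you make explicit (via clearing denominators and the identity principle for holomorphic functions) the globalization step that the paper dismisses as following ``in a standard way,'' which is a welcome clarification rather than a deviation.
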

\begin{proof}
Let $(\D,\x)$ and $(\D,\hat\x)$ denote the maximal space-like surfaces, generated by $(g_1,g_2)$ and
$(\hat g_1,\hat g_2)$, respectively by canonical Weierstrass representation of the type \eqref{W_Can1_polinom_R42}
and let $(\nu,\mu)$ and $(\hat\nu,\hat\mu)$ be their normal curvatures. First we suppose that $(g_1,g_2)$ and
$(\hat g_1,\hat g_2)$ are related by linear fractional transformations of the type \eqref{hatg_g-orthchr_move_Can1_R42} 
or \eqref{hatg_g-nonorthchr_move_Can1_R42}. Then both surfaces are obtained from each other through a proper motion
in $\RR^4_2$. From here, applying Theorem \ref{Thm-Nat_Eq_nu_mu_R42} we find $\hat\nu=\nu$ and $\hat\mu=\mu$,
which means that $(g_1,g_2)$ and $(\hat g_1,\hat g_2)$ generate one and the same solution.
For the inverse, suppose that both pairs generate one and the same solution to the system of natural equations.
According to Theorem \ref{Bone_Phi_bar_Phi_n1_n2_nu_mu_R42} we have that the two surfaces locally in a 
neighborhood of any point in $\D$ are obtained from each other by a proper motion in $\RR^4_2$.
It follows from here that in a neighborhood of any point in $\D$, $(g_1,g_2)$ and $(\hat g_1,\hat g_2)$ are
related by linear fractional transformations of the type \eqref{hatg_g-orthchr_move_Can1_R42} or
\eqref{hatg_g-nonorthchr_move_Can1_R42}. Since the domain $\D$ is connected, it follows in a standard way,
that the linear fractional transformations are one and the same in the whole domain $\D$. 
\end{proof}

 Consider the system of natural equations \eqref{Nat_Eq_K_kappa_R42} with respect to the pair $(K,\varkappa)$,
which is equivalent to \eqref{Nat_Eq_nu_mu_R42} through the equalities \eqref{K_kappa_nu_mu_R42} and
\eqref{nu_mu_K_kappa_R42}. On the other hand, formulas \eqref{K_kappa_g1g2_Can1_R42} and \eqref{nu_mu_g1g2_Can1_R42} 
are also equivalent through \eqref{K_kappa_nu_mu_R42} and \eqref{nu_mu_K_kappa_R42}. Consequently, for the
system \eqref{Nat_Eq_K_kappa_R42}, similarly to Theorem \ref{Nat_Eq_nu_mu_solv_g1g2_R42}, we have:
\begin{theorem}\label{Nat_Eq_K_kappa_solv_g1g2_R42}
Let $(K>0,\varkappa)$ be a pair of functions, defined in a domain $\D\subset\RR^2\equiv\CC$ and let $(K,\varkappa)$ 
be a solution to the system of natural equations \eqref{Nat_Eq_K_kappa_R42}.
For any point $p_0\in\D$ there exists a neighborhood $\D_0\subset\D$ of $p_0$ and a pair of holomorphic functions 
$(g_1,g_2)$, defined in $\D_0$, satisfying the conditions $g'_1g'_2\ne 0$ and $(|g_1|^2-1)(|g_2|^2-1)>0$\,, 
such that the solution $(K,\varkappa)$ is given by formulas \eqref{K_kappa_g1g2_Can1_R42} in the domain $\D_0$.

 Conversely, if $(g_1,g_2)$ is a pair of functions, defined in a domain $\D\subset\CC$ and satisfying
the conditions $g'_1g'_2\ne 0$, $(|g_1|^2-1)(|g_2|^2-1)>0$\,, then formulas \eqref{K_kappa_g1g2_Can1_R42}
give a solution $(K>0,\varkappa)$ to the system of natural equations \eqref{Nat_Eq_K_kappa_R42}.
\end{theorem}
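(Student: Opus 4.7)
The plan is to deduce this theorem directly from its counterpart for the invariants $(\nu,\mu)$, namely Theorem \ref{Nat_Eq_nu_mu_solv_g1g2_R42}, by transporting every statement across the explicit bijection between the pairs $(\nu,\mu)$ and $(K,\varkappa)$. Recall that \eqref{K_kappa_nu_mu_R42} and \eqref{nu_mu_K_kappa_R42} give mutually inverse change-of-variable formulas between the two pairs, valid wherever $\nu>|\mu|$ (equivalently $K>|\varkappa|$), and that this change is real-analytic in both directions on the admissible open set. Hence a pair $(K,\varkappa)$ with $K>0$ and $K^2-\varkappa^2>0$ solves \eqref{Nat_Eq_K_kappa_R42} if and only if the corresponding pair $(\nu,\mu)$, defined through \eqref{nu_mu_K_kappa_R42}, solves \eqref{Nat_Eq_nu_mu_R42}; this is precisely the equivalence the author established when deriving \eqref{Nat_Eq_K_kappa_R42} from \eqref{Nat_Eq_nu_mu_R42}.

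First I would handle the direct part. Given a solution $(K,\varkappa)$ to \eqref{Nat_Eq_K_kappa_R42} on $\D$ and a point $p_0\in\D$, I define $(\nu,\mu)$ on $\D$ by \eqref{nu_mu_K_kappa_R42}. The admissibility conditions $K^2-\varkappa^2>0$ and $K>0$ guarantee that $\nu>0$ and $\mu$ is real, and the equivalence of the two natural systems shows that $(\nu,\mu)$ is a solution of \eqref{Nat_Eq_nu_mu_R42}. Applying Theorem \ref{Nat_Eq_nu_mu_solv_g1g2_R42} yields a neighborhood $\D_0\subset\D$ of $p_0$ and a pair of holomorphic functions $(g_1,g_2)$ on $\D_0$ with $g'_1 g'_2\neq 0$ and $(|g_1|^2-1)(|g_2|^2-1)>0$, such that $(\nu,\mu)$ is given on $\D_0$ by \eqref{nu_mu_g1g2_Can1_R42}. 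Substituting these formulas into \eqref{K_kappa_nu_mu_R42} and simplifying (the squares of \eqref{nu_mu_g1g2_Can1_R42} add and subtract cleanly because the two terms in $\nu$ and $-\mu$ are $|g'_1|/|\,|g_1|^2-1|$ and $|g'_2|/|\,|g_2|^2-1|$) reproduces precisely the formulas \eqref{K_kappa_g1g2_Can1_R42}. Thus $(K,\varkappa)$ is represented on $\D_0$ by $(g_1,g_2)$ as required.

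For the converse, I would start with holomorphic $(g_1,g_2)$ satisfying $g'_1 g'_2\neq 0$ and $(|g_1|^2-1)(|g_2|^2-1)>0$ on a domain $\D$, and define $(\nu,\mu)$ on $\D$ by \eqref{nu_mu_g1g2_Can1_R42}. By Theorem \ref{Nat_Eq_nu_mu_solv_g1g2_R42} this is a solution of \eqref{Nat_Eq_nu_mu_R42} with $\nu>0$ and $\nu^2-\mu^2>0$. Defining $(K,\varkappa)$ from $(\nu,\mu)$ via \eqref{K_kappa_nu_mu_R42} and invoking the equivalence of the two systems shows that $(K,\varkappa)$ solves \eqref{Nat_Eq_K_kappa_R42} with $K>0$ and $K^2-\varkappa^2>0$. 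A direct algebraic check that the substitution of \eqref{nu_mu_g1g2_Can1_R42} into \eqref{K_kappa_nu_mu_R42} produces \eqref{K_kappa_g1g2_Can1_R42} completes this direction.

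The only minor obstacle is the sign/root bookkeeping in the algebraic identification of \eqref{K_kappa_g1g2_Can1_R42} with what is obtained by squaring and combining \eqref{nu_mu_g1g2_Can1_R42} through \eqref{K_kappa_nu_mu_R42}; once one observes that the admissibility conditions $g'_1 g'_2\neq 0$ and $(|g_1|^2-1)(|g_2|^2-1)>0$ keep the expressions $|g'_j|/|\,|g_j|^2-1|$ real and positive, the identification is immediate. Hence the theorem reduces, in both directions, to a straightforward translation of Theorem \ref{Nat_Eq_nu_mu_solv_g1g2_R42} through the equivalence \eqref{K_kappa_nu_mu_R42}--\eqref{nu_mu_K_kappa_R42}.
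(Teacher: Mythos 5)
Your proposal is correct and follows exactly the route the paper takes: immediately before stating this theorem the authors note that the system \eqref{Nat_Eq_K_kappa_R42} is equivalent to \eqref{Nat_Eq_nu_mu_R42} via \eqref{K_kappa_nu_mu_R42} and \eqref{nu_mu_K_kappa_R42}, and that the formulas \eqref{K_kappa_g1g2_Can1_R42} and \eqref{nu_mu_g1g2_Can1_R42} are likewise equivalent under the same change of variables, so the theorem is obtained by transporting Theorem \ref{Nat_Eq_nu_mu_solv_g1g2_R42}. Your write-up just makes explicit the two directions and the sign bookkeeping that the paper leaves implicit.
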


 The analogue of the Theorem \ref{Nat_eq_same_nu_mu_hatg_g_R42} is the following:
\begin{theorem}\label{Nat_eq_same_K_kappa_hatg_g_R42}
Let $(g_1,g_2)$ and $(\hat g_1,\hat g_2)$ be two pairs of holomorphic functions, defined in a connected domain
$\D\subset\CC$, satisfying the conditions $g'_1g'_2\ne 0$\,, $(|g_1|^2-1)(|g_2|^2-1)>0$\,,
$\hat g'_1 \hat g'_2\ne 0$ and $(|\hat g_1|^2-1)(|\hat g_2|^2-1)>0$\,.
Both pairs generate through the equalities \eqref{K_kappa_g1g2_Can1_R42} one and the same solution
to the system of natural equations \eqref{Nat_Eq_K_kappa_R42} if and only if the pairs are related by linear
fractional transformations of the type \eqref{hatg_g-orthchr_move_Can1_R42} or \eqref{hatg_g-nonorthchr_move_Can1_R42}.
\end{theorem}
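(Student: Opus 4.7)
The plan is to deduce this statement directly from Theorem \ref{Nat_eq_same_nu_mu_hatg_g_R42} by exploiting the bijection between the pairs of invariants $(K,\varkappa)$ and $(\nu,\mu)$. Everything rests on the fact that formulas \eqref{K_kappa_nu_mu_R42} and \eqref{nu_mu_K_kappa_R42} establish a smooth one-to-one correspondence between $\{(\nu>0,\mu):\nu>|\mu|\}$ and $\{(K>0,\varkappa):K>|\varkappa|\}$, and that formulas \eqref{K_kappa_g1g2_Can1_R42} and \eqref{nu_mu_g1g2_Can1_R42} correspond to one another precisely through this same bijection (this is how the $(K,\varkappa)$ formulas were obtained in Section~\ref{sect_K_kappa-W}, cf.\ the derivation around \eqref{Nat_Eq_K_kappa_R42}).

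First I would observe that, under the conditions $g_1'g_2'\neq 0$ and $(|g_1|^2-1)(|g_2|^2-1)>0$, the pair $(\nu,\mu)$ produced by a given pair $(g_1,g_2)$ via \eqref{nu_mu_g1g2_Can1_R42} automatically satisfies $\nu>|\mu|\geq 0$, so it lies in the domain where \eqref{K_kappa_nu_mu_R42}--\eqref{nu_mu_K_kappa_R42} constitute a bijection. Hence $(g_1,g_2)$ and $(\hat g_1,\hat g_2)$ generate the same $(K,\varkappa)$ through \eqref{K_kappa_g1g2_Can1_R42} if and only if they generate the same $(\nu,\mu)$ through \eqref{nu_mu_g1g2_Can1_R42}.

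Next I would apply Theorem \ref{Nat_eq_same_nu_mu_hatg_g_R42} to conclude: the two pairs produce the same solution $(\nu,\mu)$ of \eqref{Nat_Eq_nu_mu_R42} if and only if they are related by linear fractional transformations of the type \eqref{hatg_g-orthchr_move_Can1_R42} or \eqref{hatg_g-nonorthchr_move_Can1_R42}. Combined with the equivalence of the systems \eqref{Nat_Eq_nu_mu_R42} and \eqref{Nat_Eq_K_kappa_R42} noted after \eqref{Nat_Eq_K_kappa_2_R42}, this gives exactly the statement of the theorem.

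There is essentially no obstacle here, since the proof amounts to transporting the already-established equivalence across the invertible algebraic change of variables $(\nu,\mu)\leftrightarrow(K,\varkappa)$. The only point that requires a brief verification is that the inequalities $g_1'g_2'\neq 0$ and $(|g_1|^2-1)(|g_2|^2-1)>0$ (and analogously for the hatted pair) place $(\nu,\mu)$ strictly inside the region $\nu>|\mu|$ where the bijection is valid; this follows directly from \eqref{nu_mu_g1g2_Can1_R42}, since the factor $\sqrt{|g_1'g_2'|/((|g_1|^2-1)(|g_2|^2-1))}$ is strictly positive and the two summands in the formula for $\nu$ are nonnegative.
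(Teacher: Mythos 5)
Your proposal is correct and follows essentially the same route as the paper, which states this theorem as the analogue of Theorem \ref{Nat_eq_same_nu_mu_hatg_g_R42} obtained by transporting that result through the algebraic bijection \eqref{K_kappa_nu_mu_R42}--\eqref{nu_mu_K_kappa_R42} between $(\nu,\mu)$ and $(K,\varkappa)$, under which \eqref{nu_mu_g1g2_Can1_R42} and \eqref{K_kappa_g1g2_Can1_R42} correspond. Your added verification that the hypotheses on $(g_1,g_2)$ force $\nu>|\mu|$, so that the correspondence is genuinely invertible on the relevant range, is a sound and worthwhile detail that the paper leaves implicit.
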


\medskip

 From the theorems proved in this section, we can derive similarly to the spaces $\RR^4$ and $\RR^4_1$, 
natural correspondences between three types of classes of objects: classes of maximal space-like surfaces of general type
in $\RR^4_2$, classes of solutions to the system of natural equations of the maximal space-like surfaces and 
classes of pairs of holomorphic functions in $\CC$. These correspondences can be determined only locally as in 
$\RR^4$ and $\RR^4_1$. In order to obtain these correspondences we need some additional definitions and denotations.
 
 Consider the set of maximal space-like surfaces of general type in $\RR^4_2$ of the type $(\D,\x)$,
where $\D$ is a neighborhood of zero in $\RR^2\equiv\CC$, in which the surface is parametrized with canonical
coordinates. Two surfaces $(\D,\x)$ and $(\hat\D,\hat\x)$ are said to be \emph{equivalent}, if there exists
a neighborhood of zero $\D_0\subset\D\cap\hat\D$, such that both surfaces are related in $\D_0$ by a proper motion
in $\RR^4_2$ of the type \eqref{hat_M-M-prop_mov_R42}. The set of the equivalence classes of this relation
is denoted by $\mathbf{MS}_{\RR^4_2}$.

 Consider the set of solutions of the system of natural equations \eqref{Nat_Eq_K_kappa_R42} of maximal 
space-like surfaces in $\RR^4_2$ of the type $(\D,K>0,\varkappa)$, where the functions $K$ and $\varkappa$ 
are defined in a neighborhood of zero $\D$ in $\RR^2\equiv\CC$. Two solutions $(\D,K,\varkappa)$ and
$(\hat\D,\hat K,\hat\varkappa)$ are said to be \emph{equivalent}, if there exists a neighborhood of zero
$\D_0\subset\D\cap\hat\D$, such that both solutions coincide in $\D_0$. The set of the equivalence classes 
of this relation is denoted by $\mathbf{SNE}_{\RR^4_2}$.

 Consider the set of pairs of holomorphic functions of the type $(\D,g_1,g_2)$, where the functions $g_1$ and $g_2$
are defined in a neighborhood $\D$ of zero in $\RR^2\equiv\CC$, and satisfy the conditions $g'_1g'_2\ne 0$, 
$(|g_1|^2-1)(|g_2|^2-1)>0$\,. Two pairs of functions $(\D,g_1,g_2)$ and $(\hat\D,\hat g_1,\hat g_2)$ are 
said to be \emph{equivalent}, if there exists a neighborhood of zero $\D_0\subset\D\cap\hat\D$, such that both
pairs of functions are related in $\D_0$ by  linear fractional transformations of the type
\eqref{hatg_g-orthchr_move_Can1_R42} or \eqref{hatg_g-nonorthchr_move_Can1_R42}. The set of the equivalence classes 
of this relation is denoted by $\mathbf{H}_{\RR^4_2}$.

 Let $\M$ be a maximal space-like surface of general type in $\RR^4_2$, parametrized by canonical coordinates. 
Associate to $\M$ its Gauss curvature $K$ and the curvature of the normal connection $\varkappa$. It follows
from Theorem \ref{Thm-Nat_Eq_K_kappa_R42} that this correspondence induces a map from $\mathbf{MS}_{\RR^4_2}$ 
into $\mathbf{SNE}_{\RR^4_2}$. What is more, Theorem \ref{Thm-Nat_Eq_K_kappa_R42} and Theorem
\ref{Bone_Phi_bar_Phi_n1_n2_K_kappa_R42} imply that this map is a bijection. 

Let $(g_1,g_2)$ be a pair of holomorphic functions, satisfying the conditions $g'_1g'_2\ne 0$ and 
$(|g_1|^2-1)(|g_2|^2-1)>0$\,. Associate to $(g_1,g_2)$ the maximal space-like surface of general type in
$\RR^4_2$, given by the canonical Weierstrass representation \eqref{W_Can1_polinom_R42}.
Then, from Theorem \ref{thm-W_Can1_polinom_R42} and equalities \eqref{hatg_g-orthchr_move_Can1_R42} and
\eqref{hatg_g-nonorthchr_move_Can1_R42} we have that this correspondence induces a map from $\mathbf{H}_{\RR^4_2}$ 
into $\mathbf{MS}_{\RR^4_2}$, which is bijection.

Let $(g_1,g_2)$ be a pair of holomorphic functions, satisfying the conditions $g'_1g'_2\ne 0$ and 
$(|g_1|^2-1)(|g_2|^2-1)>0$\,. Associate to $(g_1,g_2)$ the solution to the system of natural equations
\eqref{Nat_Eq_K_kappa_R42}, obtained by formulas \eqref{K_kappa_g1g2_Can1_R42}.
Theorem \ref{Nat_Eq_K_kappa_solv_g1g2_R42} and Theorem \ref{Nat_eq_same_K_kappa_hatg_g_R42} imply that this
correspondence induces a map from $\mathbf{H}_{\RR^4_2}$ into $\mathbf{SNE}_{\RR^4_2}$, which is bijection.

It is easily seen that the map from $\mathbf{H}_{\RR^4_2}$ into $\mathbf{SNE}_{\RR^4_2}$ is a composition of 
the previous two maps. The above three maps diagrammatically are presented in Figure \ref{Diag_MS_SNE_H_R42}.
The results obtained above can be summarized in the following:
\begin{figure}
		\centering
			\includegraphics[width=0.33\textwidth]{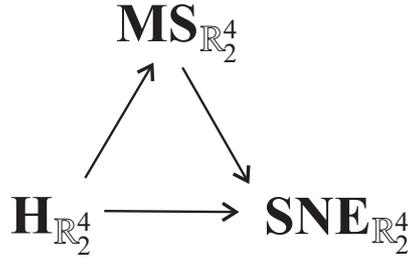}  
		\vspace*{-1ex}	
		\caption{Commutative diagram of bijections}
		\label{Diag_MS_SNE_H_R42}
\end{figure}\FloatBarrier

\begin{theorem}\label{Thm-Comut_Diag_R42}
The diagram in the Figure \ref{Diag_MS_SNE_H_R42} is commutative and the three maps are bijections\,. 
\end{theorem}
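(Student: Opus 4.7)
The plan is to verify, in turn, that each of the three arrows in the diagram is well defined on equivalence classes, injective, and surjective, and then to observe that the commutativity of the diagram is essentially built into how the three maps are constructed. Because the heavy machinery has already been developed in the previous sections, the proof reduces to a careful bookkeeping of which of the earlier theorems supplies each piece.

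For the map $\mathbf{MS}_{\RR^4_2}\to\mathbf{SNE}_{\RR^4_2}$, which sends a surface to its pair $(K,\varkappa)$, well-definedness on equivalence classes is a direct consequence of Theorem \ref{Thm-Nat_Eq_K_kappa_R42}: the Gauss curvature and the curvature of the normal connection of a maximal space-like surface of general type in canonical coordinates solve the system \eqref{Nat_Eq_K_kappa_R42}, and two surfaces related by a proper motion of type \eqref{hat_M-M-prop_mov_R42} give identical $(K,\varkappa)$ as functions of $t$. Injectivity and surjectivity both follow from the Bonnet type Theorem \ref{Bone_Phi_bar_Phi_n1_n2_K_kappa_R42}: given any solution $(K>0,\varkappa)$ defined near the origin, that theorem produces a maximal space-like surface of general type in canonical coordinates having these as its invariants, and any two such surfaces are related locally by a proper motion of type \eqref{hat_M-M-prop_mov_R42}, hence represent one and the same class in $\mathbf{MS}_{\RR^4_2}$.

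For the map $\mathbf{H}_{\RR^4_2}\to\mathbf{MS}_{\RR^4_2}$, a pair $(g_1,g_2)$ satisfying $g'_1g'_2\neq 0$ and $(|g_1|^2-1)(|g_2|^2-1)>0$ yields, via the canonical Weierstrass representation \eqref{W_Can1_polinom_R42}, a maximal space-like surface of general type parametrized by canonical coordinates; this is the converse part of Theorem \ref{thm-W_Can1_polinom_R42}. Well-definedness on classes follows from the transformation laws: two equivalent pairs, related by \eqref{hatg_g-orthchr_move_Can1_R42} or \eqref{hatg_g-nonorthchr_move_Can1_R42}, generate surfaces obtained one from the other by a proper motion in $\RR^4_2$, by Theorem \ref{W-orthchr_move_Can1_R42} and its non-orthochronous analogue. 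Surjectivity is the direct part of Theorem \ref{thm-W_Can1_polinom_R42}, complemented by the remark that surfaces violating $\phi_1+\ii\phi_2\neq 0$ can be rotated into the admissible case by a proper motion without changing the $\mathbf{MS}_{\RR^4_2}$-class. Injectivity is the converse implication in Theorem \ref{W-orthchr_move_Can1_R42}: if the generated surfaces lie in the same class, the pairs $(g_1,g_2)$ and $(\hat g_1,\hat g_2)$ satisfy \eqref{hatg_g-orthchr_move_Can1_R42} or \eqref{hatg_g-nonorthchr_move_Can1_R42}, hence are equivalent in $\mathbf{H}_{\RR^4_2}$.

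For the map $\mathbf{H}_{\RR^4_2}\to\mathbf{SNE}_{\RR^4_2}$, given by formulas \eqref{K_kappa_g1g2_Can1_R42}, well-definedness and surjectivity are exactly the two halves of Theorem \ref{Nat_Eq_K_kappa_solv_g1g2_R42}, while injectivity is Theorem \ref{Nat_eq_same_K_kappa_hatg_g_R42}. Commutativity of the triangle reduces to the identity: starting from $(g_1,g_2)$, building the surface via \eqref{W_Can1_polinom_R42}, and then computing its curvatures in canonical coordinates reproduces \eqref{K_kappa_g1g2_Can1_R42}. But this is precisely how the formulas \eqref{K_kappa_g1g2_Can1_R42} were derived at the beginning of Section \ref{sect_sol_nat_eq}, by specialising the general Weierstrass expressions \eqref{K_kappa_fg1g2_R42} using the canonical condition \eqref{fg1g2_Can1_R42}; hence commutativity holds at the level of representatives, and therefore on the quotients. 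The only delicate point I anticipate, already absorbed into the preparatory theorems, is the passage from local to global in the injectivity arguments --- ensuring that a single motion $A\in\mathbf{SO}(2,2,\RR)$, or a single pair of linear fractional transformations, works throughout a connected neighborhood of the reference point rather than varying pointwise. This is handled by the standard connectedness and analytic continuation arguments already used in Theorems \ref{Bone_Phi_bar_Phi_n1_n2_K_kappa_R42} and \ref{Nat_eq_same_K_kappa_hatg_g_R42}, and no new obstacle arises when assembling the pieces into the diagram.
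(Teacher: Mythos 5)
Your proposal is correct and follows essentially the same route as the paper: the paper likewise builds the three maps from Theorems \ref{Thm-Nat_Eq_K_kappa_R42} and \ref{Bone_Phi_bar_Phi_n1_n2_K_kappa_R42} (for $\mathbf{MS}_{\RR^4_2}\to\mathbf{SNE}_{\RR^4_2}$), Theorem \ref{thm-W_Can1_polinom_R42} with the transformation laws \eqref{hatg_g-orthchr_move_Can1_R42}, \eqref{hatg_g-nonorthchr_move_Can1_R42} (for $\mathbf{H}_{\RR^4_2}\to\mathbf{MS}_{\RR^4_2}$), and Theorems \ref{Nat_Eq_K_kappa_solv_g1g2_R42}, \ref{Nat_eq_same_K_kappa_hatg_g_R42} (for $\mathbf{H}_{\RR^4_2}\to\mathbf{SNE}_{\RR^4_2}$), and obtains commutativity by observing that the third map is the composition of the first two. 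Your write-up is in fact somewhat more explicit than the paper's about well-definedness on classes and the local-to-global step, but no new ideas are involved.
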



\section{Correspondence between maximal space-like surfaces in $\RR^4_2$ and pairs of maximal space-like surfaces 
in $\RR^3_1$}\label{sect_corresp_R42_R31}

  At the end of the previous section we established a correspondence between the set $\mathbf{MS}_{\RR^4_2}$ of
the equivalence classes of maximal space-like surfaces in $\RR^4_2$ and the set $\mathbf{H}_{\RR^4_2}$ of
the equivalence classes of pairs of holomorphic functions. Next we consider the analogous correspondence in
$\RR^3_1$. The necessary formulas are obtained in \cite{G} and \cite{G-K-K}, but these formulas can be obtained
also as a special case from the formulas for the surfaces in $\RR^4_2$ in this work. For that purpose we identify
the space $\RR^3_1$ with the subspace $x_4=0$ in $\RR^4_2$, considering in this way the surfaces in $\RR^3_1$ 
as surfaces in $\RR^4_2$.

  First we consider the question of canonical coordinates. We saw that the canonical coordinates of the first type 
in $\RR^3_1$ are characterized by equalities \eqref{Can_Princ_R31}. This means that, the canonical coordinates 
are principal and the coefficients of the second fundamental form are $L=-N=\pm 1$ and $M=0$\,.
Under a change of the canonical coordinates of the type\; $t=\pm \ii s+c$\; or\; $t=\pm \bar s + c$\; 
the coefficients of the second fundamental form change the sign. Consequently, always there exist coordinates, 
such that $L=-N= 1$ and $M=0$\,. We call these coordinates canonical coordinates for maximal space-like surfaces 
in $\RR^3_1$. The admissible changes for the canonical coordinates are of the type\;
$t=\pm s+c$\; or\; $t=\pm \ii \bar s + c$. Since under an improper motion in $\RR^3_1$ the coefficients of 
the second fundamental form change the sign, then the changes of the type\; $t=\pm \ii s+c$\; or\; $t=\pm \bar s + c$\; 
give the canonical coordinates of the surface, obtained by such a motion.

 Now we proceed to the natural equation. Let $\M$ be a maximal space-like surface of general type in $\RR^3_1$.
Since we identify $\RR^3_1$ with the subspace $x_4=0$ in $\RR^4_2$, then Theorem 
\ref{Thm-Min_Surf_in_hyperspace_R42}\, is valid for $\M$ and we have $\mu=0$\,. Applying this equality to the system 
\eqref{Nat_Eq_nu_mu_R42}, we obtain that the normal curvature $\nu$ of $\M$ satisfies the following PDE:
\begin{equation}\label{Nat_Eq_nu_R31}
\Delta\ln \nu - 2\nu = 0\,.
\end{equation} 
We call the above equation a \emph{natural equation} of the maximal space-like surfaces in $\RR^3_1$.
In the proof of Theorem \ref{Thm-Min_Surf_in_hyperspace_R42} we also established, that the functions in the 
Weierstrass representation of $\M$, considered as a surface in $\RR^4_2$, satisfy the condition $g_1=g_2$.
Applying this condition to \eqref{W_Can1_polinom_R42} we get that the surface $\M$ is locally given in 
$\RR^3_1$, through the formula: 
\begin{equation}\label{W_Can_Princ_polinom_R31}
\Phi=\left(\, \frac{1}{2} \frac{g^2+1}{g'} \,,\, \frac{\ii}{2} \frac{g^2-1}{g'} \,,\, \frac{g}{g'} \,\right),
\end{equation}
where the function $g$ satisfies the conditions $g'\neq 0$ and $|g|\neq 1$\,.
We call the above representation a \textit{canonical Weierstrass representation} for maximal space-like surfaces 
of general type in $\RR^3_1$.

  Since the representation \eqref{W_Can_Princ_polinom_R31} is a special case of \eqref{W_Can1_polinom_R42},
then the transformation formulas for $g$ under the different transformations of $\M$ in $\RR^3_1$ can be 
obtained from the corresponding formulas in $\RR^4_2$. Thus, under a holomorphic change of the canonical 
coordinates it follows from \eqref{g_s-hol_R42} that the function $g$ is transformed by:
\begin{equation}\label{g_s-hol_Can_Princ_R31}
\tilde g(s) = g(\pm s+c).
\end{equation}
Under an anti-holomorphic change it follows from \eqref{g_s-antihol_R42}, respectively:
\begin{equation}\label{g_s-antihol_Can_Princ_R31}
\tilde g(s) = \frac{1}{\bar g(\pm\ii \bar s + c)}.
\end{equation}

 Formulas \eqref{hatg_g-orthchr_move_Can1_R42} and \eqref{hatg_g-nonorthchr_move_Can1_R42} give that
under a proper motion in $\RR^3_1$ the function $g$ is transformed by:
\begin{equation}\label{hatg_g-move_Can_Princ_R31}
\hat g = \frac{a g + \bar b}{b g + \bar a}\:;
\qquad a,b \in \CC\,; \  |a|^2-|b|^2=\pm 1 \,,
\end{equation}
where $|a|^2-|b|^2= +1$ in the case of an orthochronous motion, and $|a|^2-|b|^2= -1$ in the case of 
a non-orthochronous motion.

 In the case of an improper motion in $\RR^3_1$, as we mentioned in the beginning, there has to be made
a change of the coordinates of the type:\; $t=\pm \ii s+c$\; or\; $t=\pm \bar s + c$. Combining equalities
\eqref{hatg_g-unp_nonorthchr_move_Can1_R42}, \eqref{hatg_g-unp_orthchr_move_Can1_R42},
\eqref{g_s-hol_R42} and \eqref{g_s-antihol_R42} we have, that under an improper motion the function $g$ 
is transformed through anyone of the following formulas:
\medskip
\begin{equation}\label{hatg_g-unp_move_t_is_bs_Can_Princ_R31}
\hat g(s) = \frac{a g(\pm\ii s+c) + \bar b}{b g(\pm\ii s+c) + \bar a} \quad \text{or} \quad 
\hat g(s) = \frac{a \bar g(\pm\bar s+c) + \bar b}{b \bar g(\pm\bar s+c) + \bar a}\,;
\qquad a,b \in \CC\,; \  |a|^2-|b|^2=\pm 1 \,.
\end{equation}
\medskip

 In the case of a homothety with coefficient $k$ it follows from \eqref{hat_g-g-homotet_Can1_R42}, that:
\begin{equation}\label{hat_g-g-homotet_Can_Princ_R31}
\hat g(s) = g\left(\frac{1}{\sqrt{k}}s\right).
\end{equation}

 Finally, for the family $\M_\theta$ of associated maximal space-like surfaces to a given one, we have from
\eqref{g_1-param_family_Can1_R42} the following:
\begin{equation}\label{g_1-param_family_Can_Princ_R31}
g_\theta (s) = g(\e^{\ii\frac{\theta}{2}} s) \,.
\end{equation}

\smallskip

 Let $\M$ be a maximal space-like surface of general type in $\RR^3_1$, given by \eqref{W_Can_Princ_polinom_R31}.
Applying the equality $g_1=g_2$ to formula \eqref{E_g1g2_Can1_R42} we find that the coefficient $E$ of
the first fundamental form of $\M$, in canonical coordinates is expressed by $g$ in the following way:
\begin{equation}\label{E-g_Can_Princ_R31}
E=\frac{(|g|^2-1)^2}{4|g'|^2}\:.
\end{equation}

Respectively, applying the equality $g_1=g_2$ to formula \eqref{nu_mu_g1g2_Can1_R42} we get that the normal
curvature $\nu$ of $\M$ is expressed by $g$ in the following way:
\begin{equation}\label{nu_g_Can_Princ_R31}
\nu = \ds\frac{4|g'|^2}{(|g|^2-1)^2}\:.
\end{equation}
Consequently, the formula \eqref{nu_g_Can_Princ_R31} describes locally all solutions of the natural equation
\eqref{Nat_Eq_nu_R31}. Furthermore, two different functions $g$ and $\hat g$ generate one and the same
solution if and only if they are related by an equation of the type \eqref{hatg_g-move_Can_Princ_R31}.

\smallskip

Now, similarly to the case of surfaces in $\RR^4_2$, we define natural correspondences between the following 
equivalence classes of objects: classes of maximal space-like surfaces of general type
in $\RR^3_1$, classes of solutions to the natural equation of the maximal space-like surfaces and 
classes of holomorphic functions in $\CC$. These correspondences can be defined only locally. Further, we 
introduce some additional definitions and denotations, similar to those in $\RR^4_2$. 

 Consider the set of maximal space-like surfaces of general type in $\RR^3_1$ of the type $(\D,\x)$,
where $\D$ is a neighborhood of zero in $\RR^2\equiv\CC$. We suppose as usual, that the surface is 
parametrized by canonical coordinates. Two surfaces $(\D,\x)$ and $(\hat\D,\hat\x)$ of this type are said 
to be \emph{equivalent}, if there exists a neighborhood of zero $\D_0\subset\D\cap\hat\D$, such that the 
two surfaces are related in $\D_0$ by a proper motion in $\RR^3_1$ of the type $\hat\x = A\x + \vb$, 
where $A \in \mathbf{SO}(2,1,\RR)$ and $\vb \in \RR^3_1$.
The set of the equivalence classes of this relation is denoted by $\mathbf{MS}_{\RR^3_1}$.

 Next we consider the set of the solutions of the natural equation \eqref{Nat_Eq_nu_R31} of the maximal 
space-like surfaces in $\RR^3_1$ of the type $(\D,\nu)$, where the domain $\D$ of $\nu$ is a neighborhood of zero in 
$\RR^2\equiv\CC$. Two solutions $(\D,\nu)$ and $(\hat\D,\hat\nu)$ are said to be \emph{equivalent}, if there exists
a neighborhood of zero $\D_0\subset\D\cap\hat\D$, such that both solutions coincide in $\D_0$.
The set of the equivalence classes of this relation is denoted by $\mathbf{SNE}_{\RR^3_1}$.

 Finally we consider the set of holomorphic functions of the type $(\D,g)$, where the domain $\D$ of
the function $g$ is a neighborhood of zero in $\RR^2\equiv\CC$, and the holomorphic function $g$
satisfies the conditions $g' \neq 0$ and $|g|\neq 1$. Two functions $(\D,g)$ and $(\hat\D,\hat g)$ are
said to be \emph{equivalent}, if there exists a neighborhood of zero $\D_0\subset\D\cap\hat\D$, such that
both functions are related in $\D_0$ by a linear fractional transformation of the type 
\eqref{hatg_g-move_Can_Princ_R31}. The set of the equivalence classes of this relation is denoted by 
$\mathbf{H}_{\RR^3_1}$.
\begin{figure}
		\centering
			\includegraphics[width=0.33\textwidth]{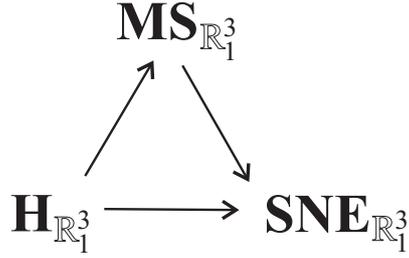}  
		\vspace*{-1ex}	
		\caption{Commutative diagram of bijections}
		\label{Diag_MS_SNE_H_R31}
\end{figure}\FloatBarrier

 Using the formulas for the maximal space-like surfaces of general type in $\RR^3_1$, obtained above, we define 
similarly to the case in $\RR^4_2$ correspondences between the three types of objects.  
These correspondences are presented diagrammatically in Figure \ref{Diag_MS_SNE_H_R31} and analogously to Theorem
\ref{Thm-Comut_Diag_R42} we have:
\begin{theorem}\label{Thm-Comut_Diag_R31}
The diagram in the Figure \ref{Diag_MS_SNE_H_R31} is commutative and the three maps are bijections\,. 
\end{theorem}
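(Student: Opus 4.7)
The plan is to imitate the proof strategy of Theorem~\ref{Thm-Comut_Diag_R42}, since all the required ingredients in the $\RR^3_1$ case are either already established in the previous section or follow from the corresponding $\RR^4_2$ results by restriction. Recall from Theorem~\ref{Thm-Min_Surf_in_hyperspace_R42} that a maximal space-like surface of general type in $\RR^4_2$ lies in a hyperplane of the form $x_4 = k$ (up to a proper orthochronous motion) precisely when $\mu \equiv 0$, and this is equivalent to $g_1 = g_2$ in the canonical Weierstrass representation. So surfaces in $\RR^3_1$ correspond exactly to the $g_1 = g_2 =: g$ specialization of the $\RR^4_2$ setup.

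First I would verify that each of the three arrows descends to the equivalence classes. The map $\mathbf{MS}_{\RR^3_1} \to \mathbf{SNE}_{\RR^3_1}$ assigns to the class of $(\D,\x)$ the class of $(\D,\nu)$; well-definedness amounts to the assertion that $\nu$ satisfies \eqref{Nat_Eq_nu_R31} (which follows from \eqref{Nat_Eq_nu_mu_R42} with $\mu = 0$) and that $\nu$ is invariant under a proper motion in $\RR^3_1$ combined with an admissible change of canonical coordinates of the type $t = \pm s + c$. The map $\mathbf{H}_{\RR^3_1} \to \mathbf{MS}_{\RR^3_1}$ sends the class of $(\D,g)$ to the surface generated by \eqref{W_Can_Princ_polinom_R31}; well-definedness follows from the transformation formula \eqref{hatg_g-move_Can_Princ_R31} together with the remark that two functions related by that linear fractional transformation generate surfaces that differ by a proper motion in $\RR^3_1$. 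The third map $\mathbf{H}_{\RR^3_1} \to \mathbf{SNE}_{\RR^3_1}$ is defined by \eqref{nu_g_Can_Princ_R31} and is manifestly the composition of the second map with the first; this gives commutativity of the diagram for free.

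Next I would show each map is a bijection. For $\mathbf{MS}_{\RR^3_1} \to \mathbf{SNE}_{\RR^3_1}$ surjectivity and injectivity together are the content of the Bonnet-type theorem in $\RR^3_1$ (the $\mu \equiv 0$ specialization of Theorem~\ref{Bone_Phi_bar_Phi_n1_n2_nu_mu_R42}): given a solution $\nu$ of \eqref{Nat_Eq_nu_R31} one reconstructs locally a maximal space-like surface of general type in $\RR^3_1$ having $\nu$ as its normal curvature, and any two such reconstructions differ by a proper motion. For $\mathbf{H}_{\RR^3_1} \to \mathbf{MS}_{\RR^3_1}$ surjectivity is exactly the existence part of the canonical Weierstrass representation \eqref{W_Can_Princ_polinom_R31}, while injectivity is the uniqueness statement: if $(\D,g)$ and $(\D,\hat g)$ generate surfaces related by a proper motion in $\RR^3_1$, then in view of the spin correspondence (the $g_1 = g_2$ reduction of \eqref{hatg_g-orthchr_move_Can1_R42} and \eqref{hatg_g-nonorthchr_move_Can1_R42}) they must be related by a transformation of the form \eqref{hatg_g-move_Can_Princ_R31}, so they lie in the same class of $\mathbf{H}_{\RR^3_1}$. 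Bijectivity of the third map is then automatic from commutativity and the bijectivity of the other two.

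The one technical point that needs care is the side condition $\phi_1 + \ii \phi_2 \neq 0$ hidden in the Weierstrass representation: at an arbitrary point this may fail and must be arranged by a preliminary proper motion of $\RR^3_1$, which is legitimate precisely because we are working with germs at $0 \in \D$ modulo proper motions. Apart from this bookkeeping, the proof is a transcription of the $\RR^4_2$ argument in Section~\ref{sect_sol_nat_eq} with $g_1 = g_2$ enforced throughout, and it reduces to quoting the already established existence, uniqueness and transformation results from this section and from Section~\ref{sect_W_can}.
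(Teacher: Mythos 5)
Your proposal is correct and follows essentially the same route as the paper: the paper gives no separate detailed proof of this theorem, but derives the $\RR^3_1$ formulas (natural equation, canonical Weierstrass representation, transformation rules, and the expression for $\nu$) as the $g_1=g_2$, $\mu\equiv 0$ specialization of the $\RR^4_2$ results and then asserts the statement ``analogously to Theorem \ref{Thm-Comut_Diag_R42}''. Your write-up simply makes explicit the well-definedness and bijectivity checks that the paper leaves implicit, including the $\phi_1+\ii\phi_2\neq 0$ bookkeeping.
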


Let us compare the definitions of $\mathbf{H}_{\RR^4_2}$ and $\mathbf{H}_{\RR^3_1}$. Two pairs of holomorphic functions
$(g_1,g_2)$ and $(\hat g_1,\hat g_2)$ generate one and the same element in $\mathbf{H}_{\RR^4_2}$ if and only if they
are related by a linear fractional transformation of the type \eqref{hatg_g-orthchr_move_Can1_R42} or 
\eqref{hatg_g-nonorthchr_move_Can1_R42}, and both linear fractional transformations are independent one from the other.
Then $g_1$ and $\hat g_1$ generate one and the same element in $\mathbf{H}_{\RR^3_1}$, and respectively
$g_2$ and $\hat g_2$ also generate one and the same element in $\mathbf{H}_{\RR^3_1}$. This means that we have
a map from the set $\mathbf{H}_{\RR^4_2}$ into the Cartesian product of tho copies of the set $\mathbf{H}_{\RR^3_1}$, 
and the so determined map is injective. Now, let $g_1$ and $g_2$ generate two arbitrary elements of $\mathbf{H}_{\RR^3_1}$.
If $(|g_1|^2-1)(|g_2|^2-1)>0$\,, then the pair $(g_1,g_2)$ generates an element of $\mathbf{H}_{\RR^4_2}$,
which is transformed into the element of $\mathbf{H}_{\RR^3_1} \times \mathbf{H}_{\RR^3_1}$, generated by $g_1$ and $g_2$.
If we have $(|g_1|^2-1)(|g_2|^2-1)<0$\,, then we can replace $g_2$ with $\ds\frac{1}{g_2}$, according to
\eqref{hatg_g-move_Can_Princ_R31}. Then the pair $\left(g_1,\ds\frac{1}{g_2}\right)$ generates an element of 
$\mathbf{H}_{\RR^4_2}$, which is transformed into the element of $\mathbf{H}_{\RR^3_1} \times \mathbf{H}_{\RR^3_1}$, 
generated by $g_1$ and $g_2$. From these arguments it follows that we can identify the set $\mathbf{H}_{\RR^4_2}$ 
with the Cartesian product of two copies of the set $\mathbf{H}_{\RR^3_1}$:
\begin{equation*}
\mathbf{H}_{\RR^4_2} \equiv \mathbf{H}_{\RR^3_1} \times \mathbf{H}_{\RR^3_1}\,.
\end{equation*}

From here and from the triangle diagrams in Figure \ref{Diag_MS_SNE_H_R42} and Figure \ref{Diag_MS_SNE_H_R31},
which consist of bijections, we also obtain the following correspondences:
\begin{equation*}
\mathbf{SNE}_{\RR^4_2} \equiv \mathbf{SNE}_{\RR^3_1} \times \mathbf{SNE}_{\RR^3_1}\,; \qquad
\mathbf{MS}_{\RR^4_2} \equiv \mathbf{MS}_{\RR^3_1} \times \mathbf{MS}_{\RR^3_1}\,.
\end{equation*}

 Our next goal is to obtain the above correspondences explicitly. We begin with the correspondence between
the solutions of the system of natural equations \eqref{Nat_Eq_K_kappa_R42} of the maximal space-like surfaces
in $\RR^4_2$ and the pairs of solutions to the natural equation \eqref{Nat_Eq_nu_R31} of the maximal space-like 
surfaces in $\RR^3_1$. Let $(\D,K>0,\varkappa)$ be a solution to \eqref{Nat_Eq_K_kappa_R42}, where $\D$ is
the domain in $\RR^2$ of the functions $K$ and $\varkappa$. Then the pair $(K,\varkappa)$ is given by formulas
\eqref{K_kappa_g1g2_Can1_R42}, where $(g_1,g_2)$ are holomorphic functions, satisfying the conditions
$g'_1g'_2\ne 0$ and $(|g_1|^2-1)(|g_2|^2-1)>0$.
Consider the solutions $\nu_1$ and $\nu_2$ to the natural equation \eqref{Nat_Eq_nu_R31},
obtained through \eqref{nu_g_Can_Princ_R31} from $g_1$ and $g_2$, respectively. Comparing formulas 
\eqref{K_kappa_g1g2_Can1_R42} and \eqref{nu_g_Can_Princ_R31} it follows that the pair $(K,\varkappa)$ is 
expressed through the pair $(\nu_1,\nu_2)$ in the following way:
\begin{equation}\label{K_kappa_nu1nu2_Can1_R42}
		 K=           \frac{1}{2} \sqrt{\nu_1\,\nu_2}\,(\nu_1+\nu_2)\,; \qquad 
		 \varkappa = -\frac{1}{2} \sqrt{\nu_1\,\nu_2}\,(\nu_1-\nu_2)\,.
\end{equation}

 These formulas are obtained locally but it is easily seen that, using these formulas we can express $\nu_1$ 
and $\nu_2$ through $K$ and $\varkappa$ uniquely in the whole domain $\D$. For the purpose we get from
\eqref{K_kappa_nu1nu2_Can1_R42}:
\begin{equation*}
\begin{array}{llr}
		 K-\varkappa &=& \sqrt{\nu_1^3\,\nu_2}\,;\\[1ex] 
		 K+\varkappa &=& \sqrt{\nu_1\,\nu_2^3}\,;
\end{array}	 \quad  \Leftrightarrow \quad  
\begin{array}{lll}
		 K^2-\varkappa^2 &=& \nu_1^2\,\nu_2^2  \,;\\[1ex]  
		 \ds\frac{K-\varkappa}{K+\varkappa} &=& \ds\frac{\nu_1}{\nu_2}\,;
\end{array}  \quad  \Leftrightarrow \quad
\begin{array}{lll}
		 \ds\frac{(K-\varkappa)^3}{K+\varkappa} &=& \nu_1^4 \,;\\[2ex]  
		 \ds\frac{(K+\varkappa)^3}{K-\varkappa} &=& \nu_2^4 \,.
\end{array}
\end{equation*}
From here we express $\nu_1$ and $\nu_2$ through $K$ and $\varkappa$:
\begin{equation}\label{nu1_nu2_K_kappa_Can1_R42}
\nu_1 = \ds\frac{K-\varkappa}{\sqrt[4]{\vphantom{{2^2}^2}K^2-\varkappa^2}}\;; \qquad
\nu_2 = \ds\frac{K+\varkappa}{\sqrt[4]{\vphantom{{2^2}^2}K^2-\varkappa^2}}\;.
\end{equation}
The last formulas show that $\nu_1$ and $\nu_2$ are defined in the whole domain $\D$ and therefore the 
formulas \eqref{K_kappa_nu1nu2_Can1_R42} are valid in the whole domain $\D$. Conversely, if $\nu_1$ and $\nu_2$ are
two arbitrary solutions to the natural equation \eqref{Nat_Eq_nu_R31}, then they have locally the form 
\eqref{nu_g_Can_Princ_R31} for given holomorphic functions $g_1$ and $g_2$. Besides we can suppose that
$(|g_1|^2-1)(|g_2|^2-1)>0$, because otherwise we can change $g_2$ with $\ds\frac{1}{g_2}$. Then defining 
$K>0$ and $\varkappa$ through \eqref{K_kappa_nu1nu2_Can1_R42}, they will satisfy \eqref{K_kappa_g1g2_Can1_R42}. 
Consequently, $K$ and $\varkappa$ will give locally and hence globally a solution to the system
\eqref{Nat_Eq_K_kappa_R42}. These arguments give the following:
\begin{theorem}\label{Thm-K_kappa_nu1nu2_Can1_R42}
Let the pair of functions $(K>0,\varkappa)$, defined in $\D\subset\RR^2$, be a solution to the system
of natural equations \eqref{Nat_Eq_K_kappa_R42} of the maximal space-like surfaces in $\RR^4_2$.
Then there exist two solutions $\nu_1$ and $\nu_2$ to the natural equation \eqref{Nat_Eq_nu_R31} 
of the maximal space-like surfaces in $\RR^3_1$, defined in the whole domain $\D$ such that
the equations \eqref{K_kappa_nu1nu2_Can1_R42} are valid. 
The functions $\nu_1$ and $\nu_2$ are determined uniquely by  $K$ and $\varkappa$ through equalities 
\eqref{nu1_nu2_K_kappa_Can1_R42}.
Conversely, if $\nu_1$ and $\nu_2$ are two arbitrary solutions to the natural equation \eqref{Nat_Eq_nu_R31},
then equalities \eqref{K_kappa_nu1nu2_Can1_R42} give a solution $(K>0,\varkappa)$ to the
system \eqref{Nat_Eq_K_kappa_R42}.
\end{theorem}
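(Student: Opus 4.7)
The plan is to exploit the explicit algebraic relation \eqref{nu1_nu2_K_kappa_Can1_R42} to pass back and forth between the pair $(K,\varkappa)$ and the pair $(\nu_1,\nu_2)$, while using the holomorphic representations from Theorem \ref{Nat_Eq_K_kappa_solv_g1g2_R42} and equality \eqref{nu_g_Can_Princ_R31} to certify that one side satisfies its system of PDE's if and only if the other does. The inversion \eqref{nu1_nu2_K_kappa_Can1_R42} is purely pointwise algebra, so the global part of the statement must be reduced to local considerations, and this is the step I expect to be most delicate.

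For the forward direction, given a global solution $(K>0,\varkappa)$ of \eqref{Nat_Eq_K_kappa_R42} on $\D$, I would first define $\nu_1$ and $\nu_2$ directly by \eqref{nu1_nu2_K_kappa_Can1_R42}; these are well defined on all of $\D$ because $K>0$ and $K^2-\varkappa^2>0$, and a direct computation shows they are positive and satisfy the algebraic identities \eqref{K_kappa_nu1nu2_Can1_R42}. Uniqueness is then immediate because the system \eqref{K_kappa_nu1nu2_Can1_R42} inverted algebraically yields exactly \eqref{nu1_nu2_K_kappa_Can1_R42}. What remains is to show that each $\nu_j$ is a solution of \eqref{Nat_Eq_nu_R31}. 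Here I would argue locally: by Theorem \ref{Nat_Eq_K_kappa_solv_g1g2_R42}, around any point of $\D$ there exist holomorphic $g_1,g_2$ with $g_1'g_2'\ne 0$, $(|g_1|^2-1)(|g_2|^2-1)>0$, generating $(K,\varkappa)$ via \eqref{K_kappa_g1g2_Can1_R42}. Comparing \eqref{K_kappa_g1g2_Can1_R42} with \eqref{K_kappa_nu1nu2_Can1_R42} and \eqref{nu_g_Can_Princ_R31} shows that locally $\nu_j = 4|g_j'|^2/(|g_j|^2-1)^2$, so $\nu_j$ satisfies the natural equation \eqref{Nat_Eq_nu_R31} in a neighborhood of every point, hence on all of $\D$.

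For the converse, suppose $\nu_1,\nu_2$ are two globally defined solutions of \eqref{Nat_Eq_nu_R31} on $\D$ and define $(K,\varkappa)$ on $\D$ by \eqref{K_kappa_nu1nu2_Can1_R42}. Positivity of $K$ follows from $\nu_1,\nu_2>0$, and $K^2-\varkappa^2 = \nu_1^2\nu_2^2>0$. To verify that $(K,\varkappa)$ solves \eqref{Nat_Eq_K_kappa_R42}, I would once again localize: by the discussion preceding Theorem \ref{Thm-Comut_Diag_R31}, each $\nu_j$ is locally of the form \eqref{nu_g_Can_Princ_R31} for some holomorphic $g_j$ with $g_j'\ne 0$ and $|g_j|\ne 1$. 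The sign of $|g_j|^2-1$ may differ between $g_1$ and $g_2$, but by replacing $g_2$ with $1/g_2$ if necessary (which leaves $\nu_2$ unchanged by \eqref{hatg_g-move_Can_Princ_R31} applied with $a=0,b=1$) we may arrange $(|g_1|^2-1)(|g_2|^2-1)>0$. Then $(g_1,g_2)$ generates a solution of \eqref{Nat_Eq_K_kappa_R42} via \eqref{K_kappa_g1g2_Can1_R42}, and this solution coincides pointwise with the $(K,\varkappa)$ defined by \eqref{K_kappa_nu1nu2_Can1_R42}; so $(K,\varkappa)$ solves \eqref{Nat_Eq_K_kappa_R42} in a neighborhood of every point and hence globally on $\D$.

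The main obstacle is the bookkeeping in the converse: the holomorphic representatives $g_j$ are only local and may not be consistent in sign of $|g_j|-1$ across $\D$. The remedy is that the natural equation and the formula \eqref{nu_g_Can_Princ_R31} are both invariant under $g_j\mapsto 1/g_j$, so after this harmless local adjustment the compatibility condition $(|g_1|^2-1)(|g_2|^2-1)>0$ required to invoke Theorem \ref{Nat_Eq_K_kappa_solv_g1g2_R42} can always be secured. Once this is in place, the rest of the argument is a direct comparison of \eqref{K_kappa_g1g2_Can1_R42}, \eqref{nu_g_Can_Princ_R31} and \eqref{K_kappa_nu1nu2_Can1_R42}, followed by a standard local-to-global extension using that the PDE's in question are local differential identities.
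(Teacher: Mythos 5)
Your proposal is correct and follows essentially the same route as the paper: both arguments use the local Weierstrass representations (Theorem \ref{Nat_Eq_K_kappa_solv_g1g2_R42} and formula \eqref{nu_g_Can_Princ_R31}) to transfer the PDE property between $(K,\varkappa)$ and $(\nu_1,\nu_2)$, the explicit algebraic inversion \eqref{nu1_nu2_K_kappa_Can1_R42} for the global definition and uniqueness, and the substitution $g_2\mapsto 1/g_2$ to secure $(|g_1|^2-1)(|g_2|^2-1)>0$ in the converse. The only difference is cosmetic: you define $\nu_1,\nu_2$ globally first and then verify the natural equation locally, whereas the paper derives \eqref{K_kappa_nu1nu2_Can1_R42} locally and then globalizes via the inversion.
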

\begin{remark}
 Note that the formulas \eqref{K_kappa_nu1nu2_Can1_R42} can be obtained directly from the system
\eqref{Nat_Eq_K_kappa_R42} by algebraic transformations without using the Weierstrass representations. This approach 
was used in the paper \cite{G-K-3}.
\end{remark}

 Now we proceed to the correspondence between the maximal space-like surfaces of general type in $\RR^4_2$ and
the pairs of maximal space-like surfaces of general type in $\RR^3_1$. This correspondence is obtained with 
the help of the canonical Weierstrass representations in $\RR^4_2$ and $\RR^3_1$ of the type 
\eqref{W_Can1_polinom_R42} and \eqref{W_Can_Princ_polinom_R31}, respectively. Since the canonical coordinates
exist in general only locally in a neighborhood of a point, then this correspondence is only local. That is why
we consider the pairs $(\M,p)$, where $\M$ is a maximal space-like surface of general type and $p$ is a fixed 
point in $\M$. Furthermore, we suppose that we have fixed canonical coordinates in a neighborhood of $p$. 
As usual, we identify two such pairs $(\M,p)$ and $(\hat\M, \hat p)$, if they are related by a proper motion,
$p$ and $\hat p$ being correspondent under this motion.

 Thus, let $(\M,p)$ be a maximal space-like surface of general type in $\RR^4_2$ and let
$t=u+\ii v$ give canonical coordinates in a neighborhood of $p$, and $t_0$ give the coordinates of $p$. 
In view of Theorem \ref{thm-W_Can1_polinom_R42}\,, the surface $\M$ has a canonical Weierstrass representation
of the type \eqref{W_Can1_polinom_R42} through a pair of holomorphic functions $g_1$ and $g_2$. Anyone of 
the functions $g_1$ and $g_2$ generates by the canonical Weierstrass representation 
\eqref{W_Can_Princ_polinom_R31} a maximal space-like surface of general type in $\RR^3_1$. Denote by $\M_1$ 
and $\M_2$ the obtained surfaces in $\RR^3_1$ and by $p_1$ and $p_2$ respectively the points on them with 
coordinate $t=t_0$. In this way we associated to $(\M,p)$ two surfaces $(\M_1,p_1)$ and $(\M_2,p_2)$. 
If $(\M,p)$ and $(\hat\M, \hat p)$ are related by a proper motion in $\RR^4_2$, then the corresponding functions
in the representation \eqref{W_Can1_polinom_R42} are related by a linear fractional transformation of the type
\eqref{hatg_g-orthchr_move_Can1_R42} or \eqref{hatg_g-nonorthchr_move_Can1_R42}. Then equality 
\eqref{hatg_g-move_Can_Princ_R31} gives that the corresponding surfaces in $\RR^3_1$ are also related by proper 
motions. Consequently the correspondence under consideration is invariant when identifying two surfaces related by 
a proper motion in $\RR^4_2$.

 Next we prove that the correspondence is reversible. For the purpose, let $(\M_1,p_1)$ and $(\M_2,p_2)$ be two
maximal space-like surfaces of general type in $\RR^3_1$ with fixed points $p_1$ and $p_2$, respectively.
Suppose that in neighborhoods of $p_1$ and $p_2$ are introduced principal canonical coordinates on $\M_1$ and
$\M_2$, respectively, and $t_0$ gives the coordinates of the points $p_1$ and $p_2$. Any of the surfaces
$\M_1$ and $\M_2$ has canonical Weierstrass representation of the type \eqref{W_Can_Princ_polinom_R31} in 
neighborhoods of the points $p_1$ and  $p_2$ by given holomorphic functions $g_1$ and $g_2$. As above, 
we can assume that $(|g_1|^2-1)(|g_2|^2-1)>0$, because otherwise we can again replace $g_2$ with 
$\ds\frac{1}{g_2}$\,. Then the pair of functions $(g_1,g_2)$ generates a maximal space-like surface of general 
type $\M$ in $\RR^4_2$ by the canonical Weierstrass representation of the type \eqref{W_Can1_polinom_R42}. 
Furthermore, let $p$ be the point in $\M$ with coordinates $t=t_0$. Thus, we associated to the surfaces $(\M_1,p_1)$ 
and $(\M_2,p_2)$ in $\RR^3_1$ a surface $(\M,p)$ in $\RR^4_2$. Let $(\hat\M_1,\hat p_1)$ and $(\hat\M_2,\hat p_2)$ 
be obtained respectively from $(\M_1,p_1)$ and $(\M_2,p_2)$ by proper motions in $\RR^3_1$. Then the generating 
functions $\hat g_j(s)$ for $(\hat\M_j,\hat p_j)$ are obtained from $g_j(t)$ through formulas of the type
\eqref{hatg_g-move_Can_Princ_R31}. Moreover, the conditions\, $(|g_1|^2-1)(|g_2|^2-1)>0$\, and\, 
$(|\hat g_1|^2-1)(|\hat g_2|^2-1)>0$\, guarantee that the sign of the expression $|a_j|^2-|b_j|^2$ in
\eqref{hatg_g-move_Can_Princ_R31} is the same in the cases $j=1$ and $j=2$\,, as it is in the equations
\eqref{hatg_g-orthchr_move_Can1_R42} and \eqref{hatg_g-nonorthchr_move_Can1_R42}. Therefore, the correspondence 
from $\RR^3_1$ to $\RR^4_2$ is invariant under the identification of the surfaces related by a proper motion in 
$\RR^3_1$. It is clear that the last correspondence is the inverse of the correspondence from $\RR^4_2$ to $\RR^3_1$. 

The above correspondence between maximal space-like surfaces of general type in $\RR^4_2$ and the pairs of maximal 
space-like surfaces of general type in $\RR^3_1$ we denote in the following way:
\begin{equation}\label{Mp_R42-M1p1_M2p2_R31}
		\big(\M,p\big) \leftrightarrow \big((\M_1,p_1),(\M_2,p_2)\big) \,.
\end{equation}

\smallskip

 Next we consider the properties of this correspondence under a change of the canonical coordinates and the basic
geometric transformations of the surfaces in $\RR^4_2$ and $\RR^3_1$. First we consider a \emph{change of the coordinates}.
As we mentioned at the beginning of this section, there are two changes in $\RR^3_1$. Under a change of the type 
$t=\pm s+c$, the functions $g_j(t)$ in $\RR^3_1$ are replaced by $g_j(\pm s+c)$, according to \eqref{g_s-hol_Can_Princ_R31}. 
Formula \eqref{g_s-hol_R42} shows that the functions $g_j(t)$ are replaced in the same way in $\RR^4_2$. Under a
change of the type $t=\pm\ii \bar s + c$, the functions $g_j(t)$ are also replaced in the same way in $\RR^3_1$
and in $\RR^4_2$, according to \eqref{g_s-antihol_Can_Princ_R31} and \eqref{g_s-antihol_R42}. Consequently,
the correspondence \eqref{Mp_R42-M1p1_M2p2_R31} is invariant under a change of the canonical coordinates in $\RR^3_1$.

 In $\RR^4_2$ there are two more changes of the canonical coordinates\; $t=\pm \ii s+c$\; and\; $t=\pm \bar s + c$\;,
according to Theorem \ref{Can_Coord-uniq}\,. We saw at the beginning of this section, that these changes give in $\RR^3_1$
coordinates of a surface, obtained by an improper motion. These changes will be considered further, considering 
improper motions.

 Next we proceed to \emph{motions} in $\RR^3_1$ and $\RR^4_2$. As we noted, the functions $g_j(t)$ are transformed  
in one and the same way under proper motions in $\RR^3_1$ and $\RR^4_2$. Consequently the correspondence 
\eqref{Mp_R42-M1p1_M2p2_R31} is invariant under proper motions in $\RR^3_1$ and $\RR^4_2$. 

 Now, consider the case of \emph{improper motions}. Let $(\tilde\M_1,\tilde p_1)$ and $(\tilde\M_2,\tilde p_2)$ 
be obtained respectively from  $(\M_1,p_1)$ and $(\M_2,p_2)$ by improper motions in $\RR^3_1$. In order to obtain 
canonical coordinates on $(\tilde\M_j,\tilde p_j)$, we make change of the type\; $t=\pm \ii s+c$\; 
or\; $t=\pm \bar s + c$\,. Then, the generating functions $\tilde g_j(s)$ for $(\tilde\M_j,\tilde p_j)$ are obtained
from $g_j(t)$ by some of the formulas \eqref{hatg_g-unp_move_t_is_bs_Can_Princ_R31}. The conditions
$(|g_1|^2-1)(|g_2|^2-1)>0$\, and\, $(|\tilde g_1|^2-1)(|\tilde g_2|^2-1)>0$\, again guarantee that the sign
of the expression $|a_j|^2-|b_j|^2$ in \eqref{hatg_g-unp_move_t_is_bs_Can_Princ_R31} is the same for $j=1$ and $j=2$\,,
as in the formulas \eqref{hatg_g-orthchr_move_Can1_R42} and \eqref{hatg_g-nonorthchr_move_Can1_R42}.
On the other hand, the new coordinates $s$ are canonical for the same surface $(\M,p)$ in $\RR^4_2$ and 
the functions $\tilde g_j(s)$ generate a surface, obtained by a \emph{proper} motion from$(\M,p)$, 
according to \eqref{g_s-hol_R42}, \eqref{g_s-antihol_R42}, \eqref{hatg_g-orthchr_move_Can1_R42} and 
\eqref{hatg_g-nonorthchr_move_Can1_R42}. Consequently, \eqref{Mp_R42-M1p1_M2p2_R31} implies that:
\begin{equation}\label{Mp_R42-M1p1_M2p2_unp_move_R31}
		\big(\M,p\big) \leftrightarrow \big((\tilde\M_1,\tilde p_1),(\tilde\M_2,\tilde p_2)\big) \leftrightarrow 
		\big((\M_1,p_1),(\M_2,p_2)\big) \,.
\end{equation}

 Let now $(\tilde\M,\tilde p)$ be obtained by an \emph{improper motion} in $\RR^4_2$ from $(\M,p)$.
In view of formulas \eqref{hatg_g-unp_nonorthchr_move_Can1_R42} and \eqref{hatg_g-unp_orthchr_move_Can1_R42} 
we know that, this means displacement of the generating functions $g_1$ and $g_2$, which is equivalent to the
displacement of $(\M_1,p_1)$ and $(\M_2,p_2)$. Then \eqref{Mp_R42-M1p1_M2p2_R31} implies that:
\begin{equation}\label{Mp_R42-M1p1_M2p2_unp_move_R42}
		\big(\tilde\M,\tilde p\big) \leftrightarrow \big((\M_2,p_2),(\M_1,p_1)\big) \leftrightarrow 
		\big((\tilde\M_2,\tilde p_2),(\tilde\M_1,\tilde p_1)\big) \,.
\end{equation}

 Note that, if we change the canonical coordinates on only one of the surfaces in $\RR^3_1$, then by the 
correspondence \eqref{Mp_R42-M1p1_M2p2_R31} it is obtained in general a new surface in $\RR^4_2$. In a
similar way, if we have an improper motion of the only one of the surfaces in $\RR^3_1$, say $(\M_1,p_1)$, 
then the pair $(\tilde\M_1,\tilde p_1)$ and $(\M_2,p_2)$ give a new surface in $\RR^4_2$, different from both: 
$(\M,p)$ and $(\tilde\M,\tilde p)$. 

\smallskip

 Now, proceed to the case of \emph{homotheties} with coefficient $k$ in $\RR^3_1$ and $\RR^4_2$. 
Taking into account formulas \eqref{hat_g-g-homotet_Can_Princ_R31} and \eqref{hat_g-g-homotet_Can1_R42} we
conclude that the generating functions $g_j$ are transformed in one and the same way in $\RR^3_1$ and 
$\RR^4_2$:\; $\hat g_j(s) = g_j\left(\frac{1}{\sqrt{k}}s\right)$. This means that the correspondence
\eqref{Mp_R42-M1p1_M2p2_R31} is invariant under a homothety. Introduce the denotation $(k\M,kp)$ for the 
surface, obtained by a homothety from $(\M,p)$ in $\RR^3_1$ or  $\RR^4_2$, where $kp$ is the image of $p$ 
under this homothety. Then \eqref{Mp_R42-M1p1_M2p2_R31} implies that:
\begin{equation}\label{Mp_R42-M1p1_M2p2_R31_homotet}
		\big(k\M,kp\big) \leftrightarrow \big((k\M_1,kp_1),(k\M_2,kp_2)\big) \,.
\end{equation}

\smallskip

 Let us consider the \emph{one-parameter families} of associated maximal space-like surfaces of $(\M_1,p_1)$ and $(\M_2,p_2)$ 
in $\RR^3_1$ and of $(\M,p)$ in $\RR^4_2$. Taking into account formulas \eqref{g_1-param_family_Can_Princ_R31} 
and \eqref{g_1-param_family_Can1_R42} we see that the generating functions $g_j$ are obtained in one and 
the same way in $\RR^3_1$ and $\RR^4_2$:\; $g_{j|\theta} (s) = g_j(\e^{\ii\frac{\theta}{2}} s)$.
This means that the correspondence \eqref{Mp_R42-M1p1_M2p2_R31} preserves the one-parameter families of associated 
maximal space-like surfaces. Let $p_\theta=\mathcal{F}_\theta(p)$ denote the point in $\M_\theta$, 
corresponding to $p$ under the standard isometry $\mathcal{F}_\theta$ between $\M$ and $\M_\theta$, 
defined in Proposition \ref{Isom_M_phi-M}\,. Then we have:
\begin{equation}\label{Mp_R42-M1p1_M2p2_R31_1-param}
		\big(\M_\theta,p_\theta\big) \leftrightarrow \big((\M_{1|\theta},p_{1|\theta}),(\M_{2|\theta},p_{2|\theta})\big) \,.
\end{equation}

 From the notes after Definition \ref{1-param_family_assoc_surf} we know that the \emph{conjugate} maximal space-like 
surface $\bar\M$ of $\M$ belongs to the one-parameter family of associated maximal space-like surfaces to $\M$ 
and is obtained by $\theta=\frac{\pi}{2}$. Let $\bar p$ denote the point in $\bar\M$, corresponding to $p$ in $\M$. 
Then \eqref{Mp_R42-M1p1_M2p2_R31_1-param} implies that:
\begin{equation}\label{Mp_R42-M1p1_M2p2_R31_conj}
		\big(\bar\M,\bar p\big) \leftrightarrow \big((\bar\M_1,\bar p_1),(\bar\M_2,\bar p_2)\big) \,.
\end{equation}

\smallskip

 Finally we show how the scalar invariants of $(\M,p)$ in $\RR^4_2$ are related to the corresponding invariants of
$(\M_1,p_1)$ and $(\M_2,p_2)$ in $\RR^3_1$. Let $E$ be the coefficient of the first fundamental form of $\M$, 
with respect to the canonical coordinates on $\M$ and let $E_1$ and $E_2$ be the corresponding coefficients
for $\M_1$ and $\M_2$. Then $E$ satisfies \eqref{E_g1g2_Can1_R42}, and $E_1$, $E_2$ satisfy \eqref{E-g_Can_Princ_R31}. 
Comparing these formulas we see that $E$ is the  \emph{geometric mean} of $E_1$ and $E_2$:
\begin{equation}\label{Mp_R42-M1p1_M2p2_R31_E_E12}
  E=\sqrt{E_1E_2}\;.
\end{equation}

The curvatures $(K,\varkappa)$ of $\M$ satisfy \eqref{K_kappa_g1g2_Can1_R42}, and the curvatures $\nu_1$ and $\nu_2$
of $\M_1$ and $\M_2$ satisfy respectively \eqref{nu_g_Can_Princ_R31}. As we have already seen, from here follow formulas 
\eqref{K_kappa_nu1nu2_Can1_R42} and \eqref{nu1_nu2_K_kappa_Can1_R42}, giving the relation between the pairs
$(K,\varkappa)$ and $(\nu_1,\nu_2)$.

\smallskip
\textbf{Acknowledgements}

The first author is partially supported by the National Science Fund, Ministry of Education
and Science of Bulgaria under contract DN 12/2.


\end{document}